\documentclass[11pt]{article}
\usepackage[margin=1in]{geometry}
\usepackage{graphicx,subfigure}
\usepackage{hyperref}
\usepackage{amsmath}
\usepackage{amsthm}
\usepackage{amssymb}
\usepackage{mathrsfs}
\usepackage{bbm}
\usepackage{algpseudocode}
\usepackage{algorithm}
\usepackage{xcolor}
\usepackage{tikz,caption}
\usetikzlibrary{arrows.meta,calc}
 \usepackage{enumitem}
  \usepackage{multirow}
  \usepackage{booktabs,longtable}
 
\newtheorem{theorem}{Theorem}

\newtheorem{lemma}{Lemma}

\newtheorem{proposition}{Proposition}

\newtheorem{definition}{Definition}
\newtheorem{fact}{Fact}

\newtheorem{assumption}{Assumption}

\numberwithin{equation}{section}

\numberwithin{lemma}{section}
\numberwithin{corollary}{section}
\numberwithin{definition}{section}

\newtheorem{remark}{Remark}
\numberwithin{remark}{section}

\usepackage{booktabs}

\newcommand{\M}{\mathcal{M}}
\newcommand{\N}{\mathrm{N}}

\newcommand{\rmN}{\mathrm{N}}
\newcommand{\rmT}{\mathrm{T}}
\newcommand{\rmD}{\mathrm{D}}

\newcommand{\U}{\mathbf{U}}

\newcommand{\R}{\mathbb{R}}
\newcommand{\E}{\mathcal{E}}

\newcommand{\rmI}{\mathrm{I}}
\newcommand{\oM}{\mathcal{M}}

\newcommand{\be}{\begin{equation}}
\newcommand{\ee}{\end{equation}}
\newcommand{\ba}{\begin{array}}
\newcommand{\ea}{\end{array}}
\newcommand{\bad}{\begin{aligned}}
\newcommand{\ead}{\end{aligned}}

\newcommand{\normfro}[1]{\| #1 \|}

\newcommand{\argmin}{\mathop{\rm argmin}}

\newcommand{\secondform}{\mathrm{I\!I}}

\newcommand{\rank}{\mathrm{rank}}
\newcommand{\range}{\mathrm{range}}

\newcommand{\diag}{\mathrm{diag}}
\newcommand{\Diag}{\mathrm{Diag}}

\newcommand{\Retr}{\mathrm{Retr}}

\newcommand{\T}{\mathrm{T}}
 
\newcommand{\st}{\mathrm{s.t. }}

 \newcommand{\grad}{\mathrm{grad}}

\newcommand{\retr}{\mathrm{Retr}}

\newcommand{\Proj}{\mathcal{P}}

\usepackage[normalem]{ulem}

\begin{document}
	
	\title{Retractions by Alternating Projections}
	

\author{%
Shixiang Chen%
\thanks{School of Mathematical Sciences, Key Laboratory of the Ministry of Education for Mathematical Foundations and Applications of Digital Technology, University of Science and Technology of China, Hefei, Anhui, China. Yixiao He: \texttt{hyxxxx@mail.ustc.edu.cn}. Corresponding author: Shixiang Chen (\texttt{shxchen@ustc.edu.cn}). }%
\and
Yixiao He\protect\footnotemark[\value{footnote}]%
\and
Wen Huang%
\thanks{School of Mathematical Sciences, Xiamen University, Xiamen, China (Wen Huang: \texttt{wen.huang@xmu.edu.cn}).}%
}

	\date{\today}

	\maketitle

    \begin{abstract}
    
Alternating projections and their variants are classical tools for computing points in intersections of sets. Existing analyses for smooth manifolds mainly focus on local convergence rates under transversality or related regularity conditions. In this work, we develop a unified framework for a broad class of (possibly inexact) alternating-projection-type methods on intersections of smooth manifolds. Specifically, under the assumption that two $C^{2,1}$ embedded submanifolds $\mathcal{M}_1, \mathcal{M}_2 \subset \mathbb{R}^n$ intersect cleanly, we show that the associated alternating mapping admits a well-defined local limiting map $\psi$ on the intersection manifold $\mathcal{M}=\mathcal{M}_1\cap \mathcal{M}_2$, and that $\psi$ is a retraction on $\mathcal{M}$. If, in addition, $\mathcal{M}_1$ and $\mathcal{M}_2$ are $C^{3,1}$, then $\psi$ is a second-order retraction. Furthermore, the standard NewtonSLRA scheme, which exhibits quadratic local
behavior under transversality, can be understood as inducing a second-order
retraction on \(\M\). This framework thus provides   new retraction-based optimization tools for problems constrained to the intersection manifold.
	\end{abstract}
\noindent\textbf{Mathematics Subject Classification (2020):} 65K10 $\cdot$   58C05 $\cdot$ 49M37 $\cdot$ 90C26.\\
\noindent\textbf{Keywords:} retraction; second-order retraction; alternating projections; alternating-projection-type methods; manifold intersections; clean intersection; intrinsic transversality.
	
	\section{Introduction}
Alternating projections trace back to Schwartz~\cite{schwarz1869uber} and von Neumann~\cite{von1950geometry}, originally proposed to find a point in the intersection of two subspaces,
and were later generalized to convex sets~\cite{bregman1965method,gubin1967method,bauschke1993convergence,bauschke1996projection}.
When projecting onto the intersection is computationally prohibitive, alternating projections provide an efficient alternative.
Recent works establish local convergence rates   on nonconvex sets and manifolds under geometric regularity assumptions such as (intrinsic) transversality~\cite{lewis2009local,lewis2008alternating,drusvyatskiy2015transversality}, non-tangential intersections~\cite{andersson2013alternating,bauschke2013restricted,bauschke2013restricted-theory}, and separable intersection \cite{noll2016local}. Moreover, their inexact variants~\cite{kruger2016regularity,drusvyatskiy2019local} have also been proposed. In addition, Newton-type refinements can lead to local quadratic convergence, particularly in the structured low-rank approximation problem
\cite{schost2016quadratically,nagasaka2021relaxed} and for general nonconvex sets~\cite{xiao2025quadratically}.

While the above works primarily focus on local convergence rates of alternating projections,
a complementary question is how these updates can be used as  building blocks  for optimization algorithms on manifold intersections. To this end, a key ingredient in Riemannian optimization is a \emph{retraction},
which serves as a computationally tractable surrogate for the exponential map.
This replacement is particularly important on intersection manifolds, where the exponential map is
typically intractable, since it would require solving a nonlinear geodesic differential--algebraic system
arising from the coupled constraints. Algorithmically, retractions serve as the basic tool for mapping tangent directions back to the manifold,
and their smoothness is crucial for fast local convergence of Riemannian optimization algorithms. Projection-like retractions and their properties have been systematically developed in~\cite{absil2012projection}. 
Nevertheless, on intersection manifolds, the coupled constraints make it considerably more delicate to construct tractable retractions with the desired smoothness.
 Some existing work~\cite{schost2016quadratically,andersson2013alternating} shows that alternating-type algorithms provide sufficiently accurate approximations to the orthogonal projection, but does not establish that the induced limit maps form retractions, especially with the smoothness condition.
This motivates us to revisit alternating-projections schemes through the lens of retractions.

\paragraph{Problem setting.}
In this paper, we consider two embedded $C^{p,1}$ submanifolds $\M_1,\M_2 \subset \R^n$ with $p\ge 2$.
Here $C^{p,1}$ means that the manifolds admit $C^{p,1}$ atlases, namely, local coordinate maps that are $p$ times continuously differentiable and whose $p$th derivatives are locally Lipschitz.
While the local convergence analysis of alternating projections typically only requires $C^p$ regularity, the additional Lipschitz continuity is needed to establish the $C^{p-1}$ regularity of the induced retraction.

We assume that $\M_1$ and $\M_2$ intersect cleanly \cite{drusvyatskiy2015transversality}, meaning that the intersection
\[
\M := \M_1\cap \M_2
\]
is an embedded submanifold and, for any $\bar x\in \M$, one has
\[
\T_{\bar x}\M \;=\; \T_{\bar x}\M_1 \cap \T_{\bar x}\M_2 .
\]
Within this setting, we study a unified alternating-type iteration of the form
\[
\varphi := \phi_1\circ \phi_2,
\]
where $\phi_1:\R^n\to \M_1$ represents an inexact orthogonal projection map,
and $\phi_2:\R^n\to \R^n$ is     an   inexact projection mapping associated with $\M_2$. Note that $\phi_1$ takes values in $\M_1$. 
Given a reference point $\bar x\in \M$ and a tangent vector $\eta\in \T_{\bar x}\M$,
we associate the one-step map $\varphi$ with the limiting map
\begin{equation}\label{eq:limiting_alter_proj}\tag{Alt-R}
	\psi(\bar x,\eta)
	:= \lim_{k\to\infty}\varphi^{k}(\bar x+\eta),
\end{equation}
whenever the above limit is locally well-defined.

Our main contribution is a unified framework showing that a broad class of alternating-projection-type methods induces a retraction $\psi$ on the intersection manifold.
\begin{theorem}[Informal]
Let $\M_1,\M_2\subset \R^n$ be two embedded $C^{p,1}$ submanifolds ($p\ge 2$) that intersect cleanly at
$\bar x\in \M$.
\begin{enumerate}[leftmargin=*,label=(\roman*)]
\item
Any method from a broad class of locally $R$-linearly convergent methods---including the alternating projection method (APM)~\cite{lewis2008alternating,andersson2013alternating,drusvyatskiy2015transversality,noll2016local}
and inexact alternating projection methods (IAPM)~\cite{kruger2016regularity,drusvyatskiy2019local}---induces  a retraction  $\psi$ on $\M$ (in the sense of Definition~\ref{def:retraction}).
Furthermore, such a retraction is of second order if $p \ge 3$.

\item
Assume in addition that a transversality condition holds and $p\ge 3$.
Then the quadratically convergent scheme NewtonSLRA~\cite{schost2016quadratically}
induces a second-order retraction on $\M$. 

\end{enumerate}
\end{theorem}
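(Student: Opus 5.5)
The plan is to reduce everything to properties of the one-step map $\varphi=\phi_1\circ\phi_2$ near $\bar x$, and then pass to the limit $\psi(\bar x,\eta)=\lim_k\varphi^k(\bar x+\eta)$. For part (i) I would impose three conditions on the class of methods. First, every point of $\M$ is a fixed point of $\varphi$. Second, $\varphi$ is $C^{p-1,1}$ near $\bar x$, with $D\varphi(\bar x)=P_{\T_{\bar x}\M_1}P_{\T_{\bar x}\M_2}$ for exact projections; inexact projections should agree with the exact ones to first order along $\M$. Third, the iterates converge locally $R$-linearly to $\M$. The clean intersection assumption gives $\T\M=\T\M_1\cap\T\M_2$. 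Hence $\|P_{\T\M_1}P_{\T\M_2}v\|\le c\|v\|$ with $c<1$ for $v\perp \T_{\bar x}\M$, which is the usual Friedrichs-angle fact. So $D\varphi(\bar x)$ is the identity on $\T_{\bar x}\M$ and a contraction on the normal space $\rmN_{\bar x}\M$. This is exactly the setting of a normally hyperbolic (here, normally attracting) manifold of fixed points.

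\textbf{Step 1: existence and regularity of $\psi$.} Write $\varphi^k(z)-\varphi^{k+1}(z)=O(c^k\,\mathrm{dist}(z,\M))$. This gives uniform convergence on a neighborhood $U$, so $\varphi^\infty:=\lim\varphi^k$ exists, is continuous, and is the identity on $\M$. For smoothness, differentiate: $D\varphi^k(z)=\prod_j D\varphi(\varphi^j z)$. Since $\varphi^j z\to\varphi^\infty(z)$ linearly and $D\varphi$ is Lipschitz, the products form a Cauchy sequence. The key estimate is that each factor is a projection-like operator that is nonexpansive up to $O(c^j)$, and the normal component decays geometrically. The same telescoping handles derivatives of order up to $p-1$, using the Lipschitz $(p-1)$th derivative. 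I expect this inductive control of higher derivatives of the iterates to be the main obstacle. The standard approach would be an invariant-manifold or stable-fibration argument (Fenichel-type): $\varphi^\infty$ is the projection along the $C^{p-1}$ stable foliation onto $\M$. I would first try to prove the $C^{p-1}$ estimate by induction on the derivative order via the Faà di Bruno formula. At each order, the term involving only the top derivative is contracted by $c^j$ in normal directions, and the lower-order terms have already been bounded; the Lipschitz hypothesis is what makes the resulting series summable.

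\textbf{Step 2: retraction properties.} Set $\psi(\bar x,\eta)=\varphi^\infty(\bar x+\eta)$. Then $\psi(\bar x,0)=\bar x$ because $\bar x$ is fixed. For the derivative, $D\varphi^\infty(\bar x)=\lim D\varphi(\bar x)^k$, which is the orthogonal projection onto $\T_{\bar x}\M$ by the spectral splitting above. Hence $D_\eta\psi(\bar x,0)=\mathrm{id}_{\T_{\bar x}\M}$. Smoothness in $(\bar x,\eta)$ jointly follows from Step 1, so $\psi$ is a retraction in the sense of Definition~\ref{def:retraction}.

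\textbf{Step 3: second order when $p\ge3$.} It suffices to show $P_{\T_{\bar x}\M}\,\frac{d^2}{dt^2}\psi(\bar x,t\eta)|_{t=0}=0$. First note that $\varphi^\infty$ is $C^2$ and fixes $\M$ pointwise, so $D\varphi^\infty(y)=P_{\T_y\M}$ for $y\in\M$. Next, differentiating $\varphi\circ\varphi^\infty=\varphi^\infty$ twice and evaluating the resulting identity on tangent inputs should show that $D^2\varphi^\infty(\bar x)[\eta,\eta]$ is normal. Concretely, each exact projection $P_{\M_i}$ has the property that, for tangent $\eta$, its second derivative $D^2P_{\M_i}(\bar x)[\eta,\eta]$ is normal to $\M_i$. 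This normality passes through the composition and the limit. For inexact maps I would require the same second-order agreement as an assumption on the class.

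For part (ii), NewtonSLRA is no longer $C^1$-contractive in this form, but under transversality each step maps $z$ to a point $z'$ with $\mathrm{dist}(z',\M)=O(\mathrm{dist}(z,\M)^2)$. In addition, $z'-z$ lies, up to second order, in the normal space, coming from the Newton linearization of both constraints. Convergence is then immediate. I would obtain smoothness by showing that the limit coincides, to second order, with the exact projection $P_\M$. The distance between the limit and the first iterate is quadratically small, and all displacements are normal up to $O(\|\eta\|^2)$ terms whose tangent component vanishes at second order. Since $P_\M(\bar x+\eta)$ is a known second-order retraction \cite{absil2012projection}, agreement up to $O(\|\eta\|^3)$ suffices. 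The regularity of $\psi$ itself would again follow from the smooth dependence of the Newton step on $z$ and the superlinear telescoping.
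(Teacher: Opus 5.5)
\textbf{Verdict.} Your architecture matches the paper: telescoping $\psi=x_0+\sum_k\Delta_k$, normal contraction from the Friedrichs angle under clean intersection, tangent/normal splitting at the limit point, and a Lipschitz top derivative. Your $C^1$ step is sound. In fact $\|\rmD\varphi(x^*)\|=\|\Proj_{\T_{x^*}\M_1}\Proj_{\T_{x^*}\M_2}\|\le 1$ gives $\sup_k\|J_k\|<\infty$ more directly than the paper's coupled recursion, so your plan covers $p=2$.

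\textbf{The gap: second derivatives.} The genuine gap is the passage to second derivatives, needed for $p\ge 3$ and for part (ii); this is where the paper does its real work. Write $H_{k+1}=\rmD\varphi(x_k)H_k+B_k$ with $B_k=\rmD^2\varphi(x_k)[J_k,J_k]$. The tangential block $P^*\rmD\varphi(x_k)P^*=P^*+O(\tau_K^k\|\eta\|)$ is neutral, not contracting. So the mechanism you state fails: top-order term contracted in normal directions, lower-order terms already bounded, Lipschitz continuity making the series summable. A merely bounded forcing allows $\|P^*H_k\|$ to grow linearly in $k$. You also cannot hope for summability of $B_k$ as a whole, since $Q^*B_k$ is generically of order one. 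What is needed is summability of $P^*B_k$ specifically.

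The missing identity is $P^*\rmD^2\varphi(x^*)[u,u]=0$ for $u\in\T_{x^*}\M$. It follows by differentiating $\varphi(c(t))=c(t)$ twice along a curve in $\M$ and using $P^*\rmD\varphi(x^*)=P^*$. This is the same normality you invoke in Step 3, now needed along the whole orbit. Combined with the geometric decay of $Q^*J_k$ and Lipschitz $\rmD^2\varphi$, it gives $\|P^*B_k\|=O(q_K^k\|\eta\|)$. That keeps $P^*H_k$ bounded and makes $P^*\rmD^2\Delta_k$ summable.

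On the normal side, bounded $Q^*B_k$ only gives bounded $Q^*H_k$. Summability of $Q^*(H_{k+1}-H_k)$ requires running the contraction on the increments, using $\|B_{k+1}-B_k\|=O(q_K^k\|\eta\|)$ (Lemmas~\ref{lem:bound_of_Bk} and~\ref{lem:uniform_D2Delta_summable}). A black-box appeal to Fenichel-type fibrations would instead need a precise theorem delivering $C^{p-1}$ regularity from $C^{p-1,1}$ data.

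\textbf{Step 3.} Step 3 also relies on a step that does not work. Differentiating $\varphi\circ\varphi^\infty=\varphi^\infty$ twice at $\bar x$ along $\eta$ gives $\rmD^2\varphi(\bar x)[\eta,\eta]=(I-\rmD\varphi(\bar x))w$ with $w=\rmD^2\varphi^\infty(\bar x)[\eta,\eta]$. Since $P_{\bar x}(I-\rmD\varphi(\bar x))=0$, this determines only $Q_{\bar x}w$, never $P_{\bar x}w$.

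The right argument is already in your Step 2. Suppose $\varphi^\infty$ is $C^2$ and $\rmD\varphi^\infty(y)=\lim_k(\Proj_{\T_y\M_1}\Proj_{\T_y\M_2})^k=\Proj_{\T_y\M}$ for every $y\in\M$ near $\bar x$. This uses the von Neumann limit at each $y$, not merely that $\varphi^\infty$ fixes $\M$. Differentiate this identity along a curve in $\M$ with velocity $\eta$ to get $\rmD^2\varphi^\infty(\bar x)[\eta,\eta]=\mathcal P_\eta\eta=\secondform_{\bar x}(\eta,\eta)\in\N_{\bar x}\M$ (Definition~\ref{def:second_and_weingarten}).

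This is shorter than the paper's error recursion in Proposition~\ref{prop:tangent_error_o_eta2}. It also shows that the extra second-order agreement you planned to impose on inexact maps is unnecessary once $\varphi\in C^{2,1}$; that assumption is the analogue of the residual bounds in Assumption~\ref{assump:DP}(ii).

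\textbf{Part (ii).} Matching $\Proj_\M$ to second order at each base point gives \eqref{eq:second-order-retraction}, but not joint $C^2$ regularity of $\psi$ on $\T\M$. Superlinear convergence alone does not control $\rmD\Delta_k$ and $\rmD^2\Delta_k$. The paper's key input is that Fact~\ref{fact:NewtonSLRA_quad_ProjM}(i) forces $\rmD\varphi(x)=\Proj_{\T_x\M}$ on $\M$ (Lemma~\ref{lem:Dphi_equals_proj_TbarxM}). So, contrary to your remark, NewtonSLRA is the easiest instance of the same framework, with normal contraction factor $0$. The repaired $C^2$ argument together with the corrected Step 3 then covers it, second-order property included.
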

 This theorem provides a unified characterization of alternating projections, inexact variants, and Newton-type local refinements through the induced map $\psi$. It also provides principled tools for  optimization over intersecting manifolds, enabling the use of second-order Riemannian algorithms such as the Riemannian Newton method \cite{smith1994optimization, Absil2009} and the Riemannian trust-region method \cite{absil2007trust}. Although first-order retractions already suffice for smooth optimization, second-order retractions provide more accurate pullback models and can substantially simplify the analysis of superlinear or quadratic local convergence \cite{Absil2009, boumal2023introduction}. In particular, recent work on nonsmooth Riemannian optimization \cite{wang2024adaptive} explicitly requires second-order retractions to achieve fast local convergence. Motivated by this perspective, we further develop a hybrid three-stage algorithm that preserves the retraction property, and numerical experiments on quadratic problems \cite{hou2025low} illustrate the practical benefit of the proposed retractions.

The main technical difficulty is that alternating-type schemes are generally not contractions in the ambient space, so a direct Banach fixed-point argument is unavailable for establishing the regularity of the limiting map. Our analysis relies on an orthogonal tangent--normal decomposition of the one-step error along the iterates. The normal component is contractive because of the clean-intersection geometry, whereas the tangential component is summable due to the $R$-linear decay of the iterates. This mechanism ensures that the limiting map $\psi$ is well defined and sufficiently smooth; in particular, it yields a retraction, and under stronger regularity, a second-order retraction.


\paragraph{Outline of this paper.}
Section~\ref{sec:prelim} collects notation and basic concepts on clean intersections, (intrinsic) transversality,  retraction, and metric regularity, etc.
Section~\ref{sec:alternating_projection} reviews existing alternating projection algorithms and their variants in our settings. Section~\ref{sec:opt_on_intersection} presents the main contribution of the work. We provide a unified framework to show that all related alternating-projections-type algorithms induce a retraction under certain regularity conditions.  Section~\ref{sec:new_alg} provides a hybrid algorithm that combines the strengths of alternating projections and Newton-type refinements. Section~\ref{sec:numerical_exp} presents numerical experiments illustrating the practical impact of the proposed retractions.

	\section{Notations and Background}\label{sec:prelim}
 \textbf{Notations.}
We work in a Euclidean space $\mathcal E$ (identified with $\R^n$ when the dimension needs to be specified). The Euclidean norm in $\E$ is denoted by $\|\,\cdot\,\|$.
Given a closed set $D\subset \E$, we write $\Proj_{D}$ for the orthogonal projection onto $D$, i.e.,
\[
\Proj_{D}(x) := \argmin_{y\in D}\|y-x\|,
\]
and
\[
d_{D}(x):=\|x-\Proj_{D}(x)\|
\]
for the distance from $x$ to $D$. Though the projection $\Proj_D$ could be set-valued, the projection onto a manifold $\Proj_{\M}$ is single-valued locally (see Proposition~\ref{prop:well-define_proj}). This paper mainly considers local behavior of alternating projections, hence we treat $\Proj_{\M}$ as a single-valued map by default.
The open ball in $\E$ centered at $x$ with radius $r$ is denoted by
\[
\mathbb{B}(x,r):=\{\,y\in\E:\ \|y-x\|<r\,\}.
\]
For a linear operator $\mathcal A$, we denote by $\mathcal A^*$ its adjoint and by $\|\mathcal A\|$ its operator norm.

 For a $C^p$ mapping $F$ defined in a Euclidean space $\mathcal E$, we denote its differential by $\mathrm D F$ and write
\[
\mathrm D F(x)[u] := \lim_{t\to 0}\frac{F(x+tu)-F(x)}{t},
\qquad x\in U\subset \mathcal E,\ u\in\mathcal E,
\]
whenever the limit exists.

\subsection{Riemannian manifold}
Let us review some standard concepts of  Riemannian geometry and Riemannian optimization. These can also be found in \cite{lee2012smooth,Absil2009,boumal2023introduction,lewis2008alternating}.


A mapping $F:U\to \mathcal E$, defined on an open set $U\subset \mathcal E$, is said to be of class $C^{p,1}$ if it is $p$ times continuously differentiable and its $p$-th derivative is locally Lipschitz.

\begin{definition}\label{def:local_def} 
Let $\mathcal{E}$ be a linear space of dimension $n$. A non-empty subset $\M$ of $\mathcal E$ is a $C^{p,1}$ embedded submanifold of $\mathcal E$ of dimension $d$ if either
	\begin{enumerate}
		\item $d=n$ and $\M$ is open in $\E$; or 
		\item $d=n-m$ for some $m\ge 1$ and, for each $x\in\M,$ there exists a neighborhood $U$ of $x$ in $\E$ and a $C^{p,1}$ function $h:U\to \R^{m}$ such that
        \begin{enumerate}
            \item If $y\in U$, then $h(y)=0$ if and only if $y\in \M$; and
            \item $\mathrm{rank} \rmD h(x)=m.$
        \end{enumerate}
	\end{enumerate}
    Such a function $h$ is called a local defining function for $\M$ at $x.$
\end{definition}
The above definition requires that the local defining function satisfies the linear independence constraint qualification(LICQ). 
Proposition~\ref{prop:submanifold} provides another approach to verify whether a subset is an embedded submanifold, which only requires the constant rank constraint qualification(CRCQ). 
\begin{definition}[{\cite[Theorem 5.12]{lee2012smooth}}]\label{prop:submanifold} 
    Let $\M$ be a smooth manifold, and let $h:\M\rightarrow \mathcal{E}$ be a smooth map with constant rank $c$, i.e., $\mathrm{rank}(\mathrm{D} h(x)) = c, \forall x \in \mathcal{M}$. Each level set of $h$ is a properly embedded submanifold of codimension $c$ in~$\M$. 
\end{definition}

By \cite[Prop.~5.38]{lee2012smooth}, if $h$ is a local defining function for $\mathcal{M}$, then for every $x \in \mathcal{M}  \cap U$,
\begin{equation}\label{eq:tangent-kernel}
	\T_x\mathcal{M}  =  \ker\big( \mathrm{D} h(x)\big), ~\text{where }~  \mathrm{D}h(x): \T_x \M \to \mathbb{R}^m .
\end{equation}
Meanwhile, one has
\[
\N_x\M = (\T_x\M)^\perp = \range(\rmD h(x)^*).
\]
A Riemannian manifold is a smooth manifold endowed with a Riemannian metric.
In this paper, we equip all embedded submanifolds with the Riemannian metric induced by the ambient Euclidean inner product.

For a $C^p$ map $F:\mathcal M\to \mathcal N$ between manifolds, the differential at $x\in\mathcal M$ is the linear map
$\mathrm D F(x):\T_x\mathcal M\to \T_{F(x)}\mathcal N$ characterized by
\[
\mathrm D F(x)[\xi] \;=\; \frac{\mathrm d}{\mathrm dt}\,F(\gamma(t))\Big|_{t=0},
\]
where $\xi\in\T_x\mathcal M$ and $\gamma:(-\varepsilon,\varepsilon)\to\mathcal M$ is any smooth curve with
$\gamma(0)=x$ and $\dot\gamma(0)=\xi$.

For $x\in \M$, the tangent space and the normal space of $\M$ at $x$ are denoted by $\T_x\M$ and $\N_x\M$, respectively.
For convenience, we set
\begin{equation}
	P_x:=\Proj_{\T_x\M},\qquad Q_x:=\Proj_{\N_x\M}.
\end{equation}

\subsection{Transversality and Intrinsic transversality}
Given two embedded submanifolds $\mathcal{M}_1,\mathcal{M}_2\subset \R^n$, their intersection $\mathcal{M}_1\cap \mathcal{M}_2$ need not be an embedded submanifold of $\R^n$.
A sufficient condition ensuring that $\mathcal{M}_1\cap \mathcal{M}_2$ is an embedded submanifold is \emph{transversality}; see Definition~\ref{def:tranverse},
which is adapted from \cite[Theorem~6.30 and Problem~6-10]{lee2012smooth}.

\begin{definition}\label{def:tranverse}
	Let $\M_1$ and $\M_2$ be two embedded submanifolds of $\R^n$. We say that $\M_1$ and $\M_2$ intersect transversely if at each  $x\in \M_1\cap\M_2,$  $\T_x\M_1 + \T_x\M_2 = \R^n,$ or equivalently, $\rmN_x\M_1\cap \rmN_x\M_2=\{0\}$. In this case, the intersection $\M=\M_1\cap \M_2$ is an embedded submanifold of $\R^n$ whose codimension is the sum of the codimension of $\M_1$ and $\M_2$. Moreover, it holds that $\T_x\M=\T_x\M_1\cap \T_x\M_2.$
\end{definition}
In view of Definition~\ref{def:local_def}, transversality amounts to the LICQ  condition for the joint equality-constrained representation of the intersection manifold. The transversality is strong, since it is necessary that $\dim(\T_x\M_1)+\dim(\T_x\M_2)\ge n.$  In contrast, in many applications one  has $\dim(\T_x\M_1)+\dim(\T_x\M_2)< n.$ This paper only assumes clean intersection, which corresponds to the CRCQ condition in Definition~\ref{prop:submanifold}. Formally, we assume that the following conditions hold throughout this paper.
\begin{assumption}[Clean intersection]\label{assumpt:M1andM2_intersect_cleanly}
	Let $\M_1,\M_2\subset \mathbb{R}^n$ be $C^{p,1}$ ($p\ge 2$) embedded submanifolds, and fix
	$\bar x\in \M_1\cap \M_2$.
	We assume that $\M_1$ and $\M_2$ intersect \emph{cleanly} at $\bar x$, namely,
	$\M:=\M_1\cap \M_2$ is a $C^{p,1}$ embedded submanifold in a neighborhood of $\bar x$ and
	there exists a neighborhood $\U_{\bar x}\subset \M$ of $\bar x$ such that
	\[
	\T_{x}\M=\T_{x}\M_1\cap \T_{x}\M_2,
	\qquad \forall x\in \U_{\bar x}.
	\]
\end{assumption}

This clean-intersection condition is often verifiable in structured matrix
problems. For instance, a recent variational analysis of determinantal varieties
derives tangent intersection rules for low-rank matrix sets intersected with
additional smooth constraints, including affine and orthogonally invariant
constraints; see, e.g., \cite{yang2025variational}. These examples illustrate
that the condition \(\T_x(\M_1\cap\M_2)=\T_x\M_1\cap \T_x\M_2\) naturally appears
in structured low-rank optimization.

The clean intersection coincides with the \emph{nontangential intersection} in the sense of \cite{andersson2013alternating}.
It is obvious that transversality implies a clean intersection. We also have the following regularity conditions. 

\begin{definition}\cite[Definition~3.1]{drusvyatskiy2015transversality}
	We say two manifolds $\M_1$ and $\M_2$ are intrinsically transverse at $\bar x$ if  there exist a neighborhood $U$ of $\bar x$
	and a constant $\kappa\in(0,1]$ such that for all
	$x\in (\M_1\cap U)\setminus \M_2$ and $y\in (\M_2\cap U)\setminus \M_1$, with
	$u:=\frac{x-y}{\|x-y\|}$, one has
	\[
	\max\big\{d_{\rmN_{y}\M_2}\big(u\big),\ d_{\rmN_{x}\M_1}\big(u\big)\big\}\ \ge\ \kappa.
	\]
\end{definition} 
 
 \begin{definition}[0-separability]\cite{noll2016local}
     We say $\M_1$ intersects $\M_2$   0-separably at $\bar x$ with  an angle  $\alpha>0$ if   there exists a neighborhood $U'$ of $\bar x$ such that,  	the building block		$z\in \M_1\setminus \M_2 \ \rightarrow\ x\in \Proj_{\M_2}(z)\setminus\M_1\ \rightarrow\ z^+\in \Proj_{\M_1}(x)$ satisfies that the angle between $z-x$ and $z'-x$ is at least $\alpha.$ 
 \end{definition}
  
 These two definitions arise in the analysis of alternating projections for general nonconvex sets. Note that 0-separability  is not symmetric for general settings. If $\M_2$ also intersects 0-separably with $\M_1$, we obtain a symmetric condition.  In this case, we say that $\M_1,\M_2$ intersect 0-separably.
The clean intersection implies {intrinsic transversality} at any $\bar x\in \M_1\cap \M_2$; see \cite[Section~3]{drusvyatskiy2015transversality}.  In turn, intrinsic transversality implies $\M_1,\M_2$ intersects 0-separably \cite[Proposition~2]{noll2016local}. In particular, if $\M_1$ is  super-regular (\cite[Def. 4.3]{drusvyatskiy2015transversality}) at $\bar x$ and $\M_1$ intersects $\M_2$ 0-separable  at $\bar x$,
then the inexact alternating projections  \cite[Algorithms 4\& 5]{drusvyatskiy2019local}
converges $R$-linearly to a point in $\M:=\M_1\cap\M_2$.
Note that both $\M_1,\M_2$ are a $C^2$ embedded submanifold of $\R^n$ and hence are super-regular at~$\bar x$~\cite{drusvyatskiy2019local}; in fact, it is prox-regular \cite{poliquin2000local}.

\subsection{Retraction} 
A retraction provides a computationally tractable approximation of the exponential map, which can be expensive to evaluate even on complete manifolds.
It is a fundamental tool in Riemannian optimization algorithms.
We first need to present the definition of the acceleration of a smooth curve on a manifold.
\begin{definition}
	Let $\M$ be a $C^p$ Riemannian submanifold embedded in $\R^n$ and  $c \colon I \to \mathcal{M}$ be a $C^{p-1}$ curve, where $I$ is an open interval in $\R$. Its velocity is the vector field $c' \in \mathfrak{X}(c)$, where  $\mathfrak{X}(c)$ denotes the set of smooth vector fields on the curve $c.$   When $p\ge 3$, the acceleration of $c$ is the   vector field $c'' \in \mathfrak{X}(c)$ defined by:
	\[
	c'' = \frac{\mathrm{D}}{\mathrm{d}t} c',
	\]
	where $\frac{\mathrm{D}}{\mathrm{d}t}: \mathfrak{X}(c)\rightarrow\mathfrak{X}(c)$ is  the  covariant derivative on $\M$.
	We also call $c''$ the intrinsic acceleration of $c$.
\end{definition}
 We write the classical (extrinsic) acceleration of $c$ in the embedding space as follows:
\[
\ddot{c} = \frac{\mathrm{d}^2}{\mathrm{d}t^2} c.
\]
Let $\nabla:\T\M\times \mathfrak{X}(\M)\rightarrow \T\M$ be the Riemannian connection defined by 
\begin{equation}\label{def:Levi-Civita}
	\nabla_\eta V = \Proj_{\T_X\M}(\mathrm{D}\bar{V}(X)[\eta]),
\end{equation}
where $V\in \mathfrak{X}(\M)$ is a smooth vector field on $\M$, $\bar{V}$ is the smooth extension of $V$ in $\mathcal E$. 
When $ \frac{\mathrm{D}}{\mathrm{d}t}$ is the  covariant derivative induced by the Riemannian connection \eqref{def:Levi-Civita}, one has (see \cite[(5.23)]{boumal2023introduction}):
\begin{equation}\label{eq:intrinsic_acc_and_ex_acc}
	c''(t)= \Proj_{\T_{c(t)}\M}(\ddot{c}(t) ).
\end{equation}
Since the derivative $c':I\to \T_{c(t)}\M$, in that spirit, we use notations $\dot{c}$ and $c'$ interchangeably for velocity since the two notions coincide.

\paragraph{Tangent bundle.}
The  tangent bundle  of $\M$ is defined by
\[
\T\M:=\{(x,\eta)\in \E\times\E:\ x\in\M,\ \eta\in \T_x\M\}.
\]
It is well known that $\T\M$ is itself a $C^{p-1}$ embedded submanifold of $\E\times\E$.
Hence, for a mapping $F:\mathcal D\to \E$ defined on an open set $\mathcal D\subset \T\M$,
its derivative at $(x,\eta)\in\mathcal D$ is understood in the standard sense of
embedded submanifolds: for any smooth curve $(x(t),\eta(t))\subset \T\M$ with
$(x(0),\eta(0))=(x,\eta)$,
\begin{equation}\label{def:differential_tangent_bundle}
\mathrm D F(x,\eta)\big[\dot x(0),\dot\eta(0)\big]
=\frac{\mathrm d}{\mathrm dt}\Big|_{t=0}F\big(x(t),\eta(t)\big).
\end{equation}
In particular, $\mathrm D F(x,\eta)$ is a linear map from the tangent space
$\T_{(x,\eta)}(\T\M)$ to $\E$.


\paragraph{Geodesic.} Since $\M=\M_1\cap\M_2\subset\E$ is a $C^2$ embedded submanifold.
For any $(x,\eta)\in \T\M$, there exists a (locally unique) geodesic $c(t), t\in I$ on $\M$ with $c(0)=x$ and $\dot c(0)=\eta$; this is a standard consequence of the local well-posedness of the geodesic ODE on a $C^2$ Riemannian manifold.
 Moreover, $c(t)$ is a geodesic if and only if its covariant acceleration vanishes,
$\nabla_{\dot c(t)}\dot c(t)=0, \forall t\in I$, equivalently $\Proj_{\T_{c(t)}\M}\ddot c(t)=0$.
Therefore,   if $\M$ admits a local defining function   $\M\cap U=\{x:\,H(x)=0\}$ with LICQ, then
there exists a multiplier $\lambda(t)$ such that
\begin{equation}\label{eq:geodesic_normal_form}
\ddot c(t)=\rmD H(c(t))^\top \lambda(t).
\end{equation}
On the other hand, the feasibility condition $H(c(t))\equiv 0$ yields, by differentiation,
\[
\rmD H(c(t))\,\dot c(t)=0,
\qquad
\rmD H(c(t))\,\ddot c(t)+\rmD^2 H(c(t))[\dot c(t),\dot c(t)]=0,
\]
where $\rmD^2H(c)[\dot c,\dot c]$ is the ambient second-order direction derivative.
Substituting \eqref{eq:geodesic_normal_form} into the second identity gives
\[
\big(\rmD H\,\rmD H^\top\big)(c(t))\,\lambda(t)
=-\,\rmD^2 H(c(t))[\dot c(t),\dot c(t)],
\]
and therefore the geodesic equation can be written as the explicit second-order ODE
\[
\ddot c(t)
=-\,\rmD H(c(t))^\top\Big(\rmD H(c(t))\,\rmD H\big(c(t)\big)^\top\Big)^{-1}
\rmD^2 H(c(t))[\dot c(t),\dot c(t)].
\]
This nonlinear differential system typically has no closed-form solution on intersection manifolds and is expensive to solve numerically, which motivates using computable retractions instead of the exponential map.

\begin{definition}[Retraction \cite{absil2012projection}]\label{def:retraction}
Let $\M$ be a $C^p(p\ge 2)$\footnote{Since $\T\M$ is $C^{p-1}$, any retraction on $\M$ is at most $C^{p-1}$ manifold.  Hence we require $p\ge 2$ \cite{absil2012projection}.} manifold.
	\begin{enumerate}
		\item A retraction on $\M$ is a $C^{p-1}$ map
		\[
		\mathrm{Retr} \colon \T\mathcal{M} \to \mathcal{M} \colon (x, \eta) \mapsto \mathrm{Retr}_x(\eta)
		\]
		{such that each curve $c(t): = \mathrm{Retr}_x(t\eta)$ satisfies $c(0) = x$ and $c'(0) = \eta$.}
		\item   A second-order retraction $\mathrm{Retr}$ on a  $C^p (p\ge 3)$ manifold $\mathcal{M}$ is a $C^{p-1}$-retraction such that, for all $x \in \mathcal{M}$ and all $\eta \in \T_x \mathcal{M}$, the curve $c(t) = \mathrm{Retr}_x(t\eta)$ has zero acceleration at $t = 0$, that is, $c''(0) = 0$.
	\end{enumerate}
\end{definition} 
  For the first-order condition, it is equivalent to that $\Retr$ is $C^1$ and for any $x\in \M$
\begin{equation}\label{eq:first-order-retraction}\tag{R1}
c(t) = x+t\eta+ o(t), \quad \text{as } t\to 0.
\end{equation}
This implies that the linearization along the tangent space approximates the curve well.
The second-order property is relaxed from the exponential map requirement to the weaker condition \(c''(0)=0\), rather than \(c''(t)=0\) for all \(t\).
 By \eqref{eq:intrinsic_acc_and_ex_acc}, it is equivalent to that $\Retr$ is $C^2$  and for any $x\in \M$ both \eqref{eq:first-order-retraction} and the following hold
\begin{equation}\label{eq:second-order-retraction}\tag{R2}
\Proj_{\T_x\M}(c(t)-(x+t\eta))=o(t^2),  \quad \text{as } t\to 0.
\end{equation}
Since the alternating projection method is well defined on an open neighborhood of the manifold,
we consider a retraction as a mapping defined on an open subset of the tangent bundle $\T\M$.
In particular, we will construct $\psi$ on a bundle neighborhood of $(\bar x,0)$.
More precisely, we seek an open set of the form
\begin{equation}\label{def:retr_smooth_neighborhood}
	\mathcal{D}(\bar x)
	:=\{(x,\eta)\in\T\M:\ x\in \Omega_{\bar x},\ \|\eta\|<\rho_{\bar x}\},
\end{equation}
where $\Omega_{\bar x}\subset\M$ is a neighborhood of $\bar x$ that is open in $\M$, and $\rho_{\bar x}>0$ is a constant.

\subsection{Metric regularity}

Let $\Phi:\mathcal Y \rightrightarrows \mathcal Z$ be a set-valued mapping between Euclidean spaces.
We say that $\Phi$ is \emph{metrically regular}  \cite{Rockafellar2009},\cite[Section~5.1]{lewis2008alternating} at $\bar y$ for $\bar z\in \Phi(\bar y)$ if there exist
a constant $\kappa>0$ and neighborhoods of $\bar y$ and $\bar z$ such that
\[
d_{\Phi^{-1}(z)}(y)\ \le\ \kappa\, d_{\Phi(y)}(z)
\quad\text{for all $y$ near $\bar y$ and $z$ near $\bar z$.}
\]

\paragraph{Regular intersection of two sets.}
Given two closed sets $M,N\subset \mathcal E$, define the multifunction
$\Phi:\mathcal E^2 \rightrightarrows \mathcal E$ by
\[
\Phi(x,y)=
\begin{cases}
	\{x-y\}, & x\in M,\ y\in N,\\
	\emptyset, & \text{otherwise}.
\end{cases}
\]
Then $0\in \Phi(x,y)\ \Longleftrightarrow\ x=y\in M\cap N$.
Following \cite[Section~5.1]{lewis2008alternating}, we say that $M$ and $N$ have a \emph{regular intersection}
at $\bar x\in M\cap N$ if $\Phi$ is metrically regular at $(\bar x,\bar x)$ for $0$.
   We get the following lemma, which will be used in our later analysis; see also \cite[Eq.(1.2)]{drusvyatskiy2015transversality}.

\begin{lemma}\label{lem:EB_on_M}
	Let $\M=\M_1\cap \M_2$  intersect cleanly at $\bar x$.
	Then there exist $r_1>0$  and  $\kappa>0$ such that for all $x\in \M_1\cap \mathbb B(\bar x, r_1)$,
	 	\begin{equation}\label{eq:bound_M_2}
	d_{\M}(x)\ \le\  \kappa\ d_{\M_2}(x).
	 	\end{equation}
\end{lemma}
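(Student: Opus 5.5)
The plan is to reduce \eqref{eq:bound_M_2} to a statement about linear subspaces via local coordinates. I see two routes. One is to invoke \cite[Eq.(1.2)]{drusvyatskiy2015transversality}, where intrinsic transversality (implied by clean intersection) gives a local error bound $d_{\M}(x)\le\kappa'(d_{\M_1}(x)+d_{\M_2}(x))$; restricting to $x\in\M_1$ kills the first term. To keep the argument self-contained, I would instead prove it directly through an implicit-function argument, as follows.

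\textbf{Step 1: straightening $\M_1$.} Choose a $C^{p,1}$ local parametrization of $\M_1$ near $\bar x$, namely $\Phi:V\subset \T_{\bar x}\M_1\to\M_1$ with $\Phi(0)=\bar x$ and $\rmD\Phi(0)$ equal to the inclusion. Take a local defining function $h_2:U\to\R^{m_2}$ of $\M_2$ at $\bar x$ with $\rank \rmD h_2(\bar x)=m_2$, and set $g:=h_2\circ\Phi$. Then $\Phi^{-1}(\M)=g^{-1}(0)$ near $0$. Because $\M$ is a $C^{p,1}$ embedded submanifold with $\T_x\M=\T_x\M_1\cap\T_x\M_2$ on $\U_{\bar x}$, the set $Z:=\Phi^{-1}(\M)$ is a submanifold of $V$ with $\T_uZ=\ker \rmD g(u)$ for $u\in Z$ near $0$. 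Hence $\rmD g$ has constant rank $c:=\dim\T_{\bar x}\M_1-\dim\T_{\bar x}\M$ along $Z$.

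\textbf{Step 2: local linear error bound.} For $x\in\M_2$ near $\bar x$, $h_2$ is Lipschitz and vanishes on $\M_2$, so $\|h_2(y)\|\le L\,d_{\M_2}(y)$. Therefore $\|g(u)\|\le L\,d_{\M_2}(\Phi(u))$. It then suffices to show
\[
d_Z(u)\le C\,\|g(u)\| \quad\text{for } u \text{ near } 0,
\]
because $\Phi$ is bi-Lipschitz near $0$, which gives $d_\M(\Phi(u))\le L_\Phi\, d_Z(u)$. This is the key step, and also the main obstacle, since $\rmD g(0)$ is not surjective. I would handle it by splitting coordinates $u=(v,w)$ with $v\in\T_{\bar x}\M$ and $w\in W:=(\T_{\bar x}\M)^\perp\cap\T_{\bar x}\M_1$. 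Since $Z$ is a graph $w=\omega(v)$ over $v$, I would consider $G(v,w):=\Pi\, g(v,w)$, where $\Pi$ is the orthogonal projection onto $\range \rmD g(0)$ (of dimension $c$). Then $\rmD_w G(0)$ is invertible, because $\ker \rmD g(0)\cap W=\{0\}$. By the implicit function theorem, $G^{-1}(0)$ is a $\dim\T_{\bar x}\M$-dimensional graph containing $Z$, so it coincides with $Z$ near $0$. Moreover
\[
\|w-\omega(v)\|\le C\,\|G(v,w)\|\le C\,\|g(u)\|,
\]
by the inverse-function Lipschitz bound for $G(v,\cdot)$, uniform in $v$ near $0$.

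\textbf{Step 3: conclusion.} Combining the bounds gives $d_\M(x)\le L_\Phi C L\, d_{\M_2}(x)$ for $x\in\M_1\cap\mathbb B(\bar x,r_1)$ with $r_1$ small. This yields $\kappa=L_\Phi CL$, where $r_1$ is chosen so that all the neighborhoods above apply. The nearest points in $\Proj_{\M_2}(x)$ stay in $U$ as long as $r_1$ is small, since $\|x-\Proj_{\M_2}(x)\|\le\|x-\bar x\|$.
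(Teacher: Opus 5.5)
Your proof is correct, but your main argument takes a different route from the paper. The paper's proof is exactly the citation route you mention first and then set aside: clean intersection $\Rightarrow$ intrinsic transversality \cite[Section~3]{drusvyatskiy2015transversality} $\Rightarrow$ subtransversality \cite[Theorem~6.2]{drusvyatskiy2015transversality}. That is, it uses the symmetric bound $d_{\M}(x)\le\kappa\,(d_{\M_1}(x)+d_{\M_2}(x))$ near $\bar x$ and restricts it to $x\in\M_1$.

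Your self-contained version instead straightens $\M_1$ and reduces the claim to a linear error bound for the zero set $Z$ of $g=h_2\circ\Phi$. You repair the rank deficiency of $\rmD g(0)$ by keeping only the $c$ components $\Pi g$ lying in $\range\,\rmD g(0)$. The implicit function theorem plus the dimension count $\dim G^{-1}(0)=\dim\T_{\bar x}\M=\dim Z$ then forces $Z=G^{-1}(0)$ locally. Finally, uniform invertibility of $\rmD_w G$ gives $d_Z(u)\le C\|g(u)\|$. All of this checks out.

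Your route also shows exactly where clean intersection enters: only through $\ker\rmD g(0)=\T_{\bar x}\M_1\cap\T_{\bar x}\M_2=\T_{\bar x}\M$ at the single point $\bar x$. So your constant-rank remark along $Z$ in Step~1 is never actually used. The constant you obtain is explicit in terms of $\|(\rmD g(0)|_W)^{-1}\|$, which is governed by the angle between $\T_{\bar x}\M_1$ and $\T_{\bar x}\M_2$ and the conditioning of $\rmD h_2(\bar x)$.

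The paper's citation buys brevity and a stronger, symmetric inequality valid off $\M_1$ as well. The cost is reliance on variational machinery developed for general closed sets. Your route is elementary and uses only $C^1$ data.

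One small technicality: $\M_2$ need not be closed. In Step~2, use near-minimizers $z\in\M_2$ with $\|y-z\|\le d_{\M_2}(y)+\epsilon$ (or the local closedness of embedded submanifolds) rather than exact nearest points.
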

 \begin{proof}
 	Intrinsic transversality implies subtransversality of $(\M_1,\M_2)$ at $\bar x$
 	\cite[Theorem~6.2]{drusvyatskiy2015transversality}. Moreover, clean intersection implies intrinsic transversality. Hence, there exist $r_1>0$ and $\kappa>0$ such that \eqref{eq:bound_M_2} holds.
 \end{proof}

\section{Revisit of alternating projections type algorithms}\label{sec:alternating_projection}
We begin by recalling projective retractions and the local regularity of the orthogonal projection, which will be instrumental in establishing the retraction properties of alternating-projections-type schemes.
\subsection{Projective retraction}
A  well-known result is that the orthogonal projection operator is a second-order retraction on a  $C^3$ embedded submanifold of Euclidean space; see \cite{absil2012projection}.   Given any $z\in\mathcal E$, we define the projection problem  as follows:
\begin{equation}\label{prob:intersection}
	\Proj_{\M}(z)=  \argmin_{ x\in\M} \frac{1}{2}\normfro{x-z }^2. \tag{Proj}
\end{equation}
It follows from \cite[Lemma 3.1]{absil2012projection} that the projection is always well-defined locally for embedded submanifolds of Euclidean space.  We present the result below.

\begin{proposition}\label{prop:well-define_proj}
For any $C^{p,1}(p\ge 2)$ submanifold $\M$ embedded in $\mathcal E$. Given  any $x\in\M$, there exists a   constant $\bar \delta>0$ such that   for any  $z\in \mathbb{B}(x, \bar \delta)$,
	problem \eqref{prob:intersection} has a unique optimal point.
Furthermore,  we get   a   retraction operator, which is $C^{p-1,1}$  on $\M$ as follows:
\begin{equation}\label{prob:retraction}
	\retr_{\bar{x}}^{\Proj}(\eta) := \argmin_{ y\in\M} \frac{1}{2}\normfro{y- (\bar{x}+\eta) }^2, \quad \text{with}\quad \|\eta\|\leq \bar{\delta}.\tag{Proj-R}
\end{equation}
Finally, if $p\geq 3$, $	\retr^{\Proj}$ is also a second-order retraction.
\end{proposition}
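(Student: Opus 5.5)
We prove Proposition~\ref{prop:well-define_proj} through the first-order optimality system of \eqref{prob:intersection}, treated as a parametric equation solved by the implicit function theorem. Fix $x\in\M$ and a local defining function $h:U\to\R^m$ of class $C^{p,1}$ with $\rank \rmD h(x)=m$, as in Definition~\ref{def:local_def}. Since $\M$ is locally the zero set of $h$ with LICQ, every local minimizer $y$ of $\frac12\|y-z\|^2$ over $\M\cap U$ satisfies the KKT system
\[
F(y,\lambda;z):=\begin{pmatrix} y-z-\rmD h(y)^*\lambda\\ h(y)\end{pmatrix}=0 .
\]
The map $F$ is $C^{p-1,1}$ in $(y,\lambda,z)$, because $\rmD h$ is $C^{p-1,1}$.

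At $(y,\lambda,z)=(x,0,x)$ the Jacobian with respect to $(y,\lambda)$ is
\[
\begin{pmatrix} I & -\rmD h(x)^*\\ \rmD h(x) & 0\end{pmatrix}.
\]
The curvature term $\rmD^2h(x)[\cdot]^*\lambda$ drops out because $\lambda=0$. This block matrix is invertible since $\rmD h(x)$ is surjective; one checks this via the Schur complement $\rmD h(x)\rmD h(x)^*\succ0$. The implicit function theorem in the $C^{p-1,1}$ class then gives neighborhoods and a $C^{p-1,1}$ map $z\mapsto(y(z),\lambda(z))$ with $y(x)=x$.

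The main obstacle is showing that $y(z)$ is the \emph{global} unique minimizer for $\|z-x\|<\bar\delta$, not merely a KKT point. We handle this in three steps.
\begin{itemize}
\item[(i)] Minimizers exist because $\M\cap \overline{\mathbb B}(x,r)$ is compact, using closedness of $\M$ near $x$.
\item[(ii)] Any minimizer $y^\star$ satisfies $\|y^\star-z\|\le\|x-z\|<\bar\delta$, so $\|y^\star-x\|<2\bar\delta$. For small $\bar\delta$ it therefore lies in $U$ and satisfies the KKT system. The multiplier $\lambda^\star=(\rmD h\rmD h^*)^{-1}\rmD h(y^\star)(y^\star-z)$ is small by continuity.
\item[(iii)] Local uniqueness of the implicit function forces $(y^\star,\lambda^\star)=(y(z),\lambda(z))$.
\end{itemize}
This yields single-valuedness and $C^{p-1,1}$ regularity of $\Proj_\M$ on $\mathbb B(x,\bar\delta)$. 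It also gives the same conclusion uniformly for base points $\bar x$ near $x$, by shrinking $\bar\delta$.

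For the retraction property, we set $\retr^{\Proj}_{\bar x}(\eta)=\Proj_\M(\bar x+\eta)$. This is a composition of the smooth map $(\bar x,\eta)\mapsto\bar x+\eta$ on $\T\M$ with a $C^{p-1,1}$ map, so it is $C^{p-1}$. Clearly $c(0)=\Proj_\M(\bar x)=\bar x$. Differentiating the KKT system at $(\bar x,0)$ gives
\[
\rmD\Proj_\M(\bar x)=P_{\bar x},
\]
the orthogonal projector onto $\ker\rmD h(\bar x)=\T_{\bar x}\M$ by \eqref{eq:tangent-kernel}. Hence $c'(0)=P_{\bar x}\eta=\eta$.

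For $p\ge3$, the curve $c$ is $C^2$. Optimality gives $c(t)-\bar x-t\eta\in \N_{c(t)}\M$, hence
\[
P_{c(t)}\big(c(t)-\bar x-t\eta\big)=0.
\]
Since $P_{c(t)}=P_{\bar x}+O(t)$ and $c(t)-\bar x-t\eta=O(t^2)$, we obtain
\[
P_{\bar x}\big(c(t)-\bar x-t\eta\big)=O(t)\cdot O(t^2)=O(t^3)=o(t^2).
\]
This is \eqref{eq:second-order-retraction}, so $c''(0)=0$ by \eqref{eq:intrinsic_acc_and_ex_acc}, and $\retr^{\Proj}$ is a second-order retraction.
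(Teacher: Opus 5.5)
Your proposal is correct. It is the standard argument behind the result that the paper does not reprove but cites from \cite[Lemma~3.1]{absil2012projection}: local solvability of the first-order optimality system, combined with the localization of every minimizer near $x$, so that any nearby critical point is the unique minimizer — which is exactly the consequence the paper draws from the cited proof right after the proposition. For $p\ge 3$, your bound $P_{\bar x}(c(t)-\bar x-t\eta)=-(P_{c(t)}-P_{\bar x})(c(t)-\bar x-t\eta)=O(t)\cdot O(t^2)$ uses the same orthogonality identity $P_{c(t)}(c(t)-\bar x-t\eta)=0$ that the paper differentiates twice in Lemma~\ref{lem:expansion of orth_proj}, and this bound already suffices for \eqref{eq:second-order-retraction}.
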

\begin{remark}
Proposition~\ref{prop:well-define_proj} is classical: the projection retraction
$\Retr^{\Proj}$ is locally well-defined under $C^{p}$ regularity of $\M$, and this
does not require Lipschitz continuity of higher-order derivatives.
For alternating-type schemes, the existence of the limit map
$\psi_{\bar x}(\eta)=\lim_{k\to\infty}\phi^{k}(\bar x+\eta)$ at a fixed base point
$\bar x\in\M$ also only requires $C^{p}$ regularity.
The stronger $C^{p,1}$ condition is invoked only later to derive uniform
estimates and joint $C^{j}$ regularity of the limit map \eqref{eq:limiting_alter_proj} on $\T\M$ for $j=1,2$.
\end{remark}

Importantly, we observe that the proof of Proposition~\ref{prop:well-define_proj} in \cite[Lemma 3.1]{absil2012projection} implies that any first-order critical point in a neighborhood of $\bar x$ is the unique optimal solution. That is, if $\eta$ in \eqref{prob:retraction} is sufficiently small, the retraction can be computed by any algorithm that converges to a first-order critical point. However, designing an efficient method to solve this problem remains a challenging task due to the complex constraints.  This motivates to further study the alternating-projections-type algorithms.

\subsection{Alternating-projections-type  methods}\label{subsection:APM}

Different from the projection problem \eqref{prob:intersection}, alternating projection--type algorithms seek
a computationally tractable approximation to it.
We categorize the related approaches into three types: the classical Alternating
Projections Method (APM)~\cite{lewis2008alternating,andersson2013alternating,drusvyatskiy2015transversality,noll2016local}, a broader family of
{i}nexact {A}lternating {P}rojections {m}ethods (IAPM), and Newton-type schemes.
The IAPM class includes inexact alternating projections (iAP)~\cite{kruger2016regularity}\cite{drusvyatskiy2019local}.
We also discuss related quadratically convergent alternating-type variants,
such as APHL~\cite{xiao2025quadratically}.
In addition, Newton-type schemes such as NewtonSLRA~\cite{schost2016quadratically,nagasaka2021relaxed}
achieve quadratic local convergence by exploiting higher-order models.
Under appropriate regularity assumptions, e.g., (intrinsic) transversality or
 non-degeneracy (see \cite[eq.(1.3)]{xiao2025quadratically}), these methods enjoy local linear or quadratic convergence. Note
in the smooth manifold setting, this nondegeneracy reduces to transversality since   
		 $\T_x\M_2=\ker(\mathrm{D} H(x))$, so $\mathrm{D} H(x)\T_x\M_1=\R^m$ is equivalent to $\T_x\M_1+\T_x\M_2=\R^n$.
Table~\ref{tab:manifold_intersection} summarizes representative methods related
to our setting. Throughout the paper, we consider two $C^{p,1}$ ($p\ge 2$) manifolds with clean intersection. Note that clean intersection implies {intrinsic transversality} \cite{drusvyatskiy2015transversality}, and intrinsic transversality implies 0-separability \cite[Proposition 2]{noll2016local}. 
On the other hand, the quadratically convergent schemes NewtonSLRA and APHL
typically require stronger regularity conditions to guarantee that the iterates
are well-defined and convergent. In the retraction theory below, we only prove
the second-order retraction property for the standard NewtonSLRA map, while
APHL is treated as a related quadratically convergent method; see Appendix~\ref{app:APHL_first_order}
for its first-order expansion property.

\begin{table}[htbp]
	\centering
	\caption{Summary of methods for finding intersection points of two nonconvex sets.
The smoothness condition is listed when the methods are specified to two
manifolds. For equality-constrained manifolds, \(C^{1,1}\) regularity suffices
for APM and IAPM methods, whereas \(C^{2,1}\) regularity suffices for
NewtonSLRA. APHL is included as a related quadratically convergent method; its
projective mapping \(Q\) generally induces an oblique or variable-metric
correction, rather than the Euclidean projection-type maps covered by the main
framework.}
	\label{tab:manifold_intersection}
	\begin{tabular}{llccc}
		\toprule
		\textbf{Type} & \textbf{Algorithm} & \textbf{Rate} & \textbf{Regularity}  & \textbf{Smoothness}\\
		\midrule
		\multirow{4}{*}{APM}  
		& \multirow{4}{*}{APM}  & \multirow{4}{*}{Linear}   & Transversality\cite{lewis2008alternating}& $C^2$ \\
          & &  & Nontangential \cite{andersson2013alternating} & $C^2$\\
       & &  & Intrinsic transversality \cite{drusvyatskiy2015transversality} & $C^2$\\
      & &   & 0-separability \cite{noll2016local} & $C^2$\\
        \hline
		\multirow{3}{*}{IAPM}
		& iAP~\cite{kruger2016regularity}& Linear & Intrinsic transversality & $C^2$\\
		& iAP~\cite{drusvyatskiy2019local}& Linear & 0-separability& $C^2$\\
		& APHL~\cite{xiao2025quadratically}& Quadratic & Non-degeneracy & $C^{1,1}$\\
		\hline
		Newton
		& NewtonSLRA\cite{schost2016quadratically,nagasaka2021relaxed}
		& Quadratic & Transversality & $C^3$\\
		\bottomrule
	\end{tabular}
\end{table}

\subsubsection{Alternating projections method}
Lewis and Malick  \cite{lewis2008alternating} studied the alternating projections method for finding a point in the intersection of two transverse manifolds. 
Specifically, let
$ 
\Proj_{\M_1}$ \text{and}  $\Proj_{\M_2 }$ 
denote the (orthogonal) projection operators onto \(\M_1\) and \(\M_2 \) respectively. They are both well-defined locally. Given $\bar{x}\in\M_1\cap \M_2,$ there exist $\delta_1>0$ and $\delta_2>0$ such that the projections 
\begin{equation}\label{eq:well-defined-PM-PS}
	\left\{ \begin{array}{cc}
		\Proj_{\M_1}(\bar{x}+\eta)\quad \text{is smooth},&\quad \forall \eta\in\R^{n\times r}, \|\eta\|\leq\delta_{1},\\
		\Proj_{\M_2}(\bar{x}+\xi)\quad \text{is smooth},& \quad \forall \xi\in\R^{n\times r},  \|\xi\|\leq \delta_{2}. 
	\end{array}\right.
\end{equation}  
Define the composite mapping
\begin{equation}\label{eq:alter_projection}\tag{Alt-P}
\varphi(x)= \Proj_{\M_1}\circ \Proj_{\M_2 }(x).
\end{equation}
Combined with Proposition \ref{prop:well-define_proj}, 
we know that near the point $\bar{x}$, the map $\Proj_{\M}$, $\Proj_{\M_1}$, $\Proj_{\M_2}$ and $\eta\mapsto\varphi(\bar{x}+\eta)$ are   well defined on $\mathbb{B}(\bar{x}, \delta_s)$, where 
\begin{equation}\label{def:local_alter_region_well_def}
	\delta_s:= \min\{\bar{\delta},\delta_{1}, \delta_{2}\}.
\end{equation}
Given the initial point $x_0=\bar{x}+\eta$ for  $\bar{x}\in \M$ and $\eta\in\T_{\bar{x}}\M$, the alternating projections method iterates as
\[
x_{k+1} = \varphi(x_k).
\]
It converges to an intersection point at a locally linear rate if the two manifolds are transverse; see \cite[Theorem 4.3]{lewis2008alternating}. The convergence rate is closely related to the angle between two manifolds.
\begin{definition}[Angle between two manifolds \cite{lewis2008alternating}]\label{def:angle}
	Let $x\in\M,$ the angle between  $\M_1$ and $\M_2$ at $x$ is the angle between  $\T_x\M_1$ and $\T_x\M_2.$ That is, it is the angle  between $0$ and $\pi/2$ whose cosine is 
	\[
	\begin{aligned}
		c(\M_1, \M_2, x) : = \max\{ \langle \xi, \zeta\rangle\mid \xi&\in  \T_x\M_1\cap (\T_x\M)^\perp, \normfro{\xi} \leq 1, \\ \zeta&\in  \T_x\M_2\cap (\T_x\M)^\perp, \normfro{\zeta}\le 1 \}.   
	\end{aligned}
	\]
\end{definition}

	Regarding the linear subspaces, one has the following result.
	\begin{lemma}\label{lem:contraction_projection}\cite[Lemma 9.5]{deutsch2001best} 
		Given two linear subspaces $M$ and $N$ in $\mathcal E$, one has
		\[
		c(M,N)= \|\Proj_{N}\Proj_{M} - \Proj_{M\cap N}\|_2.
		\]
	\end{lemma}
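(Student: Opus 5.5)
The plan is to reduce everything to the two-subspace case with $K:=M\cap N$. Write $P_M:=\Proj_M$, $P_N:=\Proj_N$ and $P_K:=\Proj_{K}$. These are orthogonal projections, so they are self-adjoint and idempotent, and $P_MP_K=P_K=P_KP_M$ (and likewise for $N$).

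\textbf{Step 1: commuting with $P_K$.} Using these relations, $P_NP_M-P_K = P_NP_M(I-P_K)$. Since $P_K$ commutes with $P_M$, the operator $I-P_K$ also commutes with $P_M$, and it is the projection onto $K^\perp$. This gives
\[
P_NP_M-P_K = P_N(I-P_K)\,P_M(I-P_K),
\]
because $P_N(I-P_K)$ and $P_M(I-P_K)$ are exactly the orthogonal projections onto $N\cap K^\perp$ and $M\cap K^\perp$. To verify the latter, note that $K\subset M$, so $M=K\oplus(M\cap K^\perp)$ orthogonally, and hence $P_M-P_K=\Proj_{M\cap K^\perp}$. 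Setting $M':=M\cap K^\perp$ and $N':=N\cap K^\perp$, the operator becomes $\Proj_{N'}\Proj_{M'}$.

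\textbf{Step 2: computing the norm.} By definition of the operator norm,
\[
\|\Proj_{N'}\Proj_{M'}\|
=\sup_{\|x\|\le1}\|\Proj_{N'}\Proj_{M'}x\|
=\sup_{\|x\|\le 1}\ \sup_{\|y\|\le 1}\langle \Proj_{N'}\Proj_{M'}x,\,y\rangle .
\]
By self-adjointness, $\langle \Proj_{N'}\Proj_{M'}x,y\rangle=\langle \Proj_{M'}x,\Proj_{N'}y\rangle$. As $x,y$ range over the unit ball, $\xi=\Proj_{M'}x$ ranges over the unit ball of $M'$ and $\zeta=\Proj_{N'}y$ over the unit ball of $N'$. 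Each such $\xi,\zeta$ is attained by taking $x=\xi$, $y=\zeta$. Therefore
\[
\|\Proj_{N'}\Proj_{M'}\|=\sup\{\langle\xi,\zeta\rangle:\ \xi\in M\cap K^\perp,\ \zeta\in N\cap K^\perp,\ \|\xi\|,\|\zeta\|\le1\},
\]
which is exactly the cosine $c(M,N)$ in the sense of Definition~\ref{def:angle}, with $K$ playing the role of $\T_x\M$. Because we are in finite dimensions, the supremum is a maximum by compactness.

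\textbf{Main obstacle.} The only real subtlety is the decomposition identity in Step 1, namely that $P_M-P_K$ is the projection onto $M\cap K^\perp$ and that the cross terms vanish. I would check this directly by expanding
\[
(P_N-P_K)(P_M-P_K)=P_NP_M-P_NP_K-P_KP_M+P_K^2=P_NP_M-P_K .
\]
This uses only $P_NP_K=P_K$, $P_KP_M=P_K$ and $P_K^2=P_K$, which follow from $K\subset M\cap N$. With this identity the proof is complete.
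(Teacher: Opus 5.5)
Your proof is correct: the rewriting $P_NP_M-P_K=\Proj_{N\cap K^\perp}\Proj_{M\cap K^\perp}$ holds, the norm computation by duality is sound, and your supremum without absolute values agrees with the paper's definition of $c$ because the unit balls are symmetric. The paper gives no proof of its own and only cites Deutsch's Lemma~9.5, whose standard argument takes essentially your route: commute $\Proj_{M\cap N}$ past $\Proj_M$ and $\Proj_N$ to reduce to $M\cap(M\cap N)^\perp$ and $N\cap(M\cap N)^\perp$, then identify the norm of a product of two projections with the supremum of $\langle\xi,\zeta\rangle$ over their unit balls.
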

		Since  $\M_1,\M_2$    intersect cleanly at $\bar x$, we have $\T_{\bar{x}}\M_1\cap \T_{\bar{x}}\M_2 = \T_{\bar x}\M$.  Then, it holds  \cite[Proposition 4.6]{andersson2013alternating} that
	\begin{equation}\label{def:angle_M1_M2_old}
		c(\M_1,\M_2,\bar x)=\|\Proj_{\T_{\bar{x}}\M_1}\Proj_{\T_{
				\bar{x}}{\M_2 }}-\Proj_{\T_{\bar{x}}\M}\|_2<1.
	\end{equation}
	A key idea in analyzing the convergence rate of the alternating projections method is that one linearized step contracts the distance to the target set by a constant factor  {$c \in [c(\M_1, \M_2, \bar{x}), 1)$ for current iterate $x$ sufficiently close to $\bar{x}$}. This contraction property, stated in equation \eqref{def:angle_M1_M2_old},  underlies the local linear convergence of the method.

Moreover,  the mapping
$x\mapsto c(\M_1,\M_2,x)$ is continuous on $\M$ (in fact $C^{p-1}$; see \cite[Prop.4.7]{andersson2013alternating}).  It follows that for any constant $c$ satisfying $ c(\M, \M_2, \bar{x})<c<1,$ there exists $\delta_c\in(0,\delta_s],$ where $\delta_s$ is given in \eqref{def:local_alter_region_well_def}, such that  {for any $x \in \mathbb{B}(\bar{x}, \delta_c) \cap \M$,}  
	\begin{equation}\label{eq:upper_bound_cosine}
c(\M_1,\M_2,x)
=\bigl\|\Proj_{\T_{x}\M_1}\Proj_{\T_{x}\M_2}-\Proj_{\T_{x}\M}\bigr\|_2\leq c.
	\end{equation}
	

It follows from \cite{lewis2008alternating,andersson2013alternating} that the APM converges $R-$linearly to a limiting point $x^*\in\M$. Specifically, there exists a radius $\delta'>0$ such that  the limiting map  $\psi_{\bar{x}}:\T_{\bar{x}}\M\cap \mathbb{B}(0, \delta')\rightarrow\M$ is well-defined. Later, we will show that $\psi_{\bar{x}}$ is a (second-order) retraction, provided the $C^{p,1}$ regularity of the manifolds.

\subsubsection{Inexact alternating projections and NewtonSLRA}\label{section:revist-inexact-alt}
In this section, we revisit the inexact alternating projection schemes
underlying IAPM and NewtonSLRA. In contrast to the exact alternating projection map
\eqref{eq:limiting_alter_proj}, the orthogonal projection onto $\M_i$ is replaced by a
computationally cheaper surrogate. 

Specifically, the mapping $\phi_1:\mathcal{E}\to\M_1$ and $\phi_2:\E\to \E$  
approximates the orthogonal projector  {$\Proj_{\M_1}$ and $\Proj_{\M_2}$ respectively}. Note that   $\phi_1$ needs to be   $\M_1$-valued. The resulting inexact alternating map is
\begin{equation}\label{eq:inexact_alter_proj}\tag{Alt-IP}
	\varphi_{\mathrm{IAPM}} \;:=\; \phi_1\circ \phi_2.
\end{equation}
To ensure that the limiting mapping
$\psi_{\bar x}(\eta)=\lim_{k\to\infty}\varphi_{\mathrm{IAPM}}^{\,k}(\bar x+\eta)$ converges, there are three  existing types of algorithms:
\begin{enumerate}
\item[(a)] $\phi_1$ is an exact projection and $\phi_2$ is an inexact one: $\phi_1=\Proj_{\M_1}$ and $\phi_2$ satisfies
\begin{equation}\label{eq:second_order-inexact_M2}
	\big\|\phi_2(x)-\Proj_{\M_2}(x)\big\|
	\;=\;o \big(d_{\M_2}(x)\big),
	\qquad x\to \bar x.
\end{equation}
\item[(b)] $\phi_2$ is an exact projection and $\phi_1$ is an inexact one: $\phi_2=\Proj_{\M_2}$ and $\phi_1$ is so-called \emph{faithful approximation} to $\Proj_{\M_1}$ \cite{drusvyatskiy2019local}:  if $z\in\M_1$ is sufficiently close to $\bar x$, $y=\Proj_{\M_2}(z),$ for any $\epsilon>0$,  one has 
\begin{equation}\label{eq:faithful}
    \| \Proj_{\M_1} (y) - \phi_1(y) \|\leq \epsilon \|y-z\|.
\end{equation}
 
\item[(c)] Both $\phi_1$ and $\phi_2$ are inexact, and $\phi_i:\E\to \M_i$ satisfy so-called   $(\tau,\sigma)$-projection \cite{kruger2016regularity} (see Appendix~\ref{sec:append-iap}),
	denoted by $\Proj_{\M_i}^{\tau,\sigma}$, as an inexact substitute for $\Proj_{\M_i}$.
\end{enumerate}

Despite the use of inexact projections, IAPM and NewtonSLRA can still attain linear or even
quadratic local convergence under appropriate regularity conditions in Table~\ref{tab:manifold_intersection}. Below, we briefly review
representative schemes, emphasizing (i) the update rule, (ii) the main computational cost,
and (iii) the local convergence rate. Further conditions for the  retraction property will be stated later in
Assumptions~\ref{assump:alt_framework} and \ref{assump:DP}. 

\begin{enumerate}
	\item \textbf{Inexact alternating projections (iAP)}  Kruger and Thao \cite{kruger2016regularity} propose the $(\tau,\sigma)$-projection notion. Under intrinsic transversality assumptions for general
	nonconvex sets, $R$-linear convergence of the corresponding inexact alternating projection scheme
	was established.
	
	In a related direction, Drusvyatskiy and Lewis \cite{drusvyatskiy2019local}
studied two types of approximate projection steps under the $0$-separability
and super-regularity assumptions, namely the two schemes (a) and (b) described
above.
	
	\emph{Computation.}
	A typical choice of inexact map $\phi_i$ is based on a local model or linearization. For instance, 
	if $\M_2=\{z:H(z)=0\}$ with a $C^{1,1}$ mapping $H$, one may define $\phi_2(x)$ as the
	projection onto the linearized constraint:
	\begin{equation}\label{eq:projection_to_linearization}
		\phi_2(x)= \argmin_{z}\ \|z-x\|^2
		\quad \text{s.t.}\quad
		H(x)+\mathrm{D} H(x)(z-x)=0.
	\end{equation}
	When $\mathrm{D} H(x)\mathrm{D} H(x)^{\top}$ is nonsingular, this yields the closed form
	\[
	\phi_2(x)
	= x-\mathrm{D} H(x)^{\top}\big(\mathrm{D} H(x)\mathrm{D} H(x)^{\top}\big)^{-1}H(x),
	\]
	and it satisfies $\|\phi_2(y)-\Proj_{\M_2}(y)\|
=
O\big(d_{\M_2}(y)^2\big),
 y\to \M_2$; see
	\cite[Theorem~2]{drusvyatskiy2019local}.
	The faithful approximation can also be obtained as shown in \cite[Section 7]{drusvyatskiy2019local}.

	\emph{Convergence.}
	Under the corresponding regularity assumptions, the iterates converge $R$-linearly to the
	intersection; see \cite[Theorem~31]{kruger2016regularity} and \cite[Theorems~1\&3]{drusvyatskiy2019local}.
		
		\item \textbf{NewtonSLRA \cite{schost2016quadratically,nagasaka2021relaxed}.}  
		This method was originally developed to approximate  the projection onto the intersection of   an affine subspace $\M_1$ with a
		fixed rank manifold $\M_2=\mathcal{D}_r=\{X\in\R^{q_1\times q_2}: \rank(X)=r\}$ in \cite{schost2016quadratically}.
		More generally, it applies to the intersection of an affine subspace $\M_1$ with any
		$C^3$ submanifold $\M_2$, provided that the transversality condition holds at
		some intersection point $\bar x$.
		
		NewtonSLRA keeps an accurate projection onto $\M_2$ in order to exploit
		second-order geometry, and then performs a Newton-like correction by intersecting
		$\M_1$ with the affine tangent space of $\M_2$ at the projected point.
		Given a current iterate $x\in \M_1$ close to $\bar x$, compute
		\[
		\tilde x = \Proj_{\M_2}(x).
		\]
		Under LICQ, the tangent and normal spaces of $\M_2$ at $\tilde x$ admit the
		representations
		\[
		\T_{\tilde x}\M_2=\ker(\mathrm{D} H(\tilde x)),\qquad 
		\rmN_{\tilde x}\M_2=\mathrm{range}\big(\mathrm{D} H(\tilde x)^{\top}\big).
		\]
		The next iterate is then defined as the orthogonal projection of $x$ onto the
		linearized intersection $\M_1\cap(\tilde x+\T_{\tilde x}\M_2)$:
		\begin{equation}\label{eq:projection_Onto_linear_intersection}
		x^{+}
		= \argmin_{y}\ \|x-y\|^2,
		\quad \st\quad y\in \M_1,\ \ \mathrm{D} H(\tilde x)\,[y-\tilde x]=0,
		\end{equation}
		i.e., \begin{equation*}x^{+}=\Proj_{\M_1\cap(\tilde x+\T_{\tilde x}\M_2)}(x).\end{equation*}
        Since $x-\tilde x\in \N_{\tilde x}\M_2,$ one has $\Proj_{\M_1\cap(\tilde x+\T_{\tilde x}\M_2)}(x) = \Proj_{\M_1\cap(\tilde x+\T_{\tilde x}\M_2)}(\tilde x).$ We rewrite the iteration as 
        \begin{equation}\label{eq:NewtonSLRA}
         \varphi_{\mathrm{NewtonSLRA}}(x) : =   \Proj_{\M_1\cap(\tilde x+\T_{\tilde x}\M_2)}(\Proj_{\M_2}(x)).
        \end{equation}
		The transversality condition ensures that the intersection   $\M_1\cap(\tilde x+\T_{\tilde x}\M_2)$ is non-empty near $\bar x$ \cite[Lemma 2.4]{schost2016quadratically}, while the clean intersection condition does not guarantee this. Therefore, we need to assume transversality for NewtonSLRA. In \cite{nagasaka2021relaxed}, a relaxed version was proposed. The main difference is that the update in \eqref{eq:NewtonSLRA} is replaced by
        \begin{equation}\label{eq:relax_SLRA}
        x^{+}=\Proj_{\M_1\cap(\tilde x+\T_{x,\tilde x}\M_2)}(\Proj_{\M_2}(x)),  
        \end{equation}
        where $\T_{x,\tilde x}\M_2= \big(\mathrm{span}\{\tilde x - x\}\big)^\perp$. Since $\mathrm{span}\{\tilde x - x\}\subset \N_{\tilde x}\M_2$, $\T_{x,\tilde x}\M_2$ is  called the relaxed tangent space. This update is cheaper than \eqref{eq:NewtonSLRA}  due to the smaller number of constraints. 
		
Although the iteration of the standard NewtonSLRA map \eqref{eq:NewtonSLRA}
has a similar form as $\varphi_1\circ\varphi_2$, $\varphi_1$ does not satisfy
the condition \eqref{eq:phi_inexact_proj_first_order} assumed later.
Nevertheless, its limiting map can still be analyzed from the same retraction
perspective. We therefore treat the standard NewtonSLRA map separately in
Subsection~\ref{subsect:NewtonSLRA}.

The relaxed update \eqref{eq:relax_SLRA} is also known to enjoy strong
local convergence properties in the SLRA setting. However, the relaxed tangent
space \(T_{x,\tilde x}\M_2=\operatorname{span}\{\tilde x-x\}^{\perp}\) need not
depend smoothly on \(x\) near the intersection, since \(\tilde x-x\to0\).
Thus the \(C^2\)-regularity required for the retraction result is not automatic
for the relaxed variant. It will be used only as a computational variant in the
numerical experiments.

		\emph{Computation:} (i) one projection onto $\M_2$;
		(ii) one projection given by \eqref{eq:NewtonSLRA} or \eqref{eq:relax_SLRA}. 
		
		\emph{Convergence:} under standard transversality assumptions, (relaxed) NewtonSLRA converges to an intersection point
		 at a local quadratic rate; see \cite[Theorem~4.1]{schost2016quadratically} and \cite[Theorem 2]{nagasaka2021relaxed}.
		
	\item 	\textbf{APHL~\cite{xiao2025quadratically}.}
    APHL is a quadratically convergent alternating-type method for solving
intersections between a general   set \( \M_1 \) and an equality-constrained manifold
\[
\M_2=\{x\in\mathbb R^n:H(x)=0\},
\qquad
H:\mathbb R^n\to\mathbb R^m.
\]
Its update is based on a projective mapping \(Q\) and a constraint-dissolving
correction \(A(x)=x-Q(x)d(x)\), followed by a projection back onto \(\M_1\).
In general, this correction corresponds to a variable-metric, or oblique,
projection onto the tangent space of \(\M_2\), rather than the Euclidean
orthogonal projection used in the present framework. When \(Q=I\) and the regularization is inactive, the correction
reduces to the linearized projection map \eqref{eq:projection_to_linearization}. Thus the general APHL map
is not covered by the Euclidean retraction theorem proved below. Nevertheless,
viewed as an overall solver, its limiting point satisfies the first-order
expansion \eqref{eq:first-order-retraction}; see
Appendix~\ref{app:APHL_first_order}. Establishing a full \(C^1\)-retraction theory for the general APHL map, or a
more general variable-metric extension of the present framework, is left for
future work.

		\emph{Computation:} (i) evaluation of $Q(x)$ and $\mathrm{D} H(x)$; (ii) solution of an
		$m\times m$ linear system to obtain $d(x)$; (iii) and one projection onto $\M_1$.
		
		\emph{Convergence:} under a suitable nondegeneracy regularity condition, see \cite[eq.(1.3)]{xiao2025quadratically}, the iterates converge locally
		 {quadratically} to $\M_1\cap\M_2$.  
 
	\end{enumerate}

\section{A Unified Framework for Limiting Retractions}\label{sec:opt_on_intersection}

In this section, we present the main contributions of the paper.
The organization is as follows.
In Subsection~\ref{subsect:revisit}, we use a unified framework to summarize the most convergence rates of the algorithms listed in Table~\ref{tab:manifold_intersection} and obtain the first-order retraction property \eqref{eq:first-order-retraction}.
Subsection~\ref{subsection:second-order} introduces the regularity assumptions and establishes the second-order approximation property~\eqref{eq:second-order-retraction}.
In Subsections~\ref{subsect:C^1} and \ref{subsec:C2_limit}, we prove the $C^{p-1}$ regularity of the limiting map $\psi$, thereby showing that $\psi$ defines a (second-order) retraction.
Subsections~\ref{subsection:very_assump_for_proj} and \ref{subsection:correctionness} verify that the required assumptions are naturally satisfied in our setting.
Finally, Subsection~\ref{subsect:NewtonSLRA} treats NewtonSLRA separately and shows that it can be analyzed within the same limiting-map framework.

To be clear, we   write
\begin{equation}\label{def:xstar_series}
\psi(x,\eta)=x^*=x_0+\sum_{k=0}^{\infty}\Delta_k(x,\eta),
	\qquad \Delta_k:=x_{k+1}-x_k.
\end{equation}
To establish the $C^{p-1}$ regularity of $\psi$ in some neighborhood $\mathcal{D}(\bar x)$ of $(\bar x,0)$ in the tangent bundle $\T\M$,
we will prove that for each $j=0,1,\ldots,p-1$,
\[
\sum_{k=0}^\infty \bigl\| \mathrm{D}^j \Delta_k(x,\eta)\bigr\|
\]
converges  absolutely and uniformly for all $(x,\eta)\in\mathcal{D}(\bar x)$. The neighborhood $\mathcal{D}(\bar x)$ will be defined later.

\subsection{Linear convergence: a revisit}\label{subsect:revisit}
Starting from $\bar x\in\oM$, if the alternating-projections-type methods in Table~\ref{tab:manifold_intersection} converge at least linearly, then we can show that the
corresponding limiting mappings induce
  retractions on $\M$. We emphasize that the limit point
produced by alternating projections generally does not coincide with the exact
projection point defined in \eqref{prob:retraction}. For the retraction  property \eqref{eq:first-order-retraction}, we show it by bounding the distance $\|\varphi(\bar x,\eta)-\Proj_{\overline\M}(\bar x+\eta)\|$, which was also obtained in existing literatures \cite{andersson2013alternating,schost2016quadratically}. However, for the second-order property \eqref{eq:second-order-retraction}, it is more subtle to control the tangential error.  More importantly, it would be more tricky to show that $\psi(\cdot, \cdot)$ is $C^{p-1}$ on the open set $\mathcal D(\bar x)\subset\T\oM$ in \eqref{def:retr_smooth_neighborhood}.

We first assume that the iterative map $\varphi$ in
\eqref{eq:limiting_alter_proj} admits the decomposition described below, which
covers APM and the IAPM variants in Table~\ref{tab:manifold_intersection}. 


\begin{definition}
For $i\in\{1,2\}$, we say that $\phi_i$ is first-order consistent with
$\Proj_{\M_i}$ if
\begin{equation}\label{eq:phi_inexact_proj_first_order}
    \|\phi_i(y)-\Proj_{\M_i}(y)\|
    =
    o\big(d_{\M_i}(y)\big),
    \quad
    \text{as } y\to x,\ \ x\in \mathcal U_{\bar x}\cap \M_i .
\end{equation}
\end{definition}

\begin{assumption}\label{assump:alt_framework}
Under Assumption~\ref{assumpt:M1andM2_intersect_cleanly}, fix $\bar x\in \M$.
There exists an open neighborhood $\mathcal U_{\bar x}\subset \mathcal E$ such that
$\phi_1:\mathcal U_{\bar x}\to \M_1$ and
$\phi_2:\mathcal U_{\bar x}\to \mathcal E$ are $C^{p-1}$ mappings $(p\ge 2)$.
Set $\varphi:=\phi_1\circ\phi_2$ on $\mathcal U_{\bar x}$.

Assume that one of the following conditions holds:
\begin{enumerate}
  \item[(i)] $\phi_1=\Proj_{\M_1}$ on $\mathcal U_{\bar x}$, and $\phi_2$ is
  first-order consistent with $\Proj_{\M_2}$.

  \item[(ii)] $\phi_2=\Proj_{\M_2}$ on $\mathcal U_{\bar x}$, and $\phi_1$ is
  first-order consistent with $\Proj_{\M_1}$.

  \item[(iii)] For each $i\in\{1,2\}$, $\phi_i$ is first-order consistent with
  $\Proj_{\M_i}$ and satisfies
  \[
      \phi_i(\mathcal U_{\bar x})\subset \M_i .
  \]
\end{enumerate}
\end{assumption}
The first-order consistency condition
\eqref{eq:phi_inexact_proj_first_order} requires that the update map $\phi_i$
approximates the orthogonal projection onto $\M_i$ up to a little-$o$ error
relative to the distance to $\M_i$. This condition is sufficient to obtain the
differential property needed for the first-order retraction result.

As discussed in Section~\ref{subsection:APM}, the first-order consistency
condition is satisfied by the inexact projection constructions considered
there. The condition
\eqref{eq:phi_inexact_proj_first_order} is stronger than the faithful
approximation condition~\eqref{eq:faithful}, because
$d_{\M_i}(y)\le \|y-z\|$ for all $z\in \M_i$. Moreover,
Assumption~\ref{assump:alt_framework}(iii) is stronger than the
$(\tau,\sigma)$-projection notion in \cite{kruger2016regularity}. We adopt these stronger assumptions because they yield the differential
property needed in the subsequent first-order analysis. The second-order
residual estimates will be imposed separately in Assumption~\ref{assump:DP}.

\begin{lemma}\label{lem:Dphi_is_tangent_proj}
Suppose Assumption~\ref{assump:alt_framework} holds. Then, for each
$i\in\{1,2\}$ and any $x\in \M\cap\mathcal U_{\bar x}$, one has
\[
    \rmD \phi_i(x)=\Proj_{\T_x\M_i}.
\]
\end{lemma}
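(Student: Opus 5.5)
The plan is to reduce the claim to the classical fact that the differential of the orthogonal projection onto a $C^{2}$ (here $C^{p,1}$, $p\ge 2$) embedded submanifold, at a point of that submanifold, is the orthogonal projector onto its tangent space. That is, for $x\in\M_i$ one has $\rmD\Proj_{\M_i}(x)=\Proj_{\T_x\M_i}$. This fact follows from Proposition~\ref{prop:well-define_proj}: $\Proj_{\M_i}$ is $C^{p-1,1}$ near $x$. It equals the identity on $\M_i$, so $\rmD\Proj_{\M_i}(x)\xi=\xi$ for $\xi\in\T_x\M_i$. For $\nu\in\N_x\M_i$ and small $t$, we have $\Proj_{\M_i}(x+t\nu)=x$ by uniqueness of the projection together with the first-order optimality condition. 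Hence $\rmD\Proj_{\M_i}(x)\nu=0$. In cases where $\phi_i=\Proj_{\M_i}$, nothing further is needed, since $x\in\M\subset\M_i$.

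For $\phi_i$ merely first-order consistent, I compare derivatives. First I check that $\phi_i(x)=x$ for $x\in\M\cap\mathcal U_{\bar x}$: apply \eqref{eq:phi_inexact_proj_first_order} with $y=x$, which gives $d_{\M_i}(x)=0$ and hence $\phi_i(x)=\Proj_{\M_i}(x)=x$. Next, set $g:=\phi_i-\Proj_{\M_i}$, which is $C^{1}$ near $x$ since $p-1\ge1$. Then $g(x)=0$, and for any direction $u$ we have
\[
\|g(x+tu)\|=o\big(d_{\M_i}(x+tu)\big)\le o(t\|u\|),
\]
because $d_{\M_i}(x+tu)\le \|x+tu-x\|=t\|u\|$ as $x\in\M_i$. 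Dividing by $t$ and letting $t\to0$ gives $\rmD g(x)u=0$. Hence $\rmD\phi_i(x)=\rmD\Proj_{\M_i}(x)=\Proj_{\T_x\M_i}$.

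The step needing the most care is interpreting the little-$o$ in \eqref{eq:phi_inexact_proj_first_order} uniformly enough to take $y=x+tu\to x$ with $x$ itself the base point in $\M$. The definition states this limit as $y\to x$ for $x\in\mathcal U_{\bar x}\cap\M_i$, which is exactly the situation here, since $\M\subset\M_i$. I also handle the trivial case $d_{\M_i}(y)=0$, where the condition forces $g(y)=0$, so the bound $\|g(y)\|\le\epsilon\, d_{\M_i}(y)$ holds for $y$ close to $x$ in all cases. In case (ii), $\phi_1$ is only stated to map into $\M_1$, but the argument above does not use that property; it uses only consistency and differentiability.
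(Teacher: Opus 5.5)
Your proposal is correct and follows the paper's proof: both bound $\|\phi_i(x+tu)-\Proj_{\M_i}(x+tu)\|$ by $o(d_{\M_i}(x+tu))=o(|t|)$, using $x\in\M_i$, and then transfer $\rmD\Proj_{\M_i}(x)=\Proj_{\T_x\M_i}$ to $\phi_i$. Your extra verification of $\rmD\Proj_{\M_i}(x)=\Proj_{\T_x\M_i}$, which the paper simply cites from Proposition~\ref{prop:well-define_proj}, is fine; note only that the step $\Proj_{\M_i}(x+t\nu)=x$ rests on local uniqueness of critical points of the projection problem (as remarked after that proposition), not on uniqueness of the minimizer alone.
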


\begin{proof}
Fix $i\in\{1,2\}$ and $x\in \M\cap\mathcal U_{\bar x}$. If
$\phi_i=\Proj_{\M_i}$ on a neighborhood of $x$, then the conclusion follows
from Proposition~\ref{prop:well-define_proj}. We therefore consider the
inexact case.

Let $P_i:=\Proj_{\T_x\M_i}$. For any $u\in\E$, set $y(t):=x+tu$. Since
$x\in\M\subset\M_i$, the first-order consistency of $\phi_i$ with
$\Proj_{\M_i}$ gives
\begin{equation}\label{eq:phi_vs_proj}
    \|\phi_i(y(t))-\Proj_{\M_i}(y(t))\|
    =
    o\big(d_{\M_i}(y(t))\big),
    \qquad t\to0 .
\end{equation}
Moreover, since $x\in\M_i$,
\[
    d_{\M_i}(y(t))\le \|y(t)-x\|=|t|\|u\|.
\]
Combining this estimate with \eqref{eq:phi_vs_proj}, we obtain
\[
    \|\phi_i(y(t))-\Proj_{\M_i}(y(t))\|=o(|t|).
\]
By Proposition~\ref{prop:well-define_proj},
\[
    \Proj_{\M_i}(x+tu)=x+tP_i u+o(t).
\]
Hence
\[
    \phi_i(x+tu)=x+tP_i u+o(t),\qquad t\to0.
\]
Since $u\in\E$ is arbitrary, this proves
\[
    \rmD\phi_i(x)[u]=P_i u,\qquad \forall u\in\E,
\]
and therefore $\rmD\phi_i(x)=\Proj_{\T_x\M_i}$.
\end{proof}

\begin{remark}\label{rem:local_sets}
	Fix $\bar x\in\M$.
	By Assumptions~\ref{assumpt:M1andM2_intersect_cleanly}--\ref{assump:alt_framework},
	there exist open neighborhoods $\mathbf U_{\bar x}\subset \M$ and $\mathcal U_{\bar x}\subset \E$ of $\bar x$
	such that:
	\begin{itemize}[leftmargin=*,itemsep=2pt,topsep=2pt]
		\item the clean-intersection property in Assumption~\ref{assumpt:M1andM2_intersect_cleanly}
		(and hence the angle bound \eqref{eq:upper_bound_cosine}) holds for all $x\in \mathbf U_{\bar x}$;
		\item the mappings $\Proj_{\M_1},\Proj_{\M_2},\Proj_{\M}$ and $\phi_i$ are well-defined and $C^{p-1,1}$ on $\mathcal U_{\bar x}$,
		and the metric regularity    in Lemma~\ref{lem:EB_on_M} holds on $\mathcal U_{\bar x}$.
	\end{itemize}
	
	Choose an open neighborhood $\Omega_{\bar x}\subset \M$ of $\bar x$ such that
	\[
	\Omega_{\bar x}\subset \mathbf U_{\bar x}\cap \M
	\qquad\text{and}\qquad
	\Omega_{\bar x}\subset  \M\cap \mathcal U_{\bar x}.
	\]
	 Define the closure of the neighborhood $\Omega_{\bar x}$ in $\oM$ as
	\begin{equation}\label{eq:closure}
		K:=\mathrm{cl}_{\oM}{\Omega}_{\bar x},
	\end{equation} 
	and  open $\delta$-neighborhood  of $K$ in $\mathcal E$ with radius $\delta>0$ by
	\[
	\mathcal N_\delta(K):=\{\,y\in\mathcal E:\ d_K(y)<\delta\,\}.
	\]
	Then $K$ is compact and satisfies $K\subset \mathbf U_{\bar x}\cap \mathcal U_{\bar x}$.
	
	Throughout the sequel, all statements are understood to hold after possibly shrinking
	$\Omega_{\bar x}$ and $\mathcal U_{\bar x}$ while keeping the above inclusions.
\end{remark}

We have the following results from existing literatures, which are the key to establishing the linear convergence rate.
\begin{fact}\label{fact:alt_linear}
	For any $ x\in \M$.
	Suppose Assumptions~\ref{assumpt:M1andM2_intersect_cleanly} and \ref{assump:alt_framework} hold at $x$.
	Then there exist constants $\delta_{ x}>0$ and $\tau_{ x}\in(0,1)$ such that
	$\mathbb B( x,\delta_{ x})\subset \mathcal U_{ x}$ and, for any
	\[
	y\in \M_1\cap \mathbb B( x,\delta_{ x}),
	\qquad y^+:=\varphi(y),
	\]
	the point $y^+$ is well-defined and satisfies
	\begin{equation}\label{eq:linear_decay_dM2}
		d_{\M_2}(y^+)\le \tau_{ x}\, d_{\M_2}(y).
	\end{equation}
	Consequently, if $x_{k+1}=\varphi(x_k)$ and $x_k\in \M_1\cap \mathbb B( x,\delta_{ x})$ for all $k$,
	then $d_{\M_2}(x_k)\le \tau_{ x}^{\,k}\, d_{\M_2}(x_0)$.
\end{fact}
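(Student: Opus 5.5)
The plan is to linearize one step of $\varphi$ around the nearest point of $\M$ and then use the clean-intersection angle bound \eqref{eq:upper_bound_cosine} together with the error bound of Lemma~\ref{lem:EB_on_M} to turn a contraction of the distance to $\M$ into a contraction of $d_{\M_2}$.

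\emph{Step 1: reduce to the distance to $\M$.} Fix $y\in\M_1\cap\mathbb B(x,\delta_x)$ and set $z:=\Proj_{\M}(y)$, which is well defined for $\delta_x$ small by Proposition~\ref{prop:well-define_proj}. Then $\|y-z\|=d_{\M}(y)\le\|y-x\|$, so $z$ stays close to $x$. Since $z\in\M$ and $\varphi(z)=z$ (both $\phi_i$ fix points of $\M_i$ by the consistency condition), the $C^{1,1}$ regularity of $\varphi$ on $\mathcal U_x$ gives a Taylor expansion
\[
\varphi(y)=z+\rmD\varphi(z)[y-z]+O(\|y-z\|^2),
\]
with a uniform constant. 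By Lemma~\ref{lem:Dphi_is_tangent_proj} and the chain rule, $\rmD\varphi(z)=\Proj_{\T_z\M_1}\Proj_{\T_z\M_2}$. Because $y-z\in\rmN_z\M$, we have $\Proj_{\T_z\M}(y-z)=0$. Therefore \eqref{eq:upper_bound_cosine} yields
\[
\|\rmD\varphi(z)[y-z]\|=\|(\Proj_{\T_z\M_1}\Proj_{\T_z\M_2}-\Proj_{\T_z\M})(y-z)\|\le c\,\|y-z\|.
\]
Consequently $d_{\M}(\varphi(y))\le\|\varphi(y)-z\|\le (c+C\|y-z\|)\,d_{\M}(y)$.

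\emph{Step 2: pass to $d_{\M_2}$.} We have $y^+=\varphi(y)\in\M_1$ and is near $x$, so Lemma~\ref{lem:EB_on_M} applies: $d_{\M}(y)\le\kappa\,d_{\M_2}(y)$. Also $d_{\M_2}(y^+)\le d_{\M}(y^+)$ since $\M\subset\M_2$. Combining, $d_{\M_2}(y^+)\le \kappa(c+C\delta_x)\,d_{\M_2}(y)$. Since $\kappa$ may exceed one, this single-step bound is not yet a contraction, and this is the main obstacle. There are two ways I would try to get around it. The first is to replace the chain of inequalities by a sharper one: compare $d_{\M_2}(y)$ directly with $\|\Proj_{\T_z\M_2}^{\perp}(y-z)\|$. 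For $y-z\in\rmN_z\M$ with $y\in\M_1$, we have $y-z\approx$ a vector in $\T_z\M_1\cap\rmN_z\M$. On that subspace, $\|\Proj_{\N_z\M_2}v\|^2=\|v\|^2-\|\Proj_{\T_z\M_2}v\|^2\ge(1-c^2)\|v\|^2$. This gives the sharp constant $\kappa\approx(1-c^2)^{-1/2}$. Next, estimate $d_{\M_2}(y^+)$ by $\|\Proj_{\N_z\M_2}\Proj_{\T_z\M_1}\Proj_{\T_z\M_2}(y-z)\|$ up to $O(\|y-z\|^2)$. The linear operator $v\mapsto \Proj_{\N\M_2}\Proj_{\T\M_1}\Proj_{\T\M_2}v$ restricted to $\T\M_1\cap\rmN\M$ is exactly the linear alternating-projection step between two subspaces with cosine $c<1$. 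Its ratio with $\|\Proj_{\N\M_2}v\|$ is bounded by $c$ by the classical subspace computation in \cite{deutsch2001best}. If this fails, the fallback is to invoke the cited convergence results directly (\cite[Thm.~4.3]{lewis2008alternating}, \cite{andersson2013alternating}, \cite[Thms.~1,3]{drusvyatskiy2019local}, \cite[Thm.~31]{kruger2016regularity}), which state exactly \eqref{eq:linear_decay_dM2} under intrinsic transversality/0-separability. Those hypotheses are implied by clean intersection.

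\emph{Step 3: iteration.} The first-order consistency errors contribute $o(d_{\M_i})$, which is absorbed by shrinking $\delta_x$ so that $\tau_x:=c'+\epsilon<1$. The final claim follows by induction, applying the one-step bound to each $x_k$ as long as $x_k\in\M_1\cap\mathbb B(x,\delta_x)$.
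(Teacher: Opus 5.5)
Your fallback is essentially what the paper does. The paper gives no self-contained proof of this Fact: it imports \eqref{eq:linear_decay_dM2} from \cite[Theorems~1 and 3]{drusvyatskiy2019local} and \cite[Theorem~31]{kruger2016regularity}, the latter after the parameter change of Appendix~\ref{sec:append-iap}, using that clean intersection implies intrinsic transversality and 0-separability. Your primary route, linearizing at $z=\Proj_{\M}(y)$, is genuinely different, and it does close.

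With $v:=\Proj_{\T_z\M_1}(y-z)\in\T_z\M_1\cap\N_z\M$ one has
$y-z=v+o(\|y-z\|)$,
$d_{\M_2}(y)=\|\Proj_{\N_z\M_2}v\|+o(\|y-z\|)$, and
$d_{\M_2}(y^+)=\|\Proj_{\N_z\M_2}\Proj_{\T_z\M_1}\Proj_{\T_z\M_2}v\|+o(\|y-z\|)$.
Set $u:=\Proj_{\N_z\M_2}v$. The last vector equals $\Proj_{\N_z\M_2}\Proj_{\N_z\M_1}u$, and $u\perp\N_z\M_1\cap\N_z\M_2$. So your ratio bound follows from Lemma~\ref{lem:contraction_projection} applied to the normal spaces, together with $c(\N_z\M_1,\N_z\M_2)=c(\T_z\M_1,\T_z\M_2)$; a principal-angle count even gives $c^2$. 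The remainders are then absorbed via $\|y-z\|=d_{\M}(y)\le\kappa\,d_{\M_2}(y)$. This absorption, not the contraction factor, is where Lemma~\ref{lem:EB_on_M} is actually needed.

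Two minor fixes:
\begin{itemize}
\item For $p=2$, Assumption~\ref{assump:alt_framework} gives only $C^1$ maps. Your $O(\|y-z\|^2)$ terms should therefore be $o(\|y-z\|)$, uniform by continuity of $\rmD\varphi$ on a compact ball; that is enough here.
\item Step~3 needs no separate absorption of consistency errors. They are already encoded in $\rmD\varphi(z)=\Proj_{\T_z\M_1}\Proj_{\T_z\M_2}$ from Lemma~\ref{lem:Dphi_is_tangent_proj}.
\end{itemize}

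The trade-off between the two routes is as follows. The citation route needs no differentiability of the $\phi_i$, and it covers set-valued $(\tau,\sigma)$-projections and faithful approximations under weaker regularity. However, the one-step bound in $d_{\M_2}$ has to be extracted from inside each cited proof, which is why Appendix~\ref{sec:append-iap} is needed. Your argument is self-contained and treats cases (i)--(iii) of Assumption~\ref{assump:alt_framework} uniformly with tools already in the paper. It also yields explicit constants: a rate close to $c(\M_1,\M_2,x)^2$ and an error-bound constant close to $(1-c^2)^{-1/2}$. The price is the $C^1$ smoothness, which the paper assumes anyway.
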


In particular, \eqref{eq:linear_decay_dM2} 
is ensured by   \cite[Theorems 1\&3]{drusvyatskiy2019local} and  \cite[Theorem~31]{kruger2016regularity}\footnote{To obtain this property, the parameters $\tau,\sigma$ should be modified as explained in Appendix~\ref{sec:append-iap}.}  for inexact alternating
projections methods. Hence, it also holds for APM. 

 Since the retraction should be defined on the open set $\mathcal D(\bar x)$ in \eqref{def:retr_smooth_neighborhood} of the tangent bundle $\T\M$. That is, $\psi$ should be well-defined on it. We get the uniform version of Fact~\ref{fact:alt_linear} on a compact set in Lemma~\ref{lem:uniform_fact2}. 

 \begin{lemma}\label{lem:uniform_fact2}
 	Fix $\bar x\in \M$ and take neighborhoods $\Omega_{\bar x}\subset \M$
 	and $ \mathcal U_{\bar x}\subset \E$ as in Remark~\ref{rem:local_sets}.
 	Set  $K$ as defined in \eqref{eq:closure} and suppose Assumptions~\ref{assumpt:M1andM2_intersect_cleanly} and \ref{assump:alt_framework} hold at any $x\in K$.
 	Then there exist constants $\delta_K>0$ and $\tau_K\in(0,1)$ such that
 	$\mathcal N_{\delta_K}(K)\subset \mathcal  U_{\bar x}$ and for any
 	$x\in \M_1\cap \mathcal N_{\delta_K}(K)$, the iterate $x^+=\varphi(x)$
 	is well-defined and satisfies
 	\[
 	d_{\M_2}(x^+)\le \tau_K\, d_{\M_2}(x).
 	\]
 \end{lemma}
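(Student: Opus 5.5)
This is a compactness argument built on the pointwise Fact~\ref{fact:alt_linear}. For each $x\in K$, Assumptions~\ref{assumpt:M1andM2_intersect_cleanly} and \ref{assump:alt_framework} hold at $x$, so Fact~\ref{fact:alt_linear} gives $\delta_x>0$ and $\tau_x\in(0,1)$. These satisfy $\mathbb B(x,\delta_x)\subset\mathcal U_x$, and the contraction $d_{\M_2}(\varphi(y))\le\tau_x d_{\M_2}(y)$ holds for all $y\in\M_1\cap\mathbb B(x,\delta_x)$.

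Since the maps $\phi_i$ are fixed on the common neighborhood $\mathcal U_{\bar x}$ of Remark~\ref{rem:local_sets}, and $K\subset\mathcal U_{\bar x}$, we may shrink each $\delta_x$ so that $\mathbb B(x,\delta_x)\subset\mathcal U_{\bar x}$. The contraction is preserved, because it is stated for every point of the ball.

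Next consider the open cover $\{\mathbb B(x,\delta_x/2)\}_{x\in K}$ of $K$. Since $K$ is compact, it has a finite subcover $\mathbb B(x_j,\delta_{x_j}/2)$, $j=1,\dots,N$. Set
\[
\delta_K:=\min_{1\le j\le N}\delta_{x_j}/2>0,\qquad \tau_K:=\max_{1\le j\le N}\tau_{x_j}<1 .
\]
Take any $y\in\mathcal N_{\delta_K}(K)$. Because $K$ is compact, some $z\in K$ satisfies $\|y-z\|=d_K(y)<\delta_K$. This $z$ lies in some $\mathbb B(x_j,\delta_{x_j}/2)$, so
\[
\|y-x_j\|<\delta_K+\delta_{x_j}/2\le\delta_{x_j}.
\]
Hence $\mathcal N_{\delta_K}(K)\subset\bigcup_j\mathbb B(x_j,\delta_{x_j})\subset\mathcal U_{\bar x}$, which proves the inclusion claim.

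For $y\in\M_1\cap\mathcal N_{\delta_K}(K)$ with $y\in\mathbb B(x_j,\delta_{x_j})$, Fact~\ref{fact:alt_linear} at $x_j$ shows that $\varphi(y)$ is well defined. It also gives
\[
d_{\M_2}(\varphi(y))\le\tau_{x_j}d_{\M_2}(y)\le\tau_K d_{\M_2}(y).
\]

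The only delicate point is consistency. The neighborhoods $\mathcal U_x$ supplied by the Fact at different base points must be compatible with the single domain $\mathcal U_{\bar x}$ on which $\varphi$ is defined. This is why I intersect the balls with $\mathcal U_{\bar x}$ at the start, and why the Lebesgue-type half-radius trick is used to pass from covering $K$ to covering a uniform tubular neighborhood. Beyond that, the argument is a direct finite-subcover argument with no real obstacle.
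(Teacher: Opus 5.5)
Your proposal is correct and follows essentially the paper's argument. Both cover $K$ by half-radius balls from Fact~\ref{fact:alt_linear}, extract a finite subcover, take $\delta_K$ as the minimal half-radius and $\tau_K$ as the maximal rate, and conclude with the triangle inequality. The only cosmetic difference is how you secure $\mathcal N_{\delta_K}(K)\subset\mathcal U_{\bar x}$: you shrink each ball into $\mathcal U_{\bar x}$ beforehand, whereas the paper additionally caps $\delta_K$ by a radius $\delta_U$ with $\mathcal N_{\delta_U}(K)\subset\mathcal U_{\bar x}$.
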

 
 \begin{proof}
 	Since $K\subset \oM\cap \mathcal U_{\bar x}$ and $\mathcal U_{\bar x}$ is open in $\mathcal E$,
 	there exists $\delta_U>0$ such that
 	\[
 	\mathcal N_{\delta_U}(K)\subset \mathcal U_{\bar x}.
 	\]
 	
 	For each $u\in K$, using Fact~\ref{fact:alt_linear}, there exist
 	constants $\delta_u>0$ and $\tau_u\in[0,1)$ such that for all
 	$x\in \M_1\cap \mathbb B(u,\delta_u)$, the iterate $x^+=\varphi(x)$ is well-defined and satisfies
 	\[
 	d_{\M_2}(x^+)\le \tau_u\, d_{\M_2}(x).
 	\]
 	The collection $\{\mathbb B(u,\delta_u/2)\}_{u\in K}$ forms an open cover of $K$.
 	By compactness of $K$, there exists finite subcover
 	$K\subset \bigcup_{j=1}^J \mathbb B(u_j,\delta_{u_j}/2)$.
 	Set
 	\[
 	\delta_0:=\min_{1\le j\le J}\frac{\delta_{u_j}}{2},
 	\qquad
 	\tau_K:=\max_{1\le j\le J}\tau_{u_j}\in(0,1),
 	\qquad
 	\delta_K:=\min\{\delta_U,\delta_0\}.
 	\]
 	Then $\mathcal N_{\delta_K}(K)\subset \mathcal U_{\bar x}$.
 	
 	Now take any $x\in \M_1\cap \mathcal N_{\delta_K}(K)$.
 	Choose $u\in K$ with $\|x-u\|<\delta_K\le \delta_0$.
 	Pick an index $j$ such that $u\in \mathbb B(u_j,\delta_{u_j}/2)$.
 	Then
 	\[
 	\|x-u_j\|\le \|x-u\|+\|u-u_j\|<\delta_0+\frac{\delta_{u_j}}{2}
 	\le \frac{\delta_{u_j}}{2}+\frac{\delta_{u_j}}{2}
 	=\delta_{u_j},
 	\]
 	so $x\in \M_1\cap \mathbb B(u_j,\delta_{u_j})$.
 	Applying Fact~\ref{fact:alt_linear} at $u_j$ yields that $x^+=\varphi(x)$ is well-defined and
 	\[
 	d_{\M_2}(x^+)\le \tau_{u_j}\, d_{\M_2}(x)\le \tau_K\, d_{\M_2}(x),
 	\]
 	which completes the proof.   
 \end{proof}

Note that existing results usually need $x_0\in\M_1$ as a prior, such as \cite{drusvyatskiy2019local} and \cite{kruger2016regularity}. However, in our framework $x_0=\bar x+\eta\notin\M_1$. We first get the following bound.

 \begin{lemma}\label{lem:one_step_into_N}
 	Under the same conditions  of
 	Lemma~\ref{lem:uniform_fact2},   there exist constants $\delta_V>0$ and $\rho_{\rm in}>0$ such that
 	for any $x\in K$ and any $\eta\in \T_x\M$ with $\|\eta\|\le \rho_{\rm in}$,
 	setting $x_0:=x+\eta$, the point $x_0$ belongs to $\mathcal N_{\delta_V}(K),$
 	the iterate $x_1:=\varphi(x_0)$ is well-defined, and
 	\[
 	\|x_1-x_0\|=o(\|\eta\|) \quad \text{as } \eta \to 0,
 	\]
 	\[
\phi_2(x_0)\in   \mathcal N_{\delta_K}(K), \quad \text{and}\quad 	x_1\in \M_1\cap \mathcal N_{\delta_K}(K).
 	\]
 \end{lemma}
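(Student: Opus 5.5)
The plan is to linearize $\varphi$ at points of $K$ and to get the needed uniformity from compactness of $K$ together with the $C^{p-1}$ (hence at least $C^1$) regularity of $\phi_1,\phi_2$ on $\mathcal U_{\bar x}$.

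\textbf{Step 1 (pointwise derivative).} For every $x\in K\subset \M\cap\mathcal U_{\bar x}$, Lemma~\ref{lem:Dphi_is_tangent_proj} gives $\rmD\phi_i(x)=\Proj_{\T_x\M_i}$. Since $x\in\M$ and $\phi_i$ agrees with $\Proj_{\M_i}$ to first order at points of $\M_i$, we have $\phi_i(x)=x$. The chain rule then gives
\[
\rmD\varphi(x)=\Proj_{\T_x\M_1}\Proj_{\T_x\M_2}.
\]
For $\eta\in\T_x\M=\T_x\M_1\cap\T_x\M_2$ (clean intersection), this yields $\rmD\varphi(x)[\eta]=\eta$.

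\textbf{Step 2 (Taylor estimate and the $o(\|\eta\|)$ bound).} Write
\[
x_1-x_0=\varphi(x+\eta)-\varphi(x)-\rmD\varphi(x)[\eta]=\int_0^1\big(\rmD\varphi(x+s\eta)-\rmD\varphi(x)\big)[\eta]\,ds.
\]
Choose $\delta_V\le\delta_U$ so that $\overline{\mathcal N_{\delta_V}(K)}$ is a compact subset of $\mathcal U_{\bar x}$. There $\rmD\varphi$ is uniformly continuous, with modulus of continuity $\omega$. It follows that
\[
\|x_1-x_0\|\le\omega(\|\eta\|)\,\|\eta\|=o(\|\eta\|)
\]
uniformly in $x\in K$.

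Some care is needed to make sure the segment $x+s\eta$ lies in the domain where $\varphi$ is defined. This holds as soon as $\|\eta\|\le\rho_{\rm in}<\delta_V$, because then $d_K(x+s\eta)\le\|\eta\|<\delta_V$. In particular $x_0\in\mathcal N_{\delta_V}(K)$.

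\textbf{Step 3 (localization of the intermediate points).} For the intermediate point, $\phi_2(x)=x$ and $\phi_2$ is Lipschitz on the compact neighborhood, say with constant $L_2$. Hence
\[
\|\phi_2(x_0)-x\|\le L_2\|\eta\|,
\]
so $\phi_2(x_0)\in\mathcal N_{\delta_K}(K)$ once $\rho_{\rm in}<\delta_K/L_2$. Then $\phi_1$ is defined at $\phi_2(x_0)$, so $x_1=\phi_1(\phi_2(x_0))$ is well-defined and belongs to $\M_1$, since $\phi_1$ is $\M_1$-valued.

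For the endpoint,
\[
\|x_1-x\|\le\|x_1-x_0\|+\|\eta\|\le(1+\omega(\|\eta\|))\|\eta\|.
\]
Thus $x_1\in\mathcal N_{\delta_K}(K)$ for $\rho_{\rm in}$ small.

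\textbf{Main obstacle.} The only delicate point is domain bookkeeping. One must shrink $\delta_V$ and $\rho_{\rm in}$ so that every evaluation (of $\phi_2$ on the segment, and of $\phi_1$ at $\phi_2(x_0)$) lies in a compact subset of $\mathcal U_{\bar x}$ on which the $C^1$ moduli are uniform. This is handled by choosing the constants in the order $\delta_V$, then $L_2$ and $\omega$, then $\rho_{\rm in}$.
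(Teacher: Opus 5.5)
Your proof is correct, but it is organized differently from the paper's. The paper passes through the intermediate point $\phi_2(x_0)$ in three steps:
\begin{itemize}
\item it uses $\eta\in\T_x\M_2$ and the expansion of $\Proj_{\M_2}$ to get $d_{\M_2}(x_0)=o(\|\eta\|)$;
\item it combines this with first-order consistency to get $\|\phi_2(x_0)-x_0\|=o(\|\eta\|)$;
\item it expands $\phi_1$ at $x$, using $\rmD\phi_1(x)=\Proj_{\T_x\M_1}$ and $\eta\in\T_x\M_1$.
\end{itemize}
You instead linearize the composite map once. The identity $\rmD\varphi(x)=\Proj_{\T_x\M_1}\Proj_{\T_x\M_2}$ fixes $\T_x\M$, and you control the remainder with a modulus of continuity of $\rmD\varphi$ on a compact neighborhood of $K$.

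Your route avoids the distance estimate altogether. It also makes the uniformity in $x\in K$ an honest consequence of compactness. The paper, by contrast, asserts that the pointwise little-$o$ in \eqref{eq:phi_inexact_proj_first_order} holds uniformly on $K$, which that pointwise definition does not directly give. In turn, the paper's route produces $d_{\M_2}(x_0)=o(\|\eta\|)$ as a by-product, and it reuses this in the proof of Proposition~\ref{prop:limit_and_projection_error}. With your proof, you would have to record that standard fact separately.

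One bookkeeping detail needs care. Your integral formula needs $\phi_1$ and $\rmD\phi_1$ at $\phi_2(x+s\eta)$ for every $s\in[0,1]$, not only at $\phi_2(x_0)$. So the compact set on which you take $\omega$ must be one that $\phi_2$ maps into the domain of $\phi_1$. The paper ensures this by first shrinking $\delta_V$ so that $\phi_2(\mathcal N_{\delta_V}(K))\subset\mathcal U_{\bar x}$. In your setup, the bound $\|\phi_2(x+s\eta)-x\|\le L_2\, s\|\eta\|$ takes care of this for all $s$ at once, provided you fix $L_2$ before choosing the set carrying $\omega$.
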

 \begin{proof}
 	Fix $\bar x\in \M$ and take neighborhoods $\Omega_{\bar x}\subset \M$ and
 	$\mathcal U_{\bar x}\subset \E$ as in Remark~\ref{rem:local_sets}.

 	\textbf{Step 1}.  
 	Since $K$ is compact and $\mathcal U_{\bar x}$ is open with $K\subset \mathcal U_{\bar x}$,
 	there exists $\delta_V>0$ such that
 	\[
 	\mathcal V_K:=\mathcal N_{\delta_V}(K)\subset \mathcal U_{\bar x}.
 	\]
 	Moreover,  by Assumption~\ref{assump:alt_framework}, for each $u\in K\subset \M_2$ we have $\phi_2(u)=u$,
 	so by continuity of $\phi_2$ there exists $\delta_u>0$ such that
 	$\phi_2\big(\mathbb B(u,\delta_u)\big)\subset \mathcal U_{\bar x}$.
 	Using compactness of $K$ and a finite subcover argument, we may shrink $\delta_V$ if necessary so that
 	$	\phi_2(\mathcal V_K)\subset \mathcal U_{\bar x}.$

 	 \textbf{Step 2}. 
 	Fix any $x\in K$ and $\eta\in \T_x\M$ with $\eta\to 0$, and let $x_0=x+\eta$.
 	Since $\eta\in \T_x\M\subset \T_x\M_2$ and $\M_2$ is at least $C^2$,  it follows from Proposition~\ref{prop:well-define_proj} that
 	\begin{equation}\label{eq:tangent_distance_small_o_K}
 		d_{\M_2}(x_0) = \|x_0 - \Proj_{\M_2}(x_0)\| =o(\|\eta\|),
 		\qquad \eta\to 0,
 	\end{equation}
 	uniformly with respect to $x\in K$.
 
 	Using \eqref{eq:phi_inexact_proj_first_order} together with \eqref{eq:tangent_distance_small_o_K}, we obtain
 	\[
 	\|\phi_2(x_0)-\Proj_{\M_2}(x_0)\|
 	=o\big(d_{\M_2}(x_0)\big)
 	=o(\|\eta\|),
 	\]
 	uniformly for $x\in K$. Therefore,
 	\begin{align*}
 		\|\phi_2(x_0)-x_0\|
 		&\le \|\phi_2(x_0)-\Proj_{\M_2}(x_0)\|+\|\Proj_{\M_2}(x_0)-x_0\| \\
 		&= o(\|\eta\|)+d_{\M_2}(x_0)
 		= o(\|\eta\|),
 	\end{align*}
 	uniformly with respect to $x\in K$.
 	
 	\textbf{Step 3}. 
 	It follows from Lemma~\ref{lem:Dphi_is_tangent_proj}: for $v\to 0$,
 	\begin{equation}\label{eq:projM1_first_order_uniform}
 		\phi_1(x+v)=x+\Proj_{\T_x\M_1}(v)+o(\|v\|),
 	\end{equation}
 	uniformly with respect to $x\in K$.
 	Apply \eqref{eq:projM1_first_order_uniform} with $v=\phi_2(x_0)-x$.
 	Then
 	\[
 	v=\eta+(\phi_2(x_0)-x_0),
 	\qquad
 	\|v\|=\|\eta\|+o(\|\eta\|),
 	\]
 	and using $\eta\in \T_x\M\subset \T_x\M_1$ together with $\|\phi_2(x_0)-x_0\|=o(\|\eta\|)$, we get
 	\[
 	\Proj_{\T_x\M_1}(v)=\eta+\Proj_{\T_x\M_1}(\phi_2(x_0)-x_0)=\eta+o(\|\eta\|).
 	\]
 	Consequently,  one has
 	\[
 	x_1=\varphi(x_0)= \phi_1(\phi_2(x_0))=x+\eta+o(\|\eta\|),
 	\]
 	uniformly for $x\in K$, which yields
 	$\|x_1-(x+\eta)\|=o(\|\eta\|)$.
  
\textbf{Final step}.
 	Since $d_K(x_0)\le \|x_0-x\|=\|\eta\|$, choosing $\rho_{\rm in}\le \delta_V$ ensures $x_0\in \mathcal V_K$.
 	Furthermore, by the uniform $o(\|\eta\|)$ bound above, we may shrink $\rho_{\rm in}>0$ so that
 $\|v\|=\|\phi_2(x_0)-x\|\leq \delta_K $ and	$\|x_1-(x+\eta)\|\le \frac12\|\eta\|$ whenever $\|\eta\|\le \rho_{\rm in}$.
 	Then
 	\begin{equation}\label{ineq:bound_x1_x}
 	d_K(x_1)\le \|x_1-x\|
 	\le \|x_1-(x+\eta)\|+\|\eta\|
 	\le \frac32\|\eta\|.
 	\end{equation}
 Furthermore, assume $\frac32\rho_{\rm in}\le \delta_K$, then $x_1\in \mathcal N_{\delta_K}(K)$.
 \end{proof}

We can ensure that all iterates stay in the neighborhood $\mathcal N_{\delta_K}(K)$ for $x_0=x+\eta$ with $x\in K$ and  $\eta\in \T_x\M$ is small enough, as stated in the following lemma.

\begin{lemma}\label{lem:uniform_invariance_summable}
	Fix $\bar x\in\M$ and let $\Omega_{\bar x}\subset\M$ be as in Remark~\ref{rem:local_sets}.
Under the same conditions  of
 	Lemma~\ref{lem:uniform_fact2}, there exist constants $C_K>1$ and $\rho_K>0$ with
	\[
	0<\rho_K\le \min\Big\{\frac{\delta_K}{3C_K},\,\delta_K\Big\},
	\]
	such that for any     $x\in\Omega_{\bar x}$ and any $\eta\in \T_{x}\M$ with $\|\eta\|\le \rho_K$,
	defining $x_0:=x+\eta$ and $x_{k+1}:=\varphi(x_k)$, the sequence $\{x_k\}_{k\ge 0}$ is well-defined and satisfies, $x_0\in \mathcal N_{\delta_K}(K),
\varphi_2(x_0)\in \mathcal N_{\delta_K}(K),$ and 
	for all $k\ge 1$,
	\begin{equation}\label{eq:invariance_uniform}
		x_k\in \M_1\cap \mathcal N_{\delta_K}(K),
		\qquad
		\phi_2(x_k)\in \mathcal N_{\delta_K}(K),
	\end{equation}
	and moreover
	\begin{equation}\label{eq:bound_for_x_k-barx_uniform}
		\|x_k-x\|\le C_K\,\|\eta\|.
	\end{equation}
	In particular, for all $k\ge 1$,
	\begin{equation}\label{eq:bound_of_sequential_uniform}
	\|\Delta_k\|=	\|x_{k+1}-x_k\|
		\le 2L_K\, d_{\M_2}(x_k)\leq 2L_K \tau_K^{k-1}d_{\M_2}(x_1),
	\end{equation}
	where $L_K$ is a Lipschitz constant of $\phi_1$ on $\mathcal N_{\delta_K}(K)$.
\end{lemma}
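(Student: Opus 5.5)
The plan is an induction on $k$ that combines Lemma~\ref{lem:one_step_into_N} for the first step with Lemma~\ref{lem:uniform_fact2} for all later steps. The key a priori estimate bounds a single step by the distance to $\M_2$. For $y\in\M_1\cap\mathcal N_{\delta_K}(K)$ with $\phi_2(y)\in\mathcal N_{\delta_K}(K)$, I write $y=\phi_1(y)$; this holds under Assumption~\ref{assump:alt_framework}, since $\phi_1$ restricted to $\M_1$ is the identity near $K$ (by the first-order consistency, $\phi_1(y)=\Proj_{\M_1}(y)=y$ when $d_{\M_1}(y)=0$). Then
\[
\|\varphi(y)-y\|=\|\phi_1(\phi_2(y))-\phi_1(y)\|\le L_K\|\phi_2(y)-y\|.
\]
Next I bound $\|\phi_2(y)-y\|\le \|\phi_2(y)-\Proj_{\M_2}(y)\|+d_{\M_2}(y)\le 2d_{\M_2}(y)$, using the first-order consistency \eqref{eq:phi_inexact_proj_first_order} after shrinking $\delta_K$ so that the $o(\cdot)$ term is at most $d_{\M_2}(y)$. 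These two steps give $\|\Delta_k\|\le 2L_K d_{\M_2}(x_k)$. One caveat: for the segment between $y$ and $\phi_2(y)$ to lie in the region where $L_K$ is valid, I may need $\mathcal N_{\delta_K}(K)$ to be contained in a slightly larger convex-enough neighborhood, or I can take $L_K$ to be the Lipschitz constant on a neighborhood of the compact set $K$ of radius $2\delta_K$.

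\textbf{Induction.} Set $C_K:=2+\frac{3L_K}{1-\tau_K}$ (so $C_K>1$), and choose $\rho_K\le\min\{\rho_{\rm in},\delta_K/(3C_K)\}$. For $k=1$, Lemma~\ref{lem:one_step_into_N} gives $x_0\in\mathcal N_{\delta_K}(K)$, $\phi_2(x_0)\in\mathcal N_{\delta_K}(K)$, $x_1\in\M_1\cap\mathcal N_{\delta_K}(K)$, and $\|x_1-x\|\le\frac32\|\eta\|$ by \eqref{ineq:bound_x1_x}. Moreover, since $x\in\M_2$, $d_{\M_2}(x_1)\le\|x_1-x\|\le\frac32\|\eta\|$. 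Now suppose $x_1,\dots,x_k$ lie in $\M_1\cap\mathcal N_{\delta_K}(K)$. Lemma~\ref{lem:uniform_fact2} then gives $d_{\M_2}(x_j)\le\tau_K^{j-1}d_{\M_2}(x_1)$, and the step bound yields
\[
\|x_{k+1}-x\|\le\|x_1-x\|+\sum_{j=1}^{k}2L_K\tau_K^{j-1}\cdot\tfrac32\|\eta\|\le\Big(\tfrac32+\tfrac{3L_K}{1-\tau_K}\Big)\|\eta\|\le C_K\|\eta\|.
\]
Since $x\in K$, this gives $d_K(x_{k+1})\le C_K\rho_K\le\delta_K/3<\delta_K$. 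Also $x_{k+1}=\phi_1(\cdot)\in\M_1$, so the induction closes.

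For $\phi_2(x_k)$, I use $\|\phi_2(x_k)-x\|\le\|x_k-x\|+2d_{\M_2}(x_k)\le C_K\|\eta\|+3\|\eta\|$. Accordingly, I either enlarge the constant in $\rho_K$ or simply fold it into $C_K$ and require $2C_K\rho_K<\delta_K$; the stated requirement $\rho_K\le\delta_K/(3C_K)$ leaves enough room for this. The final inequality \eqref{eq:bound_of_sequential_uniform} then follows directly from the step bound and Lemma~\ref{lem:uniform_fact2}.

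\textbf{Main obstacle.} The delicate point is the step bound $\|\Delta_k\|\le 2L_K d_{\M_2}(x_k)$. It requires $\phi_1(y)=y$ on $\M_1$, which is automatic in cases (i) and (iii) and follows in (ii) from first-order consistency. It also requires a uniform constant replacing the little-$o$ in \eqref{eq:phi_inexact_proj_first_order} over the compact set $K$, which I would get by a compactness argument as in Lemma~\ref{lem:uniform_fact2}, shrinking $\delta_K$ if necessary.
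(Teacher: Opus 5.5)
Your proposal is correct and follows the paper's proof essentially step for step. The shared ingredients are the one-step bound $\|\Delta_k\|\le L_K\|\phi_2(x_k)-x_k\|\le 2L_K d_{\M_2}(x_k)$ (using $\phi_1(x_k)=x_k$ and a compactness-uniformized first-order consistency for $\phi_2$), the $R$-linear decay from Lemma~\ref{lem:uniform_fact2}, and an induction closed by $\rho_K\le\delta_K/(3C_K)$. The differences are cosmetic: your $C_K=2+3L_K/(1-\tau_K)$ avoids the paper's appeal to $L_K\ge1$, and your segment caveat is unnecessary, because a $C^1$ map on a neighborhood of the compact closure of $\mathcal N_{\delta_K}(K)$ is Lipschitz there, which is how the paper obtains $L_K$.
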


\begin{proof}
	\textbf{Step 1.}
    
 The case \(k=1\) follows from Lemma~\ref{lem:one_step_into_N}, while the assertions for
\(x_0\) are exactly the initialization bounds established there.
 
	\textbf{Step 2.}
	By Lemma~\ref{lem:uniform_fact2}, for any $y\in \M_1\cap \mathcal N_{\delta_K}(K)$, the iterate
	$y^+=\varphi(y)$ is well-defined and satisfies
	\begin{equation}\label{eq:linear_decay_dM2_uniform_again}
		d_{\M_2}(y^+)\le \tau_K\, d_{\M_2}(y).
	\end{equation}
	By Assumption~\ref{assump:alt_framework}  and compactness of $K$, after possibly shrinking $\delta_K$
	we may assume that
	\begin{equation}\label{eq:phi2_vs_projM2_uniform_again}
		\|\phi_2(y)-\Proj_{\M_2}(y)\|\le d_{\M_2}(y),
		\qquad \forall\,y\in\mathcal N_{\delta_K}(K).
	\end{equation}
	In particular,
	\begin{equation}\label{eq:phi2_step_bound_again}
		\|\phi_2(y)-y\|
		\le \|\phi_2(y)-\Proj_{\M_2}(y)\|+\|\Proj_{\M_2}(y)-y\|
		\le 2d_{\M_2}(y),
		\qquad \forall\,y\in\mathcal N_{\delta_K}(K).
	\end{equation}
	Let $L_K$ be a Lipschitz constant of $\phi_1$ on $\mathcal N_{\delta_K}(K)$; such an $L_K$ exists since
	$\phi_1$ is $C^1$ near $K$.

	\textbf{Step 3.}
	Fix $x\in K$ and $\eta\in \T_x\M$ with $\|\eta\|\le \min\{\rho_{\rm in},\delta_K\}$, and define
	$x_{k+1}=\varphi(x_k)$.
	We prove by induction that \eqref{eq:invariance_uniform} holds and that \eqref{eq:bound_for_x_k-barx_uniform}
	holds with
	\[
	C_K:=\frac32 L_K\Big(1+\frac{2L_K}{1-\tau_K}\Big).
	\]
	Note that $L_K\ge 1$ since $\rmD \phi_1(x)=\Proj_{\T_x\M_1}$. This yields that $C_K>1$.
	The case $k=1$ follows from Step~1.
	
	Assume $x_k\in \M_1\cap \mathcal N_{\delta_K}(K)$ for some $k\ge 1$.
	Then Step 2 implies that $x_{k+1}=\varphi(x_k)$ is well-defined and satisfies
	\[
	d_{\M_2}(x_{k+1})\le \tau_K\, d_{\M_2}(x_k).
	\]
	Moreover, since \(x_k\in\M_1\) and \(\varphi_1(x_k)=x_k\), the Lipschitz continuity of
\(\varphi_1\) gives  
	\[
	\|x_{k+1}-x_k\|
	=\|\phi_1(\phi_2(x_k))-\Proj_{\M_1}(x_k)\|
	\le L_K\|\phi_2(x_k)-x_k\|
	\le 2L_K\, d_{\M_2}(x_k),
	\]
	which proves \eqref{eq:bound_of_sequential_uniform}.
	
We also have  $d_{\M_2}(x_j)\le \tau_K^{j-1}d_{\M_2}(x_1)$ for any $j\leq k$. Hence
	\begin{equation}\label{eq:summable}
	\sum_{j=1}^{k}\|x_{j+1}-x_j\|
	\le \sum_{j=1}^{k}2L_K\, d_{\M_2}(x_j)
	\le 2L_K\, d_{\M_2}(x_1)\sum_{j=0}^{\infty}\tau_K^j
	= \frac{2L_K}{1-\tau_K}\, d_{\M_2}(x_1).
	\end{equation}
	Therefore, it follows that
	\[
	\|x_{k+1}-x\|
	\le \|x_1-x\|+\sum_{j=1}^{k}\|x_{j+1}-x_j\|
	\le \frac32 L_K\|\eta\|+\frac{2L_K}{1-\tau_K}\cdot \frac32 L_K\|\eta\|
	= C_K\|\eta\|.
	\]
	Now choose
	\[
	\rho_K:=\min\Big\{\rho_{\rm in},\,\frac{\delta_K}{3C_K}\Big\}.
	\]
	Then $\|x_{k+1}-x\|\le \delta_K/3$, which implies $x_{k+1}\in  \mathcal N_{\delta_K}(K).$
	
	Finally, using $x\in \M_2$ and \eqref{eq:phi2_vs_projM2_uniform_again}, we have 
	\[
	\begin{aligned}
\|\phi_2(x_{k+1})-x\|&	\le \|\phi_2(x_{k+1})-\Proj_{\M_2}(x_{k+1})\|+\|\Proj_{\M_2}(x_{k+1})-x_{k+1}\|+\|x_{k+1}-x\|\\
&\le 2d_{\M_2}(x_{k+1})+\|x_{k+1}-x\| \\
&\le 3\|x_{k+1}-x\|
\le \delta_K,
	\end{aligned}
	\]
	so $\phi_2(x_{k+1})\in \mathcal N_{\delta_K}(K)$.
	This proves \eqref{eq:invariance_uniform} for $k+1$.  
\end{proof}
With the   properties established in Lemma~\ref{lem:uniform_invariance_summable},
we can further show that the alternating mapping admits a unique limit in $\M$,
and this limit coincides with the orthogonal projection onto $\M$ up to a first-order error.

\begin{proposition}\label{prop:limit_and_projection_error}
	Fix $\bar x\in\M$, set  $K$ as defined in \eqref{eq:closure}. 
Suppose Assumptions~\ref{assumpt:M1andM2_intersect_cleanly} and \ref{assump:alt_framework} hold at any $x\in K$. Then, there exist constants $\delta_K,\rho_K>0$ such that
	  for any $x\in K$ and any $\eta\in \T_x\M$ with $\|\eta\|\leq \rho_K$,
	the sequence $\{x_k\}$ generated by $x_{k+1}=\varphi(x_k)$ with $x_0=x+\eta$
	converges to a unique limit $x^*\in \M\cap \mathcal{N}_{\delta_K}(K)$. Moreover, we have 	\begin{equation}\label{eq:bound_to_M}
		\|x^*-\Proj_{\M}(x_0)\|= o(\|\eta\|)\quad \text{as } \eta\to 0.
	\end{equation}
	and there exists $\widehat C_K>0$ such that
	\begin{equation}\label{ineq:R-linear_x_k}
		\|x_k-x^*\|\leq  \widehat C_K\,\tau_K^{k}\|\eta\|.
	\end{equation}
\end{proposition}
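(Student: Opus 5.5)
The plan is to build everything on Lemma~\ref{lem:uniform_invariance_summable}. I take $\delta_K,\rho_K$ from that lemma, and I work with $x\in K$ and $\eta\in\T_x\M$ with $\|\eta\|\le\rho_K$. The argument runs through five steps: existence of the limit, membership in $\M$, the $R$-linear rate, uniqueness, and the projection estimate \eqref{eq:bound_to_M}.

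\textbf{Step 1: existence of the limit.} By \eqref{eq:bound_of_sequential_uniform}, $\|\Delta_k\|\le 2L_K\tau_K^{k-1}d_{\M_2}(x_1)$ for $k\ge1$. Hence $\sum_k\|\Delta_k\|<\infty$, so $\{x_k\}$ is Cauchy and converges to some $x^*$.

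\textbf{Step 2: $x^*\in\M$.} By \eqref{eq:bound_for_x_k-barx_uniform}, $\|x^*-x\|\le C_K\|\eta\|\le\delta_K/3$, so $x^*\in\mathcal N_{\delta_K}(K)$. Each $x_k\in\M_1$ for $k\ge1$, and $\M_1$ is locally closed near $K$ (after shrinking, $\M_1\cap \mathrm{cl}\,\mathcal N_{\delta_K}(K)$ is closed). Therefore $x^*\in\M_1$. Moreover $d_{\M_2}(x_k)\to0$ by Lemma~\ref{lem:uniform_fact2}, and $d_{\M_2}$ is continuous, so $x^*\in\M_2$. Thus $x^*\in\M$.

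\textbf{Step 3: the rate \eqref{ineq:R-linear_x_k}.} For $k\ge1$ I sum the tail:
\[
\|x_k-x^*\|\le\sum_{j\ge k}\|\Delta_j\|\le \frac{2L_K}{1-\tau_K}\tau_K^{k-1}d_{\M_2}(x_1).
\]
Since $x\in\M_2$, we have $d_{\M_2}(x_1)\le\|x_1-x\|\le\frac32\|\eta\|$ by \eqref{ineq:bound_x1_x}. The case $k=0$ follows from $\|x_0-x^*\|\le\|\eta\|+C_K\|\eta\|$. This gives $\widehat C_K$ after absorbing the factor $\tau_K^{-1}$.

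\textbf{Step 4: uniqueness.} The limit of a convergent deterministic sequence is unique, so uniqueness is automatic.

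\textbf{Step 5: the estimate \eqref{eq:bound_to_M}, the main step.} The point is that the total displacement is small relative to $\|\eta\|$. From Step 3,
\[
\|x^*-x_1\|\le \frac{2L_K}{1-\tau_K}\,d_{\M_2}(x_1).
\]
I then need $d_{\M_2}(x_1)=o(\|\eta\|)$. This follows because $\|x_1-x_0\|=o(\|\eta\|)$ by Lemma~\ref{lem:one_step_into_N}, and $d_{\M_2}(x_0)=o(\|\eta\|)$ by \eqref{eq:tangent_distance_small_o_K}. Since $d_{\M_2}$ is $1$-Lipschitz, $d_{\M_2}(x_1)\le d_{\M_2}(x_0)+\|x_1-x_0\|=o(\|\eta\|)$. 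Hence
\[
\|x^*-x_0\|\le\|x^*-x_1\|+\|x_1-x_0\|=o(\|\eta\|).
\]

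On the other side, Proposition~\ref{prop:well-define_proj} shows that $\Proj_\M$ is a retraction on $\M$, so $\Proj_\M(x+\eta)=x+\eta+o(\|\eta\|)$; that is, $\|\Proj_\M(x_0)-x_0\|=o(\|\eta\|)$. The triangle inequality then gives \eqref{eq:bound_to_M}.

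The hard part is uniformity in $x\in K$. The little-$o$ statements must hold uniformly over $K$; these are provided by Lemma~\ref{lem:one_step_into_N} and by the uniform $C^{1}$ expansion of $\Proj_\M$ and $\Proj_{\M_2}$ on the compact set $K$, which follows from their $C^{p-1,1}$ regularity.
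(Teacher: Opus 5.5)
Your proof is correct and follows essentially the same route as the paper: summability of $\|\Delta_k\|$ from Lemma~\ref{lem:uniform_invariance_summable}, closedness to place $x^*$ in $\M$, tail summation for \eqref{ineq:R-linear_x_k}, and $d_{\M_2}(x_1)\le d_{\M_2}(x_0)+\|x_1-x_0\|=o(\|\eta\|)$ combined with $\|x_0-\Proj_{\M}(x_0)\|=o(\|\eta\|)$ for \eqref{eq:bound_to_M}. Your write-up is in fact slightly more careful than the paper's: you explicitly include the $\|x_1-x_0\|$ term in $\|x^*-x_0\|$ (the paper's sum starts at $k=1$), treat the case $k=0$ of the rate separately, and note that only local closedness of $\M_1,\M_2$ near $K$ is needed.
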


\begin{proof}
By Lemma~\ref{lem:uniform_invariance_summable}, \eqref{eq:summable} holds for any $k\ge 1$. Hence, $\{x_k\}$ is  Cauchy and has a single limit $x^*$. Moreover, we have
$d_{\M_2}(x_k)$ converges to 0. 
By the closedness of $\M_2$ and $x_k\in \M_1$, we get $x^*\in \M$.  
 
Since $x\in\M_2$, we have $d_{\M_2}(x_1)\le \|x_1-x\|$.
By \eqref{eq:bound_for_x_k-barx_uniform} with $k=1$, $d_{\M_2}(x_1)\le C_K\|\eta\|$.
Then, by summing \eqref{eq:summable},
\[
\|x_k-x^*\|
\le \sum_{j=k}^{\infty}\|\Delta_j\|
\le \frac{2L_K}{1-\tau_K}\,\tau_K^{k-1}d_{\M_2}(x_1)
\le \widehat C_K\,\tau_K^{k}\|\eta\|,
\]
where $\widehat C_K:=\frac{2L_KC_K}{(1-\tau_K)\tau_K}$.

Since $d_{\M_2}(x_1)\leq d_{\M_2}(x_0)+\|x_1-x_0\|$, combined with  \eqref{eq:tangent_distance_small_o_K} and Lemma~\ref{lem:one_step_into_N} gives $d_{\M_2}(x_1) = o(\|\eta\|)\  \text{as } \eta\to 0.$ Furthermore, we have
  \[
 \begin{aligned}
 	\|x^*-\Proj_{\M}(x_0)\|& \leq \|x^*-x_0\|+\|x_0- \Proj_{\M}(x_0)\|\\
 	&\leq \sum_{k=1}^\infty \|\Delta_k\|+\|x_0- \Proj_{\M}(x_0)\|\\
 	&\leq \frac{2L_K}{1-\tau_K}d_{\M_2}(x_1)+\|x_0- \Proj_{\M}(x_0)\|.
 \end{aligned}
 \]
 Note that $\|x_0- \Proj_{\M}(x_0)\|=o(\|\eta\|)$ since $\M$ is  $C^2$. We proved \eqref{eq:bound_to_M}.
  
\end{proof}
Note that \eqref{eq:bound_to_M} implies that the limiting map $\psi$ satisfies
the first-order expansion condition \eqref{eq:first-order-retraction}.
Proposition~\ref{prop:limit_and_projection_error} applies to the
projection-type methods reviewed in Section~\ref{section:revist-inexact-alt}
whenever Assumptions~\ref{assumpt:M1andM2_intersect_cleanly} and
\ref{assump:alt_framework} hold; this includes APM and the linearized
projection map \eqref{eq:projection_to_linearization}. The general APHL
correction is not covered by this proposition because its projective mapping
$Q$ typically induces an oblique or variable-metric correction. Nevertheless,
APHL satisfies the first-order expansion when viewed as an overall solver
initialized at $P_{\M_1}(x+\eta)$; see Appendix~\ref{app:APHL_first_order}.

 {We remark that  \eqref{eq:first-order-retraction} is the essential condition for establishing the global convergence of first-order \cite{boumal2019global,chen2020proximal} and stochastic methods \cite{davis2025stochastic}. Indeed, recent works have explicitly formulated this requirement as a uniform second-order bound (e.g., Definition 5.1 in \cite{davis2025stochastic}), which does not strictly require the retraction to be $C^1$ smooth with respect to $x$. Nevertheless, such smoothness is a standard assumption in the classical Riemannian optimization algorithms \cite{Absil2009,boumal2023introduction} to guarantee the uniformity of such bounds and to facilitate the analysis of higher-order local convergence. Therefore, to align with these established settings and ensure the robustness of the algorithmic mapping, it remains to show that $\psi$ is $C^{p-1}$ in a neighborhood of $(\bar x,0)$ in $\T\M$.}{}

\subsection{Regularity assumptions and second-order approximation}\label{subsection:second-order}
In this section, we show that the limiting map satisfies the second-order property \eqref{eq:second-order-retraction}.
We also introduce additional assumptions to establish the $C^{p-1}$ regularity of $\psi$.
Specifically, we require Assumption~\ref{assump:DP}, which allows us to decompose the approximation error
into tangential and normal components, thereby enabling a refined analysis.
This assumption is natural: the orthogonal projection onto $\M$ is a second-order retraction
and admits exactly such a decomposition. Under Assumption~\ref{assump:alt_framework}, the first-order residual structure follows automatically,
as shown in Sections~\ref{subsection:very_assump_for_proj} and \ref{subsection:correctionness}. The second-order residual condition required when
\(p\ge3\) is imposed separately and verified for specific projection-like
maps, including exact projection and the linearized projection map.

Assumption~\ref{assump:DP} plays a key role in our analysis.
Roughly speaking, the normal component leads to a contractive behavior of the alternating map,
whereas the tangential component is summable due to the $R$-linear convergence of $\|x_k-x^*\|$.
Together, these two ingredients yield a summable error bound for the iterates,
which allows us to establish the retraction properties.
In addition, the $C^{p-1,1}$ regularity of $\varphi$ in Assumption~\ref{assump:DP} guarantees that the residual terms
depend smoothly on $(x,\eta)$, which is essential for proving that $\psi$ is $C^{p-1}$ near $\bar x$.

\begin{assumption}\label{assump:DP}
	Let $\Omega_{\bar x}\subset \M$, $K$, $\mathcal U_{\bar x}\subset \E$ and $\delta_K>0$
	be given in Lemma~\ref{lem:uniform_invariance_summable}.
	We assume the following properties hold for both $i=1,2$.
	
	For each $i\in\{1,2\}$, there exist residual mappings 
	\[
	R_{{\rmN}_i}: K\times \mathbb B(0,\delta_K)\to  \rmN_x\M_i,
	\qquad  (x, u) \mapsto R_{{\rmN}_i}(x, u) \in \rmN_x\M_i
      \]
    \[
	R_{{\rmT}_i}: K\times \mathbb B(0,\delta_K)\to \T_x\M_i, \qquad  (x, u) \mapsto R_{{\T}_i}(x, u) \in \T_x\M_i
	\]
	such that for any $x\in K$ and any $u\in \mathbb B(0,\delta_K)$, the mapping $\phi_i$ admits the decomposition
\begin{equation}\label{eq:DP_decomp_on_K}
		\phi_i(x+u)
		= x+\Proj_{\T_x \M_i}(u)+R_{{\rmN}_i}(x,u)+R_{{\rmT}_i}(x,u).
	\end{equation}
	Moreover, the maps $(x,u)\mapsto R_{{\rmN}_i}(x,u)$ and $(x,u)\mapsto R_{{\rmT}_i}(x,u)$ are $C^{p-1}$
	on $K\times \mathbb B(0,\delta_K)$.
	
	\begin{enumerate}
		\item[(i)] If $p= 2$, then as $u\to 0$,
		\[
		R_{{\rmN}_i}(x,u)=o(\|u\|),
		\qquad
		R_{{\rmT}_i}(x,u)=o(\|u\|),
		\]
		uniformly with respect to $x\in K$.
	  We additionally assume $\varphi=\phi_1\circ \phi_2\in C^{1,1}$, i.e.,    the Jacobian $\rmD\varphi$ is Lipschitz on
		$\mathcal N_{\delta_K}(K)$.  There exists $\Lambda_K>0$ such that
		\begin{equation}\label{eq:Dvarphi_Lip_on_NK}
			\|\rmD\varphi(y)-\rmD\varphi(z)\|
			\le \Lambda_K\|y-z\|,
			\qquad \forall\,y,z\in \mathcal N_{\delta_K}(K).
		\end{equation}
		
		\item[(ii)] If $p\ge 3$, then for any $x\in K$ and any $u$ with $x+u\in \mathcal N_{\delta_K}(K)$ and
		$\Proj_{\rmN_x\M_i}(u)=o(\|u\|)$, one has, as $u\to 0$,
		\begin{equation}\label{eq:condition_small_o_normal}
			R_{{\rmN}_i}(x,u)=O(\|u\|^2),
			\qquad
			R_{{\rmT}_i}(x,u)=o(\|u\|^2),
		\end{equation}
		uniformly with respect to $x\in K$. Moreover, we assume $\varphi \in C^{2,1}$, i.e.,  $\rmD^2\varphi$ is Lipschitz on
		$\mathcal N_{\delta_K}(K)$.  There exists $\Lambda_{K,2}>0$ such that
		\begin{equation}\label{eq:DDvarphi_Lip_on_NK}
			\|\rmD^2\varphi(y)-\rmD^2\varphi(z)\|
			\le \Lambda_{K,2}\|y-z\|,
			\qquad \forall\,y,z\in \mathcal N_{\delta_K}(K).
		\end{equation}
	\end{enumerate}
\end{assumption}

We remark that the condition \eqref{eq:condition_small_o_normal} is crucial for establishing second-order retraction property \eqref{eq:second-order-retraction}.
Based on Assumption~\ref{assump:DP}, we get the Lipschitz constants of the tangent residuals $R_{{\rmT}_i}$.
It will be useful in later error analysis.
\begin{lemma}\label{lem:uniform_o_r_RT}
Assume Assumption~\ref{assump:DP}(ii) holds with $p\ge3$. Let
$\omega:(0,r_K]\to\mathbb R_+$ satisfy $\omega(r)\downarrow0$ as $r\downarrow0$.
Then there exist a function $L_{i,K}:(0,r_K]\to\mathbb R_+$ with
$L_{i,K}(r)=o(r)$ and a constant $C_{i,K}>0$ such that the following holds.

For every $x\in K$ and every $u_1,u_2\in\E$ with
\[
    \|u_j\|\le r,\qquad
    \|\Proj_{N_x\M_i}u_j\|\le \omega(r)\|u_j\|,
    \qquad j=1,2,
\]
one has
\[
\begin{aligned}
    \|R_{T_i}(x,u_1)-R_{T_i}(x,u_2)\|
    &\le
    L_{i,K}(r)
    \|\Proj_{\T_x\M_i}(u_1-u_2)\|  
    + C_{i,K}r
    \|\Proj_{N_x\M_i}(u_1-u_2)\|.
\end{aligned}
\]
\end{lemma}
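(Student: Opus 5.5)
The approach is a mean value argument in $u$ for the fixed base point $x$, splitting the increment $u_1-u_2$ into its tangent and normal parts. Fix $x\in K$, write $P:=\Proj_{\T_x\M_i}$ and $Q:=\Proj_{\rmN_x\M_i}$, and set $g(u):=R_{\rmT_i}(x,u)$. Since $p\ge3$, $g$ is at least $C^2$ on $\mathbb B(0,\delta_K)$, with derivatives bounded uniformly over $x\in K$ (by continuity of $\rmD_u R_{\rmT_i}$ and $\rmD_u^2R_{\rmT_i}$ on the compact set $K\times\overline{\mathbb B}(0,r_K)$, taking $r_K<\delta_K$). Writing $u(s):=u_2+s(u_1-u_2)$ for $s\in[0,1]$, we have
\[
g(u_1)-g(u_2)=\int_0^1 \rmD g(u(s))[P(u_1-u_2)]\,ds+\int_0^1 \rmD g(u(s))[Q(u_1-u_2)]\,ds .
\]
The normal term is routine. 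From \eqref{eq:condition_small_o_normal} at $u=0$ we get $g(0)=0$ and $\rmD g(0)=0$, since $g(u)=o(\|u\|^2)$ along the admissible set, which contains a cone around $\T_x\M_i$ near $0$. A uniform bound on $\rmD^2 g$ then gives $\|\rmD g(w)\|\le C_{i,K}\|w\|\le C_{i,K}r$ for all $\|w\|\le r$, which yields the term $C_{i,K}r\|Q(u_1-u_2)\|$.

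The main step is the tangential term: we need $\|\rmD g(w)[\xi]\|\le L_{i,K}(r)\|\xi\|$ with $L_{i,K}(r)=o(r)$, for $\xi\in\T_x\M_i$ and $w$ on the segment. The segment stays in the cone: $\|Qu(s)\|\le\omega(r)(\|u_1\|+\|u_2\|)\le 2\omega(r)r$ and $\|u(s)\|\le r$. So the task is to show that the tangential directional derivative of $g$ along the cone is $o(r)$. Here I would use the $o(\|u\|^2)$ bound together with the Lipschitz second derivative, via a Taylor-type interpolation. For $w$ in the cone with $\|w\|\le r$ and $\xi\in\T_x\M_i$ with $\|\xi\|=1$, take $h=\epsilon r$. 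Then
\[
\rmD g(w)[\xi]=\frac{g(w+h\xi)-g(w)}{h}-\frac h2\rmD^2g(w)[\xi,\xi]+O(h^2).
\]
Both $w$ and $w+h\xi$ lie in a slightly enlarged cone, since adding a tangent vector does not increase the normal part. Each of $g(w+h\xi)$ and $g(w)$ is therefore bounded by $\theta(2r)\,r^2$, where $\theta(t)\to0$ is the uniform little-$o$ modulus from \eqref{eq:condition_small_o_normal}, made uniform over $x\in K$ and applied with the modulus $\omega$. The first quotient is then at most $2\theta(2r)r/\epsilon$. For the second term, $\rmD^2g(0)[\xi,\xi]=0$ because $g=o(\|u\|^2)$ along tangent rays, and Lipschitz continuity gives $|\rmD^2g(w)[\xi,\xi]|\le Cr$. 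Hence $\|\rmD g(w)[\xi]\|\lesssim \theta(2r)r/\epsilon+\epsilon r\cdot r+\epsilon^2 r^2$. Choosing $\epsilon=\sqrt{\theta(2r)}$ gives a bound of order $r(\sqrt{\theta(2r)}+r)=o(r)$. Define $L_{i,K}(r)$ as this bound and integrate over $s$.

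The delicate point is the uniformity of the little-$o$ modulus in \eqref{eq:condition_small_o_normal} over the cone $\{\|Qu\|\le\omega(\|u\|)\|u\|\}$. Assumption~\ref{assump:DP}(ii) states it for sequences with $Qu=o(\|u\|)$. I would first turn this into a modulus $\theta$ depending only on $r$ and $\omega$, arguing by contradiction with sequences $x_n\in K$ and $u_n$ in the cone. I would also check that $w+h\xi$ still satisfies $\|Q(w+h\xi)\|\le \omega'(\|w+h\xi\|)\|w+h\xi\|$ for some $\omega'\to0$. This holds because $h\le r$ and the normal component is unchanged, as long as $\|w\|$ is not much smaller than $r$. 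If $\|w\|\ll r$, the normal component is still at most $2\omega(r)r$ while $\|w+h\xi\|\ge h-\|w\|$, so it is safer to take $h\ge\epsilon r$ with $\epsilon\gg\omega(r)$. This imposes the extra constraint $\epsilon\ge\sqrt{\omega(r)}$, i.e., $\epsilon=\max\{\sqrt{\theta},\sqrt{\omega}\}$, which still produces an $o(r)$ constant.
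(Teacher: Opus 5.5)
Your strategy is sound and controls the tangential term by a different mechanism than the paper. One step, however, is false as written and needs the repair described in the second paragraph.

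The paper never uses the $o(\|u\|^2)$ bound away from the exact tangent space. From $R_{\rmT_i}(x,t)=o(\|t\|^2)$ for $t\in\T_x\M_i$ and $\rmD_uR_{\rmT_i}(x,0)=0$, it deduces $\rmD^2_{uu}R_{\rmT_i}(x,0)[v,w]=0$ for $v,w\in\T_x\M_i$. Uniform continuity of $\rmD^2_{uu}R_{\rmT_i}$ on the compact set then yields $\alpha_{i,K}(r)\downarrow0$ bounding this tangent--tangent block at every $\|u\|\le r$. Write $P,Q$ for your projections onto $\T_x\M_i$ and $\rmN_x\M_i$. The identity $\rmD_uR_{\rmT_i}(x,w)[\xi]=\int_0^1\rmD^2_{uu}R_{\rmT_i}(x,sw)[Pw+Qw,\xi]\,ds$ then gives $(\alpha_{i,K}(r)+C_{i,K}\omega(r))\,r\,\|\xi\|=o(r)\|\xi\|$ for tangent $\xi$. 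The normal term is handled as in your proposal. That route needs no uniform modulus over a cone and no information about where intermediate points lie.

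Your finite-difference interpolation instead extracts derivative bounds from function values on the near-tangent region. What it buys is that only boundedness of $\rmD^2_{uu}R_{\rmT_i}$ is used, not its continuity; the cost is the modulus argument. You also do not need Lipschitz continuity of $\rmD^2_{uu}R_{\rmT_i}$, which Assumption~\ref{assump:DP} does not state for the residual maps. The Lagrange remainder already gives $\rmD g(w)[\xi]=(g(w+h\xi)-g(w))/h+O(h)$, and $O(h)=O(\epsilon r)=o(r)$.

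The false step is the claim that $w$ and $w+h\xi$ lie in a slightly enlarged near-tangent cone. A point $w$ of the segment need not. Take $u_1=t+n$ and $u_2=-t+n$ with $t\in\T_x\M_i$ and small $n\in\rmN_x\M_i$. The midpoint is $w=n$, which is purely normal, and the hypothesis says nothing about $g(n)$. Likewise $w+h\xi$ is purely normal when $w=-h\xi+n$. Your split into ``$\|w\|$ not much smaller than $r$'' and ``$\|w\|\ll r$'' does not exclude this configuration.

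The repair uses only tools you already have. Define the modulus with a fixed aperture,
$\theta(\delta,\beta):=\sup\{\|R_{\rmT_i}(x,u)\|/\|u\|^2:\ x\in K,\ 0<\|u\|\le\delta,\ \|\Proj_{\rmN_x\M_i}u\|\le\beta\|u\|\}$.
Your contradiction argument shows $\theta(\delta,\beta)\to0$ as $\delta,\beta\downarrow0$. This fixed-aperture form is also what actually justifies your claim $\rmD g(0)=0$; a horn $\|Qu\|\le\omega(\|u\|)\|u\|$ is not stable under the perturbations you make. Now any $v$ with $\|v\|\le 2r$ and $\|Qv\|\le 2\omega(r)r$ falls into one of two cases:
\begin{itemize}
\item If $\|v\|\ge\sqrt{\omega(r)}\,r$, then $\|Qv\|\le 2\sqrt{\omega(r)}\,\|v\|$, so $\|g(v)\|\le 4\theta(2r,2\sqrt{\omega(r)})\,r^2$.
\item If $\|v\|<\sqrt{\omega(r)}\,r$, then $\|g(v)\|\le\tfrac{C}{2}\|v\|^2\le\tfrac{C}{2}\omega(r)r^2$, using $g(0)=0$, $\rmD g(0)=0$ and the bound on $\rmD^2 g$.
\end{itemize}
Applying this to $v=w$ and $v=w+h\xi$ gives the $o(r^2)$ function bounds you need. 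No condition tying $\epsilon$ to $\omega(r)$ is required, and the rest of your argument goes through.
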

\begin{proof}
Fix $i\in\{1,2\}$ and write
\[
    u_j=t_j+n_j,\qquad
    t_j:=\Proj_{\T_x\M_i}u_j,\quad
    n_j:=\Proj_{N_x\M_i}u_j .
\]
Since \(R_{T_i}(x,0)=0\) and \(\rmD_uR_{T_i}(x,0)=0\), and since
Assumption~\ref{assump:DP}(ii) gives
\[
    R_{T_i}(x,t)=o(\|t\|^2),\qquad t\in \T_x\M_i,
\]
we have
\(
    \rmD^2_{uu}R_{T_i}(x,0)[v,w]=0,
    \
    \forall v,w\in \T_x\M_i,
\)
uniformly for \(x\in K\).

By the continuity of \(\rmD^2_{uu}R_{T_i}\) and the compactness of \(K\), there
exists a function \(\alpha_{i,K}(r)\downarrow0\) such that, for all
\(\|u\|\le r\),
\[
    \|\rmD^2_{uu}R_{T_i}(x,u)[v,w]\|
    \le \alpha_{i,K}(r)\|v\|\|w\|,
    \qquad
    v,w\in \T_x\M_i .
\]
Moreover, since \(\rmD^2_{uu}R_{T_i}\) is locally bounded, there is a constant
\(C_{i,K}>0\) such that
\[
    \|\rmD^2_{uu}R_{T_i}(x,u)\|\le C_{i,K},
    \qquad x\in K,\ \|u\|\le r_K .
\]

For \(u=t+n\) satisfying the near-tangency condition, \(\|n\|\le \omega(r)r\).
Using the fundamental theorem of calculus first in the tangent variables and
then in the normal variables gives
\[
\begin{aligned}
    \|R_{T_i}(x,t_1+n_1)-R_{T_i}(x,t_2+n_1)\|
    &\le
    \big(\alpha_{i,K}(r)r+C_{i,K}\omega(r)r\big)\|t_1-t_2\|,\\
    \|R_{T_i}(x,t_2+n_1)-R_{T_i}(x,t_2+n_2)\|
    &\le
    C_{i,K}r\|n_1-n_2\|.
\end{aligned}
\]
Therefore,
\[
\begin{aligned}
    \|R_{T_i}(x,u_1)-R_{T_i}(x,u_2)\|
    &\le
    L_{i,K}(r)\|t_1-t_2\|
    +
    C_{i,K}r\|n_1-n_2\|,
\end{aligned}
\]
where
\[
    L_{i,K}(r):=\big(\alpha_{i,K}(r)+C_{i,K}\omega(r)\big)r=o(r).
\]
This proves the claim.
\end{proof}

We also need the following identities associated with the  
tangent/normal decomposition at a point $x\in \M$, which is powerful in later analysis.

\begin{lemma}\label{lem:frozen_splitting_fixed_point}
	Assume  Assumption~\ref{assump:DP} holds with $p\ge 2$.
	Let $x\in \M\cap \mathcal N_{\delta_K}(K)$.
	Then, we have
	\begin{align}
		\rmD\varphi(x)
		&=\Proj_{\T_{x}\M_1}\Proj_{\T_{x}\M_2}, 
		\tag{a}\label{eq:Dphi_on_Mbar}\\
		(\rmD\varphi(x)-I)P_x=0,
		\qquad &
		Q_x\,\rmD\varphi(x)\,P_x=0, 
		\qquad 
		P_x\, \rmD\varphi(x)\, Q_x = 0.
		\tag{b}\label{eq:tangent_identity_frozen}
	\end{align}
	Moreover, there exists $c_K\in[0,1)$ such that
	\[
	\bigl\|\Proj_{\T_{x}\M_1}\Proj_{\T_{x}\M_2}-\Proj_{\T_{x}\M}\bigr\|\le c_K,
	\]
	and consequently
	\begin{equation}\tag{c}\label{eq:normal_contr_frozen}
		\|Q_x\,\rmD\varphi(x)\,Q_x\|\le c_K.
	\end{equation}
\end{lemma}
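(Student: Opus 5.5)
For \eqref{eq:Dphi_on_Mbar} I will apply the chain rule to $\varphi=\phi_1\circ\phi_2$ at a point $x\in\M$. Since $x\in\M\subset\M_2$, Lemma~\ref{lem:Dphi_is_tangent_proj} gives $\phi_2(x)=x$; alternatively, $\phi_2(x)=x$ follows from the decomposition \eqref{eq:DP_decomp_on_K} with $u=0$, because the residuals vanish at $u=0$. Hence
\[
\rmD\varphi(x)=\rmD\phi_1(\phi_2(x))\,\rmD\phi_2(x)=\rmD\phi_1(x)\,\rmD\phi_2(x)=\Proj_{\T_x\M_1}\Proj_{\T_x\M_2},
\]
again by Lemma~\ref{lem:Dphi_is_tangent_proj}. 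The same conclusion can be read off from \eqref{eq:DP_decomp_on_K}, since $R_{\rmN_i}(x,u)$ and $R_{\rmT_i}(x,u)$ are $o(\|u\|)$ uniformly. To cover points of $\M\cap\mathcal N_{\delta_K}(K)$ rather than only points of $K$, I may have to shrink neighborhoods so that they lie in $\mathbf U_{\bar x}$, where clean intersection holds (Remark~\ref{rem:local_sets}).

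For \eqref{eq:tangent_identity_frozen}, set $A=\Proj_{\T_x\M_1}$ and $B=\Proj_{\T_x\M_2}$. These are self-adjoint projections, and by clean intersection $\T_x\M=\T_x\M_1\cap\T_x\M_2$, so $AP_x=P_x=BP_x$ and, taking adjoints, $P_xA=P_x=P_xB$. Then $ABP_x=P_x$, which gives $(\rmD\varphi(x)-I)P_x=0$ and also $Q_x\rmD\varphi(x)P_x=Q_xP_x=0$. On the other side, $P_xAB=P_xB=P_x$, so $P_x\rmD\varphi(x)Q_x=P_xQ_x=0$.

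For \eqref{eq:normal_contr_frozen}, I will use the two identities above: $AB$ commutes with $P_x$, so $AB-P_x=(AB)Q_x=Q_xABQ_x$. Therefore $\|Q_x\rmD\varphi(x)Q_x\|=\|AB-P_x\|=c(\M_1,\M_2,x)$, using Lemma~\ref{lem:contraction_projection} together with \eqref{def:angle_M1_M2_old}.

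The one real obstacle is uniformity, i.e. producing $c_K<1$ independent of $x$. By \eqref{eq:upper_bound_cosine}, or directly because $x\mapsto c(\M_1,\M_2,x)$ is continuous and strictly less than $1$ at each point, the supremum over the compact set $K$ is attained and is therefore below $1$. To extend this to $\M\cap\mathcal N_{\delta_K}(K)$, I will either take the closure of that set, shrinking $\delta_K$ so that the closure stays compact inside $\mathbf U_{\bar x}$, or use continuity of the angle to pick $c_K\in(\max_K c,1)$ and shrink $\delta_K$ accordingly. This is consistent with the convention in Remark~\ref{rem:local_sets} that neighborhoods may be shrunk.
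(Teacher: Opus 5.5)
Your proof is correct and follows essentially the same route as the paper: the chain rule with $\rmD\phi_i(x)=\Proj_{\T_x\M_i}$ for (a), the clean-intersection identities $\Proj_{\T_x\M_i}P_x=P_x=P_x\Proj_{\T_x\M_i}$ for (b) (the paper's ``tangent vectors are fixed and $(\T_x\M)^\perp$ is invariant''), and the uniform angle bound \eqref{eq:upper_bound_cosine} for (c), which you justify via continuity and compactness. Your refinements only make explicit what the paper uses implicitly: you invoke Lemma~\ref{lem:Dphi_is_tangent_proj} so that points of $\M\cap\mathcal N_{\delta_K}(K)$ outside $K$ are covered, and you note the exact identity $\rmD\varphi(x)-P_x=Q_x\rmD\varphi(x)Q_x$.
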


\begin{proof}
	{(a)}
	Since $x\in \M\subset \M_i$, Assumption~\ref{assump:DP}(i) gives
	$R_{{\rmN}_i}(x,u)=o(\|u\|)$ and $R_{{\rmT}_i}(x,u)=o(\|u\|)$ as $u\to 0$, hence
	\[
	\phi_i(x+u)=x+\Proj_{\T_x\M_i}(u)+o(\|u\|),
	\quad\text{so}\quad
	\rmD\phi_i(x)=\Proj_{\T_x\M_i}.
	\]
	Therefore,
	\[
	\rmD\varphi(x)=\rmD\phi_1(x)\rmD\phi_2(x)
	=\Proj_{\T_x\M_1}\Proj_{\T_x\M_2},
	\]
	which proves \eqref{eq:Dphi_on_Mbar}.
	
	{(b)}
	For any $v\in \T_x\M=\T_x\M_1\cap \T_x\M_2$, we have
	$\Proj_{\T_x\M_2}v=v$ and $\Proj_{\T_x\M_1}v=v$, hence $\rmD\varphi(x)v=v$.
	Equivalently, $(\rmD\varphi(x)-I)P_x=0$.
	Moreover, $\rmD\varphi(x)P_x$ maps into $\T_x\M$, thus applying $Q_x$ yields
	$Q_x\,\rmD\varphi(x)\,P_x=0$. Since $\rmD\varphi(x)$ maps $\big(\T_{x}\M\big)^\perp$ into itself, we get
	$P_x\rmD\varphi(x)Q_x=0.$
	
	{(c)}
	By \eqref{eq:upper_bound_cosine} and the fact that $x\in \mathcal N_{\delta_K}(K)$,
	there exists $c_K\in[0,1)$ such that
	\[
	\bigl\|\Proj_{\T_x\M_1}\Proj_{\T_x\M_2}-\Proj_{\T_x\M}\bigr\|\le c_K.
	\]
	Let $u\in\mathcal E$ and set $u_N:=Q_xu\in \N_x\M$. Since $P_xu_N=0$, we get
	\[
	\rmD\varphi(x)u_N
	=\bigl(\Proj_{\T_x\M_1}\Proj_{\T_x\M_2}-P_x\bigr)u_N.
	\]
	Thus $\|\rmD\varphi(x)u_N\|\le c_K\|u_N\|$.
	Finally,
	\[
	\|Q_x\,\rmD\varphi(x)\,Q_xu\|
	\le \|\rmD\varphi(x)u_N\|
	\le c_K\|u_N\|
	\le c_K\|u\|,
	\]
	which gives \eqref{eq:normal_contr_frozen}.  
\end{proof}
 
 To get the retraction property \eqref{eq:second-order-retraction}, we decompose the recursive errors in the tangent and normal spaces.

\begin{lemma}\label{lem:EN_ET_recursions}
	Fix $\bar x\in\M$ and suppose the assumptions of
	Proposition~\ref{prop:limit_and_projection_error} hold.
	Let $x\in K$ and $\eta\in \T_x\M$ with $\|\eta\|\le\rho_K$.
	Define $x_0=x+\eta$ and $x_{k+1}=\varphi(x_k)$ for $k\ge0$.
	
	Let
	\[
	w^k:=x_k-x,\qquad v^k:=\phi_2(x_k)-x,
	\qquad E_k:=x_k-(x+\eta)=w^k-\eta.
	\]
	Then for all $k\ge0$,
	\begin{equation}\label{eq:E_total_recursion}
		E_{k+1}=\rmD\varphi(x)\,E_k+\mathcal E_k,
	\end{equation}
	where
	\begin{equation}\label{eq:def_Ecal_k}
		\mathcal E_k
		:=R_{\rmT_1}(x,v^k)+R_{\rmN_1}(x,v^k)
		+\Proj_{\T_x\M_1}\!\big(R_{\rmT_2}(x,w^k)+R_{\rmN_2}(x,w^k)\big).
	\end{equation}
	Moreover, one has the  recursions along normal and tangent spaces as follows:
	\begin{align}
		E_{k+1}^{\rm N}
		&:=Q_xE_{k+1}
		=Q_x\rmD\varphi(x)Q_xE_k^{\N}+Q_x\mathcal E_k,
		\tag{N}\label{eq:EN_recursion}\\[2mm]
		E_{k+1}^{\rm T}
		&:=P_xE_{k+1}
		=E_k^{\T}+P_x\big(R_{\rmT_1}(x,v^k)+R_{\rmT_2}(x,w^k)\big).
		\tag{T}\label{eq:ET_recursion}
	\end{align} 
\end{lemma}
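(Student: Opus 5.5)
The plan is a direct algebraic computation: apply the decomposition \eqref{eq:DP_decomp_on_K} twice, once for $\phi_2$ and once for $\phi_1$, both at the frozen base point $x\in K$, and then project onto $\T_x\M$ and $\N_x\M$. By Lemma~\ref{lem:uniform_invariance_summable}, all $x_k$ and $\phi_2(x_k)$ lie in $\mathcal N_{\delta_K}(K)$. Moreover $\|w^k\|\le C_K\|\eta\|\le\delta_K$ and, from the final estimate in that proof, $\|v^k\|\le 3\|w^k\|\le \delta_K$ (for $k=0$ this follows from Lemma~\ref{lem:one_step_into_N}). Hence $w^k,v^k\in\mathbb B(0,\delta_K)$ and the decompositions of Assumption~\ref{assump:DP} are available, possibly after shrinking $\rho_K$.

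\textbf{Total recursion.} Write $\phi_2(x_k)=\phi_2(x+w^k)=x+\Proj_{\T_x\M_2}w^k+R_{\rmN_2}(x,w^k)+R_{\rmT_2}(x,w^k)$, so that
\[
v^k=\Proj_{\T_x\M_2}w^k+R_{\rmN_2}(x,w^k)+R_{\rmT_2}(x,w^k).
\]
Next apply the decomposition to $\phi_1(x+v^k)$:
\[
x_{k+1}-x=\Proj_{\T_x\M_1}v^k+R_{\rmN_1}(x,v^k)+R_{\rmT_1}(x,v^k).
\]
Substituting and using $\rmD\varphi(x)=\Proj_{\T_x\M_1}\Proj_{\T_x\M_2}$ from Lemma~\ref{lem:frozen_splitting_fixed_point}(a) gives $w^{k+1}=\rmD\varphi(x)w^k+\mathcal E_k$, with $\mathcal E_k$ exactly as in \eqref{eq:def_Ecal_k}. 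Since $\eta\in\T_x\M$, Lemma~\ref{lem:frozen_splitting_fixed_point}(b) yields $\rmD\varphi(x)\eta=\eta$. Subtracting $\eta$ from both sides then gives \eqref{eq:E_total_recursion}.

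\textbf{Normal recursion.} Apply $Q_x$ to \eqref{eq:E_total_recursion} and split $E_k=P_xE_k+Q_xE_k$. The identity $Q_x\rmD\varphi(x)P_x=0$ from Lemma~\ref{lem:frozen_splitting_fixed_point}(b) removes the tangential part, which gives \eqref{eq:EN_recursion}.

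\textbf{Tangent recursion.} Apply $P_x$. Lemma~\ref{lem:frozen_splitting_fixed_point}(b) gives $P_x\rmD\varphi(x)Q_x=0$ and $P_x\rmD\varphi(x)P_x=P_x$, so $P_x\rmD\varphi(x)E_k=E_k^{\T}$. It remains to compute $P_x\mathcal E_k$, which rests on two observations.
\begin{itemize}
\item Since $\T_x\M\subset\T_x\M_i$, we have $\N_x\M_i\subset\N_x\M$. Hence $P_xR_{\rmN_i}=0$, which kills $R_{\rmN_1}(x,v^k)$.
\item For the $\phi_2$ residuals, $P_x\Proj_{\T_x\M_1}=P_x$, because $\T_x\M\subset\T_x\M_1$ implies $\Proj_{\T_x\M_1}P_x=P_x$ and taking adjoints gives the claim. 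Therefore $P_x\Proj_{\T_x\M_1}R_{\rmN_2}=P_xR_{\rmN_2}=0$ and $P_x\Proj_{\T_x\M_1}R_{\rmT_2}=P_xR_{\rmT_2}$.
\end{itemize}
These give \eqref{eq:ET_recursion}.

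\textbf{Main obstacle.} The algebra is routine. The only delicate point is justifying that $w^k$ and $v^k$ stay in $\mathbb B(0,\delta_K)$ uniformly in $k$, so that the decomposition is legitimate at every step. I would handle this by citing the bounds $\|x_k-x\|\le C_K\|\eta\|$ and $\|\phi_2(x_k)-x\|\le 3\|x_k-x\|$ from the proof of Lemma~\ref{lem:uniform_invariance_summable}, together with $\rho_K\le\delta_K/(3C_K)$.
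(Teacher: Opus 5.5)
Your proposal is correct and follows the paper's proof essentially step for step. You expand $\phi_2$ and then $\phi_1$ at the frozen base point $x$ via Assumption~\ref{assump:DP}, subtract $\eta$ using $\rmD\varphi(x)\eta=\eta$, and apply $Q_x$ and $P_x$ with the identities of Lemma~\ref{lem:frozen_splitting_fixed_point}(b). Two of your points fill in what the paper leaves implicit: the identity $P_x\Proj_{\T_x\M_1}=P_x$, which is needed to remove $P_x\Proj_{\T_x\M_1}R_{\rmN_2}(x,w^k)$, and the bound $\|v^k\|\le 3\|w^k\|$, which keeps $v^k$ inside the decomposition's domain.
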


\begin{proof}
	By Lemma~\ref{lem:uniform_invariance_summable}, $x_k\in\mathcal N_{\delta_K}(K)$
	and $\phi_2(x_k)\in\mathcal N_{\delta_K}(K)$ for all $k\ge0$.
	Hence $\|w^k\|\le\delta_K$ and $\|v^k\|\le\delta_K$.
	
	Apply Assumption~\ref{assump:DP} at the base point $x\in K$:
	\[
	\phi_2(x+w^k)
	=x+\Proj_{\T_x\M_2}(w^k)+R_{\rmN_2}(x,w^k)+R_{\rmT_2}(x,w^k),
	\]
	so $v^k=\Proj_{\T_x\M_2}(w^k)+R_{\rmN_2}(x,w^k)+R_{\rmT_2}(x,w^k)$.
	Using this in the decomposition of $\phi_1$ gives
	\[
	x_{k+1}
	=\phi_1(x+v^k)
	=x+\Proj_{\T_x\M_1}(v^k)+R_{\rmN_1}(x,v^k)+R_{\rmT_1}(x,v^k).
	\]
	Therefore, it follows that
	\[
	w^{k+1}
	=\Proj_{\T_x\M_1}\Proj_{\T_x\M_2}(w^k)+\mathcal E_k,
	\]
	where $\mathcal E_k$ is exactly \eqref{eq:def_Ecal_k}.
	Subtracting $\eta$ from both sides and using $\eta\in\T_x\M$ together with
	Lemma~\ref{lem:frozen_splitting_fixed_point}\eqref{eq:tangent_identity_frozen},
	$\rmD\varphi(x)\eta=\eta$, yields \eqref{eq:E_total_recursion}.
	
	Applying $Q_x$ to \eqref{eq:E_total_recursion} and using
	Lemma~\ref{lem:frozen_splitting_fixed_point}\eqref{eq:tangent_identity_frozen},
	$Q_x\rmD\varphi(x)P_x=0$, we obtain \eqref{eq:EN_recursion}.
	Applying $P_x$ and using
	$P_x\rmD\varphi(x)P_x=P_x$ and $P_x\rmD\varphi(x)Q_x=0$ from
	Lemma~\ref{lem:frozen_splitting_fixed_point}\eqref{eq:tangent_identity_frozen},
	we get
	\[
	P_xE_{k+1}=P_xE_k+P_x\mathcal E_k.
	\]
	Finally, since $R_{\rmN_1}(x,v^k)\in\N_x\M_1\perp\T_x\M$ and
	$R_{\rmN_2}(x,w^k)\in\N_x\M_2\perp\T_x\M$, we have
	$P_xR_{\rmN_1}(x,v^k)=0$ and $P_xR_{\rmN_2}(x,w^k)=0$,
	which gives \eqref{eq:ET_recursion}. 
\end{proof}

We get the following result, which certificates that $\psi(x, \cdot)$ satisfies the second-order retraction property~\eqref{eq:second-order-retraction}. The main idea is to show that, in the tangent space, the errors are summable thanks to the $R$-linear convergence rate \eqref{ineq:R-linear_x_k}, while in the normal space the error is contractive due to the angle condition \eqref{eq:normal_contr_frozen}.
\begin{proposition}\label{prop:tangent_error_o_eta2}
	Fix $\bar x\in\M$ and suppose the assumptions of Proposition~\ref{prop:limit_and_projection_error}  hold. 	Assume moreover that Assumption~\ref{assump:DP} holds with $p\ge 3$. Let $x\in K$ and $\eta\in \T_x\M$ with $\|\eta\|\le\rho_K$.
	Let $x_k$ be generated by $x_{k+1}=\varphi(x_k)$ from $x_0=x+\eta$,
	and let $x^*\in \M\cap\mathcal N_{\delta_K}(K)$ be its limit point.
	Then,  one has
	\begin{equation} \label{eq:normal_error_o_eta2_final}
			\Proj_{\N_x\M}\bigl(x^*-(x+\eta)\bigr)=O(\|\eta\|^2),
	\end{equation}
	\begin{equation}\label{eq:tangent_error_o_eta2_final}
		\Proj_{\T_x\M}\bigl(x^*-(x+\eta)\bigr)=o(\|\eta\|^2),
		\qquad \eta\to 0,
	\end{equation}
	uniformly with respect to $x\in K$.
\end{proposition}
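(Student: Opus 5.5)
We will work with the error recursions of Lemma~\ref{lem:EN_ET_recursions}. Since $x_k\to x^*$, we have $E_k\to x^*-(x+\eta)$. Hence it suffices to bound $\sup_k\|E_k^{\rm N}\|$ by $O(\|\eta\|^2)$ and $\lim_k E_k^{\rm T}=\sum_{k\ge0}P_x\big(R_{\rmT_1}(x,v^k)+R_{\rmT_2}(x,w^k)\big)$ by $o(\|\eta\|^2)$, uniformly in $x\in K$. Note that $E_0=0$.

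\textbf{Step 1: the arguments are nearly tangent.} First we check that the arguments $w^k$ and $v^k$ satisfy the hypothesis of Assumption~\ref{assump:DP}(ii). We have $w^k=\eta+E_k$ with $\|w^k\|\le C_K\|\eta\|$ by \eqref{eq:bound_for_x_k-barx_uniform}. Moreover $\Proj_{\rmN_x\M_2}(w^k)=\Proj_{\rmN_x\M_2}(E_k)$, because $\eta\in\T_x\M\subset\T_x\M_2$, and similarly for $v^k$ with $\M_1$. It therefore suffices to show $\|E_k\|=o(\|\eta\|)$ uniformly in $k$. This follows from Proposition~\ref{prop:limit_and_projection_error}: $\|x_k-x_0\|\le\|x_1-x_0\|+\sum_{j\ge1}\|\Delta_j\|\le o(\|\eta\|)+\frac{2L_K}{1-\tau_K}d_{\M_2}(x_1)=o(\|\eta\|)$. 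The same argument applies to $v^k=w^k+(\phi_2(x_k)-x_k)$, using $\|\phi_2(x_k)-x_k\|\le 2d_{\M_2}(x_k)$. So \eqref{eq:condition_small_o_normal} applies at every $k$, with $\omega(r)\downarrow0$.

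\textbf{Step 2: normal bound.} Apply \eqref{eq:condition_small_o_normal} to each term of $\mathcal E_k$. This gives $\|Q_x\mathcal E_k\|\le\|\mathcal E_k\|\le C\|\eta\|^2$ uniformly in $k$. Then \eqref{eq:EN_recursion} together with \eqref{eq:normal_contr_frozen} yields $\|E^{\rm N}_{k+1}\|\le c_K\|E^{\rm N}_k\|+C\|\eta\|^2$. Hence $\sup_k\|E_k^{\rm N}\|\le C\|\eta\|^2/(1-c_K)$, which is \eqref{eq:normal_error_o_eta2_final}.

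\textbf{Step 3: tangent bound (the main obstacle).} The crude bound $R_{\rmT_i}=o(\|\eta\|^2)$ holds for each $k$, but an infinite sum of such terms is not obviously $o(\|\eta\|^2)$. So we must exploit the decay of consecutive differences. Write $S_k:=R_{\rmT_1}(x,v^k)+R_{\rmT_2}(x,w^k)$. Because $x^*\in\M$ is a fixed point, the natural limit of $S_k$ is the pair $R_{\rmT_i}$ evaluated at $w^*:=x^*-x$. We then telescope against this limit using Lemma~\ref{lem:uniform_o_r_RT} with $r=C\|\eta\|$:
\[
\|R_{\rmT_2}(x,w^k)-R_{\rmT_2}(x,w^*)\|\le L_{2,K}(r)\|w^k-w^*\|+C_{2,K}r\|\Proj_{\rmN_x\M_2}(w^k-w^*)\|,
\]
and similarly for $v^k$, where $v^*=w^*$ since $\phi_2(x^*)=x^*$. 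By \eqref{ineq:R-linear_x_k}, $\|w^k-w^*\|\le\widehat C_K\tau_K^k\|\eta\|$, so the first term sums to $o(\|\eta\|)\cdot O(\|\eta\|)=o(\|\eta\|^2)$. For the second term, the normal parts are $O(\|\eta\|^2)$ by Step~2 (the normal components with respect to $\N_x\M_i\subset\N_x\M$). Because $\|w^k-w^*\|$ also decays geometrically, we bound $\|\Proj_{\rmN}(w^k-w^*)\|\le\min\{C\|\eta\|^2,\widehat C_K\tau_K^k\|\eta\|\}$. Summing this minimum gives only $O(\|\eta\|^2\log(1/\|\eta\|))$ terms times $r$, i.e.\ $O(\|\eta\|^3\log(1/\|\eta\|))=o(\|\eta\|^2)$. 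If needed, we would instead refine the normal recursion to show $\|E^{\rm N}_k-E^{\rm N}_{k+1}\|$ decays like $\tau^k\|\eta\|^2$.

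\textbf{Step 4: handling the constant term.} It remains to control the constant term $\sum_k S(w^*)$, which does not decay and so does not telescope. This is the subtle point. We would handle it by noting that $R_{\rmT_i}(x,w^*)$ is itself $o(\|\eta\|^2)$, but the number of summands is infinite, so the direct route fails. We would therefore try the following instead. The decomposition \eqref{eq:DP_decomp_on_K} at the fixed point gives $w^*=\Proj_{\T_1}\Proj_{\T_2}w^*+\mathcal E^*$, and projecting with $P_x$ forces $P_x\big(R_{\rmT_1}(x,w^*)+R_{\rmT_2}(x,w^*)\big)=0$. Hence the constant term vanishes identically. The full tangent sum then reduces to the telescoped differences, and combining Steps~3 and~4 gives \eqref{eq:tangent_error_o_eta2_final}, uniformly in $x\in K$ by the uniformity of all the constants.
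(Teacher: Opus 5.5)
Your proposal is correct and follows the paper's skeleton. The common steps are: the contraction bound $\sup_k\|E_k^{\rm N}\|=O(\|\eta\|^2)$; the fixed-point relation at $x^*$ (with $v^*=w^*$), which forces $P_x\bigl(R_{\rmT_1}(x,w^*)+R_{\rmT_2}(x,w^*)\bigr)=0$, so that $E_*^{\rm T}$ is a sum of differences; and Lemma~\ref{lem:uniform_o_r_RT} applied to $w^k-w^*$ and $v^k-v^*$, with the $R$-linear rate \eqref{ineq:R-linear_x_k} controlling the tangential part.

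The genuine difference is in the normal-coupling term $C_{i,K}\,r\,\|\Proj_{\N_x\M_i}(u_1-u_2)\|$. The paper first proves the tail estimate $\|E_k^{\rm N}-E_*^{\rm N}\|=O(q_K^k\|\eta\|^2)$ by a second contraction argument, combining the fixed-point equation for $E_*^{\rm N}$ with a mean-value bound on $\mathcal E_k-\mathcal E_*$. That makes this term $O(\|\eta\|^3)$. You instead use the bound $\min\{C\|\eta\|^2,\widehat C_K\tau_K^k\|\eta\|\}$, which costs a factor $\log(1/\|\eta\|)$ but still gives $O(\|\eta\|^3\log(1/\|\eta\|))=o(\|\eta\|^2)$. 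Your route is shorter and needs only the uniform normal bound; the paper's is sharper.

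Your Step~1 also proves the near-tangency of $w^k$, $v^k$, $w^*$ (via $\|E_k\|=o(\|\eta\|)$ uniformly in $k$), which the paper only asserts.

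One line is missing. The bound $\|\Proj_{\N_x\M_1}(v^k-v^*)\|=O(\|\eta\|^2)$ does not follow from Step~2 directly, because $v^k-v^*\neq E_k-E_*$. Write $v^k-v^*=(w^k-w^*)+(v^k-w^k)$. The decomposition of $\phi_2$ then gives $v^k-w^k=-\Proj_{\N_x\M_2}E_k+O(\|\eta\|^2)=O(\|\eta\|^2)$, which closes the step.
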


\begin{proof}
By Lemma~\ref{lem:EN_ET_recursions}, for all $k\ge0$,
\begin{align*}
    E_{k+1}^{\rm N}
    &=Q_x\rmD\varphi(x)Q_xE_k^{\rm N}+Q_x\mathcal E_k,\\
    E_{k+1}^{\rm T}
    &=E_k^{\rm T}
    +P_x\big(R_{\rmT_1}(x,v^k)+R_{\rmT_2}(x,w^k)\big),
\end{align*}
where $w^k=x_k-x$ and $v^k=\phi_2(x_k)-x$.

\medskip
\noindent\textbf{Step 1: Uniform second-order bound for the normal error.}
By Lemma~\ref{lem:frozen_splitting_fixed_point}\eqref{eq:normal_contr_frozen},
$\|Q_x\rmD\varphi(x)Q_x\|\le c_K$ for some $c_K\in[0,1)$ depending only on $K$.
Moreover, Assumption~\ref{assump:DP} gives
$\|R_{\rmN_i}(x,u)\|\le C\|u\|^2$ for $\|u\|\le\delta_K$, uniformly for
$x\in K$. Since \eqref{eq:bound_for_x_k-barx_uniform} gives
$\|w^k\|\le C_K\|\eta\|$ for all $k\ge0$, and $\phi_2$ is locally Lipschitz,
we also have $\|v^k\|=O(\|\eta\|)$ uniformly in $k$. Hence, by the definition
of $\mathcal E_k$, there exists $C'>0$ such that
$\|Q_x\mathcal E_k\|\le C'\|\eta\|^2$ for all $k\ge0$. Therefore,
\[
    \|E_{k+1}^{\rm N}\|
    \le c_K\|E_k^{\rm N}\|+C'\|\eta\|^2.
\]
Since $E_0^{\rm N}=0$, this yields
$\sup_{k\ge0}\|E_k^{\rm N}\|\le C_{\rm N}\|\eta\|^2$, proving
\eqref{eq:normal_error_o_eta2_final}.

\medskip
\noindent\textbf{Step 2: Tail estimate for the normal component.}
Since $E_k\to E_*:=x^*-(x+\eta)$, the tangential recursion gives
\[
    E_*^{\rm T}
    =
    \sum_{k\ge0}
    P_x\big(R_{\rmT_1}(x,v^k)+R_{\rmT_2}(x,w^k)\big).
\]
Let $w^*:=x^*-x$ and $v^*:=\phi_2(x^*)-x$. Since $x^*$ is a fixed point of
$\varphi$, the same recursion at the limit gives
\[
    P_x\big(R_{\rmT_1}(x,v^*)+R_{\rmT_2}(x,w^*)\big)=0.
\]

We next prove a normal tail estimate. At the limit point,
\[
    E_*^{\rm N}
    =
    Q_x\rmD\varphi(x)Q_xE_*^{\rm N}
    +Q_x\mathcal E_*,
\]
where $\mathcal E_*$ is obtained from $\mathcal E_k$ by replacing
$(w^k,v^k)$ with $(w^*,v^*)$. Hence
\[
  \|  E_{k+1}^{\rm N}-E_*^{\rm N}\|
    =
    \| Q_x\rmD\varphi(x)Q_x(E_k^{\rm N}-E_*^{\rm N})
    +
    Q_x(\mathcal E_k-\mathcal E_*)\|\leq c_K\|  E_{k}^{\rm N}-E_*^{\rm N}\|+  \|Q_x(\mathcal E_k-\mathcal E_*)\|.
\]
The residual maps in $\mathcal E_k$ satisfy $R(x,0)=0$ and
$\rmD_uR(x,0)=0$. Thus their derivatives are $O(r)$ on $\|u\|\le r$.
Here \(r=O(\|\eta\|)\), because
$\|w^k\|+\|w^*\|+\|v^k\|+\|v^*\|=O(\|\eta\|)$. The mean-value theorem gives
\[
    \|\mathcal E_k-\mathcal E_*\|
    =
    O(\|\eta\|)
    \big(\|w^k-w^*\|+\|v^k-v^*\|\big).
\]
Using the local Lipschitz continuity of $\phi_2$ and
Proposition~\ref{prop:limit_and_projection_error}, we have
\begin{equation}\label{eq:vk-wk-star}
    \|v^k-v^*\|
    =
    O(\|w^k-w^*\|)
    =
    O(\|x_k-x^*\|)
    =
    O(\tau_K^{\,k}\|\eta\|).
\end{equation}
Therefore, $\|Q_x(\mathcal E_k-\mathcal E_*)\|=
O(\tau_K^{\,k}\|\eta\|^2)$. Let
$q_K:=\max\{\tau_K, c_K\}\in(0,1)$. We obtain
\begin{equation}\label{eq:E^Nk}
    \|E_k^{\rm N}-E_*^{\rm N}\|
    =
    O(q_K^{\,k}\|\eta\|^2).
\end{equation}

\medskip
\noindent\textbf{Step 3: Estimate of the tangential component.}
By Step~1, after possibly shrinking the neighborhood, there exists
$r=r(\eta)=O(\|\eta\|)$ such that, uniformly for $x\in K$,
\[
    \|w^k\|,\ \|w^*\|,\ \|v^k\|,\ \|v^*\|\le r,
    \qquad k\ge0.
\]
Moreover, these vectors are uniformly nearly tangent to the corresponding
manifolds. Hence Lemma~\ref{lem:uniform_o_r_RT}, with $p\ge3$, yields a
function $L_K(r)=o(r)$ and a constant $C_K^{\rm N}>0$ such that, for all
admissible $u_1,u_2$ with $\|u_1\|,\|u_2\|\le r$,
\begin{equation}\label{eq:RTx}
    \begin{aligned}
\|R_{\rmT_i}(x,u_1)-R_{\rmT_i}(x,u_2)\|
&\le
L_K(r)\|\Proj_{\T_x\M_i}(u_1-u_2)\|
+ C_K^{\rm N} r\|\Proj_{\N_x\M_i}(u_1-u_2)\|.
\end{aligned}
\end{equation}

We first estimate the normal parts of $w^k-w^*$ and $v^k-v^*$. Since
$w^k-w^*=E_k-E_*$ and $\N_x\M_i\subset \N_x\M$, \eqref{eq:E^Nk} gives
\[
    \|\Proj_{\N_x\M_2}(w^k-w^*)\|
    \le
    \|Q_x(E_k-E_*)\|
    =
    O(q_K^{\,k}\|\eta\|^2).
\]
For $v^k-v^*$, using $\rmD\phi_2(x)=\Proj_{\T_x\M_2}$ and the $C^2$ regularity
of $\phi_2$, we have
\[
    v^k-v^*
    =
    \Proj_{\T_x\M_2}(w^k-w^*)
    +O(\|\eta\|\|w^k-w^*\|)
    =
    \Proj_{\T_x\M_2}(w^k-w^*)+O(q_K^k\|\eta\|^2).
\]
Applying $\Proj_{\N_x\M_1}$, and using
$w^k-w^*=P_x(E_k-E_*)+Q_x(E_k-E_*)$ with
$P_x(E_k-E_*)\in \T_x\M=\T_x\M_1\cap\T_x\M_2$, we obtain
\[
\begin{aligned}
\|\Proj_{\N_x\M_1}(v^k-v^*)\|
&\le
\|\Proj_{\N_x\M_1}\Proj_{\T_x\M_2}Q_x(E_k-E_*)\|
+
O(q_K^k\|\eta\|^2)  =
O(q_K^k\|\eta\|^2).
\end{aligned}
\]
Consequently, we get
\[
\|\Proj_{\N_x\M_1}(v^k-v^*)\|
+
\|\Proj_{\N_x\M_2}(w^k-w^*)\|
=
O(q_K^{\,k}\|\eta\|^2).
\]

Using \eqref{eq:RTx}, together
with \eqref{eq:vk-wk-star} and the preceding normal estimates, we obtain, for
some $\tilde C>0$,
\[
\begin{aligned}
&\|R_{\rmT_1}(x,v^k)-R_{\rmT_1}(x,v^*)\|
+\|R_{\rmT_2}(x,w^k)-R_{\rmT_2}(x,w^*)\| \le
\tilde C\,L_K(r)\,\tau_K^{\,k}\|\eta\|
+
\tilde C\,r\,q_K^{\,k}\|\eta\|^2 .
\end{aligned}
\]
Summing over $k\ge0$ yields
\[
\begin{aligned}
\|E_*^{\rm T}\|
&=
\left\|
\sum_{k\ge0}P_x\Big(
R_{\rmT_1}(x,v^k)+R_{\rmT_2}(x,w^k)
-\big(R_{\rmT_1}(x,v^*)+R_{\rmT_2}(x,w^*)\big)
\Big)
\right\| \\
&\le
\tilde C\,L_K(r)\|\eta\|\sum_{k\ge0}\tau_K^{\,k}
+
\tilde C\,r\|\eta\|^2\sum_{k\ge0}q_K^{\,k} \\
&\le
C''\,L_K(r)\|\eta\|
+
C''\,r\|\eta\|^2 .
\end{aligned}
\]
Since $L_K(r)=o(r)$ and $r=O(\|\eta\|)$, the first term is
$o(\|\eta\|^2)$, while the second term is $O(\|\eta\|^3)$. Hence
\[
    \|E_*^{\rm T}\|=o(\|\eta\|^2),
\]
which proves \eqref{eq:tangent_error_o_eta2_final}.
\end{proof}
\begin{remark}
    For any finite iteration $k,$ we also have the second-order approximation: as $\eta\to 0,$
\begin{equation}\label{eq:tangent_error_o_eta2_finite}
		x_k = x+\eta+\mathcal R^\T_k(\eta)+\mathcal R^\N_k(\eta),\qquad \mathcal R^\T_k(\eta) = o(\|\eta\|^2),\qquad \mathcal R^\N_k(\eta) = O(\|\eta\|^2).
	\end{equation}
    This means that all the iterates approximate the orthogonal projection $\Proj_{\M}(x+\eta)$ by a second-order error.
\end{remark}

\subsection{\texorpdfstring{$C^1$}{C1} regularity of the limiting map
\texorpdfstring{$\psi$}{psi} on \texorpdfstring{$\mathrm T\M$}{TM}}
\label{subsect:C^1}
We next prove that $\psi$ is $C^1$ on a neighborhood of $(\bar x,0)$ in the tangent bundle $\T\M$.  Different from the $C^2$ regularity conditions in Proposition~\ref{prop:limit_and_projection_error}, we further need the $C^{2,1}$ regularity in Assumption~\ref{assump:DP} since we require a Lipschitz continuity condition for $\rmD \varphi$. 

Recall the series defined in~\eqref{def:xstar_series}. Our goal is to show that the differential $\rmD \Delta_k$ is also absolutely summable. For each $k\ge0$, define $F_k:\mathcal D(\bar x)\to \E$ by $F_k(x,\eta):=x_k$, and set
\begin{equation}\label{def:x_k_deriv_eta}\tag{J}
	J_k:=\mathrm D F_k(x,\eta),
\end{equation}
in the sense of \eqref{def:differential_tangent_bundle}. Since $F_{k+1}(x,\eta)=\varphi(F_k(x,\eta)),$ one has
\[
J_{k+1} = \mathrm{D}\varphi(x_k)J_k.
\]
Then
$
\mathrm D\Delta_k(x,\eta)=J_{k+1}-J_k,
$
and therefore differentiability of $\psi$ will follow once we show that
$\sum_{k\ge0}\|J_{k+1}-J_k\|$ converges uniformly. For simplicity, we will treat $J_k$ as the Jacobian of $x_k$ with respect to $(x,\eta)$,
without explicitly displaying a local trivialization of $\T\M$.

The following lemma provides uniform bounds for the derivatives $J_k$ via a tangent–normal decomposition, in the spirit of Proposition~\ref{prop:tangent_error_o_eta2}, but centered at the limit point $x^*$. We denote the orthogonal splitting at $x^*$ by
 \[
 P^*=\Proj_{\T_{x^*}\M},\qquad Q^*=\Proj_{\N_{x^*}\M}.
 \]
 \begin{lemma} \label{lem:bound_Jk_uniform}
	Under the same conditions of Proposition~\ref{prop:limit_and_projection_error},
	let $x^*\in \M\cap \mathcal N_{\delta_K}(K)$ be the limit point of $\{x_k\}$. Assume moreover that Assumption~\ref{assump:DP} holds with $p\ge 2$. Let $J_k$ be given by \eqref{def:x_k_deriv_eta}.
 Then, after possibly shrinking $\rho_K$,
 there exists a constant $q_K\in [\tau_K, 1)$ such that
\begin{equation}\label{eq:a_decay_final}
	\|Q^*J_k\|\le C_{N,K}\,q_K^{\,k}\,\|\eta\|,\qquad \forall k\ge0.
\end{equation} and   $C_{J,K}>0$
such that
\begin{equation}\label{eq:Jk_uniform_bound}
	\sup_{k\ge 0}\ \sup_{(x,\eta)\in\mathcal D_K}\ \|J_k\|
	\le C_{J,K}.
\end{equation}
\end{lemma}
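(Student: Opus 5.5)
The plan is to freeze the linearization at the limit point $x^*$ rather than at the base point $x$, and to run two coupled scalar recursions for the normal and tangential parts of $J_k$. Set
\[
a_k:=\|Q^*J_k\|,\qquad b_k:=\|P^*J_k\|,
\]
so that $\|J_k\|\le a_k+b_k$. Throughout, $J_k$ acts on unit tangent vectors $(\dot x,\dot\eta)\in\T_{(x,\eta)}(\T\M)$.

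\textbf{Step 1: the initial terms $a_0$ and $b_0$.} Since $F_0(x,\eta)=x+\eta$, we have $J_0[\dot x,\dot\eta]=\dot x+\dot\eta$.
\begin{itemize}
\item $\dot x\in\T_x\M$.
\item Differentiating the constraint $Q_{x(t)}\eta(t)=0$ gives $Q_x\dot\eta=-\rmD Q_x[\dot x]\eta$, which is $O(\|\eta\|)$ because $x\mapsto Q_x$ is $C^1$ on the compact $K$.
\item By Proposition~\ref{prop:limit_and_projection_error}, $\|x^*-x\|\le C_K\|\eta\|$, so $\|Q^*-Q_x\|=O(\|\eta\|)$.
\end{itemize}
Together these give $a_0\le C\|\eta\|$ and $b_0\le C$, uniformly on $\mathcal D_K$. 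I expect this to be the only mildly delicate point, because the bundle derivative carries the $\rmD Q_x[\dot x]\eta$ term.

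\textbf{Step 2: the perturbed recursions.} From $J_{k+1}=\rmD\varphi(x_k)J_k$, write
\[
\rmD\varphi(x_k)=\rmD\varphi(x^*)+\Delta\mathcal A_k .
\]
By the Lipschitz bound \eqref{eq:Dvarphi_Lip_on_NK} and the $R$-linear estimate \eqref{ineq:R-linear_x_k},
\[
\|\Delta\mathcal A_k\|\le \Lambda_K\|x_k-x^*\|\le \Lambda_K\widehat C_K\tau_K^k\|\eta\|.
\]
Lemma~\ref{lem:frozen_splitting_fixed_point} applies at $x^*\in\M\cap\mathcal N_{\delta_K}(K)$ and gives $Q^*\rmD\varphi(x^*)P^*=0$, $P^*\rmD\varphi(x^*)P^*=P^*$, $P^*\rmD\varphi(x^*)Q^*=0$ and $\|Q^*\rmD\varphi(x^*)Q^*\|\le c_K$. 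Hence
\[
a_{k+1}\le c_K a_k+M\tau_K^k\|\eta\|(a_k+b_k),\qquad
b_{k+1}\le b_k+M\tau_K^k\|\eta\|(a_k+b_k).
\]

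\textbf{Step 3: the uniform bound \eqref{eq:Jk_uniform_bound}.} Set $s_k:=a_k+b_k$. Since $c_K\le1$, adding the two recursions gives
\[
s_{k+1}\le(1+2M\tau_K^k\|\eta\|)s_k .
\]
Therefore
\[
s_k\le s_0\prod_{j\ge0}(1+2M\tau_K^j\|\eta\|)\le s_0\exp\!\Big(\frac{2M\rho_K}{1-\tau_K}\Big)=:C_{J,K}.
\]
This bound is uniform in $k$ and in $(x,\eta)\in\mathcal D_K$.

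\textbf{Step 4: the geometric decay \eqref{eq:a_decay_final}.} Inserting $s_k\le C_{J,K}$ back into the normal recursion gives
\[
a_{k+1}\le c_K a_k+MC_{J,K}\tau_K^k\|\eta\|.
\]
Unrolling with $a_0\le C\|\eta\|$ yields
\[
a_k\le c_K^k C\|\eta\|+MC_{J,K}\|\eta\|\sum_{j<k}c_K^{k-1-j}\tau_K^j .
\]
The sum is at most $k\max\{c_K,\tau_K\}^{k-1}$. To absorb the factor $k$, fix any $q_K\in(\max\{c_K,\tau_K\},1)$ and note that $k\max\{c_K,\tau_K\}^{k}\le C q_K^k$. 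This gives $a_k\le C_{N,K}q_K^k\|\eta\|$ with $q_K\ge\tau_K$, as claimed.

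The only shrinking of $\rho_K$ needed is to keep $x^*$ and all iterates inside $\mathcal N_{\delta_K}(K)$, where $\Lambda_K$ and $c_K$ are valid; Lemma~\ref{lem:uniform_invariance_summable} already guarantees this.
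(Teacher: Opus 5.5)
Your proof is correct. It shares the paper's architecture: freeze the linearization at $x^*$, split $J_k$ into $P^*J_k$ and $Q^*J_k$, and control $\rmD\varphi(x_k)-\rmD\varphi(x^*)$ by $\Lambda_K\widehat C_K\tau_K^k\|\eta\|$. You close the coupled recursions differently, though.

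The paper's proof does three things:
\begin{itemize}
\item it first shrinks $\rho_K$ so that $\|Q^*\rmD\varphi(x_k)Q^*\|\le\tilde c_K<1$ along all iterates;
\item it bounds the off-diagonal block $P^*\rmD\varphi(x_k)Q^*$ only by the crude constant $M_{\varphi,K}$;
\item it then invokes Lemma~\ref{lem:summable_coupled_recursion}.
\end{itemize}
Strictly speaking, that lemma requires $\sum_k\alpha_k<\infty$, which fails for the constant coefficient $\alpha_k=M_{\varphi,K}$. The paper's step therefore needs exactly the observation you use, $P^*\rmD\varphi(x^*)Q^*=0$, to make that coefficient summable. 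You keep every perturbation in the single summable term $M\tau_K^k\|\eta\|(a_k+b_k)$ and add the two recursions. This gives the uniform bound by an elementary product estimate, with no shrinking of $\rho_K$ and no appeal to the appendix lemma.

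You also supply two details the paper skips.
\begin{itemize}
\item The initial estimate $\|Q^*J_0\|=O(\|\eta\|)$. It is needed to obtain the factor $\|\eta\|$ in \eqref{eq:a_decay_final}, since $a_0$ propagates through the term $c_K^k a_0$. You obtain it from $Q_x\dot\eta=-\rmD Q_x[\dot x]\eta$ and $\|Q^*-Q_x\|=O(\|\eta\|)$.
\item The choice of $q_K$ strictly above $\max\{c_K,\tau_K\}$, which absorbs the factor $k$ when the two rates coincide. The paper's choice $q_K=\max\{\tilde c_K,\tau_K\}$ is slightly too optimistic in that case.
\end{itemize}

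What the paper's route buys is the uniform contraction \eqref{eq:normal_block_contr_along_iterates_Jk} along the iterates. It reuses this for $H_k$ and $\rmD^2\Delta_k$, where the forcing term $Q^*B_k$ is merely bounded rather than summable, so simply adding the recursions would no longer suffice. If you continue to the $C^2$ analysis, you will want that contraction, or an equivalent argument, as well.
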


\begin{proof}
By \eqref{eq:invariance_uniform} and Proposition~\ref{prop:limit_and_projection_error},
we have $x_k\in \mathcal N_{\delta_K}(K)$, $x^*\in \M\cap \mathcal N_{\delta_K}(K)$ and
\[
\|x_k-x^*\|\le \widehat C_K\,\tau_K^{\,k}\,\|\eta\|\qquad \forall k\ge0.
\]
Denote
\[
t_k:=\|P^*J_k\|,\qquad a_k:=\|Q^*J_k\|.
\]

\textbf{Step 1: recursion for the normal block.}
Since $J_{k+1}=\rmD\varphi(x_k)J_k$ and $I=P^*+Q^*$, we have
\[
Q^*J_{k+1}
=Q^*\rmD\varphi(x_k)Q^*J_k+Q^*\rmD\varphi(x_k)P^*J_k.
\]
Lemma~\ref{lem:frozen_splitting_fixed_point}\eqref{eq:normal_contr_frozen} gives
$\|Q^*\,\rmD\varphi(x^*)\,Q^*\|\le c_K$ with some $c_K\in[0,1)$.
By Lipschitz continuity of $\rmD\varphi$ and $\|x_k-x^*\|\le \widehat C_K\tau_K^{\,k}\|\eta\|$,
\[
\|Q^*(\rmD\varphi(x_k)-\rmD\varphi(x^*))Q^*\|
\le \Lambda_K\|x_k-x^*\|
\le \Lambda_K\widehat C_K\,\tau_K^{\,k}\|\eta\|
\le \Lambda_K\widehat C_K\rho_K\,\tau_K^{\,k}.
\]
Shrink $\rho_K$ so that $\Lambda_K\widehat C_K\rho_K\le (1-c_K)/2$.
Then there exists $\tilde c_K\in(c_K,1)$ such that
\begin{equation}\label{eq:normal_block_contr_along_iterates_Jk}
	\|Q^*\,\rmD\varphi(x_k)\,Q^*\|\le \tilde c_K<1,\qquad \forall k\ge0.
\end{equation}
Moreover, by Lemma~\ref{lem:frozen_splitting_fixed_point}\eqref{eq:tangent_identity_frozen},
$Q^*\rmD\varphi(x^*)P^*=0$, hence
\[
\|Q^*\,\rmD\varphi(x_k)\,P^*\|
\le \|\rmD\varphi(x_k)-\rmD\varphi(x^*)\|
\le \Lambda_K\widehat C_K\,\tau_K^{\,k}\|\eta\|.
\]
Therefore, there exists $C_1>0$ such that
\begin{equation}\label{eq:a_recursion_Jk}
	a_{k+1}\le \tilde c_K\,a_k + C_1\,\tau_K^{\,k}\,\|\eta\|\,t_k,
	\qquad \forall k\ge0.
\end{equation}

\textbf{Step 2: recursion for the tangential block.}
Similarly,
\[
P^*J_{k+1}
=P^*\rmD\varphi(x_k)P^*J_k+P^*\rmD\varphi(x_k)Q^*J_k,
\]
and thus
\begin{equation}\label{eq:t_recursion_raw_Jk}
	t_{k+1}\le \|P^*\rmD\varphi(x_k)P^*\|\,t_k+\|P^*\rmD\varphi(x_k)Q^*\|\,a_k.
\end{equation}
Since $P^*\rmD\varphi(x^*)P^*=P^*$ by Lemma~\ref{lem:frozen_splitting_fixed_point}\eqref{eq:tangent_identity_frozen},
the Lipschitz property of $\rmD\varphi$ implies
\[
\|P^*\rmD\varphi(x_k)P^*\|
\le 1+\|\rmD\varphi(x_k)-\rmD\varphi(x^*)\|
\le 1+C_2\,\tau_K^{\,k}\,\|\eta\|
\]
for some $C_2>0$.
Moreover, $\|P^*\rmD\varphi(x_k)Q^*\|$ is uniformly bounded on $\mathcal N_{\delta_K}(K)$; denote
\[
M_{\varphi,K}:=\sup_{y\in\mathcal N_{\delta_K}(K)}\|\rmD\varphi(y)\|<\infty.
\]
Then \eqref{eq:t_recursion_raw_Jk} yields
\begin{equation}\label{eq:t_recursion_Jk}
	t_{k+1}\le \bigl(1+C_2\,\tau_K^{\,k}\,\|\eta\|\bigr)t_k + M_{\varphi,K}\,a_k,
	\qquad \forall k\ge0.
\end{equation}

 Applying Lemma~\ref{lem:summable_coupled_recursion} to \eqref{eq:a_recursion_Jk} and \eqref{eq:t_recursion_Jk}, we get
\begin{equation}\label{eq:t_uniform_bound_final_Jk}
	\sup_{k\ge0}t_k\le C_{T,K},
\end{equation}
for some $C_{T,K}>0$.

Moreover, substituting \eqref{eq:t_uniform_bound_final_Jk} into \eqref{eq:a_recursion_Jk} gives
\[
a_{k+1}\le \tilde c_K a_k + C_1 C_{T,K}\,\tau_K^{\,k}\|\eta\|.\]
Let $q_K:=\max\{\tilde c_K,\tau_K\}\in(0,1)$. Then there exists $C_{N,K}>0$ such that
\begin{equation*}
	\|Q^*J_k\|=a_k\le C_{N,K}\,q_K^{\,k}\,\|\eta\|,\qquad \forall k\ge0.
\end{equation*}
Finally, since $\|J_k\|\le t_k+a_k$ and $\|\eta\|\le \rho_K\le 1$, we have
\[
\sup_{k\ge0}\|J_k\|
\le \sup_{k\ge0}t_k + \sum_{k\ge0}a_k
\le C_{T,K}+\frac{C_{N,K}}{1-q_K}
=:C_{J,K},
\]
which proves \eqref{eq:Jk_uniform_bound}.  
\end{proof}
As a result, we obtain the geometric decay of $\rmD\Delta_k$.
\begin{lemma} \label{lem:uniform_DDelta_summable}
	Under the same conditions of Lemma~\ref{lem:bound_Jk_uniform},
	there exist constants $C_{\Delta,K}>0$ and $ q_K\in(0,1)$ such that for all $k\ge0$,
	\begin{equation}\label{eq:DDelta_decay}
		\|\rmD\Delta_k\|\le C_{\Delta,K}\, q_K^{\,k}\,\|\eta\|.
	\end{equation}
	In particular, $\sum_{k\ge0}\sup_{\mathcal D_K}\|\rmD\Delta_k\|<\infty$.
\end{lemma}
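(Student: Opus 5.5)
We use $\rmD\Delta_k=J_{k+1}-J_k=(\rmD\varphi(x_k)-I)J_k$ and split $J_k=P^*J_k+Q^*J_k$ at the limit point $x^*$, exactly as in Lemma~\ref{lem:bound_Jk_uniform}. We take $q_K$ to be the constant produced there, enlarged if necessary so that $q_K\ge\tau_K$ (it already satisfies this).

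\textbf{Tangential block.} By Lemma~\ref{lem:frozen_splitting_fixed_point}\eqref{eq:tangent_identity_frozen} applied at $x^*\in\M\cap\mathcal N_{\delta_K}(K)$, we have $(\rmD\varphi(x^*)-I)P^*=0$. Therefore
\[
\|(\rmD\varphi(x_k)-I)P^*J_k\|\le\|\rmD\varphi(x_k)-\rmD\varphi(x^*)\|\,\|J_k\|\le \Lambda_K\widehat C_K\tau_K^{\,k}\|\eta\|\,C_{J,K}.
\]
This uses the Lipschitz bound \eqref{eq:Dvarphi_Lip_on_NK}, the $R$-linear estimate \eqref{ineq:R-linear_x_k}, and the uniform bound \eqref{eq:Jk_uniform_bound}.

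\textbf{Normal block.} We bound $\|(\rmD\varphi(x_k)-I)Q^*J_k\|\le (M_{\varphi,K}+1)\|Q^*J_k\|$. By \eqref{eq:a_decay_final} this is at most $(M_{\varphi,K}+1)C_{N,K}q_K^{\,k}\|\eta\|$.

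Adding the two blocks and using $\tau_K\le q_K$ gives \eqref{eq:DDelta_decay} with
\[
C_{\Delta,K}:=\Lambda_K\widehat C_KC_{J,K}+(M_{\varphi,K}+1)C_{N,K}.
\]
All constants depend only on $K$, so the bound is uniform over $\mathcal D_K$.

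For summability, $\sup_{\mathcal D_K}\|\rmD\Delta_k\|\le C_{\Delta,K}\rho_K q_K^{\,k}$, and this geometric series is finite.

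The only point needing care is that $J_k$ is a derivative on $\T\M$, taken with respect to $(x,\eta)$. The identity $J_{k+1}=\rmD\varphi(x_k)J_k$ holds for the ambient chain rule applied along any curve in $\T\M$, so operator norms are taken on $\T_{(x,\eta)}\T\M$ with the norm inherited from $\E\times\E$. The argument above is unaffected by this. The substantive work, namely the decay of $Q^*J_k$ and the boundedness of $J_k$, has already been done in Lemma~\ref{lem:bound_Jk_uniform}.
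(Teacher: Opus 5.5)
Your proof is correct and is essentially the paper's argument. Both start from $\rmD\Delta_k=(\rmD\varphi(x_k)-I)J_k$; both handle the tangential part with the Lipschitz bound on $\rmD\varphi$, the $R$-linear rate and $\sup_k\|J_k\|\le C_{J,K}$; and both control the remainder by the geometric decay of $Q^*J_k$ from Lemma~\ref{lem:bound_Jk_uniform}.

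The only difference is where the splitting is applied. You split $J_k$ on the input side, which needs just $(\rmD\varphi(x^*)-I)P^*=0$; this uses only that $\varphi$ fixes $\M$ pointwise. The paper instead projects $\rmD\Delta_k$ on the output side, using $P^*(\rmD\varphi(x^*)-I)=0$ (which additionally invokes $P^*\rmD\varphi(x^*)Q^*=0$) and bounding $\|Q^*\rmD\Delta_k\|\le\|Q^*J_{k+1}\|+\|Q^*J_k\|$.
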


\begin{proof}

    By \eqref{eq:invariance_uniform} and Proposition~\ref{prop:limit_and_projection_error},
	all iterates satisfy $x_k\in \mathcal N_{\delta_K}(K)$, $x^*\in \M\cap \mathcal N_{\delta_K}(K)$ and
	\[
	\|x_k-x^*\|\le \widehat C_K\,\tau_K^{\,k}\,\|\eta\|\qquad \forall k\ge0.
	\]
 
		By $\Delta_k=\varphi(x_k)-x_k$, we have
	\begin{equation}\label{eq:DDelta_formula_again}
		\rmD\Delta_k=\bigl(\rmD\varphi(x_k)-I\bigr)J_k,
	\end{equation}
	where  $J_k$ is given by  \eqref{def:x_k_deriv_eta}.
	Substituting $I=P^*+Q^*$   into \eqref{eq:DDelta_formula_again} yields:
	\[
    \begin{aligned}
        	\|\rmD\Delta_k\|&=\|P^* \rmD\Delta_k+Q^*\rmD\Delta_k\|\\
	&\le \|P^*(\rmD\varphi(x_k)-I) J_k\|\,
	+\|Q^*\rmD\Delta_k\|.
    \end{aligned}
	\]
	For the  tangent part, it follows from   Lemma~\ref{lem:frozen_splitting_fixed_point}\eqref{eq:tangent_identity_frozen} that
	 $(\rmD\varphi(x^*)-I)P^*=0$, hence
	\[
    \begin{aligned}
     &\quad   	\|P^*(\rmD\varphi(x_k)-I)J_k\|\\
	&\le \|P^*(\rmD\varphi(x_k)-\rmD\varphi(x^*))P^*J_k\|+\|P^*(\rmD\varphi(x_k)-\rmD\varphi(x^*))Q^*J_k\|\\
	&\le \Lambda_K\|x_k-x^*\|(\|P^*J_k\|+\|Q^*J_k\|)\\
	&\le \Lambda_K\widehat C_{P,J}\,\tau_K^{\,k}\|\eta\|,
    \end{aligned}
	\]
    where we use the the uniform bounds of $\|J_k\|$ and $\|Q^*J_k\|$ in Lemma~\ref{lem:bound_Jk_uniform} in the last inequality.
  
	For the normal contribution, one has
\[ \begin{aligned}
     \|Q^*\rmD\Delta_k\| = \|Q^*(J_{k+1}-J_k)\|
\leq \|Q^* J_{k+1}\|+\|Q^* J_k\|.
    \end{aligned}
\]
 
Therefore, there exists $C_{\Delta,K}>0$ such that
\[
\|\rmD\Delta_k\|\le C_{\Delta,K}\, q_K^{\,k}\,\|\eta\|,
\qquad \forall k\ge0,
\]
which is the desired summable bound.  
\end{proof}

Consequently, we show that $\psi$ is a first-order retraction.

\begin{theorem}\label{thm:retr-1st}
	 Fix $\bar x\in \M$ and let $\Omega_{\bar x}\subset \M$ and
	$K=\mathrm{cl}_{\M}\Omega_{\bar x}$ be defined as in Remark~\ref{rem:local_sets}. Suppose Assumptions~\ref{assumpt:M1andM2_intersect_cleanly}, \ref{assump:alt_framework}  and Assumption~\ref{assump:DP} hold with $p\ge 2$  at any $x\in K$. Then there exists    $\rho_K>0$,
such that  for each $ (x,\eta)\in \mathcal{D}_K:=\{(x,\eta)\in \T\M: x\in \Omega_{\bar x},  \|\eta\|< \rho_K \}$,   the iterates
	\[
	x_0=x+\eta,\qquad x_{k+1}=\varphi(x_k)\quad (k\ge0)
	\]
    is well-defined. Moreover,
	  the limit
	\[
	\psi(x,\eta)=\lim_{k\to\infty}x_k
	\]
	defines a $C^1$ mapping $\psi:\mathcal D_K\to \M$.
	Moreover, $\psi(x,0)=x$ and $\rmD_\eta\psi(x,0)=I$ for all $x\in K$.
	Hence, $\psi$ is a (first-order) retraction on $\M$ over $\mathcal D_K$.
\end{theorem}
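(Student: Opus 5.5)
Theorem~\ref{thm:retr-1st} follows by assembling the preceding lemmas through a uniform-convergence argument for the series \eqref{def:xstar_series}.

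\textbf{Well-definedness.} Take $\rho_K$ to be the (possibly shrunk) radius from Lemmas~\ref{lem:uniform_invariance_summable} and \ref{lem:bound_Jk_uniform}. For every $(x,\eta)\in\mathcal D_K$, Lemma~\ref{lem:uniform_invariance_summable} shows that all iterates are defined and remain in $\mathcal N_{\delta_K}(K)\subset\mathcal U_{\bar x}$. Proposition~\ref{prop:limit_and_projection_error} then gives a limit $x^*=\psi(x,\eta)\in\M$ with $\|x_k-x^*\|\le\widehat C_K\tau_K^k\|\eta\|$. This bound is uniform on $\mathcal D_K$.

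\textbf{$C^1$ regularity.} Each $F_k(x,\eta)=x_k=\varphi^k(x+\eta)$ is a composition of $C^{p-1}$ maps, so it is $C^1$ on the $C^{p-1}$ manifold $\T\M$. The partial sums $x_0+\sum_{j<k}\Delta_j$ converge to $\psi$ uniformly on $\mathcal D_K$. By Lemma~\ref{lem:uniform_DDelta_summable}, $\sup_{\mathcal D_K}\|\rmD\Delta_k\|\le C_{\Delta,K}q_K^k\rho_K$, which is summable, so the derivative series converges uniformly. The standard theorem on termwise differentiation, applied in local charts of $\T\M$, shows that $\psi$ is $C^1$ and $\rmD\psi=\lim_k J_k$.

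The one point needing care is that $\rmD\Delta_0=J_1-J_0$ is not covered by the decay estimate in the same way as later terms. Only a uniform bound is needed for it, and this follows from the chain rule together with \eqref{eq:Jk_uniform_bound}. A single term does not affect summability.

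\textbf{Retraction identities.} If $\eta=0$, then $x_0=x\in\M$ is a fixed point of $\varphi$: $\phi_2(x)=x$ and $\phi_1(x)=x$ by the decomposition \eqref{eq:DP_decomp_on_K} at $u=0$. Hence $\psi(x,0)=x$. For the $\eta$-derivative, fix $x$ and apply \eqref{eq:bound_to_M}: $\psi(x,\eta)=\Proj_\M(x+\eta)+o(\|\eta\|)$. Since $\Proj_\M(x+\eta)=x+\eta+o(\|\eta\|)$ for $\eta\in\T_x\M$ by Proposition~\ref{prop:well-define_proj}, we get $\psi(x,\eta)=x+\eta+o(\|\eta\|)$. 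Therefore $\rmD_\eta\psi(x,0)=I$ on $\T_x\M$, which is \eqref{eq:first-order-retraction}. Alternatively, $J_k$ restricted to $\eta$-directions at $\eta=0$ equals $\rmD\varphi(x)^k$ acting on $\T_x\M$, which is the identity by Lemma~\ref{lem:frozen_splitting_fixed_point}(b).

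Since $\psi$ is $C^1=C^{p-1}$ when $p=2$, it is a retraction on $\mathcal D_K$ in the sense of Definition~\ref{def:retraction}.

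\textbf{Main obstacle.} The main difficulty is justifying termwise differentiation on the open subset $\mathcal D_K$ of the embedded manifold $\T\M$ rather than on an open subset of Euclidean space. To handle this, extend each $F_k$ to an ambient neighborhood via $(x,\eta)\mapsto\varphi^k(x+\eta)$. Then restrict the uniform bounds from Lemmas~\ref{lem:bound_Jk_uniform}--\ref{lem:uniform_DDelta_summable} to tangent directions of $\T\M$. Those lemmas already bound the full ambient Jacobians, so the uniform-limit theorem for derivatives applies directly along curves in $\T\M$.
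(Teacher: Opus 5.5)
Your proposal is correct and follows the paper's own (very brief) proof: convergence of the series \eqref{def:xstar_series} from Proposition~\ref{prop:limit_and_projection_error}, $C^1$ regularity from the summable derivative bound of Lemma~\ref{lem:uniform_DDelta_summable} via termwise differentiation, and $\rmD_\eta\psi(x,0)=I$ from \eqref{eq:bound_to_M}; your fixed-point check for $\psi(x,0)=x$ and the chart argument just make explicit what the paper leaves implicit. One minor quibble: the bounds in Lemma~\ref{lem:bound_Jk_uniform} are for derivatives along curves in $\T\M$, not for full ambient Jacobians, since the factor $\|\eta\|$ in $\|Q^*J_k\|$ relies on the initial data $x+\eta$ being tangent. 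This causes no gap, because you only use those bounds in $\T\M$-directions.
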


\begin{proof}
By Proposition~\ref{prop:limit_and_projection_error},   the series in \eqref{def:xstar_series}
 converges absolutely. 
The $C^1$ property of $\psi$ follows from Lemma~\ref{lem:uniform_DDelta_summable}.  Moreover, it follows from \eqref{eq:bound_to_M} that $\rmD_\eta\psi(x,0)=I$.
\end{proof}

\subsection{\texorpdfstring{$C^2$}{C2} regularity of the limiting map \texorpdfstring{$\psi$}{psi}}\label{subsec:C2_limit}
In this subsection, assume Assumption~\ref{assump:DP} with $p\ge 3$, we prove that $\psi$ is $C^2$ on $\mathcal D_K$, where $\mathcal D_K$ is given by Theorem~\ref{thm:retr-1st}.
  Consider the sequence $\{x_k\}$ generated by \eqref{eq:limiting_alter_proj} with $x_0=x+\eta$, and let
 $x^*=\psi(x,\eta)\in \M\cap\mathcal N_{\delta_K}(K)$ be the limit point in
 Proposition~\ref{prop:limit_and_projection_error}.
For each $k\ge0$, Let
$\Delta_k=F_{k+1}-F_k$ be given by    \eqref{def:xstar_series}.
Recall $J_k=\rmD F_k(x,\eta)$ and  write
\begin{equation}\label{eq:H_k}\tag{H}
    H_k:=\rmD^2 F_k(x,\eta),
\end{equation}
in the sense of \eqref{def:differential_tangent_bundle}.
Then,  	we have \[
 \rmD^2\Delta_k=H_{k+1}-H_k.\] 
 Our goal is show $\sum_{k\ge 0} \|\rmD^2\Delta_k\|<\infty$. 
 The idea follows similarly as Lemma~\ref{lem:uniform_DDelta_summable}, which decomposes $ \rmD^2\Delta_k$ into the tangent and normal spaces. 
 Since $F_{k+1}=\varphi\circ F_k$, $J_{k+1}=\rmD\varphi(x_k)\,J_k$ and $\varphi$ is $C^2$ on $\mathcal N_{\delta_K}(K)$,
 the chain rule on embedded submanifolds yields
 \begin{equation}\label{eq:chain_rule_JH}
 	H_{k+1}=\rmD\varphi(x_k)\,H_k+\rmD^2\varphi(x_k)[J_k,J_k].
 \end{equation}
 Then, we need to bound $H_{k}$ by $P^*H_k$ and $Q^* H_k$.
 	Denote
 \[
 B_k:=\rmD^2\varphi(x_k)[J_k,J_k].
 \]
 Multiplying \eqref{eq:chain_rule_JH} by $P^*$ and $Q^*$ gives the    following recursion:
 \begin{equation}\label{eq:Hk_block_recursion}
 	\begin{aligned}
 		Q^*H_{k+1}
 		&=Q^*\rmD\varphi(x_k)Q^*H_k + Q^*\rmD\varphi(x_k)P^*H_k + Q^*B_k,\\
 		P^*H_{k+1}
 		&=P^*\rmD\varphi(x_k)P^*H_k + P^*\rmD\varphi(x_k)Q^*H_k + P^*B_k.
 	\end{aligned}
 \end{equation}

We start by deriving   bounds for $B_k$.

\begin{lemma}\label{lem:bound_of_Bk}	Under the same conditions of Lemma~\ref{lem:bound_Jk_uniform}, suppose Assumption~\ref{assump:DP} with $p\ge 3$ holds. Then there exist constants $\bar B_K>0$, $C_{B,\Delta} > 0$, and $C_{B,K}>0$ such that
	for all $(x,\eta)\in \mathcal D_K$ and $k\ge0$,
	\begin{align}
		\|Q^*B_k\| &\le \bar B_K, \label{eq:QBk_uniform}\\
\|B_{k+1}-B_k\|
&\le C_{B,\Delta}\,q_K^{\,k}\|\eta\|,\label{eq:QBk_increment_decay}\\
		\|P^*B_k\| &\le C_{B,K}\,q_K^{\,k}\|\eta\|, \label{eq:PBk_decay}
	\end{align}
    where $q_K\in [\tau_K, 1)$ is given by Lemma~\ref{lem:bound_Jk_uniform}.
\end{lemma}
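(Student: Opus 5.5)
The plan is to bound the three quantities directly from the definition $B_k=\rmD^2\varphi(x_k)[J_k,J_k]$. The inputs are the uniform estimates already available:
\begin{itemize}
\item $\|x_k-x^*\|\le \widehat C_K\tau_K^k\|\eta\|$ and $\|x_{k+1}-x_k\|\le C\tau_K^k\|\eta\|$, from Proposition~\ref{prop:limit_and_projection_error};
\item $\sup_k\|J_k\|\le C_{J,K}$ and $\|Q^*J_k\|\le C_{N,K}q_K^k\|\eta\|$, from Lemma~\ref{lem:bound_Jk_uniform};
\item $\|J_{k+1}-J_k\|=\|\rmD\Delta_k\|\le C_{\Delta,K}q_K^k\|\eta\|$, from Lemma~\ref{lem:uniform_DDelta_summable};
\item the Lipschitz bound \eqref{eq:DDvarphi_Lip_on_NK} on $\rmD^2\varphi$ over $\mathcal N_{\delta_K}(K)$.
\end{itemize}

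\textbf{First bound, \eqref{eq:QBk_uniform}.} Since $\rmD^2\varphi$ is Lipschitz on $\mathcal N_{\delta_K}(K)$, and this set is bounded because $K$ is compact, $M_{2}:=\sup_{y\in\mathcal N_{\delta_K}(K)}\|\rmD^2\varphi(y)\|<\infty$. Every $x_k$ lies in this set, so $\|Q^*B_k\|\le\|B_k\|\le M_2C_{J,K}^2=:\bar B_K$.

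\textbf{Second bound, \eqref{eq:QBk_increment_decay}.} Telescope the increment as
\[
B_{k+1}-B_k=(\rmD^2\varphi(x_{k+1})-\rmD^2\varphi(x_k))[J_{k+1},J_{k+1}]+\rmD^2\varphi(x_k)[J_{k+1}-J_k,J_{k+1}]+\rmD^2\varphi(x_k)[J_k,J_{k+1}-J_k].
\]
The first term is at most $\Lambda_{K,2}\|x_{k+1}-x_k\|C_{J,K}^2=O(\tau_K^k\|\eta\|)$. The other two are at most $M_2C_{J,K}\|\rmD\Delta_k\|=O(q_K^k\|\eta\|)$. Since $q_K\ge\tau_K$, all three terms are $O(q_K^k\|\eta\|)$.

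\textbf{Third bound, \eqref{eq:PBk_decay}, and the main obstacle.} The key step is the structural identity
\[
P^*\rmD^2\varphi(x^*)[t,s]=0\qquad\text{for all }t,s\in\T_{x^*}\M .
\]
To prove it, take any $C^2$ curve $c$ in $\M$ with $c(0)=x^*$ and $\dot c(0)=t$. Since $\varphi$ fixes $\M$ pointwise (both $\phi_i$ fix $\M$), we have $\varphi(c(s))=c(s)$. Differentiating twice at $s=0$ gives $\rmD^2\varphi(x^*)[t,t]=(I-\rmD\varphi(x^*))\ddot c(0)$. By Lemma~\ref{lem:frozen_splitting_fixed_point}\eqref{eq:tangent_identity_frozen}, $P^*\rmD\varphi(x^*)=P^*\rmD\varphi(x^*)P^*+P^*\rmD\varphi(x^*)Q^*=P^*$, so $P^*(I-\rmD\varphi(x^*))=0$. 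Hence $P^*\rmD^2\varphi(x^*)[t,t]=0$, and polarization of the symmetric bilinear form gives the identity for all $t,s$. This needs $\varphi\in C^2$ and $\M$ at least $C^2$, both available when $p\ge3$.

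With the identity in hand, split
\[
P^*B_k=P^*(\rmD^2\varphi(x_k)-\rmD^2\varphi(x^*))[J_k,J_k]+P^*\rmD^2\varphi(x^*)[J_k,J_k].
\]
The first piece is at most $\Lambda_{K,2}\widehat C_K\tau_K^k\|\eta\|C_{J,K}^2$. For the second, write $J_k=P^*J_k+Q^*J_k$. The purely tangential part $P^*\rmD^2\varphi(x^*)[P^*J_k,P^*J_k]$ vanishes by the identity. Each remaining cross term contains a factor $Q^*J_k$, so it is bounded by $2M_2C_{J,K}C_{N,K}q_K^k\|\eta\|+M_2C_{N,K}^2q_K^{2k}\|\eta\|^2$. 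Using $\|\eta\|\le1$ and $q_K\ge\tau_K$, this yields \eqref{eq:PBk_decay}.

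The only delicate point is making sure the vanishing identity is applied at the limit point $x^*\in\M$, not at $x_k$. That is why the splitting above is centered at $x^*$ and the Lipschitz bound on $\rmD^2\varphi$ is used to move from $x_k$ to $x^*$.
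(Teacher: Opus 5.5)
Your proposal is correct and follows the paper's proof essentially step for step. The three ingredients are the same: the uniform bound $\sup\|\rmD^2\varphi\|\cdot C_{J,K}^2$; a three-term telescoping of $B_{k+1}-B_k$, controlled by the Lipschitz constant of $\rmD^2\varphi$ together with the decay of $\|\Delta_k\|$ and $\|\rmD\Delta_k\|$; and the identity $P^*\rmD^2\varphi(x^*)[u,u]=0$ on $\T_{x^*}\M$, obtained by differentiating $\varphi(c(t))=c(t)$ twice. The differences are cosmetic: you center all of $P^*B_k$ at $x^*$ before splitting $J_k$, and you polarize the identity to its bilinear form, which is slightly more careful than the paper's substitution $u=P^*J_k$ when $J_k$ is operator-valued.
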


\begin{proof}
	\textbf{Step 1.}
	Since $\rmD^2\varphi$ is bounded on $\mathcal N_{\delta_K}(K)$, define
	\[
	M_{2,K}:=\sup_{y\in \mathcal N_{\delta_K}(K)}\|\rmD^2\varphi(y)\|<\infty.
	\]
	Moreover, \eqref{eq:Jk_uniform_bound} yields $\sup_{k\ge0}\|J_k\|\le C_{J,K}$ on $\mathcal D_K$.
	Hence,
	\[
	\|Q^*B_k\|\le \|B_k\|
	\le M_{2,K}\|J_k\|^2
	\le M_{2,K}C_{J,K}^2
	=: \bar B_K,
	\]
	which proves \eqref{eq:QBk_uniform}.

	\textbf{Step 2.} 
We have
\[
\begin{aligned}
B_{k+1}-B_k
={}&\big(\rmD^2\varphi(x_{k+1})-\rmD^2\varphi(x_k)\big)[J_k,J_k]\\
&+\rmD^2\varphi(x_{k+1})[J_{k+1}-J_k,J_k]
+\rmD^2\varphi(x_{k+1})[J_{k+1},J_{k+1}-J_k].
\end{aligned}
\]
Since $\rmD^2\varphi$ is Lipschitz on $\mathcal N_{\delta_K}(K)$,
together with $\|J_k\|\le C_{J,K}$ and $J_{k+1}-J_k=\rmD\Delta_k$, we obtain
\begin{equation}\label{eq:QBk_increment}
\|B_{k+1}-B_k\|
\le C_{B,D}(\|\Delta_k\| +  \|\rmD\Delta_k\|),
\end{equation}
for some constant $C_{B,D}>0$ depending only on $K$.
Using the $R$-linear rate $\|\Delta_k\|=O(\tau^k_K)$ in \eqref{eq:bound_of_sequential_uniform} and
\eqref{eq:DDelta_decay}  for $\|\rmD\Delta_k\|$,
we conclude that there exists $C_{B,\Delta}>0$ such that \eqref{eq:QBk_increment_decay} holds.

	\textbf{Step 3.}
	Decompose $J_k=P^*J_k+Q^*J_k$ to obtain
	\[
	\begin{aligned}
		P^*B_k
		=&\,P^*\rmD^2\varphi(x_k)[P^*J_k,P^*J_k]
		+2P^*\rmD^2\varphi(x_k)[P^*J_k,Q^*J_k]\\
		&\,+P^*\rmD^2\varphi(x_k)[Q^*J_k,Q^*J_k].
	\end{aligned}
	\]
	Since $\rmD^2\varphi$ is bounded and $\|P^*J_k\|\le \|J_k\|\le C_{J,K}$, 
	$\|Q^*J_k\|$ decays geometrically by \eqref{eq:a_decay_final}, there exists $C_1>0$
	such that
	\begin{equation}\label{eq:PkBk_cross_terms_bound}
		\big\|2P^*\rmD^2\varphi(x_k)[P^*J_k,Q^*J_k]
		+P^*\rmD^2\varphi(x_k)[Q^*J_k,Q^*J_k]\big\|
		\le C_1\,q_K^{\,k}\|\eta\|.
	\end{equation}
	
	\textbf{Step 4.}
	We claim that for any $u\in \T_{x^*}{\M}$,
	\begin{equation}\label{eq:D2phi_TT_normal}
		P^*\rmD^2\varphi(x^*)[u,u]=0.
	\end{equation}

Indeed,	fix $u\in \T_{x^*}{\M}$ and take a $C^2$ curve $c:(-t_0,t_0)\to \overline{\M}$
	such that $c(0)=x^*$ and $c'(0)=u$.
	Since $\varphi$ is the identity on ${\M}$, we have $\varphi(c(t))=c(t)$.
	Differentiating twice at $t=0$ gives
	\[
	\rmD^2\varphi(x^*)[u,u]+\rmD\varphi(x^*)c''(0)=c''(0).
	\]
	Applying $P^*$ and using $P^*\rmD\varphi(x^*)=P^*$ yields
	$P^*\rmD^2\varphi(x^*)[u,u]=0$.

	By \eqref{eq:D2phi_TT_normal} with $u=P^*J_k\in \T_{x^*}{\M}$, we get
	\[
	P^*\rmD^2\varphi(x^*)[P^*J_k,P^*J_k]=0,
	\]
	and therefore
	\[
	\begin{aligned}
		\big\|P^*\rmD^2\varphi(x_k)[P^*J_k,P^*J_k]\big\|
		&=\big\|P^*\big(\rmD^2\varphi(x_k)-\rmD^2\varphi(x^*)\big)[P^*J_k,P^*J_k]\big\|\\
		&\le \|\rmD^2\varphi(x_k)-\rmD^2\varphi(x^*)\|\cdot \|P^*J_k\|^2.
	\end{aligned}
	\]
	Since $\varphi$ is $C^{2,1}$ on $\mathcal N_{\delta_K}(K)$, $\rmD^2\varphi$ is   Lipschitz.
	Combining these estimates with $\|P^*J_k\|\le C_{J,K}$ gives
	\begin{equation}\label{eq:PkBk_TT_bound}
		\big\|P^*\rmD^2\varphi(x_k)[P^*J_k,P^*J_k]\big\|
		\le C_{B,s}\,\tau_K^{\,k}\|\eta\|
	\end{equation}
	for some $C_{B,s}>0$.
	
	Set $C_{B,K}:=C_1+C_{B,s}$.
	Combining \eqref{eq:PkBk_cross_terms_bound} and \eqref{eq:PkBk_TT_bound} yields
	\[
	\|P^*B_k\|\le C_{B,K}\,q_K^{\,k}\|\eta\|,
	\]
	which is \eqref{eq:PBk_decay}.  
\end{proof}

Next, we establish a uniform bound for $H_k$ by using a strategy similar to that
in Lemma~\ref{lem:bound_Jk_uniform} for bounding $J_k$.

\begin{lemma}\label{lem:Hk_uniform_bound}
	Under the same conditions of Lemma~\ref{lem:bound_of_Bk},   there exists $C_{H,K}>0$ depending only on $K$ such that
	\[
	\sup_{k\ge0}\ \|H_k\|\ \le\ C_{H,K}.
	\]
\end{lemma}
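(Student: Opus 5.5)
The plan mirrors Lemma~\ref{lem:bound_Jk_uniform}, applied now to the block recursion \eqref{eq:Hk_block_recursion} for $H_k$. The inhomogeneous terms are controlled by Lemma~\ref{lem:bound_of_Bk}. Set $t_k:=\|P^*H_k\|$ and $a_k:=\|Q^*H_k\|$. Since $x_0=x+\eta$ depends affinely on $(x,\eta)$ in a local trivialization of $\T\M$, the second derivative $H_0$ is uniformly bounded on $\mathcal D_K$; hence $t_0,a_0\le C_0$.

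\textbf{Normal block.} I will reuse \eqref{eq:normal_block_contr_along_iterates_Jk}, namely $\|Q^*\rmD\varphi(x_k)Q^*\|\le\tilde c_K<1$, together with the bound $\|Q^*\rmD\varphi(x_k)P^*\|\le\Lambda_K\widehat C_K\tau_K^k\|\eta\|$ obtained in Step 1 of Lemma~\ref{lem:bound_Jk_uniform}. Using $\|Q^*B_k\|\le\bar B_K$ from \eqref{eq:QBk_uniform}, this gives
\[
a_{k+1}\le \tilde c_K a_k + C_1\tau_K^k\|\eta\|\,t_k+\bar B_K .
\]

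\textbf{Tangential block.} As before, $\|P^*\rmD\varphi(x_k)P^*\|\le 1+C_2\tau_K^k\|\eta\|$. For the source term I use $\|P^*B_k\|\le C_{B,K}q_K^k\|\eta\|$ from \eqref{eq:PBk_decay}, which is summable; this is exactly where \eqref{eq:D2phi_TT_normal} and the $C^{2,1}$ assumption enter. The coupling term $P^*\rmD\varphi(x_k)Q^*H_k$ is the delicate one. The crude bound $M_{\varphi,K}a_k$ is not enough, because $a_k$ is now merely bounded rather than geometrically decaying: the forcing $\bar B_K$ does not decay. The fix is to use $P^*\rmD\varphi(x^*)Q^*=0$ from Lemma~\ref{lem:frozen_splitting_fixed_point}\eqref{eq:tangent_identity_frozen}, which gives $\|P^*\rmD\varphi(x_k)Q^*\|\le\Lambda_K\widehat C_K\tau_K^k\|\eta\|$. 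Hence
\[
t_{k+1}\le (1+C_2\tau_K^k\|\eta\|)\,t_k + C_3\tau_K^k\|\eta\|\,a_k + C_{B,K}q_K^k\|\eta\| .
\]

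\textbf{Closing the coupled system.} Set $s_k:=\max\{t_k,a_k\}$, or more conveniently run a bootstrap. From the $a$-recursion, $a_k\le C_0+\frac{\bar B_K}{1-\tilde c_K}+C_1\|\eta\|\sum_{j<k}\tau_K^j t_j$, so $a_k\le A+C\|\eta\|\max_{j<k}t_j$. Substituting into the $t$-recursion gives
\[
t_{k+1}\le\bigl(1+C'\tau_K^k\|\eta\|\bigr)\max_{j\le k}t_j+C''q_K^k .
\]
A discrete Gr\"onwall argument then yields
\[
\sup_k t_k\le \Bigl(C_0+C''\sum_k q_K^k\Bigr)\prod_k\bigl(1+C'\tau_K^k\|\eta\|\bigr)<\infty .
\]
This is the same mechanism as Lemma~\ref{lem:summable_coupled_recursion}, which I would invoke directly if its hypotheses allow a bounded rather than decaying forcing in the $a$-equation. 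Otherwise I will adapt its proof as just described. Feeding the bound on $t$ back gives $\sup_k a_k<\infty$, and therefore $\|H_k\|\le t_k+a_k\le C_{H,K}$, with all constants depending only on $K$ via $\rho_K\le1$.

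The main obstacle is the tangential coupling step: it requires both the off-diagonal decay $P^*\rmD\varphi(x_k)Q^*=O(\tau_K^k\|\eta\|)$ and the decay of $P^*B_k$. Without them, the non-decaying normal forcing $\bar B_K$ would accumulate linearly in $t_k$.
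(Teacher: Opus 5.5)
Your proposal is correct and follows essentially the same route as the paper: the block recursions $t_{k+1}\le(1+\varepsilon_k)t_k+\varepsilon_k a_k+\|P^*B_k\|$ and $a_{k+1}\le\tilde c_K a_k+\varepsilon_k t_k+\bar B_K$ (with $\varepsilon_k=O(\tau_K^k\|\eta\|)$), the tangential coupling controlled via $P^*\rmD\varphi(x^*)Q^*=0$, and the system closed by Lemma~\ref{lem:summable_coupled_recursion}. That lemma only requires $\sup_k b_k<\infty$ for the normal forcing, so you can invoke it directly rather than re-running the Gr\"onwall bootstrap.
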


\begin{proof}
\textbf{Step 1.}
	Since $p\ge3$, the maps $\phi_i$ are $C^2$ on $\mathcal N_{\delta_K}(K)$ by Assumption~\ref{assump:DP},
	hence $\varphi=\phi_1\circ\phi_2$ is $C^2$ on $\mathcal N_{\delta_K}(K)$. 	Then $\rmD\varphi$ is Lipschitz on $\mathcal N_{\delta_K}(K)$:
	\begin{equation}\label{eq:Dvarphi_Lip_from_D2_again}
		\|\rmD\varphi(y)-\rmD\varphi(z)\|\le M_{2,K}\|y-z\|,
		\qquad \forall y,z\in\mathcal N_{\delta_K}(K).
	\end{equation}
	Denote
	\[
	M_{2,K}:=\sup_{y\in\mathcal N_{\delta_K}(K)}\|\rmD^2\varphi(y)\|<\infty.
	\]
By Lemma~\ref{lem:frozen_splitting_fixed_point}, \eqref{eq:Dvarphi_Lip_from_D2_again} and $\|x_k-x^*\|\le \widehat C_K\tau_K^{k}\|\eta\|$,
	we obtain the following summable bounds:
	there exists $C_0>0$ such that for all $k\ge0$,
	\begin{equation}\label{eq:block_perturbations_epsk}
		\begin{aligned}
			\|Q^*(\rmD\varphi(x_k)-\rmD\varphi(x^*))Q^*\|
			&\le C_0\,\tau_K^{k}\|\eta\|,\\
			\|Q^*\rmD\varphi(x_k)P^*\|
			=\|Q^*(\rmD\varphi(x_k)-\rmD\varphi(x^*))P^*\|
			&\le C_0\,\tau_K^{k}\|\eta\|,\\
			\|P^*\rmD\varphi(x_k)Q^*\|
			=\|P^*(\rmD\varphi(x_k)-\rmD\varphi(x^*))Q^*\|
			&\le C_0\,\tau_K^{k}\|\eta\|,\\
			\|P^*(\rmD\varphi(x_k)-\rmI)P^*\|
			=\|P^*(\rmD\varphi(x_k)-\rmD\varphi(x^*))P^*\|
			&\le C_0\,\tau_K^{k}\|\eta\|.
		\end{aligned}
	\end{equation}
It follows from \eqref{eq:normal_block_contr_along_iterates_Jk} that
\begin{equation*}\label{eq:Q_block_contraction_along_iterates}
		\|Q^*\rmD\varphi(x_k)Q^*\|\le \tilde c_K<1,\qquad \forall k\ge0,
	\end{equation*}
	for some $\tilde c_K\in(c_K,1)$.

	\textbf{Step 2: bound $P^*H_k$.}
	Let $t_k:=\|P^*H_k\|$ and $a_k:=\|Q^*H_k\|$.
	From \eqref{eq:Hk_block_recursion} and \eqref{eq:block_perturbations_epsk}, we get
	\begin{equation}\label{eq:tk_recursion}
		t_{k+1}
		\le (1+\varepsilon_k)t_k+\varepsilon_k a_k+\|P^*B_k\|,
		\qquad
		\varepsilon_k:=C_0\tau_K^{\,k}\|\eta\|,
	\end{equation}
	where $\sum_{k\ge0}\varepsilon_k<\infty$.
	Moreover, \eqref{eq:PBk_decay} gives $\sum_{k\ge0}\|P^*B_k\|<\infty$.

	From \eqref{eq:Hk_block_recursion}, \eqref{eq:normal_block_contr_along_iterates_Jk},
	\eqref{eq:block_perturbations_epsk}, and \eqref{eq:QBk_uniform}, we have
		\begin{equation}\label{eq:ak_recursion}
	a_{k+1}\le \tilde c_K a_k+\varepsilon_k t_k+\bar B_K.
		\end{equation}

	\textbf{Step 3.}
	Applying Lemma~\ref{lem:summable_coupled_recursion} to \eqref{eq:tk_recursion} and \eqref{eq:ak_recursion}, we get 
	$\sup_{k\ge0}a_k\le C_{N,K}$ for some constant $C_{N,K}>0$.
	Therefore,
	\[
	\|H_k\|\le \|P^*H_k\|+\|Q^*H_k\|\le C_{T,K}+C_{N,K}=:C_{H,K},
	\qquad \forall k\ge0,
	\]
	which completes the proof.  
\end{proof}

Finally, we show that $\{\|\rmD^2\Delta_k\|\}_{k\ge0}$ is summable.

\begin{lemma} 
	\label{lem:uniform_D2Delta_summable}
	Suppose the assumptions of Lemma~\ref{lem:bound_of_Bk} hold.
	Then, after possibly shrinking $\rho_K$, there exist constants $C_{2,K}>0$ and $q_K\in(0,1)$ such that
	for all $k\ge0$,
	\begin{equation}\label{eq:D2Delta_decay}
		\sup_{(x,\eta)\in\mathcal D_K}\ \|\rmD^2\Delta_k(x,\eta)\|
		\le C_{2,K}\,q_K^{\,k}.
	\end{equation}
	Consequently, $\sum_{k\ge0}\sup_{\mathcal D_K}\|\rmD^2\Delta_k\|<\infty$ and
	$\{\rmD^2F_k\}_{k\ge0}$ converges uniformly on $\mathcal D_K$.
\end{lemma}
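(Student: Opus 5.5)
The plan mirrors the proof of Lemma~\ref{lem:uniform_DDelta_summable}, now at the level of second derivatives. Differentiating $\Delta_k=\varphi(F_k)-F_k$ twice gives
\[
\rmD^2\Delta_k=H_{k+1}-H_k=(\rmD\varphi(x_k)-I)H_k+B_k .
\]
We split this with $I=P^*+Q^*$ and bound the tangential part $P^*\rmD^2\Delta_k$ and the normal part $Q^*\rmD^2\Delta_k$ separately. Summability of $\sup_{\mathcal D_K}\|\rmD^2\Delta_k\|$ then gives uniform convergence of $H_k=\rmD^2F_k$ by the Weierstrass M-test.

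\textbf{Tangential part.} We write
\[
P^*\rmD^2\Delta_k=P^*(\rmD\varphi(x_k)-I)P^*H_k+P^*\rmD\varphi(x_k)Q^*H_k+P^*B_k .
\]
By \eqref{eq:block_perturbations_epsk}, the first two terms are bounded by $C_0\tau_K^k\|\eta\|\,\|H_k\|$. Lemma~\ref{lem:Hk_uniform_bound} gives $\|H_k\|\le C_{H,K}$, so these terms are $O(\tau_K^k)$. By \eqref{eq:PBk_decay}, $\|P^*B_k\|\le C_{B,K}q_K^k\|\eta\|$. Hence $\|P^*\rmD^2\Delta_k\|=O(q_K^k)$ uniformly on $\mathcal D_K$.

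\textbf{Normal part (main obstacle).} Here $Q^*B_k$ is only bounded, not decaying, so a direct estimate fails. Instead we bound the increments $D_k:=Q^*(H_{k+1}-H_k)$. Subtracting two consecutive instances of \eqref{eq:Hk_block_recursion} gives
\[
D_{k+1}=Q^*\rmD\varphi(x_{k+1})Q^*D_k+Q^*\rmD\varphi(x_{k+1})P^*(H_{k+1}-H_k)+Q^*\bigl(\rmD\varphi(x_{k+1})-\rmD\varphi(x_k)\bigr)H_k+Q^*(B_{k+1}-B_k).
\]
We bound each term as follows.
\begin{itemize}
\item The first term contracts: \eqref{eq:normal_block_contr_along_iterates_Jk} gives $\|Q^*\rmD\varphi(x_{k+1})Q^*\|\le\tilde c_K<1$.
\item The second term is $O(\tau_K^k)$, by \eqref{eq:block_perturbations_epsk} and $\|H_k\|\le C_{H,K}$.
\item The third term is $O(\|\Delta_k\|)=O(\tau_K^k)$, by Lipschitz continuity of $\rmD\varphi$ and \eqref{eq:bound_of_sequential_uniform}.
\item The last term is $O(q_K^k\|\eta\|)$, by \eqref{eq:QBk_increment_decay}.
\end{itemize}
Together with $\|D_0\|\le 2C_{H,K}$, this yields $\|D_{k+1}\|\le\tilde c_K\|D_k\|+C q_K^k$. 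Unrolling gives $\|D_k\|\le C'k\,\hat q^{\,k}$ with $\hat q=\max\{\tilde c_K,q_K\}$. Absorbing the factor $k$, we get $\|D_k\|\le C''\bar q^{\,k}$ for any fixed $\bar q\in(\hat q,1)$.

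\textbf{Conclusion.} Combining the two parts gives \eqref{eq:D2Delta_decay}, after renaming $\bar q$ as $q_K$ and shrinking $\rho_K$ as in Lemma~\ref{lem:bound_Jk_uniform} so that all constants are uniform on $\mathcal D_K$. The delicate point is that $\rmD\Delta_k$ decays but $Q^*B_k$ does not. The fix is to work with consecutive differences $B_{k+1}-B_k$, which do decay by Lemma~\ref{lem:bound_of_Bk}, rather than with $B_k$ itself.
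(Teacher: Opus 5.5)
Your proposal is correct and follows essentially the same route as the paper: the tangential part is handled via $P^*(\rmD\varphi(x^*)-I)=0$, the uniform bound on $H_k$, and the decay of $P^*B_k$, and the normal part via a contractive recursion for the increments $Q^*(H_{k+1}-H_k)$, driven by the decaying differences $B_{k+1}-B_k$. Your final step is slightly more careful than the paper's: you absorb the factor $k$ from unrolling into a slightly larger rate $\bar q$, and you do not claim an extra factor $\|\eta\|$ for the normal part, which cannot hold at $k=0$ since $Q^*(H_1-H_0)$ is generally $O(1)$.
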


\begin{proof}
It follows from \eqref{eq:chain_rule_JH} that \begin{equation}\label{eq:D2Delta_identity}
\rmD^2\Delta_k
=H_{k+1}-H_k
=(\rmD\varphi(x_k)-I)H_k + B_k.
\end{equation}

\textbf{Step 1.}
Applying $P^*$ to \eqref{eq:D2Delta_identity} yields
\[
P^*\rmD^2\Delta_k
=P^*(\rmD\varphi(x_k)-I)H_k + P^*B_k.
\]
Since $P^*(\rmD\varphi(x^*)-I)=0$, we have
\[
P^*(\rmD\varphi(x_k)-I)H_k
=P^*(\rmD\varphi(x_k)-\rmD\varphi(x^*))H_k.
\]
Using the Lipschitz continuity of $\rmD\varphi$ and the $r$-linear convergence
$\|x_k-x^*\|\le \widehat C\tau_K^k\|\eta\|$, together with the uniform boundedness
$\sup_k\|H_k\|\le C_{H,K}$ in Lemma~\ref{lem:Hk_uniform_bound}, we obtain
\[
\|P^*(\rmD\varphi(x_k)-I)H_k\|
\le C_{H,P}\,\tau_K^{\,k}\|\eta\|,
\]
for some $C_{H,P}>0.$
Moreover, by Lemma~\ref{lem:bound_of_Bk} we have
$\|P^*B_k\|\le C_{B,K}\,q_K^{\,k}\|\eta\|$.
Therefore,
\begin{equation}\label{eq:P_D2Delta_decay}
\|P^*\rmD^2\Delta_k\|\le C_{P,K}\,q_K^{\,k}\|\eta\|,
\end{equation}
for some $C_{P,K}>0.$

\textbf{Step 2.} Regarding the normal part, define
\[
h_k:=Q^*H_k,\qquad d_k:=h_{k+1}-h_k = Q^*(H_{k+1}-H_k)=Q^*\rmD^2\Delta_k.
\]
It follows from \eqref{eq:D2Delta_identity} that
\begin{equation}\label{eq:h_recursion_new}
h_{k+1}
= A_k h_k + f_k,
\qquad
A_k:=Q^*\rmD\varphi(x_k)Q^*,
\qquad
f_k:=Q^*\rmD\varphi(x_k)P^*H_k + Q^*B_k.
\end{equation}
We now derive a recursion for the increment $d_k$.
One has
\begin{equation}\label{eq:d_recursion_new}
\begin{aligned}
d_{k+1}
&=h_{k+2}-h_{k+1}\\
&=(A_{k+1}h_{k+1}+f_{k+1})-(A_k h_k+f_k)\\
&=A_{k+1}(h_{k+1}-h_k) + (A_{k+1}-A_k)h_k + (f_{k+1}-f_k)\\
&=A_{k+1}d_k + r_k,
\end{aligned}
\end{equation}
where
\begin{equation}\label{eq:r_k_def}
r_k:=(A_{k+1}-A_k)h_k + (f_{k+1}-f_k).
\end{equation}
We next bound $r_k$. Since $\rmD\varphi$ is Lipschitz on $\mathcal N_{\delta_K}(K)$ and $\|h_k\|\le \|H_k\|\le C_{H,K}$ on $\mathcal D_K$,
we get
\begin{equation}\label{eq:Ak_diff_term}
\|(A_{k+1}-A_k)h_k\|
\le C_{A,K}\,\|\Delta_k\|
\le C_{A,K}\,\tau_K^{\,k}\|\eta\|,
\end{equation}
for some constant $C_{A,K}>0$. Recall $f_k=Q^*\rmD\varphi(x_k)P^*H_k + Q^*B_k$.
By the triangle inequality,
\[
\begin{aligned}
\|f_{k+1}-f_k\|
 \le &\|Q^*\rmD\varphi(x_{k+1})P^*(H_{k+1}-H_k)\|\\
 &+\|Q^*(\rmD\varphi(x_{k+1})-\rmD\varphi(x_k))P^*H_k\|+\|Q^*(B_{k+1}-B_k)\|.
 \end{aligned}
\]
For the first term, combining \eqref{eq:block_perturbations_epsk} with
and $\|H_{k+1}-H_k\|\le 2C_{H,K}$ yields
\[
\|Q^*\rmD\varphi(x_{k+1})P^*(H_{k+1}-H_k)\|
\le C_{f,1}\,q_K^{\,k}\|\eta\|.
\]
For the second term, Lipschitz continuity of $\rmD\varphi$ implies
\[
\|Q^*(\rmD\varphi(x_{k+1})-\rmD\varphi(x_k))P^*H_k\|
\le C_{f,2}\,\tau_K^{\,k}\|\eta\|.
\]
For the third term, \eqref{eq:QBk_increment_decay} yields
$\|Q^*(B_{k+1}-B_k)\|\le C_{B,\Delta}\,q_K^{\,k}\|\eta\|$.
Combining these estimates, we obtain for some $C_{f,K}>0$
\begin{equation}\label{eq:fk_increment_bound}
\|f_{k+1}-f_k\|\le C_{f,K}\,q_K^{\,k}\|\eta\|.
\end{equation}

Therefore, combining \eqref{eq:Ak_diff_term} and \eqref{eq:fk_increment_bound} gives
\begin{equation}\label{eq:r_k_bound}
\|r_k\|\le C_{r,K}\,q_K^{\,k}\|\eta\|,\qquad \text{with}\ C_{r,K}=\max\{C_{A,K}, C_{f,K}\}.
\end{equation}
Moreover, it follows from \eqref{eq:normal_block_contr_along_iterates_Jk} that
there exists $\tilde c_K\in(0,1)$ such that
\begin{equation}\label{eq:Ak_contraction}
\|A_k\|=\|Q^*\rmD\varphi(x_k)Q^*\|\le \tilde c_K,
\qquad \forall k\ge0.
\end{equation}
Substituting \eqref{eq:Ak_contraction} and \eqref{eq:r_k_bound} into \eqref{eq:d_recursion_new},
we deduce
\[
\|d_{k+1}\|\le \tilde c_K\|d_k\| + C_{r,K}\,q_K^{\,k}\|\eta\|.
\]
Since $q_K = \max\{\tau_K,\tilde c_K\},$
a standard   argument yields that there exist $C_{Q,K}>0$ such that
\begin{equation}\label{ineq:bound_Q_DDelta}
\|d_k\|=\|Q^*\rmD^2\Delta_k\|\le C_{Q,K}\, q_K^{\,k}\|\eta\|,
\qquad \forall k\ge0.
\end{equation}

\textbf{Step 3.}
Combining \eqref{eq:P_D2Delta_decay} and \eqref{ineq:bound_Q_DDelta} yields
\[
\|\rmD^2\Delta_k\|
\le \|P^*\rmD^2\Delta_k\|+\|Q^*\rmD^2\Delta_k\|
\le C_{2,K}\,q_K^{\,k}\|\eta\|,
\]
which proves \eqref{eq:D2Delta_decay}.  
\end{proof}

 Hence, we conclude that $\psi$ is a second-order retraction.
 \begin{theorem}\label{thm:retr-2nd}
 	Under the same conditions of Theorem~\ref{thm:retr-1st} and further assume Assumption~\ref{assump:DP} with $p\ge 3$. Then,
 	   the limit map $\psi$
 	defines a second-order retraction on $\mathcal D_K$.
 \end{theorem}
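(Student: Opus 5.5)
The plan is to combine the three ingredients already established: (a) $\psi$ is a first-order retraction on $\mathcal D_K$ (Theorem~\ref{thm:retr-1st}); (b) $\psi$ is $C^2$ on $\mathcal D_K$; (c) the second-order expansion \eqref{eq:second-order-retraction} holds. By Definition~\ref{def:retraction} and the equivalence stated after it, a $C^2$ retraction whose curves $c(t)=\psi(x,t\eta)$ satisfy \eqref{eq:first-order-retraction} and \eqref{eq:second-order-retraction} is a second-order retraction. So the proof reduces to checking (b) and (c).

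\textbf{Step 1: $C^2$ regularity.} Write $\psi=F_0+\sum_{k\ge0}\Delta_k$ as in \eqref{def:xstar_series}, with $F_0(x,\eta)=x+\eta$ smooth on $\T\M$. The series converges uniformly on $\mathcal D_K$ by Proposition~\ref{prop:limit_and_projection_error}. The first-derivative series $\sum_k\rmD\Delta_k$ converges uniformly by Lemma~\ref{lem:uniform_DDelta_summable}, and the second-derivative series $\sum_k\rmD^2\Delta_k$ converges uniformly by Lemma~\ref{lem:uniform_D2Delta_summable}. Each partial sum $F_k$ is $C^2$ on $\mathcal D_K$, because $\varphi$ is $C^2$ on $\mathcal N_{\delta_K}(K)$ and the iterates stay there by Lemma~\ref{lem:uniform_invariance_summable}. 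The standard theorem on term-by-term differentiation, applied twice in local charts of the $C^{p-1}$ manifold $\T\M$, then gives $\psi\in C^2(\mathcal D_K)$ with $\rmD^2\psi=\lim_k H_k$. Since $p-1\ge2$ and the claim is a second-order retraction, $C^2$ is what is needed. For $p>3$ the definition asks for $C^{p-1}$, and I would handle that by an induction repeating the block tangent/normal decomposition for higher derivatives; this is the main point I would flag. For the case $p=3$ treated here, $C^2$ suffices.

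\textbf{Step 2: zero initial acceleration.} Fix $x\in\Omega_{\bar x}$ and $\eta\in\T_x\M$, and set $c(t)=\psi(x,t\eta)$. For $|t|$ small, apply Proposition~\ref{prop:tangent_error_o_eta2} with $t\eta$ in place of $\eta$. This gives $P_x(c(t)-x-t\eta)=o(t^2)$, which is \eqref{eq:second-order-retraction}, and $Q_x(c(t)-x-t\eta)=O(t^2)$. Together with $c(0)=x$ and $c'(0)=\eta$ from Theorem~\ref{thm:retr-1st}, and the fact that $c$ is $C^2$ by Step 1, Taylor's formula gives $c(t)=x+t\eta+\tfrac{t^2}{2}\ddot c(0)+o(t^2)$. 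Comparing with the tangential estimate forces $P_x\ddot c(0)=0$. By \eqref{eq:intrinsic_acc_and_ex_acc}, this means $c''(0)=\Proj_{\T_x\M}\ddot c(0)=0$.

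The main obstacle is a technical one: making sure the uniformity in Proposition~\ref{prop:tangent_error_o_eta2}, and the existence of $\rmD^2\psi$, hold on the same domain $\mathcal D_K$. I would resolve this by shrinking $\rho_K$ to the minimum of the radii used in Lemma~\ref{lem:uniform_D2Delta_summable} and Proposition~\ref{prop:tangent_error_o_eta2}.
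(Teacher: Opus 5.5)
Your proposal is correct and follows the paper's proof: $C^2$ regularity comes from the uniform summability of $\rmD\Delta_k$ and $\rmD^2\Delta_k$ (Lemmas~\ref{lem:uniform_DDelta_summable} and \ref{lem:uniform_D2Delta_summable}), and zero initial acceleration comes from Proposition~\ref{prop:tangent_error_o_eta2} via \eqref{eq:second-order-retraction}. Your Taylor argument forcing $P_x\ddot c(0)=0$ just spells out the equivalence the paper invokes, and your $C^{p-1}$ caveat for $p>3$ applies equally to the paper, which also establishes only $C^2$.
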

 
 \begin{proof}
 It follows from Theorem~\ref{thm:retr-1st} that $\psi$ is a retraction.
 	By Lemma~\ref{lem:uniform_D2Delta_summable}, $\sum_k \sup_{\mathcal D_K}\|\rmD^2\Delta_k\|<\infty$.
 	Therefore, $\sum_k \Delta_k$ converges in $C^2(\mathcal D_K;\E)$.
 	Since $x+\eta$ is smooth on $\mathcal D_K$, we conclude that
 	$\psi=x+\eta+\sum_k\Delta_k$ is $C^2$ on $\mathcal D_K$ and the derivative identities follow.
 	Moreover, the second-order retraction properties~\eqref{eq:second-order-retraction} follows from Proposition~\ref{prop:tangent_error_o_eta2}.  
 \end{proof}

\subsection{Verification  of Assumptions for  APM}\label{subsection:very_assump_for_proj}
In this section, we show that the orthogonal projection $\Proj_{\M}$ satisfies
Assumption~\ref{assump:DP} for any
$C^{3,1}$ submanifold $\M$ embedded in $\mathcal E$.  Consequently, the APM mapping
defines a second-order retraction. 

Note that the Lipschitz continuity of $\rmD\Proj_{\M}$ and $\rmD^2\Proj_{\M}$ is guaranteed by Proposition~\ref{prop:well-define_proj}.
It therefore suffices to establish the residual estimates.
The next lemma provides a Taylor expansion of the orthogonal projection.
This expansion can also be derived from the tubular neighborhood theorem \cite[Theorem~6.24]{lee2012smooth}.
For completeness, we present an elementary proof, which refines the expansion for projective retractions in \cite[Prop.~5.55]{boumal2023introduction}.

 \begin{definition}\cite[Def.5.48]{boumal2023introduction}\label{def:second_and_weingarten}
	Let $\mathcal{M}$ be an embedded $C^p (p\ge 3)$ submanifold of a Euclidean space $\mathcal{E}$.   The second fundamental form at $x$ is the bilinear  map:
	\[
	\secondform_x : \T_x \mathcal{M} \times \T_x \mathcal{M} \to \rmN_x \mathcal{M}, \quad (u,v) \mapsto \secondform_x(u,v) = \mathcal{P}_u(v).
	\]
	The Weingarten map at $x$ is the map:
	\[
	\mathcal{W}_x : \T_x \mathcal{M} \times \rmN_x \mathcal{M} \to \T_x \mathcal{M}, \quad (u,w) \mapsto \mathcal{W}_x(u,w) = \mathcal{P}_u(w).
	\]
	For both, the map $\mathcal{P}_u : \mathcal{E} \to \mathcal{E}$ is defined as follows:
	\begin{equation}\label{def:derivat_proj}
		\mathcal{P}_u := \rmD(x\mapsto \Proj_{\T_x\M})(x)[u]= \frac{\mathrm{d}}{\mathrm{d}t}\Proj_{\T_{c(t)}\M}\bigg\rvert_{t=0},
	\end{equation}
	where $c:I\rightarrow \M$ is a $C^{p-1}$ curve with $c(0)=x$ and $c'(0)=u.$ 
\end{definition}

\begin{lemma}[Expansion of the Orthogonal Projection]\label{lem:expansion of orth_proj}
	Let \(\mathcal{M}\) be a $C^p(p\ge 2)$   submanifold of \(\mathcal E\) and fix \(x\in \mathcal{M}\). Assume that the orthogonal projection \(\mathcal{P}_{\mathcal{M}}\) is well-defined on a   neighborhood of \(\mathcal{M}\). 
	\begin{enumerate}
		\item For a constant $\delta_0>0$ such that $\Proj_\M$ is well defined in $\mathbb{B}_{\delta_0}(x)$,  it follows for  any  $w \in \mathcal{E}$ with $\|w\|\leq \delta_0$   that
		\begin{equation}\label{proj_M:decomposition-1}
			\Proj_\M(x+w)=x+\mathcal{P}_{\mathrm{T}_{x}\mathcal{M}}(w)+e_x(w), \tag{DP-I}
		\end{equation}
		where $e_x:\mathbb{B}(x,\delta_0)\rightarrow\mathcal E$ is $C^{p-1}$ and $e_x(w)=o(\|w\|)$  as $u\to 0$.
		\item  
		Let $p\ge 3$.  For any $u\in \mathcal E$ with $\|u\|\leq \delta_0$, define
		\[
		u_\T = \Proj_{\T_x\M}u, \qquad u_{\rmN} = \Proj_{\rmN_x\M}u.
		\]
		Then, it follows that
			\begin{equation}\label{proj_M:decomposition-2}
						\begin{aligned}
								\mathcal{P}_{\mathcal{M}}(x+u) 
								=x+u_\T+
								\mathcal{W}_x(u_\T,  u_\rmN)+ \frac{1}{2}\secondform_x(u_\T,u_\T)+r_x(u),
							\end{aligned}\tag{DP-II}
			\end{equation}
					where $r_x:\mathbb{B}(x,\delta_0)\rightarrow\mathcal E$ is $C^{p-1}$ and $r_x(u)=o(\|u\|^2)$  as $u\to 0$. 
	\end{enumerate}
	
\end{lemma}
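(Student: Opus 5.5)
Part 1 follows from the regularity of the projection in Proposition~\ref{prop:well-define_proj}. On $\mathbb B(x,\delta_0)$ the map $w\mapsto \Proj_\M(x+w)$ is $C^{p-1}$. Its differential at $w=0$ is $\Proj_{\T_x\M}$; this can be read off from Lemma~\ref{lem:Dphi_is_tangent_proj} with $\phi_i=\Proj_{\M}$, or obtained by differentiating the identity $\Proj_\M(c(t))=c(t)$ along curves in $\M$ and using that $\Proj_\M$ is constant along the normal fibers $x+\N_x\M$ near $x$. I will therefore define
\[
e_x(w):=\Proj_\M(x+w)-x-\Proj_{\T_x\M}(w).
\]
This is $C^{p-1}$, satisfies $e_x(0)=0$ and $\rmD e_x(0)=0$, and hence $e_x(w)=o(\|w\|)$. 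Uniformity in $x$ on compacts follows from continuity of $\rmD e_x$ jointly in $(x,w)$.

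For part 2, with $p\ge3$, the map is $C^2$, so I will compute its second-order Taylor expansion at $w=0$ and set $r_x$ to be the remainder. The remainder is then $C^{p-1}$ and $o(\|u\|^2)$. It remains to identify the quadratic term $\tfrac12\rmD^2\Proj_\M(x)[u,u]$. I will split it into three blocks: $(u_\T,u_\T)$, the cross term $(u_\T,u_\rmN)$, and $(u_\rmN,u_\rmN)$.

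\emph{Normal--normal block.} For $\|u_\rmN\|$ small, $x+u_\rmN$ projects to $x$, since $x$ is a critical point of the distance and, by the uniqueness part of Proposition~\ref{prop:well-define_proj}, it is the projection. Hence $\Proj_\M(x+su_\rmN)\equiv x$, and $\rmD^2\Proj_\M(x)[u_\rmN,u_\rmN]=0$.

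\emph{Tangent--tangent block.} I will use the characterization $\Proj_{\T_y\M}(z-y)=0$ with $y=\Proj_\M(z)$. I will also use the identity $\Proj_\M(y)=y$ on $\M$ for a curve $c(t)$ with $c(0)=x$, $\dot c(0)=u_\T$ and $\ddot c(0)=\secondform_x(u_\T,u_\T)$; this is a geodesic, which by \eqref{eq:intrinsic_acc_and_ex_acc} has normal extrinsic acceleration equal to the second fundamental form. Differentiating $\Proj_\M(c(t))=c(t)$ twice gives
\[
\rmD^2\Proj_\M(x)[u_\T,u_\T]+\Proj_{\T_x\M}\ddot c(0)=\ddot c(0),
\]
so $\rmD^2\Proj_\M(x)[u_\T,u_\T]=\secondform_x(u_\T,u_\T)$. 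This yields the coefficient $\tfrac12$.

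\emph{Cross block.} For the mixed term, consider $F(t,s)=\Proj_\M(c(t)+s\,\nu(t))$, where $\nu(t):=\Proj_{\N_{c(t)}\M}u_\rmN$ is a normal field along $c$. Since $c(t)+s\nu(t)$ lies in the normal fiber at $c(t)$, we have $F(t,s)=c(t)$ for small $s$. Hence $\partial_s\partial_tF=0$. Expanding this identity by the chain rule gives
\[
\rmD^2\Proj_\M(x)[u_\T,u_\rmN]+\rmD\Proj_\M(x)[\dot\nu(0)]=0 .
\]
Since $\rmD\Proj_\M(x)=\Proj_{\T_x\M}$, it follows that
\[
\rmD^2\Proj_\M(x)[u_\T,u_\rmN]=-\Proj_{\T_x\M}\dot\nu(0)=-\Proj_{\T_x\M}\,\mathcal P_{u_\T}^{\N}u_\rmN .
\]
Here $\mathcal P^{\N}_{u_\T}$ is the derivative of the normal projector, which equals $-\mathcal P_{u_\T}$. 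So the mixed term becomes $\Proj_{\T_x\M}\mathcal P_{u_\T}(u_\rmN)=\mathcal W_x(u_\T,u_\rmN)$, using that $\mathcal P_{u_\T}$ maps normal vectors to tangent vectors. The cross term appears with factor $2\cdot\tfrac12=1$, matching \eqref{proj_M:decomposition-2}.

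\emph{Main obstacle.} The step I expect to be hardest is this cross-term computation. In particular, I need to justify the fiber-invariance $\Proj_\M(y+n)=y$ for $n\in\N_y\M$ small, uniformly in $y$ near $x$. The fix is to shrink $\delta_0$ using the uniform reach given by Proposition~\ref{prop:well-define_proj} on a compact neighborhood. I also need to track signs in $\mathcal P_u$ carefully.
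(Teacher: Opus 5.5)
Your proof is correct, but it reaches \eqref{proj_M:decomposition-2} by a different route from the paper.

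\textbf{The paper's route.} It does not compute the Hessian blockwise. It takes an arbitrary curve $w(t)$ with $w(0)=0$ and sets $c(t)=\Proj_\M(x+w(t))$. It then differentiates the first-order optimality identity $\Proj_{\T_{c(t)}\M}\bigl(x+w(t)-c(t)\bigr)\equiv 0$ twice and reads off $\Proj_{\T_x\M}\ddot c(0)=2\mathcal W_x(v_\T,v_\rmN)+\Proj_{\T_x\M}\ddot w(0)$. The normal part of $\ddot c(0)$ is $\secondform_x(v_\T,v_\T)$, by the standard splitting of the extrinsic acceleration of a curve on $\M$. Specializing to $w(t)=tu$ then gives the whole quadratic form in one computation.

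\textbf{Your route and what it buys.} You polarize $\rmD^2\Proj_\M(x)$ into tangent--tangent, mixed, and normal--normal blocks. You obtain each block from an invariance identity: $\Proj_\M$ is the identity on $\M$ and is constant on normal fibres. This explains structurally why no $\|u_\rmN\|^2$ term appears and why the mixed coefficient is exactly $\mathcal W_x$. It is essentially the tubular-neighbourhood argument the paper mentions only in passing.

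\textbf{What it costs.} You need the \emph{converse} of the optimality condition: a point $y\in\M$ near $x$ with $z-y\in\N_y\M$ small is actually $\Proj_\M(z)$. This does not follow from mere uniqueness of the minimizer in Proposition~\ref{prop:well-define_proj}, which is what your normal--normal step cites. It requires local uniqueness of critical points, i.e.\ local injectivity of $(y,n)\mapsto y+n$ near $(x,0)$. That is the observation recorded right after Proposition~\ref{prop:well-define_proj} (from the proof of Absil--Malick's Lemma~3.1), or equivalently the tubular neighbourhood theorem. Since this is local at $x$, and your curve $c(t)$ stays near $x$, you do not need compactness or a uniform reach. The paper's argument uses only the necessary optimality condition and so avoids this point altogether, at the price of differentiating $t\mapsto\Proj_{\T_{c(t)}\M}$ twice.
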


\begin{proof}
	\begin{enumerate}
		\item  By Proposition~\ref{prop:well-define_proj}, for a $C^p$ submanifold the projection operator $\Proj_\M$ is $C^{p-1}$ and $\Proj_\M$ is uniquely defined near $x$, and $\rmD \Proj_\M = \Proj_{\T_x\M}.$ 
		Therefore, for any $\|w\|\leq \delta_0$ 
		\[
		\Proj_\M(x+w)= x+\Proj_{\T_x\M} w+ e_x(w),
		\]
		where $e_x(w)=o(\|w\|).$ Since $\Proj_{\T_x\M} w$ is $C^{p-1}$ related to $x$,  it follows that $e_x$ is $C^{p-1}$ related to $w.$ This proves \eqref{proj_M:decomposition-1}.
		\item \textbf{Step 1.} Let  $w:I\to \mathcal E$ is any smooth curve, where $I$ is an interval containing $0$ such that $\|w(t)\|\leq \delta_0, \forall t\in I $ and  $w(0)=0$.  Then    the  curve $c:I\rightarrow\M$,  $c(t):=\Proj_\M(x+w(t))$ is well-defined and $C^{p-1}$ by Proposition~\ref{prop:well-define_proj}.    We   define
				\[
		w_\T(t)=\Proj_{\T_x\M}w(t),\quad 	v = w'(0), \quad 	v_\rmT = \Proj_{\T_x\M}v \quad \text{and}\quad v_\rmN=\Proj_{\rmN_x\M}v.
				\]

		By the optimality condition of the projection, one has
		$x+w(t)-c(t)$ is orthogonal to $\T_{c(t)}\M$ for any $t\in I$. 
		Let $P(t)=\Proj_{\T_{c(t)}\M}$ denote the orthogonal projection onto the tangent space $\T_{c(t)}\M.$ Let
		\[
		\phi(t)=P(t)(x+w(t)-c(t)).
		\]
		It follows that $\phi(t)$ is at least $C^2$ on $I$ and 
		\[
		\phi(t)\equiv 0.
		\]
		Hence, the classical derivative of $\phi$ in $\mathcal E$ satisfies
		\begin{equation}\label{eq:phi_first_derivate}
		\phi'(t)=P'(t)(x+w(t)-c(t))+P(t)(w'(t)-c'(t))\equiv 0.
		\end{equation}
		At $t=0$, one has $c(0)=x$, $c'(0)\in \T_x\M$, $w(0)=0$. Combined with  \eqref{eq:phi_first_derivate}, we get   
		\begin{equation}\label{eq:c_first_derivative}
			c'(0)= \Proj_{\T_x\M} w'(0)=v_\T.
		\end{equation}
		Consider the second-order derivative, one has
		\[
	\ddot{\phi}(t)=\ddot{P}(t)(x+w(t)-c(t))+ 2P'(t)(w'(t)-c'(t)) +P(t) (\ddot{w}(t)-\ddot{c}(t))\equiv 0.
		\]

		At $t=0,$ we substitute $c(0)=x,$ $c'(0)=v_\T$, $w(0)=0$ into above equality to get 
		\[
	 \Proj_{\T_x\M} \ddot{c}(0) = 2P'(0)(v_\rmN) + \Proj_{\T_x\M} \ddot{w}(0)=2\mathcal{W}_x(v_\rmT,v_\rmN)+ \Proj_{\T_x\M} \ddot{w}(0),
		\]
				 where we use the Weingarten map in Definition \eqref{def:second_and_weingarten}.
			By \cite[Eq.(5.44)]{boumal2023introduction}, one has
		\begin{equation}\label{eq:decomp_derivate_c}
			\ddot{c}(t)=c''(t)+\secondform_x(c'(t),c'(t)),
		\end{equation}
		where $c''(t)\in \T_{c(t)}\M$ and $\secondform_x(c'(t),c'(t))\in\rmN_{c(t)}\M$.
Note that one also has \[ \Proj_{\T_x\M} \ddot{c}(0)=c''(0). \]	 Hence,
				\[
		\ddot{c}(0)=c''(0)+  \secondform_x(v_\T, v_\T) =\Proj_{\T_x\M} \ddot{w}(0)+2\mathcal{W}_x(v_\rmT,v_\rmN)+ \secondform_x(v_\T, v_\T).
		\]

	Since $c$ is $C^2$ in the ambient space,	it follows   for any $t\in I$ that
		\begin{equation}\label{eq:expansion_c_t}
		\begin{aligned}
			c(t)&=c(0)+c'(0)t+\frac{t^2}{2}\ddot{c}(0)+o(|t|^2)\\
			&=x+tv_\T+ \frac{t^2}{2} \Proj_{\T_x\M} \ddot{w}(0)+t^2\mathcal{W}_x(v_\rmT,v_\rmN) +\frac{t^2}{2}\secondform_x(v_\T, v_\T)+o(|t|^2 ).
		\end{aligned}
				\end{equation}
		It follows from the linearity of $\Proj_{\T_x\M}$  that
		\[
		\Proj_{\T_x\M}w(t) = tv_\T+\frac{t^2}{2}\Proj_{\T_x\M}\ddot{w}(0)+o(|t|^2).
		\]
Then,	by  the fact that  both the Weingarten map and the second fundamental form are bilinear, we get 
	\[
\mathcal{W}_x(\Proj_{\T_x\M}w(t),tv_\rmN)=t^2\mathcal{W}_x(v_\rmT,v_\rmN)+ o(|t|^2)
	\]
	and
	\[
\frac{1}{2}\secondform_x(\Proj_{\T_x\M}w(t), \Proj_{\T_x\M}w(t))=	\frac{t^2}{2}\secondform_x(v_\T, v_\T)+o(|t|^2).
	\]
	Substituting the above three equations into \eqref{eq:expansion_c_t} yields that 
			\begin{equation}\label{proj_M:decomposition-wt}
					\begin{aligned}
							\mathcal{P}_{\mathcal{M}}(x+w(t)) 
							=x+w_\T(t)+
							\mathcal{W}_x(w_\T(t),  t v_\rmN)+ \frac{1}{2}\secondform_x(w_\T(t),w_\T(t))+o(|t|^2). 
						\end{aligned} 
				\end{equation}
				Define
				\[
				r(t): = c(t) - \Big(x +w_\T(t)+
				\mathcal{W}_x(w_\T(t),  t v_\rmN)+ \frac{1}{2}\secondform_x(w_\T(t),w_\T(t))\Big).
				\]
		Since    $c(t)$ is $C^{p-1}$ and $\Proj_{\T_x\M}$, $\mathcal{W}_x$ and $\secondform_x$ are smooth with respect to $t$,		$r(t)$ is $C^{p-1}$ related to $t\in I$.

\textbf{Step 2.} Now, for any $u\in\mathcal E$ with $\|u\|\leq \delta_0$,  take $w(t)=tu, t\in[-1,1]$.  Define \[
P_x(u):=\mathcal P_{\M}(x+u),\qquad u\in \mathbb B(0,\delta_0).
\] One has
\[
w_\T(t) = \Proj_{\T_x\M}(tu) = tu_\T, \quad v=u, \quad v_\rmN=u_\rmN,\quad v_\T=u_\T.
\]

Substituting these into \eqref{proj_M:decomposition-wt}, we have
\[
P_x(tu) = x+tu_\T+t^2	\mathcal{W}_x(u_\T,   u_\rmN)+ \frac{t^2}{2}\secondform_x(u_\T,u_\T)+ o(t^2),\qquad \text{as } t\to 0.
\]
 Since $\Proj_{\M}$ is $C^{p-1}$,  the above equality gives that
 \[
 \frac12\,\mathrm D^2P_x(0)[u,u]
 =
 \mathcal W_x(u_\T,u_\rmN)+\frac12\,\secondform_x(u_\T,u_\T).
 \]
 Therefore, we obtain
 \[
P_x(u)
 =
 x+u_\T+\mathcal W_x(u_\T,u_\rmN)+\frac12\,\secondform_x(u_\T,u_\T)+o(\|u\|^2),\quad \text{as } u\to 0.
 \]
\textbf{Step 3.}
 Let \[
 r_x(u):= \Proj_{\M}(x+u) - \Big(x+u_\T+\mathcal W_x(u_\T,u_\rmN)+\frac12\,\secondform_x(u_\T,u_\T)\Big).
 \]
The mappings $u\mapsto u_\T$ and $u\mapsto u_\rmN$ are linear,
and $\mathcal W_x(\cdot,\cdot)$ and $\secondform_x(\cdot,\cdot)$ are bilinear at the fixed base point $x$, hence, $u\mapsto r_x(u)$ is $C^{p-1}$ and $r_x(u)=o(\|u\|^2)$ as $u\to 0$.
\end{enumerate}
	 
\end{proof}
\begin{remark}
	(i) The second-order expansion \eqref{proj_M:decomposition-2} can be understood as follows.
	The term $\frac12\,\secondform_x(u_\T,u_\T)\in \rmN_x\M$ is the intrinsic curvature contribution:
	it measures how the manifold bends in the normal direction when moving along tangent directions.
	The mixed term $\mathcal W_x(u_\T,u_{\rmN})\in \T_x\M$ is the tangential correction induced by
	normal perturbations: a normal displacement changes the tangent spaces. Hence $\mathcal W_x(\cdot,\cdot)$ describes the coupling between tangential motion
	and normal offsets. 	Finally, there is no $O(\|u_{\rmN}\|^2)$ term: in a tubular neighborhood the projection is constant
	along normal fibers, i.e., $\Proj_\M(x+t\nu)\equiv x$ for $\nu\in \rmN_x\M$, which forces the pure
	normal second derivative to vanish. 
	
	(ii) Moreover,   we note that  $e_x(w)=O(\|w\|^2)$ if $p\ge 3$,  and $r_x(w)=O(\|w\|^3)$ if $p\ge 4$. 
\end{remark}
Lemma~\ref{lem:expansion of orth_proj} decomposes the residual of the projection into its tangent and normal components. Note that \eqref{proj_M:decomposition-1} verifies that Assumption~\ref{assump:DP} holds for the orthogonal projector when $p=2$.

 Moreover, by \eqref{proj_M:decomposition-2},    the variation in the normal direction is of second order, then the residual of the linearization in the tangent direction becomes small second order. We summarize this observation in the following proposition. Therefore, Assumption~\ref{assump:DP} (ii) holds for $\phi_i=\Proj_{\M_i}$ by a simple uniform argument of the smoothness over a compact set $K$.

\begin{proposition}\label{coro:expansion}
	Under the same conditions of Lemma \ref{lem:expansion of orth_proj}.
For any $\|u\|\leq \delta_0$,	suppose   
	\[
	\Proj_{\rmN_{{x}}\M} u = o(\|u\|),
	\]
	and denote $u_\T=\Proj_{\T_x\M}u$.
	It follows that 
	\begin{equation}\label{eq:projection_M_residual_proof}\tag{DP-III}
	 \Proj_{\mathcal{M}}(x+u)=x+ u_\T+R_{\rmN}(u) + R_{\T}(u),
	\end{equation}
	where 
	\begin{itemize}
		\item in normal space:  $R_{\rmN}:\mathbb{B}(x,\delta_0)\to \rmN_{x}\M$ is smooth and \(R_{\rmN}(u)=\frac{1}{2}\secondform_x(u_\T, u_\T) +o(\|u\|^2)\).
		\item in tangent space: $R_{\T}:\mathbb{B}(x,\delta_0)\rightarrow \T_{x}\M$ is smooth and \(R_{\T}(u)= o(\|u\|^2).\)  
	\end{itemize} 
\end{proposition}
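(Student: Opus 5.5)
The plan is to read this off directly from the second-order expansion \eqref{proj_M:decomposition-2} of Lemma~\ref{lem:expansion of orth_proj}, by sorting each term of that expansion into its tangent or normal component at $x$. For $\|u\|\le\delta_0$ write $u=u_\T+u_\rmN$. Set
\[
R_\rmN(u):=\Proj_{\rmN_x\M}\bigl(\Proj_\M(x+u)-x-u_\T\bigr),\qquad R_\T(u):=\Proj_{\T_x\M}\bigl(\Proj_\M(x+u)-x-u_\T\bigr).
\]
With these definitions \eqref{eq:projection_M_residual_proof} holds identically, since $\Proj_{\T_x\M}+\Proj_{\rmN_x\M}=I$. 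Both maps take values in the correct spaces, and they inherit the $C^{p-1}$ regularity of $\Proj_\M$ from Proposition~\ref{prop:well-define_proj}, because they are obtained from it by composition with fixed linear projectors. It then remains only to establish the asymptotics.

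\textbf{Normal component.} In \eqref{proj_M:decomposition-2}, $\mathcal W_x(u_\T,u_\rmN)\in\T_x\M$ and $\frac12\secondform_x(u_\T,u_\T)\in\rmN_x\M$. Applying $\Proj_{\rmN_x\M}$ therefore gives
\[
R_\rmN(u)=\tfrac12\secondform_x(u_\T,u_\T)+\Proj_{\rmN_x\M}r_x(u)=\tfrac12\secondform_x(u_\T,u_\T)+o(\|u\|^2),
\]
and this holds even without the near-tangency hypothesis.

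\textbf{Tangent component.} Applying $\Proj_{\T_x\M}$ instead gives
\[
R_\T(u)=\mathcal W_x(u_\T,u_\rmN)+\Proj_{\T_x\M}r_x(u).
\]
The second term is $o(\|u\|^2)$ by Lemma~\ref{lem:expansion of orth_proj}. For the first, the Weingarten map is a fixed bilinear form at $x$, so
\[
\|\mathcal W_x(u_\T,u_\rmN)\|\le\|\mathcal W_x\|\,\|u_\T\|\,\|u_\rmN\|\le\|\mathcal W_x\|\,\|u\|\cdot o(\|u\|)=o(\|u\|^2),
\]
using the hypothesis $\Proj_{\rmN_x\M}u=o(\|u\|)$. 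Hence $R_\T(u)=o(\|u\|^2)$.

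\textbf{Main obstacle.} The only delicate point is the mixed Weingarten term: this is exactly where the near-tangency assumption is needed, and without it $R_\T$ would only be $O(\|u\|^2)$. To use the result in Assumption~\ref{assump:DP}(ii) with $\phi_i=\Proj_{\M_i}$, the little-$o$ bounds must hold uniformly in $x\in K$. For this I would note that $\mathcal W_x$ and $\secondform_x$ depend continuously on $x$, so $\sup_{x\in K}\|\mathcal W_x\|<\infty$. I would also note that the remainder $r_x$ is uniformly $o(\|u\|^2)$ over the compact set $K$, because $\mathrm D^2_{uu}$ of $(x,u)\mapsto\Proj_\M(x+u)$ is uniformly continuous there; here the $C^{p-1,1}$ regularity is used.
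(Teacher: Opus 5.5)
Your proposal is correct and follows the paper's argument. The paper also reads the decomposition off \eqref{proj_M:decomposition-2}: it uses the hypothesis $\Proj_{\rmN_x\M}u=o(\|u\|)$ only to make the mixed Weingarten term $\mathcal W_x(u_\T,u_\rmN)\in\T_x\M$ of order $o(\|u\|^2)$, while $\frac12\secondform_x(u_\T,u_\T)$ supplies the normal part. Your explicit definition of $R_\rmN,R_\T$ as the normal and tangent components of $\Proj_\M(x+u)-x-u_\T$ just makes precise the term-sorting the paper leaves implicit. For the uniformity over $K$, continuity of the second derivative on a compact set already suffices; the Lipschitz part of $C^{p-1,1}$ is not needed there.
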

\begin{proof}
	Since $\Proj_{\rmN_x\M}u=o(\|u\|)$, we have \[
	\mathcal{W}_x(u_{\T},u_{\rmN})=o(\|u\|^2).
	\]
Therefore, we obtain \eqref{eq:projection_M_residual_proof} by  \eqref{proj_M:decomposition-2}.
  
\end{proof}

\subsection{Verification  of Assumptions for IAPM}\label{subsection:correctionness}
As shown in Section~\ref{section:revist-inexact-alt}, surrogate maps such as
\eqref{eq:projection_to_linearization} satisfy the
first-order approximation condition to $\Proj_{\M_i}$ in
\eqref{eq:phi_inexact_proj_first_order}. This condition is sufficient for the
first-order residual property in Assumption~\ref{assump:DP}. 
  
\begin{lemma}\label{lem:expansion_inexact_proj}
Let $\M_i$ $(i=1,2)$ be a $C^{p,1}$ $(p\ge2)$ submanifold of $\mathcal E$, and
assume that $\Proj_{\M_i}$ is well-defined on a neighborhood of
$x\in \M_i$. Let $\phi_i:\mathcal E\to\mathcal E$ be a $C^{p-1,1}$ operator
which is first-order consistent with $\Proj_{\M_i}$ in the sense of
\eqref{eq:phi_inexact_proj_first_order}. Then the first-order residual
condition in Assumption~\ref{assump:DP} holds for $\phi_i$. In particular,
\[
    \rmD\phi_i(x)=\Proj_{\T_x\M_i}.
\]
\end{lemma}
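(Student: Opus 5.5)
We first establish the derivative identity $\rmD\phi_i(x)=\Proj_{\T_x\M_i}$ at each base point $x\in K$. We then define the residuals by the only possible choice, namely the normal and tangential parts of the Taylor remainder. The argument that gives the derivative is exactly that of Lemma~\ref{lem:Dphi_is_tangent_proj}. For $u\in\E$ and small $t$, first-order consistency \eqref{eq:phi_inexact_proj_first_order} together with $d_{\M_i}(x+tu)\le |t|\|u\|$ gives $\phi_i(x+tu)=\Proj_{\M_i}(x+tu)+o(t)$. Lemma~\ref{lem:expansion of orth_proj}\eqref{proj_M:decomposition-1} gives $\Proj_{\M_i}(x+tu)=x+t\Proj_{\T_x\M_i}u+o(t)$. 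Since $\phi_i$ is $C^{p-1,1}$ with $p\ge2$, it is in particular $C^1$, so the Gâteaux derivative computed this way is the Fréchet derivative. Specializing to $u=0$ also gives $\phi_i(x)=x$, because $d_{\M_i}(x)=0$ forces $\phi_i(x)=\Proj_{\M_i}(x)=x$.

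Next we set, for $x\in K$ and $u\in\mathbb B(0,\delta_K)$,
\[
e_i(x,u):=\phi_i(x+u)-x-\Proj_{\T_x\M_i}(u),\qquad R_{\rmN_i}(x,u):=\Proj_{\rmN_x\M_i}e_i(x,u),\qquad R_{\rmT_i}(x,u):=\Proj_{\T_x\M_i}e_i(x,u).
\]
This yields the decomposition \eqref{eq:DP_decomp_on_K} identically. For regularity, $(x,u)\mapsto\phi_i(x+u)$ is $C^{p-1}$. The map $x\mapsto\Proj_{\T_x\M_i}$ is $C^{p-1}$ on $\M_i$, because it is built from the derivative of a $C^{p,1}$ local defining function $h$ via $I-\rmD h^*(\rmD h\rmD h^*)^{-1}\rmD h$. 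Hence $e_i$, $R_{\rmN_i}$ and $R_{\rmT_i}$ are $C^{p-1}$ on $K\times\mathbb B(0,\delta_K)$.

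The smallness estimate $e_i(x,u)=o(\|u\|)$ follows from $e_i(x,0)=0$ and $\rmD_ue_i(x,0)=\rmD\phi_i(x)-\Proj_{\T_x\M_i}=0$. Both $R_{\rmN_i}$ and $R_{\rmT_i}$ then inherit this bound, since projections are nonexpansive.

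The main point requiring care is uniformity in $x\in K$. We would write $e_i(x,u)=\int_0^1\big(\rmD\phi_i(x+su)-\rmD\phi_i(x)\big)u\,ds$, so that $\|e_i(x,u)\|\le \omega(\|u\|)\|u\|$, where $\omega$ is the modulus of continuity of $\rmD\phi_i$ on the compact set $\mathrm{cl}\,\mathcal N_{\delta_K}(K)$. Uniform continuity there follows from compactness, and since $\rmD\phi_i$ is Lipschitz we even get $\|e_i(x,u)\|\le \tfrac12 L\|u\|^2$ uniformly.

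Finally, the Lipschitz requirement \eqref{eq:Dvarphi_Lip_on_NK} on $\rmD\varphi$ holds because $\phi_1,\phi_2$ are $C^{1,1}$ on a neighborhood of the compact set. The chain rule gives $\rmD\varphi(y)=\rmD\phi_1(\phi_2(y))\rmD\phi_2(y)$, and a product of bounded Lipschitz maps is Lipschitz, with $\phi_2$ mapping $\mathcal N_{\delta_K}(K)$ into the domain by Lemma~\ref{lem:uniform_invariance_summable}. This completes Assumption~\ref{assump:DP}(i).
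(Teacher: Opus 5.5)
Your proposal is correct, and its core is the paper's argument: first-order consistency \eqref{eq:phi_inexact_proj_first_order}, together with $d_{\M_i}(x+u)\le\|u\|$ and the expansion \eqref{proj_M:decomposition-1}, gives $\phi_i(x+u)=x+\Proj_{\T_x\M_i}u+o(\|u\|)$, hence $\rmD\phi_i(x)=\Proj_{\T_x\M_i}$. The paper stops at that expansion, while you also make explicit the residuals as the normal and tangential parts of the remainder, their $C^{p-1}$ regularity, the uniformity over $K$, and the bound \eqref{eq:Dvarphi_Lip_on_NK}. Uniformity needs your integral-remainder argument with $\rmD\phi_i$ Lipschitz on a compact set, because the pointwise consistency condition alone does not give it. For \eqref{eq:Dvarphi_Lip_on_NK}, justify it by noting that $\phi_1$ is defined on all of $\E$; do not cite Lemma~\ref{lem:uniform_invariance_summable}, which only controls $\phi_2$ along the iterates.
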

\begin{proof}
By \eqref{proj_M:decomposition-1} in Lemma~\ref{lem:expansion of orth_proj},
the orthogonal projection admits the expansion
\[
    \Proj_{\M_i}(x+u)
    =
    x+\Proj_{\T_x\M_i}u+e_x(u),
    \qquad
    \|e_x(u)\|=o(\|u\|).
\]
Since $x\in\M_i$, one has
\[
    d_{\M_i}(x+u)\le \|u\|.
\]
The first-order consistency condition \eqref{eq:phi_inexact_proj_first_order}
therefore gives
\[
    \|\phi_i(x+u)-\Proj_{\M_i}(x+u)\|
    =
    o(d_{\M_i}(x+u))
    =
    o(\|u\|).
\]
Combining the last two estimates yields
\[
    \phi_i(x+u)
    =
    x+\Proj_{\T_x\M_i}u+o(\|u\|).
\]
This is exactly the first-order residual condition in
Assumption~\ref{assump:DP}. It also implies
\[
    \rmD\phi_i(x)=\Proj_{\T_x\M_i}.
\]
The proof is complete.
\end{proof}

For the
second-order residual property required when $p\ge3$, we   verify it for the specific structure of the
surrogate map.
\begin{lemma}\label{lem:linearized_projection_satisfies_DP}
Let $\M_i=\{y\in\E:H(y)=0\}$ near $x\in\M_i$, where
$H:\E\to\mathbb R^m$ is $C^{3,1}$ and $\rmD H(x)$ is surjective. Define
\[
    \varphi_i(y)
    =
    y-\rmD H(y)^*
    \big(\rmD H(y)\rmD H(y)^*\big)^{-1}H(y),
\]
which is the linearized projection map in \eqref{eq:projection_to_linearization}.
Then $\varphi_i$ satisfies Assumption~\ref{assump:DP} at $x$.
\end{lemma}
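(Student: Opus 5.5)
The plan is to compare $\varphi_i$ with $\Proj_{\M_i}$ through a second-order Taylor expansion at the base point $x$, and then read off the decomposition \eqref{eq:DP_decomp_on_K} directly. We write $A(y):=\rmD H(y)$ and $G(y):=A(y)^*(A(y)A(y)^*)^{-1}$. Since $H\in C^{3,1}$, $A$ is $C^{2,1}$, and surjectivity of $A(x)$ persists on a neighborhood, uniformly over the compact set $K$. Hence $\varphi_i$ is $C^{2,1}$ there, which gives the $C^{p-1}$ smoothness and the Lipschitz requirements on $\rmD\varphi_i$ and $\rmD^2\varphi_i$ needed in Assumption~\ref{assump:DP}. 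We then \emph{define} the residuals by
\[
R_{\rmN_i}(x,u):=Q^{(i)}_x\big(\varphi_i(x+u)-x-u_\T\big),\qquad R_{\rmT_i}(x,u):=P^{(i)}_x\big(\varphi_i(x+u)-x-u_\T\big),
\]
where $P^{(i)}_x=\Proj_{\T_x\M_i}$, $Q^{(i)}_x=\Proj_{\rmN_x\M_i}$ and $u_\T=P^{(i)}_xu$. These maps are automatically $C^{p-1}$ in $(x,u)$, because $x\mapsto P^{(i)}_x$ is $C^{2}$ by the formula $P^{(i)}_x=I-G(x)A(x)$. The case $p=2$ (item (i)) then follows from Lemma~\ref{lem:expansion_inexact_proj}, using the known estimate $\|\varphi_i(y)-\Proj_{\M_i}(y)\|=O(d_{\M_i}(y)^2)$ together with uniformity over $K$.

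For item (ii), we expand $\varphi_i(x+u)$ to second order. Using $H(x)=0$,
\[
H(x+u)=A(x)u+\tfrac12\rmD^2H(x)[u,u]+O(\|u\|^3),\qquad G(x+u)=G(x)+\rmD G(x)[u]+O(\|u\|^2).
\]
This gives
\[
\varphi_i(x+u)=x+u-G(x)A(x)u-\tfrac12 G(x)\rmD^2H(x)[u,u]-\rmD G(x)[u]\,A(x)u+O(\|u\|^3).
\]
Here $u-G(x)A(x)u=u_\T$, and $A(x)u=A(x)u_\rmN$ with $u_\rmN:=Q^{(i)}_xu$. Under the hypothesis $u_\rmN=o(\|u\|)$, the term $\rmD G(x)[u]A(x)u_\rmN$ is $O(\|u\|)\,o(\|u\|)=o(\|u\|^2)$. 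The term $G(x)\rmD^2H(x)[u,u]$ lies in $\range A(x)^*=\rmN_x\M_i$. It is $O(\|u\|^2)$, and it equals $G(x)\rmD^2H(x)[u_\T,u_\T]+o(\|u\|^2)$, which is in fact $-\secondform_x(u_\T,u_\T)$ up to that error.

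Projecting onto the two components then gives the required orders. Applying $Q^{(i)}_x$ yields $R_{\rmN_i}=O(\|u\|^2)$. Applying $P^{(i)}_x$ kills the normal term $G(x)\rmD^2H(x)[u,u]$ exactly. The only surviving second-order tangential contribution is $-P^{(i)}_x\rmD G(x)[u]A(x)u_\rmN=o(\|u\|^2)$, plus the $O(\|u\|^3)$ remainder. Hence $R_{\rmT_i}=o(\|u\|^2)$, as \eqref{eq:condition_small_o_normal} requires. As a sanity check, one can compare with Proposition~\ref{coro:expansion}: the Weingarten term there is exactly the analogue of the $\rmD G\,A u_\rmN$ term here.

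The main obstacle is uniformity in $x\in K$. The $O(\|u\|^3)$ remainder involves third derivatives of $H$, and these are only Lipschitz-bounded, not continuous. We handle this with the integral form of Taylor's theorem applied to the $C^{2,1}$ maps $A$ and $G$, together with compactness of $K$, which yields constants independent of $x$. The little-$o$ from $u_\rmN=o(\|u\|)$ is uniform exactly when the hypothesis is uniform, so no further care is needed there. A secondary point is that $H$ is only a local defining function. We cover $K$ by finitely many charts, and note that $\varphi_i$ depends on the choice of $H$. Consequently the assumption must be verified separately for each fixed $H$ on its own chart, and this is what we will do.
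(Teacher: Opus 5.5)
Your proposal is correct and follows essentially the same route as the paper's proof. Both arguments Taylor-expand $H$ and $y\mapsto \rmD H(y)^*(\rmD H(y)\rmD H(y)^*)^{-1}$ at $x$, use $u-\rmD H(x)^*(\rmD H(x)\rmD H(x)^*)^{-1}\rmD H(x)u=u_\T$, observe that the quadratic term lies in $\range(\rmD H(x)^*)=\rmN_x\M_i$, and show that the cross term (your $\rmD G(x)[u]\,A(x)u_\rmN$, the paper's $O(\|u\|\,\|\rmD H(x)u\|)$) is $o(\|u\|^2)$ under near-tangency.

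One harmless slip: for $H\in C^{3,1}$ the third derivatives of $H$ are continuous (only the fourth may fail to exist), so the uniform bound on your $O(\|u\|^3)$ remainder over the compact set follows even more directly than you suggest.
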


\begin{proof}
Set
\[
    A:=\rmD H(x),\qquad
    G:=AA^*,\qquad
    B(y):=\rmD H(y)^*
    \big(\rmD H(y)\rmD H(y)^*\big)^{-1}.
\]
Since $A$ is surjective, $G$ is nonsingular. Moreover,
\[
    \T_x\M_i=\ker A,\qquad
    \N_x\M_i=\operatorname{range}(A^*),
\]
and
\[
    P_{\N_x\M_i}=A^*G^{-1}A.
\]
For $y=x+u$, Taylor expansion gives
\[
    H(x+u)
    =
    Au+\frac12\rmD^2H(x)[u,u]+O(\|u\|^3),
    \qquad
    B(x+u)=A^*G^{-1}+O(\|u\|).
\]
Hence
\[
\begin{aligned}
\varphi_i(x+u)
&=
x+u-B(x+u)H(x+u)\\
&=
x+u-A^*G^{-1}Au
-\frac12A^*G^{-1}\rmD^2H(x)[u,u]
+O(\|u\|\,\|Au\|)+O(\|u\|^3).
\end{aligned}
\]
Since $A^*G^{-1}A=P_{\N_x\M_i}$, we have
\[
    u-A^*G^{-1}Au=P_{\T_x\M_i}u.
\]
Therefore
\[
\varphi_i(x+u)
=
x+P_{\T_x\M_i}u
-\frac12A^*G^{-1}\rmD^2H(x)[u,u]
+O(\|u\|\,\|Au\|)+O(\|u\|^3).
\]
This already implies the first-order residual condition in
Assumption~\ref{assump:DP}, since the remainder after
$x+P_{\T_x\M_i}u$ is $O(\|u\|^2)=o(\|u\|)$.

Now suppose $p\ge3$ and $P_{\N_x\M_i}u=o(\|u\|)$. Since
$Au=A(P_{\N_x\M_i}u)$, we have $Au=o(\|u\|)$, and hence
\[
    O(\|u\|\,\|Au\|)=o(\|u\|^2).
\]
Moreover,
\[
    A^*G^{-1}\rmD^2H(x)[u,u]\in \operatorname{range}(A^*)=\N_x\M_i.
\]
Thus the second-order leading term is purely normal, while the tangential
component of the remaining error is $o(\|u\|^2)$. More precisely, we may define
\[
    R_{\rmN_i}(x,u)
    :=
    -\frac12A^*G^{-1}\rmD^2H(x)[u,u]
    +P_{\N_x\M_i}\mathcal R(x,u),
\]
and
\[
    R_{\rmT_i}(x,u)
    :=
    P_{\T_x\M_i}\mathcal R(x,u),
\]
where
\[
    \mathcal R(x,u)
    =
    O(\|u\|\,\|Au\|)+O(\|u\|^3).
\]
Then, whenever $P_{\N_x\M_i}u=o(\|u\|)$,
\[
    R_{\rmN_i}(x,u)=O(\|u\|^2),
    \qquad
    R_{\rmT_i}(x,u)=o(\|u\|^2).
\]
The required regularity of the residual maps follows from the $C^{3,1}$
regularity of $H$ and the local nonsingularity of
$\rmD H(y)\rmD H(y)^*$.
Therefore $\varphi_i$ satisfies Assumption~\ref{assump:DP}.
\end{proof}

\subsection{NewtonSLRA: second-order retraction property}\label{subsect:NewtonSLRA}
 In this subsection, we show that the standard NewtonSLRA iterates
\eqref{eq:NewtonSLRA}, \(x_{k+1}=\phi(x_k)\), induce a second-order retraction
on \(\M\). The proof combines two ingredients: (i) the pointwise expansion of
the orthogonal projection \(P_{\M}\) established earlier in \eqref{eq:projection_M_residual_proof}, and
 (ii) the fact that the
 NewtonSLRA single-step map is quadratically close to $\Proj_{\M}$, as proved in 
 \cite[Proposition 4.8 \& Theorems~4.1]{schost2016quadratically} and \cite[Proposition 4 \& Theorem 2]{nagasaka2021relaxed}.

\begin{fact} \label{fact:NewtonSLRA_quad_ProjM}
Let $\M=\M_1\cap \M_2$ and $\bar x\in \M$. Assume that $\M$ is a $C^3$ embedded submanifold
in a neighborhood of $\bar x$, and that $\M_1$ and $\M_2$ intersect transversally at $\bar x$.
Let $\varphi$ denote the NewtonSLRA mapping \eqref{eq:NewtonSLRA}.
Then there exist radii $\delta_N>0$, $\rho_N>0$ and constants $\mu,\mu',\gamma,\gamma'>0$ such that the following statements hold.

\smallskip
\noindent{(i) Quadratic approximation of $\Proj_{\M}$.}
For all $x\in \mathbb B(\bar x, \delta_N)$, one has $\varphi(x)\in \mathbb B(\bar x,\rho_N)$ and
\begin{align}
\|\varphi(x)-\Proj_{\M}(x)\|
&\le \mu\,\|x-\Proj_{\M}(x)\|^2,
\label{eq:NewtonSLRA_phi_minus_ProjM}\\
\|\Proj_{\M}(\varphi(x))-\Proj_{\M}(x)\|
&\le \mu'\,\|x-\Proj_{\M}(x)\|^2.
\label{eq:NewtonSLRA_ProjM_phi_minus_ProjM}
\end{align}

\smallskip
\noindent{(ii) Local quadratic convergence.}
For any $x_0\in \mathbb B_\delta(\bar x)$, the sequence $\{x_k\}_{k\ge0}$ defined by
$x_{k+1}=\varphi(x_k)$ is well-defined and converges to a point $x^*\in \M$ satisfying
\begin{align}
\|x_{k+1}-x^*\|
&\le \gamma\,\|x_k-x^*\|^2,
\qquad k\ge 0,
\label{eq:NewtonSLRA_quadratic_rate_xk}\\
\|x^*-\Proj_{\M}(x_0)\|
&\le \gamma'\,\|x_0-\Proj_{\M}(x_0)\|^2.
\label{eq:NewtonSLRA_limit_vs_ProjM}
\end{align}
\end{fact}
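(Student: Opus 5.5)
The statement is quoted from \cite[Proposition~4.8 \& Theorem~4.1]{schost2016quadratically}. We would reprove it by comparing one NewtonSLRA step directly with $\Proj_{\M}$ and then iterating.

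Fix $x$ near $\bar x$ and set $z:=\Proj_{\M}(x)$, $d:=\|x-z\|$, $\tilde x:=\Proj_{\M_2}(x)$. Denote by $S_{\tilde x}:=\M_1\cap(\tilde x+\T_{\tilde x}\M_2)$ the linearized intersection, so that $\varphi(x)=\Proj_{S_{\tilde x}}(\tilde x)$.

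\textbf{Step 1: $\tilde x$ is tangentially close to $z$.} Since $z\in\M_2$, we have $\|x-\tilde x\|\le d$ and hence $\|\tilde x-z\|\le 2d$. We know $x-z\in \N_z\M$. Also $\tilde x-x\in\N_{\tilde x}\M_2$, and by the $C^2$ regularity of $y\mapsto \Proj_{\N_y\M_2}$ this space is $O(d)$-close to $\N_z\M_2\subset\N_z\M$. Therefore
\[
P_z(\tilde x-z)=O(d^2).
\]

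\textbf{Step 2: the affine constraint set is $C^1$-close to $\M$.} Transversality at $\bar x$ is an open condition, and it persists when $\M_2$ is replaced by the affine space $\tilde x+\T_{\tilde x}\M_2$ with $\tilde x$ near $\bar x$. We would use a uniform implicit-function argument with defining functions $(h_1,\rmD h_2(\tilde x)[\,\cdot-\tilde x\,])$ whose Jacobians are uniformly surjective. This yields the following:
\begin{itemize}
\item $S_{\tilde x}$ is a $C^3$ embedded manifold near $\bar x$, with uniform constants;
\item $\Proj_{S_{\tilde x}}$ is well defined and $C^2$ on a ball of uniform radius;
\item its derivatives are uniformly bounded.
\end{itemize}
The point $z$ lies in $\M_1$. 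Its distance to $\tilde x+\T_{\tilde x}\M_2$ is $O(\|z-\tilde x\|^2)=O(d^2)$ by curvature of $\M_2$. Metric regularity of the transverse pair then gives $w\in S_{\tilde x}$ with $\|w-z\|=O(d^2)$. The same implicit-function argument shows $\T_w S_{\tilde x}=\T_w\M_1\cap\T_{\tilde x}\M_2$ lies within $O(d)$ of $\T_z\M$. Expanding the projection at $w$ (as in \eqref{proj_M:decomposition-1}, but with a uniform $O(\|\cdot\|^2)$ remainder) gives
\[
\varphi(x)=w+\Proj_{\T_wS_{\tilde x}}(\tilde x-w)+O(d^2)=z+P_z(\tilde x-z)+O(d^2)=z+O(d^2),
\]
using Step 1. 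This proves \eqref{eq:NewtonSLRA_phi_minus_ProjM}. Then \eqref{eq:NewtonSLRA_ProjM_phi_minus_ProjM} follows from the Lipschitz continuity of $\Proj_{\M}$ together with $\Proj_{\M}(z)=z$.

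\textbf{Step 3: iteration.} Set $d_k:=d_{\M}(x_k)$. By (i), $d_{k+1}\le\|x_{k+1}-\Proj_{\M}(x_k)\|\le\mu d_k^2$, and $\|\Proj_{\M}(x_{k+1})-\Proj_{\M}(x_k)\|\le\mu' d_k^2$. For $\delta$ small, $\mu d_0\le 1/2$, so $d_k$ decays quadratically. An induction shows all iterates stay in $\mathbb B(\bar x,\delta_N)$, since their total displacement is $O(d_0)$. The sequence $\Proj_{\M}(x_k)$ is then Cauchy, and it converges to some $x^*\in\M$ with
\[
\|x^*-\Proj_{\M}(x_0)\|\le \mu'\textstyle\sum_k d_k^2\le \gamma' d_0^2,
\]
which is \eqref{eq:NewtonSLRA_limit_vs_ProjM}. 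Applying the same bound from the starting point $x_k$ gives $\|x^*-\Proj_{\M}(x_k)\|\le \gamma' d_k^2$. Since $d_k\le\|x_k-x^*\|$, we obtain
\[
\|x_{k+1}-x^*\|\le d_{k+1}+\gamma' d_{k+1}^2+\mu' d_k^2\le \gamma\|x_k-x^*\|^2,
\]
which is \eqref{eq:NewtonSLRA_quadratic_rate_xk}.

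The main obstacle is Step 2. It requires uniformity in $\tilde x$ of the following three facts: the manifold structure of $S_{\tilde x}$, the radius on which $\Proj_{S_{\tilde x}}$ is well defined, and the $O(d)$ closeness of its tangent spaces to $\T_z\M$. This is exactly where transversality, rather than clean intersection, is needed. We would handle it with a parametric implicit function theorem applied to the joint map $(y,\tilde x)\mapsto (h_1(y),\rmD h_2(\tilde x)[y-\tilde x])$ on a compact neighborhood.
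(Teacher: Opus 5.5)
Your argument is correct. Note, however, that the paper gives no proof of this Fact: it is imported from \cite[Proposition~4.8 and Theorem~4.1]{schost2016quadratically} and \cite{nagasaka2021relaxed}, so you are supplying a self-contained substitute for a citation rather than reconstructing an argument in the text.

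What your route buys is transparency and generality. The one-step comparison with $\Proj_{\M}$ shows exactly where transversality enters:
\begin{itemize}
\item uniform metric regularity of $\M_1$ against the affine space $\tilde x+\T_{\tilde x}\M_2$;
\item Lipschitz dependence of $\T_w\M_1\cap\T_{\tilde x}\M_2$ on the base points.
\end{itemize}
Step~1 reduces the tangential error to $P_z(\tilde x-z)=O(d^2)$. Step~3 makes clear that (ii) is a formal consequence of (i) plus Lipschitz continuity of $\Proj_{\M}$. Your argument also covers a nonaffine $\M_1$, at the cost of the parametric implicit-function argument you flag. The citation, by contrast, gives the result for free, but only in the affine-$\M_1$ setting of those works.

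In the case the paper actually uses (Theorem~\ref{thm:newtonslra_retraction}, $\M_1$ affine), your ``main obstacle'' disappears:
\begin{itemize}
\item $S_{\tilde x}$ is an affine subspace, nonempty because $\T_{\bar x}\M_1+\T_{\tilde x}\M_2=\mathcal E$ for $\tilde x$ near $\bar x$;
\item the expansion $\varphi(x)=w+\Proj_{\T_wS_{\tilde x}}(\tilde x-w)$ holds exactly;
\item all uniformity reduces to continuity of the angle between $\T_{\bar x}\M_1$ and $\T_{\tilde x}\M_2$.
\end{itemize}

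Two small remarks. First, your Steps~1--2 use $C^3$ (or at least $C^{2,1}$) regularity of $\M_2$, and of $\M_1$ if it is not affine. The Fact's stated hypothesis ``$\M$ is $C^3$'' does not literally provide this. It is harmless in the paper's context, where $\M_2$ is $C^{3,1}$, but it should be stated. Second, in your last display the term $\mu' d_k^2$ is superfluous, since $\|x_{k+1}-x^*\|\le d_{k+1}+\gamma' d_{k+1}^2\le \mu d_k^2+\gamma'\mu^2 d_k^4$ already gives the quadratic rate.
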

\begin{remark}[Transversality at the limit point]\label{rem:xstar_is_transversal}
Since transversality is stable under small perturbations (i.e., it holds in a neighborhood of any transversal intersection point), shrinking $\delta>0$ if necessary, the manifolds $\M_1$ and $\M_2$ intersect
transversally at every point of $\M\cap \mathbb B_\delta(\bar x)$.
In particular, the limit point $x^*\in \M\cap \mathbb B_\delta(\bar x)$ in
Fact~\ref{fact:NewtonSLRA_quad_ProjM} is   a transversal intersection point.
\end{remark}
We still use \eqref{def:xstar_series} to describe the limiting map:
\[
\psi(x,\eta)=x^*
= x_0+\sum_{k=0}^{\infty}\Delta_k(x,\eta),
\qquad \Delta_k:=x_{k+1}-x_k,
\]
and use $J_k=\rmD F_k(x,\eta)$ as defined in~\eqref{def:x_k_deriv_eta} and
$H_k=\rmD^2 F_k(x,\eta)$ as defined in~\eqref{eq:H_k}.

\paragraph{Second-order approximation.}
Let $x_0=\bar x+\eta$ with $\eta\in \T_{\bar x}\M$ and $\|\eta\|\le \delta_N$.
It follows from \eqref{eq:NewtonSLRA_limit_vs_ProjM} that
\[
\|\psi(\bar x,\eta)-\Proj_{\M}(\bar x+\eta)\|
= O\bigl(\|x_0-\Proj_{\M}(x_0)\|^2\bigr).
\]
Since $\Proj_{\M}$ is a second-order  retraction, we have
$\|x_0-\Proj_{\M}(x_0)\|=O(\|\eta\|^2)$ for $\eta\in \T_{\bar x}\M$,
and hence
\[
\|\psi(\bar x,\eta)-\Proj_{\M}(\bar x+\eta)\| = O(\|\eta\|^4).
\]
This verifies the second-order property \eqref{eq:second-order-retraction}
for the limiting map of NewtonSLRA.
    
\paragraph{$C^2$ regularity of $\psi$.}
The second-order approximation \eqref{eq:NewtonSLRA_ProjM_phi_minus_ProjM}
is stronger than the single-manifold approximation requirement in
\eqref{eq:second_order-inexact_M2}.
As a consequence, the NewtonSLRA mapping admits a stronger   splitting than
Lemma~\ref{lem:frozen_splitting_fixed_point}. 
\begin{lemma}
\label{lem:Dphi_equals_proj_TbarxM}
Under Fact~\ref{fact:NewtonSLRA_quad_ProjM}, the mapping $\varphi$ is differentiable at any transversal point
$ x\in \M$, and
\[
\rmD\varphi( x)=\rmD\Proj_{\M}( x)=\Proj_{\T_{ x}\M}.
\]
Then, it follows that
\begin{equation}\label{eq:PQ_identity_NewtonSLRA}
Q_x\,\rmD\varphi(x)\,P_x = P_x \rmD\varphi(x) Q_x=
Q_x\,\rmD\varphi(x)\,Q_x=0,\qquad
\rmD\varphi(x)P_x=P_x.
\end{equation}
\end{lemma}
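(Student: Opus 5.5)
Throughout, fix a transversal point $x\in\M$ lying in the neighborhood $\mathbb B(\bar x,\delta_N)$ where Fact~\ref{fact:NewtonSLRA_quad_ProjM} applies (enlarged via Remark~\ref{rem:xstar_is_transversal}). The plan is to show differentiability directly from \eqref{eq:NewtonSLRA_phi_minus_ProjM}, squeezing $\varphi$ against $\Proj_{\M}$ near $x$. Since $x\in\M$, we have $\Proj_{\M}(x)=x$ and $\varphi(x)=x$; the latter holds because \eqref{eq:NewtonSLRA_phi_minus_ProjM} gives $\|\varphi(x)-x\|\le\mu\cdot 0$.

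For $u\in\E$ small, set $y=x+u$. Then $\|y-\Proj_{\M}(y)\|=d_{\M}(y)\le\|u\|$, again because $x\in\M$. Hence \eqref{eq:NewtonSLRA_phi_minus_ProjM} yields
\[
\|\varphi(x+u)-\Proj_{\M}(x+u)\|\le \mu\|u\|^2 .
\]
By Proposition~\ref{prop:well-define_proj}, $\Proj_{\M}$ is $C^1$ near $x$, and \eqref{proj_M:decomposition-1} of Lemma~\ref{lem:expansion of orth_proj} gives $\Proj_{\M}(x+u)=x+P_xu+o(\|u\|)$. Combining the two estimates,
\[
\varphi(x+u)=\varphi(x)+P_xu+o(\|u\|).
\]
This is exactly Fréchet differentiability of $\varphi$ at $x$ with $\rmD\varphi(x)=P_x=\rmD\Proj_{\M}(x)$. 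The only care needed is that the constant $\mu$ be valid on a full ball around $x$, not just at $\bar x$. The Fact is stated on $\mathbb B(\bar x,\delta_N)$, so for $x$ interior to that ball a small ball around $x$ is contained in it. This uniformity point is the only mild obstacle, and it is handled by restricting to $x\in\M\cap\mathbb B(\bar x,\delta_N)$, which is also where transversality holds.

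The identities \eqref{eq:PQ_identity_NewtonSLRA} then follow from $P_x^2=P_x$, $Q_xP_x=P_xQ_x=0$ and $\rmD\varphi(x)=P_x$:
\begin{itemize}
\item $Q_xP_xP_x=0$;
\item $P_xP_xQ_x=0$;
\item $Q_xP_xQ_x=0$;
\item $P_xP_x=P_x$.
\end{itemize}

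If one wants to avoid using \eqref{eq:NewtonSLRA_phi_minus_ProjM} at points other than $\bar x$, an alternative route is available. One computes $\rmD\varphi(x)$ by the chain rule from the explicit structure $\Proj_{\M_1\cap(\tilde x+\T_{\tilde x}\M_2)}\circ\Proj_{\M_2}$. Under transversality, the inner projection onto the affine-tangent intersection depends smoothly on $\tilde x$ by the implicit function theorem, and at $\tilde x=x$ its derivative reduces to the projection onto $\T_x\M_1\cap\T_x\M_2=\T_x\M$. I would nonetheless use the squeeze argument first, since it is immediate.
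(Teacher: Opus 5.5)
Your proposal is correct and follows the paper's proof: both squeeze $\varphi(x+u)$ against $\Proj_{\M}(x+u)$ via \eqref{eq:NewtonSLRA_phi_minus_ProjM} together with $d_{\M}(x+u)\le\|u\|$, use the first-order expansion of $\Proj_{\M}$ at $x$ to get $\rmD\varphi(x)=P_x$, and then obtain \eqref{eq:PQ_identity_NewtonSLRA} from $P_x^2=P_x$ and $P_xQ_x=Q_xP_x=0$. Your explicit remarks that $\varphi(x)=x$ and that $x$ must lie in the interior of the ball where the Fact holds spell out two points the paper leaves implicit.
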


\begin{proof}
Fix $h\to 0$.
By \eqref{eq:NewtonSLRA_phi_minus_ProjM},
\begin{equation}\label{eq:phi_minus_ProjM_Oh2}
\begin{aligned}
    \|\varphi( x+h)-\Proj_{\M}( x+h)\|
&\le \mu\,\| x+h-\Proj_{\M}( x+h)\|^2\\
& = \mu \, d_{\M}( x+h)^2\\
&\leq \mu \,\| x+h- x\|^2=\mu\,\|h\|^2.
\end{aligned}
\end{equation}
On the other hand, since $\M$ is $C^3$ near $ x$,
one has
$\rmD\Proj_{\M}( x)=\Proj_{\T_{ x}\M}$.
Consequently,
\[
\Proj_{\M}( x+h)= x+\Proj_{\T_{ x}\M}(h)+O(\|h\|^2).
\]
Combining the above two estimates, we obtain
\[
\varphi( x+h)
=\Proj_{\M}( x+h)+O(\|h\|^2)
= x+\Proj_{\T_{ x}\M}(h)+O(\|h\|^2),
\]
which proves $\rmD\varphi( x)=\Proj_{\T_{ x}\M}$. Thus, the identities \eqref{eq:PQ_identity_NewtonSLRA} hold directly.  
\end{proof}

Therefore, assuming in addition that $\rmD^2\varphi$ is locally Lipschitz continuous,
the summability of $\rmD\Delta_k$ and $\rmD^2\Delta_k$ follows by repeating the proofs of
Lemmas~\ref{lem:uniform_DDelta_summable} and \ref{lem:uniform_D2Delta_summable},
with Lemma~\ref{lem:frozen_splitting_fixed_point} replaced by
\eqref{eq:PQ_identity_NewtonSLRA}.
In particular, the recursions for the normal components become strictly easier,
since the ``contraction factor'' in the alternating projection analysis can be taken as $c_K=0$.
Note that $\rmD^2\varphi$ is Lipschitz continuous whenever $\varphi$ is
$C^{2,1}$. For the standard NewtonSLRA map, this follows from the
$C^{3,1}$ regularity of $\M_2$, the affine structure of $\M_1$, and the local
nonsingularity guaranteed by transversality. The key simplification is that \eqref{eq:PQ_identity_NewtonSLRA} implies
$Q^*\,\rmD\varphi(x^*)=0$ at the limit point $x^*$.
Hence both normal components of $J_k$ and $H_k$ are controlled by the summable perturbation
$\|\rmD^{j}\varphi(x_k)-\rmD^{j}\varphi(x^*)\|=O(\|x_k-x^*\|)$ with $j=1,2$, while the tangential components remain uniformly bounded. 
For simplicity, we omit the details.

\begin{theorem}
\label{thm:newtonslra_retraction}
Under Fact~\ref{fact:NewtonSLRA_quad_ProjM}, assume that $\M_1$ is affine, $\M_2$ is a $C^{3,1}$ embedded submanifold, and
$\M_1$ and $\M_2$ intersect transversally at $\bar x$. Let \(\phi\) be the standard NewtonSLRA map \eqref{eq:NewtonSLRA}.
Then there exists $r_N>0$ such that the limiting map
\[
\psi:\T\M\to \M,
\qquad
(x, \eta)\mapsto \psi( x,\eta),
\quad \text{with}\quad \|\eta\|< r_N,
\]
is a second-order retraction near $\bar x$ on $\M$.
\end{theorem}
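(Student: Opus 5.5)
The proof will mirror the series representation \eqref{def:xstar_series}, $\psi(x,\eta)=x+\eta+\sum_{k\ge0}\Delta_k(x,\eta)$, and show three things in order. First, $\psi$ is well defined on a bundle neighborhood $\mathcal D_N=\{(x,\eta)\in\T\M:\ x\in\Omega_{\bar x},\ \|\eta\|<r_N\}$. Second, the series converges in $C^2$. Third, the second-order expansion (R2) holds at every base point $x$ near $\bar x$, not only at $\bar x$.

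\textbf{Well-definedness and first estimates.} By Remark~\ref{rem:xstar_is_transversal}, transversality holds at every point of $\M\cap\mathbb B_\delta(\bar x)$. I will therefore apply Fact~\ref{fact:NewtonSLRA_quad_ProjM} at each such base point. I expect the constants $\delta_N,\mu,\gamma,\gamma'$ to be uniform on the compact set $K=\mathrm{cl}_{\M}\Omega_{\bar x}$; this needs the finite-subcover argument of Lemma~\ref{lem:uniform_fact2}. For $x\in K$ and $\eta\in\T_x\M$ with $\|\eta\|<r_N$, put $x_0=x+\eta$. Then $x_0\in\mathbb B(x,\delta_N)$, the iterates exist, and they converge to some $x^*\in\M$. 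Since $\Proj_{\M}$ is a second-order retraction (Proposition~\ref{prop:well-define_proj}), $d_{\M}(x_0)=O(\|\eta\|^2)$, and \eqref{eq:NewtonSLRA_limit_vs_ProjM} then gives $\|\psi(x,\eta)-\Proj_{\M}(x+\eta)\|=O(\|\eta\|^4)$ uniformly on $K$. This yields (R1) and (R2) at every base point. I also need an $R$-linear rate for the iterates. From \eqref{eq:NewtonSLRA_quadratic_rate_xk} and $\|x_0-x^*\|=O(\|\eta\|)$ I get $\|x_k-x^*\|\le\widehat C\tau^k\|\eta\|$ for any $\tau\in(0,1)$, after shrinking $r_N$ so that $\gamma\|x_0-x^*\|\le\tau$. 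Consequently $\|\Delta_k\|=O(\tau^k\|\eta\|)$.

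\textbf{$C^1$ and $C^2$ summability.} Since $\M_1$ is affine, $\M_2$ is $C^{3,1}$, and $\rmD H(\tilde x)$ restricted to the direction space of $\M_1$ is surjective by transversality, the map \eqref{eq:NewtonSLRA} is $C^{2,1}$ near $K$. This holds because it is the composition of $\Proj_{\M_2}$ (which is $C^{2,1}$) with an affine least-squares solution whose data depend in a $C^{2,1}$ way on $\tilde x$. Hence $\rmD\varphi$ and $\rmD^2\varphi$ are Lipschitz on a neighborhood $\mathcal N_{\delta}(K)$. I then repeat Lemmas~\ref{lem:bound_Jk_uniform}--\ref{lem:uniform_D2Delta_summable} with $J_{k+1}=\rmD\varphi(x_k)J_k$ and $H_{k+1}=\rmD\varphi(x_k)H_k+\rmD^2\varphi(x_k)[J_k,J_k]$. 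The only change is that Lemma~\ref{lem:Dphi_equals_proj_TbarxM} replaces Lemma~\ref{lem:frozen_splitting_fixed_point}: at $x^*$ we have $\rmD\varphi(x^*)=P^*$, so $Q^*\rmD\varphi(x^*)=0$. Therefore $\|Q^*\rmD\varphi(x_k)\|\le\Lambda\widehat C\tau^k\|\eta\|$, and the normal recursions have contraction factor $O(\tau^k)$ instead of $c_K$. The tangential recursions take the form $t_{k+1}\le(1+\varepsilon_k)t_k+\varepsilon_k a_k+\|P^*B_k\|$, and Lemma~\ref{lem:summable_coupled_recursion} gives uniform bounds on $J_k$ and $H_k$. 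For $P^*B_k$ I will reuse Step~4 of Lemma~\ref{lem:bound_of_Bk}. The identity $P^*\rmD^2\varphi(x^*)[u,u]=0$ for $u\in\T_{x^*}\M$ follows from $\varphi|_{\M}=\mathrm{id}$, which holds because points of $\M$ are fixed by both steps of \eqref{eq:NewtonSLRA}. This gives $\|\rmD^j\Delta_k\|\le C q^k$ for $j=1,2$, uniformly on $\mathcal D_N$. So $\psi$ is $C^2$, with $\psi(x,0)=x$ and $\rmD_\eta\psi(x,0)=I$.

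\textbf{Main obstacle.} I expect the main difficulty to be the uniformity and regularity of $\varphi$ itself. One must check that $\varphi$ is genuinely $C^{2,1}$ on a full ambient neighborhood, including off $\M_1$ for the first step, since $x_0\notin\M_1$. One must also check that the constants in Fact~\ref{fact:NewtonSLRA_quad_ProjM} hold uniformly over base points near $\bar x$. If applying $\varphi$ off $\M_1$ causes trouble, I would handle the first step separately as a fixed $C^{2,1}$ map, and run the summability argument only for $k\ge1$, where $x_k\in\M_1$.
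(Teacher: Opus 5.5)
Your proposal is correct and follows essentially the same route as the paper. The second-order expansion comes from \eqref{eq:NewtonSLRA_limit_vs_ProjM} together with $d_{\M}(x+\eta)=O(\|\eta\|^2)$, which gives $\|\psi(x,\eta)-\Proj_{\M}(x+\eta)\|=O(\|\eta\|^4)$; $C^2$ regularity comes from rerunning the summability lemmas for $\rmD\Delta_k$ and $\rmD^2\Delta_k$ with Lemma~\ref{lem:Dphi_equals_proj_TbarxM} in place of the frozen splitting (so $Q^*\rmD\varphi(x^*)=0$, i.e.\ contraction factor $0$), using the $C^{2,1}$ regularity of $\varphi$ from $\M_2\in C^{3,1}$, affine $\M_1$, and transversality. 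Your extra details (the $R$-linear rate extracted from the quadratic rate, uniformity over base points, and $P^*\rmD^2\varphi(x^*)[u,u]=0$ from $\varphi|_{\M}=\mathrm{id}$) fill in steps the paper omits, and your worry about the first step off $\M_1$ is unnecessary, since \eqref{eq:NewtonSLRA} is defined and $C^{2,1}$ on a full ambient neighborhood of $\bar x$.
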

\begin{remark}[Relaxed NewtonSLRA]
The relaxed NewtonSLRA update \eqref{eq:relax_SLRA} may still satisfy
pointwise second-order approximation estimates or local quadratic convergence
in the settings studied in \cite{nagasaka2021relaxed}. However, the relaxed tangent space
\[
    \T_{x,\tilde x}\M_2=\operatorname{span}\{\tilde x-x\}^{\perp}
\]
does not necessarily depend smoothly on \(x\) as \(x\to\M_2\). Hence the
\(C^2\)-regularity of the induced limiting map may not hold. 
\end{remark}

\section{A hybrid algorithm}\label{sec:new_alg}
Theorems~\ref{thm:retr-1st}, \ref{thm:retr-2nd} and \ref{thm:newtonslra_retraction} provide
three retraction constructions on the intersection manifold
$\M=\M_1\cap \M_2$ for AMP \cite{lewis2008alternating,drusvyatskiy2015transversality}, iAP \cite{drusvyatskiy2019local,andersson2013alternating} satisfying Assumptions~\ref{assump:alt_framework}--\ref{assump:DP}, NewtonSLRA \cite{schost2016quadratically}. Although they all yield (second-order) retractions,
their admissible radii of $K$ and $\eta\in\T_x\M$  are different.
In typical settings, one has
$\delta_A \gtrsim \delta_i \gtrsim \delta_H$ and $\rho_A \gtrsim \rho_i \gtrsim \rho_H$, 
where the subscripts ``$A$, $i$ and $H$'' correspond to APM, iAP and NewtonSLRA, respectively.
The reduction from APM to iAP is mainly due to the condition \eqref{eq:second_order-inexact_M2}.
The  NewtonSLRA usually requires additional second-order control, which further shrinks the
admissible neighborhood.  From an algorithmic perspective, this radius hierarchy interacts with the
 per-iteration cost.
 APM enjoys the largest convergence  radius and a robust decrease of the feasibility
 residual, but its local rate is at most linear.
 iAP replaces the expensive projection $\Proj_{\M_2}$ by a cheaper surrogate
 $\mathcal A_{\M_2}$, often improving the practical
 efficiency within a
 moderate neighborhood.
 NewtonSLRA  converges at a local quadratic
 convergence, yet it becomes reliable only when the iterate is sufficiently
 close to $\M$.
 
This motivates us to combine the three phases.
Specifically, we define
\begin{equation}\label{eq:limiting_TAPR_fixed}
\psi(x+\eta)
= x_0
+ \underbrace{\sum_{i=0}^{k_A} (x_{i+1}-x_i)}_{\hbox{{APM phase}}}
+ \underbrace{\sum_{i=k_A+1}^{k_I} (x_{i+1}-x_i)}_{\hbox{{iAP phase}}}
+ \underbrace{\sum_{i=k_I+1}^{\infty} (x_{i+1}-x_i)}_{\hbox{{NewtonSLRA}}},
\end{equation}
where $k_A$ and $k_I$ denote the iteration numbers of the APM phase and the iAP phase, respectively.
The following result follows by combining the finite-time estimate~\eqref{eq:tangent_error_o_eta2_finite}
with Lemmas~\ref{lem:uniform_DDelta_summable} and~\ref{lem:uniform_D2Delta_summable}, which hold uniformly for any finite iterate $k$.
Consequently, after a phase transition, only the $R$-linear contraction factor $\tau_K\in(0,1)$ may change, while the corresponding increment terms remain summable.

\begin{theorem}\label{thm:TAPR_second_order}
	Fix $\bar x\in\M=\M_1\cap\M_2$ and suppose that Assumptions~\ref{assumpt:M1andM2_intersect_cleanly},
\ref{assump:alt_framework}, and~\ref{assump:DP} hold in a neighborhood of
$\bar x$ with $p\ge3$. Assume in addition that the standard NewtonSLRA map
used in the final phase is well-defined and satisfies the assumptions of
Theorem~\ref{thm:newtonslra_retraction}; in particular, $\M_1$ is affine,
$\M_2$ is a $C^{3,1}$ embedded submanifold, and $\M_1$ and $\M_2$ intersect
transversally at $\bar x$.

Let $k_A,k_I\in\mathbb N$ be fixed constants, independent of the input
$(x,\eta)$. Then, after possibly shrinking the neighborhood of $\bar x$, the
fixed-phase hybrid map defined by \eqref{eq:limiting_TAPR_fixed} is well-defined for
all $(x,\eta)\in \T\M$ with $x$ near $\bar x$ and $\|\eta\|$ sufficiently small.
Moreover, the induced limiting map $\psi$ is a second-order retraction on
$\M$ near $\bar x$.
\end{theorem}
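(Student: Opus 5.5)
The plan is to view the hybrid map as a finite composition followed by the NewtonSLRA limiting map. Write $\varphi_A=\Proj_{\M_1}\circ\Proj_{\M_2}$, $\varphi_I=\phi_1\circ\phi_2$ (the iAP map) and $\varphi_N$ for the standard NewtonSLRA map. Define $G(x,\eta):=\varphi_I^{\,k_I-k_A}\circ\varphi_A^{\,k_A+1}(x+\eta)$, i.e.\ the iterate $x_{k_I+1}$. The hybrid limit is then
\[
\psi(x,\eta)=\psi_N\big(G(x,\eta)\big),\qquad \psi_N(y):=\lim_{k\to\infty}\varphi_N^{k}(y),
\]
where $\psi_N$ is regarded as a map on an ambient neighborhood of $\bar x$ rather than on $\T\M$. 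The proof has three parts: well-definedness, the second-order expansion \eqref{eq:second-order-retraction}, and $C^2$ regularity on a bundle neighborhood $\mathcal D(\bar x)$.

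\textbf{Well-definedness.} Because $k_A,k_I$ are fixed, $G$ is a finite composition of $C^{2,1}$ maps, all defined near $\bar x$. By Lemma~\ref{lem:uniform_invariance_summable}, applied once to $\varphi_A$ and once to $\varphi_I$, we get $\|x_k-x\|\le C\|\eta\|$ for $k\le k_I+1$. Shrinking $\Omega_{\bar x}$ and $\rho_K$ then places $G(x,\eta)$ inside the ball $\mathbb B(\bar x,\delta_N)$ of Fact~\ref{fact:NewtonSLRA_quad_ProjM}(ii). The NewtonSLRA tail therefore converges quadratically to a point $x^*\in\M$. The ambient (not only bundle) convergence in that fact is what makes this work; the NewtonSLRA phase's starting point is not of the form $x+\eta$.

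\textbf{Second-order property.} Apply the finite-step estimate \eqref{eq:tangent_error_o_eta2_finite} to each phase, using the chain $x_{k_A+1}\mapsto x_{k_I+1}$ with the same base point $x$ and the recursions of Lemma~\ref{lem:EN_ET_recursions}. Both $\varphi_A$ and $\varphi_I$ satisfy Assumption~\ref{assump:DP}, so the normal error after the first phase is $O(\|\eta\|^2)$, hence $o(\|\eta\|)$. That keeps the inputs nearly tangent, so the second phase contributes tangential error $o(\|\eta\|^2)$ and normal error $O(\|\eta\|^2)$. This gives
\[
y:=G(x,\eta)=x+\eta+o(\|\eta\|^2)_{\T}+O(\|\eta\|^2)_{\rmN}.
\]
Since $d_\M(y)=O(\|\eta\|^2)$, estimate \eqref{eq:NewtonSLRA_limit_vs_ProjM} gives $\|x^*-\Proj_\M(y)\|=O(\|\eta\|^4)$. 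By Proposition~\ref{coro:expansion} applied to $\Proj_\M$ at $x$, with $u=y-x$ satisfying $\Proj_{\rmN_x\M}u=O(\|u\|^2)$, we have $P_x(\Proj_\M(y)-x-u_\T)=o(\|u\|^2)$, and $u_\T=\eta+o(\|\eta\|^2)$. Together these give $P_x(x^*-(x+\eta))=o(\|\eta\|^2)$, which is \eqref{eq:second-order-retraction}. The first-order property \eqref{eq:first-order-retraction} follows the same way.

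\textbf{$C^2$ regularity and the main obstacle.} Since $G$ is $C^2$ on the bundle neighborhood, the chain rule reduces this to showing $\psi_N$ is $C^2$ on an ambient neighborhood of $\bar x$. This is the step I expect to be hardest. Theorem~\ref{thm:newtonslra_retraction} as stated only covers inputs $x+\eta$ with $\eta$ tangent, and its proof was only sketched. I would redo the proofs of Lemmas~\ref{lem:bound_Jk_uniform}, \ref{lem:uniform_DDelta_summable}, \ref{lem:bound_of_Bk}, \ref{lem:Hk_uniform_bound} and \ref{lem:uniform_D2Delta_summable} for $F_k(y)=\varphi_N^k(y)$ with $y$ ambient, and check that none of them uses tangency of the initial displacement. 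Two ingredients replace that tangency:
\begin{itemize}
\item geometric (indeed quadratic) decay $\|y_k-x^*\|\le C q^k\|y-\bar x\|$, from \eqref{eq:NewtonSLRA_quadratic_rate_xk};
\item the splitting \eqref{eq:PQ_identity_NewtonSLRA}, which gives $Q^*\rmD\varphi_N(x^*)=0$, so the normal blocks of $J_k$ and $H_k$ are driven only by the summable perturbations $\|\rmD^j\varphi_N(y_k)-\rmD^j\varphi_N(x^*)\|$.
\end{itemize}
The recursions \eqref{eq:a_recursion_Jk}, \eqref{eq:tk_recursion} and \eqref{eq:ak_recursion} then hold with contraction factor $0$, and Lemma~\ref{lem:summable_coupled_recursion} yields $\sum_k\sup\|\rmD^j\Delta_k\|<\infty$ for $j=1,2$. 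The $C^{2,1}$ regularity of $\varphi_N$ needed for this comes from $\M_1$ affine, $\M_2\in C^{3,1}$, and transversality, as noted before Theorem~\ref{thm:newtonslra_retraction}.

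If this ambient extension turns out to need care, the fallback is the one indicated before the theorem. Treat all iterates as one sequence $x_{k+1}=\varphi_k(x_k)$ with $\varphi_k\in\{\varphi_A,\varphi_I,\varphi_N\}$, and note that the block estimates in those lemmas hold for every $k$ with constants taken as the maximum over the three maps and $q=\max$ of the three rates. The first $k_I+1$ terms are a finite sum and cause no difficulty. Combining these gives $\psi\in C^2$ with $\psi(x,0)=x$ and $\rmD_\eta\psi(x,0)=I$, hence a second-order retraction.
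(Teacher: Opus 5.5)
Your proof is correct, but it is organized differently from the paper's. The paper treats the hybrid sequence as one switched iteration and reuses its bundle-level machinery along the whole sequence. The finite-time estimate \eqref{eq:tangent_error_o_eta2_finite} supplies the second-order expansion, and Lemmas~\ref{lem:uniform_DDelta_summable}--\ref{lem:uniform_D2Delta_summable} supply $C^2$ regularity, with the remark that a phase switch only changes the contraction constant. That is essentially your fallback.

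You instead factor $\psi=\psi_N\circ G$, so the finite prefix needs no summability analysis at all. The second-order property then follows cleanly from three pieces: the finite-step expansion of $G$, the bound $\|x^*-\Proj_\M(y)\|=O(d_\M(y)^2)=O(\|\eta\|^4)$ from \eqref{eq:NewtonSLRA_limit_vs_ProjM}, and the expansion (DP-II) of $\Proj_\M$. This handles the infinite tail more explicitly than the paper does. (For the prefix bound $\|x_k-x\|\le C\|\eta\|$, a plain Lipschitz estimate over finitely many steps suffices. Lemma~\ref{lem:uniform_invariance_summable} is stated only for starting points $x+\eta$.)

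The price of your route is that you need $C^2$ regularity of the NewtonSLRA limit map on an \emph{ambient} neighborhood, which is stronger than Theorem~\ref{thm:newtonslra_retraction}. That extension does go through, but correct one claim in your plan. The paper's lemmas do use tangency of the initial displacement: it is what gives $\|Q^*J_0\|=O(\|\eta\|)$, and hence the factor $\|\eta\|$ in bounds such as $\|Q^*J_k\|\le Cq^k\|\eta\|$. With ambient input, $J_0=I$ and $Q^*J_0=Q^*$ is not small. However, $Q^*\rmD\varphi_N(x^*)=0$ by \eqref{eq:PQ_identity_NewtonSLRA}, so $\|Q^*J_1\|\le\Lambda\|y_0-x^*\|$. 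From then on all normal and cross blocks decay geometrically, so you lose only the $\|\eta\|$ factor and keep uniform summability.

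For the second derivative, boundedness from Lemma~\ref{lem:summable_coupled_recursion} is not enough. Here $Q^*H_k$ converges to a generally nonzero limit, because $\rmD^2\varphi_N(x^*)[u,u]=\secondform_{x^*}(u,u)$ for tangent $u$. You therefore need the increment recursion of Lemma~\ref{lem:uniform_D2Delta_summable}, which uses the Lipschitz continuity of $\rmD^2\varphi_N$.

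What each route buys: yours is modular, since any $C^2$ prefix that fixes $\M$ and preserves near-tangency to second order can be prepended, and its tail estimate is rigorous. The paper's stays on $\T\M$ and avoids the ambient extension, but its argument across phase switches is largely informal.
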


\subsection{A practical algorithm}
The iteration indices $k_A$ and $k_I$ in \eqref{eq:limiting_TAPR_fixed} are a priori unknown and can be difficult to determine.
Therefore, we seek an easy-to-compute surrogate measure (e.g., a feasibility residual) to drive the phase transitions.
 As such, we assume that $\M_2$ admits an equality-constraint representation
\[
\M_2=\{x\in\R^n \mid H(x)=0\},
\]
where $H:\R^n\to \R^m$ is $C^{p,1}$ and the linear independence constraint qualification (LICQ) holds at all feasible points,
i.e., $\mathrm{D}H(x)\in\R^{m\times n}$ has full row rank for all $x\in \M_2$.
In particular, $m=n-m_2$. Since $H$ is at least $C^2$ and $\rmD H(\bar x)$ is surjective,
 	the mapping $\Phi(x):=\{H(x)\}$ is metrically regular at $(\bar x,0)$
 	\cite[Theorem~9.43]{Rockafellar2009}. Therefore, there exist $r_H>0$ and $\kappa_H>0$ such that
 	\begin{equation}\label{eq:bound_Hx}
 		d_{\M_2}(x)\le r_H\,\|H(x)\|,
 		\qquad \forall x\in \mathbb B(\bar x,r_H).
 	\end{equation}
  Combining with Lemma~\ref{lem:EB_on_M} and taking $\bar\rho:=\min\{r_1,r_H\}$, $\bar\kappa:=\kappa\,\kappa_H$, we get
    	\begin{equation}\label{eq:bound_M_2_H}
 			d_{\M}(x)\ \le\  \kappa\ d_{\M_2}(x)\ \le\   \bar\kappa\|H(x)\|.
 	\end{equation}
On the other hand, since $H$ is at least $C^{2,1}$, it is $L_H$-Lipschitz continuous on
$\mathbb{B}(\bar x,r_H)$. Thus, for any $z\in \M_2\cap \mathbb{B}(\bar x,r_H)$, one has
$\|H(x)\|\le L_H\|x-z\|$ for all $x\in \mathbb{B}(\bar x,r_H)$, which implies
\begin{equation}\label{eq:bound_Hx_lower}
	\|H(x)\|\le L_H\, d_{\M_2}(x),
	\qquad \forall x\in \mathbb{B}(\bar x,r_H).
\end{equation}
Therefore, in view of Lemma~\ref{lem:uniform_fact2}, one may expect a $Q$-linear convergence rate
for $\|H(x_k)\|$ under APM and IAPM, and---by Fact~\ref{fact:NewtonSLRA_quad_ProjM}---a quadratic rate
for NewtonSLRA whenever the admissible radii are ensured.

\begin{algorithm}
	\caption{TAPR: A Three-phase Alternating-Projections based Retraction Method}
	\label{alg:TAPR} 
	\begin{algorithmic}[1]
		\Require Manifold $\M = {\M}_1\cap\M_2$, point $ x \in \M$, tangent vector $\eta \in \T_x \M$,  $\text{tol}>0$, $1>\mathfrak{a}_1>\mathfrak{a}_2>0$, $\mathfrak a_0>0$, $0<\mu_0<\mu_1\le \mu_2<1$.
		\Ensure  $y = \Retr_{x}(\eta) \in \M$
		\State Initialize $y = x + \eta$, phase $\leftarrow$ ``APM", $\text{maxiter}, i =1$
		\State Compute initial error: $\text{err}_0 = \|H(y)\|$.
		\If {$\text{err}_0>\mathfrak a_0$}
	\State	\Return  
		\EndIf
		\While{$i\leq \text{maxiter}$ and $\text{err}_{i-1}>\mathrm{tol}$}
		\If{phase = ``APM"}
		\State $y \leftarrow \text{APM}(y, \M)$,		 $\text{err}_i = \|H(y)\|$ \Comment{alternating projections method}
		\If{$\text{err}_i < \mathfrak{a}_1$}
		\State phase $\leftarrow$ ``iAP"
		\EndIf
		\ElsIf{phase =``iAP"}
		\State $y^+ \leftarrow \text{iAP}(y,\M)$,		  \Comment{iAP}
				\If{$\|H(y^+)\|^2\leq (1-\mu_1)\|H(y)\|^2$}
		\State $y=y^+$, $\text{err}_i = \|H(y)\|$
		\Else
		\State phase $\leftarrow$ ``APM"
		\EndIf
		\If{$\text{err}_i \leq \mathfrak{a}_2$ or slow convergence detected:  $\|H(y^+)\|^2 > (1-\mu_0)\|H(y)\|^2$}
		\State phase $\leftarrow$ ``second order"
		\EndIf
		\Else \Comment{phase = "second order"}
		\State $\text{NewtonSLRA}(y, \M)$ \Comment{   NewtonSLRA}
		\If{$\|H(y^+)\|^2\leq (1-\mu_2)\|H(y)\|^2$}
		\State $y=y^+$, $\text{err}_i = \|H(y)\|$
		\Else
		\State phase $\leftarrow$ ``iAP"
		\EndIf
		\EndIf
		\State $i=i+1$.
		\EndWhile
	\end{algorithmic}
\end{algorithm}
Motivated by these observations, we propose TAPR, a three-phase hybrid
alternating-projections based algorithm that switch among APM, iAP, and a second-order
regime based on the current feasibility residual   $\|H(y)\|$, together
with a simple stagnation criterion. Since the subroutines APM, iAP and NewtonSLRA are only guaranteed to be
well-defined and convergent within suitable local neighborhoods, TAPR is
designed as a {locally valid} retraction procedure. 
The constant $\mathfrak a_0>0$ serves as an initialization safeguard: we start the iteration only if the initial feasibility residual satisfies
$\|H(x+\eta)\|\le \mathfrak a_0$. The thresholds $1>\mathfrak a_1>\mathfrak a_2>0$ specify nested neighborhoods
of $\M$ and control the regime switching.  	Once $\|H(y)\|<\mathfrak a_1$, we enter the iAP regime, where the inexact mapping
$\phi_2$ is stable, and the method exhibits a predictable
$R$-linear decrease.

When the residual further drops below $\mathfrak a_2$, or when a stagnation
criterion is triggered, we activate a second-order method, NewtonSLRA,
to exploit faster local convergence.
Here, stagnation means that the iAP phase fails to deliver the expected linear
progress; for example, the empirical contraction factor
$\|H(y_{i})\|/\|H(y_{i-1})\|$ remains persistently close to one. To globalize the second-order updates, TAPR adopts a merit-function acceptance
safeguard with fallback.
Specifically, we use $\|H(y)\|^2$ as the merit function and accept a
second-order trial point $y^+$ only if it achieves a sufficient decrease,
$\|H(y^+)\|^2\le (1-\mu)\|H(y)\|^2$ with $\mu\in(0,1)$.   Otherwise, we reject the step
and return to the iAP regime.
Algorithm~\ref{alg:TAPR} summarizes the resulting procedure.

Note that the phase-transition rules of Algorithm~\ref{alg:TAPR} generally produce different numbers of inner iterations across stages for different initial pairs $(x,\eta)$.
As a result, TAPR does not necessarily define a smooth map on a neighborhood of $(\bar x,0)$, and hence may fail to induce a retraction in the strict sense.
Nevertheless, the method is numerically effective.
Moreover, varying the number of inner iterations only changes the associated summable error sequence through constant factors in the residual bounds; such variations are typically benign and do not materially affect the observed performance.

\section{Numerical experiment}\label{sec:numerical_exp}
In this section, we compare several alternative retraction strategies within the same
algorithmic framework. Our experiments are conducted on two representative testbeds from
\cite{hou2025low}: the quadratic assignment problem (QAP)   and quadratic knapsack problem (QKP). In all cases, we embed the compared retractions into the same Riemannian
based augmented Lagrangian method (RNNAL), keeping the outer ALM and the rank-adaptive
Riemannian gradient scheme unchanged and varying only the retraction routine used for the
manifold-intersection subproblem.  All experiments were conducted in MATLAB on a Linux server equipped with two
AMD EPYC 9754 128-Core Processor. Our implementation is based on the authors'
MATLAB package  
(\url{https://github.com/HouDiOpt/RiNNAL}), and we build our retraction variants by
modifying the corresponding retraction. Since the resulting low-rank formulations are highly
nonconvex, different retractions can lead to different iterates and sometimes different
final solutions. Therefore, our goal is not to claim that one retraction is uniformly
superior, but to illustrate that alternating-projection-type constructions provide viable
alternatives to the metric projection for manifold intersections.

Following \cite{hou2025low}, both test problems considered in this section admit an equivalent
doubly non-negative (DNN) relaxation and a smooth reformulation. For our purpose of comparing different retraction
routines, it suffices to recall the resulting low-rank Riemannian subproblem appearing in each
outer iteration; more details about  the DNN model are deferred to Appendix~\ref{app:DNN}.

After the smooth reformulation, we work with a low-rank factorization
\[
Y'=
\begin{pmatrix}
1 & (x')^\top\\
x' & X'
\end{pmatrix}
=
\begin{pmatrix}
e_1^\top\\
R
\end{pmatrix}
\begin{pmatrix}
e_1 & R^\top
\end{pmatrix},
\qquad
R\in\mathbb{R}^{N\times r},
\]
where $r$ is a rank upper bound updated by the rank-adaptive strategy in \cite{hou2025low},
and we denote by $r_{\mathrm{final}}$ the final rank bound at termination.
  The feasible set of the inner Riemannian subproblem is
the intersection manifold
\begin{equation}\label{eq:Mr_general_short}
\mathcal{M}_r
:=
\left\{
R\in\mathbb{R}^{N\times r}:
A'R=b'e_1^\top,\ \diag_B(RR^\top)=R_Be_1
\right\}.
\end{equation}
Here $B\subseteq[N]$ is the index set of binary variables (denote  $s:=|B|$), $\diag_B(RR^\top)\in\R^{s}$
collects the diagonal entries indexed by $B$, and $R_B\in\R^{s\times r}$ denotes the submatrix of $R$
consisting of the rows indexed by $B$. We write
\[
\mathcal{M}_r=\mathcal{M}_1\cap\mathcal{M}_2,
\quad
\mathcal{M}_1:=\{R\in\R^{N\times r}:A'R=b'e_1^\top\},
\
\mathcal{M}_2:=\{R\in\R^{N\times r}:\diag_B(RR^\top)=R_Be_1\}.
\]
Note that the transversality condition holds for $\M_r$ according to \cite[Lemma 4]{hou2025low}.

Accordingly, the inner low-rank subproblem takes the form
\begin{equation}\label{eq:rie_sub_ALM_short}
\min_{R\in\mathcal{M}_r} f(R),
\end{equation}
where $f:\R^{N\times r}\to\R$ is the smooth objective induced by the current outer subproblem.
Given $R\in\mathcal{M}_r$ and stepsize $t>0$, we form the trial point
\[
V=R-t\,\grad f(R),
\qquad
\grad f(R)=\Proj_{T_R\mathcal{M}_r}\nabla f(R),
\]
and compute a retracted point $\Retr_R(-t\,\grad f(R))$. In \cite{hou2025low}, the retraction is
taken as the metric projection
\begin{equation}\label{eq:retr_proj_short}
\Retr_R(-t\,\grad f(R))
=
\Proj_{\mathcal{M}_r}(V)
=
\arg\min_{X\in\mathcal{M}_r}\|X-V\|_F^2.
\end{equation}

In this section, we keep the outer framework and the rank-adaptive Riemannian gradient scheme
unchanged, and compare different solvers/retractions for \eqref{eq:retr_proj_short}, including the
baseline projection solver in \cite{hou2025low} (GWA / GWA-Newton) and several
alternating-projections-type retractions (APM, NewtonSLRA, relaxed NewtonSLRA, APHL, and TAPR).

For ease of comparison, Table~\ref{tab:retr_complexity} summarizes the per-iteration
computational cost of all retraction solvers under our current notation.  The APHL results are reported for numerical comparison only and are not claimed
to follow from the retraction theory established in this paper. Detailed derivations are deferred to  Appendix~\ref{app:complexity}.

\begin{table}[t]
\centering
\caption{Per-iteration complexity of different retraction solvers. }
\label{tab:retr_complexity}
\begin{tabular}{ll}
\toprule
Method & Per-iteration cost \\
\midrule
GWA
&
$\displaystyle
O\!\left(
mnr+msr+m^2r+\min\{m^3+m^2s,\ s^3+s^2r\}
\right)
$
\\[8pt]

APM
&
$\displaystyle
O\!\left(mnr+msr+m^2r\right)
$
\\[8pt]

GWA-Newton
&
$\displaystyle
O\!\left(
mnr+
\min\bigl\{msr+m^2r+s^2r+s^3,\ (mr)^3+s(mr)^2\bigr\}
\right)
$
\\[8pt]

NewtonSLRA
&
$\displaystyle
O\!\left(
mnr+
\min\bigl\{msr+m^2r+s^2r+s^3,\ (mr)^3+s(mr)^2\bigr\}
\right)
$
\\[8pt]
Relaxed NewtonSLRA
&
$\displaystyle
O\!\left(
mnr+msr+m^2r
\right)
$\\
APHL
&
$\displaystyle
O\!\left(
mnr+
\min\bigl\{msr+m^2r+s^2r+s^3,\ (mr)^3+s(mr)^2\bigr\}
\right)
$
\\
\bottomrule
\end{tabular}
\end{table}

 \paragraph{Performance metrics and stopping criteria}
 \begin{itemize}
    
 \item {KKT residuals and optimality gap.}
We use the relative KKT residuals $(R_p,R_d,R_c)$ (primal feasibility, dual feasibility, and
complementarity) and the relative primal--dual gap $\mathrm{pdgap}$ defined in
\cite{hou2025low} to measure solution accuracy. We report $
R_{\max}:=\max\{R_p,R_d,R_c\}$
as the overall KKT residual.

\item {Rank parameter.}
We report the final rank bound $r_{\mathrm{final}}$ at termination. Smaller $r_{\mathrm{final}}$ is preferable, since a feasible mixed-binary solution corresponds to a rank-one lifted matrix $Y'$ in the DNN model. The initial rank is set to
\(
r_0=\min\{200,\ \lceil n/5\rceil\},
\)
following \cite{hou2025low}.

 \item {Stopping criterion for RNNAL.}
All RNNAL variants (differing only in the retraction solver) terminate when
\(
R_{\max}<\mathrm{tol}
\)
or when the wall-clock time exceeds $\mathrm{TimeLimit}$.
Unless otherwise stated, we set $\mathrm{tol}=10^{-6}$ and $\mathrm{TimeLimit}=3600$ seconds.

 \item {Stopping criterion for the retraction subproblem.}
At each Riemannian gradient step, the retraction subproblem is solved inexactly.
We terminate the inner retraction iterations when the chosen retraction solver reaches the
adaptive tolerance
\begin{equation}\label{eq:ret_tol}
 \mathrm{retractol}
=
\max\!\left(
\min\!\left(\frac{\|\grad f(R)\|_F}{10^{2}},\ \frac{1}{i^{3}}\right),\ 10^{-9}
\right),   
\end{equation}
where $\|\grad f(R)\|_F$ denotes the Frobenius norm of the Riemannian gradient at the current
iterate and $i$ is the inner iteration counter of the retraction solver.

 \item {Iteration counters.}
We report $\mathrm{ALMite}$ as the total number of outer augmented Lagrangian iterations and
$\mathrm{BBite}$ as the total number of inner Riemannian gradient iterations with Barzilai--Borwein stepsize, as well as the average number of inner iterations $\overline{\mathrm{Rite}}$ used by the retraction subsolver.

Regarding TAPR, it   combines APM and NewtonSLRA and the transition error $\mathfrak{a}_2$ in Algorithm~\ref{alg:TAPR} is set to $\mathrm{retractol}\times 10^3.$
 \end{itemize}

\subsection{Quadratic assignment problem}

We use the quadratic assignment problem (QAP) as the first testbed. Let $\Pi$ denote the set
of $p\times p$ permutation matrices. Given $W,D\in\mathbb S^{p}$, the QAP is
\[
\min\bigl\{\ \langle Y, W Y D\rangle \ :\ Y\in \Pi\ \bigr\}.
\]
By vectorization, let $x=\operatorname{vec}(Y)\in\mathbb R^{n}$ with $n=p^2$ and
$Q=D\otimes W$. The permutation constraints are equivalent to the assignment constraints
\[
Ax=b,\qquad x\in\{0,1\}^{n},
\]
where
\[
A=
\begin{pmatrix}
e_p^\top\otimes I_p\\
I_p\otimes e_p^\top
\end{pmatrix}
\in\mathbb R^{(2p)\times p^2},
\qquad
b=e\in\mathbb R^{2p}.
\]
Hence, in the MBQP notation \eqref{eq:mbqp}, we have $m=2p$, $B=[n]$ and thus $s=|B|=n$.

In the smooth reformulation adopted in \eqref{eq:dnn_relax_Qprime}, the extended dimension is
\[
N=n+2m=p^2+4p,
\]
and the corresponding matrices are
\[
A'=
\begin{pmatrix}
A & I_m & 0\\
A & 0 & -I_m
\end{pmatrix}
\in \mathbb R^{(2m)\times (n+2m)},
\qquad
b'=
\begin{pmatrix}
b\\
b
\end{pmatrix}
\in\mathbb R^{2m}.
\]

We first empirically verify the second-order condition~\eqref{eq:second-order-retraction}.
For dataset \texttt{chr12a} from QAPLib~\cite{hahn2006qaplib}, we take the first iterate $R_0\in\mathcal M_r$
 and set the search direction $\eta=\grad f(R_0)$.
For a range of step sizes $t\in[10^{-7},10^{-5}]$, we define the local retraction error
\[
e(t):=\;\Retr_{R_0}(t\eta)-\bigl(R_0+t\eta\bigr),
\]
and measure its tangential component $\|\Proj_{\T_{R_0}\mathcal M_r}(e(t))\|_F$.
Figure~\ref{fig:retr_second_order} reports the resulting log--log curves. The observed slope \(\approx 3\) for APM, NewtonSLRA, and APHL suggests that
these procedures exhibit second-order-type tangential error behavior in this
test. For APHL, this is an empirical observation.
\begin{figure}[ht]
    \centering
    \includegraphics[width=0.6\linewidth]{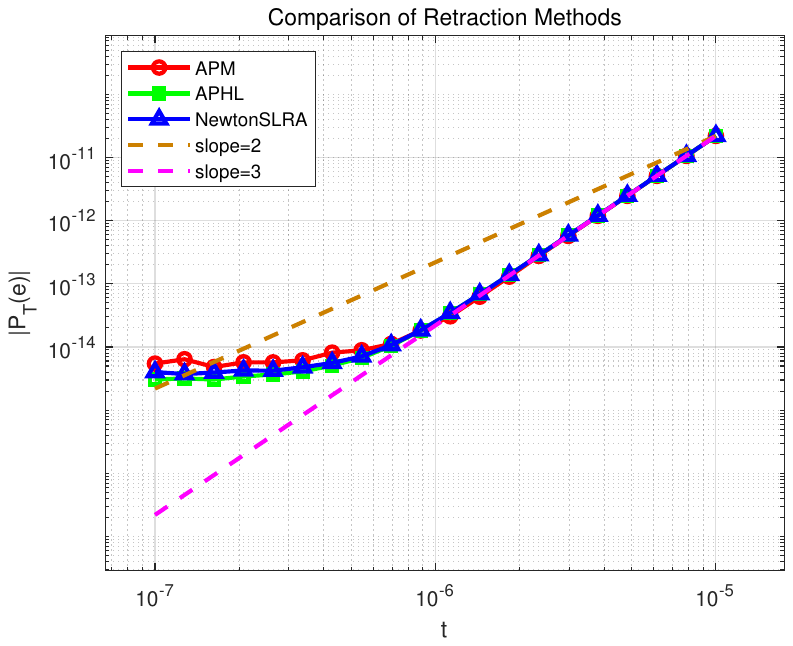}
    \caption{ Tangential retraction error
    $\|\Proj_{\T_{R_0}\mathcal M_r}\Big(\Retr_{R_0}(t\eta)-(R_0+t\eta)\Big)\|_F$ versus the step size $t$ in log--log scale. For very small $t$, the error curves plateau around $10^{-15}$ due to
    floating-point roundoff in double precision.}\label{fig:retr_second_order}
\end{figure}

The numerical results are reported in Table~\ref{tab:qap_main}. Overall, different
retractions lead to noticeably different behaviors across instances (in both
runtime and the final primal--dual metrics), and no single solver consistently dominates
the others. Additional results on all tested QAPLib instances are provided in
Table~\ref{tab:qap_all} in Appendix. The per-iteration complexity bounds in Table~\ref{tab:retr_complexity} help explain this
phenomenon. For QAP, all variables are binary, hence $B=[n]$ and $s=n=p^2$; moreover, the
assignment constraints satisfy $m=2p$. Substituting $s=n$ into Table~\ref{tab:retr_complexity} shows that (i)
first-order retractions (e.g., APM and GWA) are dominated by matrix-multiplication terms
of order $O(mnr)=O(p^3r)$, whereas (ii) Newton-type correction routines (e.g., NewtonSLRA/APHL/GWA-Newton) additionally require solving  linear systems, which in
turn have $O(s^3)=O(p^6)$   per inner iteration. Thus, there is
an inherent trade-off between cheap first-order steps and more expensive Newton-type
corrections, and the overall performance depends on instance-specific constants and the
number of ALM/BB/retraction iterations.  Similar sensitivity to the choice of retraction has also been observed in low-rank
Riemannian optimization; see \cite{absil2015low}.

\begin{table}[t]
\centering
\scriptsize
\setlength{\tabcolsep}{3pt}
\renewcommand{\arraystretch}{1.05}
\caption{QAPLib results on three representative instances. Here $pobj$ is the final primal objective value, $r$ is the final rank, $(R_p,R_d,R_c)$ are the relative KKT residuals defined in \cite{hou2025low}, and $\overline{\mathrm{Rite}}$ is the average number of inner iterations used by the retraction subsolver. 
Best values in each column are highlighted in bold within each instance.}
\label{tab:qap_main}
\resizebox{\linewidth}{!}{%
\begin{tabular}{lrrrrrrrrr}
\toprule
Method & $pobj$ & $r$ & $R_p$ & $R_d$ & $R_c$ & Time(s) & ALMite & BBite & $\overline{\mathrm{Rite}}$\\
\midrule
\multicolumn{10}{l}{\textbf{chr12b} ($n=144, p=12)$}\\
\midrule
GWA & 9.7422e+03 & 19 & 9.32e-07 & 2.45e-07 & 3.81e-07 & \textbf{8.53e+00} & 28 & \textbf{2315} & 2.67 \\
GWA-Newton & 9.7425e+03 & 11 & 8.98e-07 & 6.71e-07 & 1.90e-07 & 1.08e+01 & 28 & 2368 & \textbf{1.18} \\
APM & \textbf{9.7420e+03} & 15 & 4.21e-07 & 4.35e-10 & 2.17e-09 & 1.08e+01 & 26 & 2596 & 3.45 \\
NewtonSLRA & \textbf{9.7420e+03} & 25 & 7.11e-07 & 4.71e-10 & 4.22e-11 & 1.10e+01 & 25 & 2750 & 1.31 \\
APHL & \textbf{9.7420e+03} & \textbf{8} & \textbf{1.40e-07} & 2.58e-10 & \textbf{3.65e-11} & 1.03e+01 & 28 & 2692 & 1.25 \\
TAPR & \textbf{9.7420e+03} & 23 & 9.70e-07 & 9.35e-10 & 1.07e-10 & 1.08e+01 & \textbf{24} & 2486 & 2.29 \\
Relaxed NewtonSLRA & \textbf{9.7420e+03} & 19 & 6.96e-07 & \textbf{9.58e-17} & 5.54e-08 & 1.30e+01 & 47 & 2525 & 4.00 \\
\midrule
\multicolumn{10}{l}{\textbf{chr20b} ($n=400, p=20$)}\\
\midrule
GWA & \textbf{2.2980e+03} & 231 & \textbf{1.28e-07} & 4.15e-11 & 1.68e-10 & 3.17e+02 & 36 & 10582 & 4.24 \\
GWA-Newton & \textbf{2.2980e+03} & 233 & 2.17e-07 & 4.24e-11 & 3.46e-11 & 4.03e+02 & 36 & 11404 & \textbf{1.23} \\
APM & \textbf{2.2980e+03} & 180 & 8.63e-07 & 3.34e-10 & 5.36e-09 & \textbf{1.55e+02} & 40 & \textbf{4422} & 4.64 \\
NewtonSLRA & \textbf{2.2980e+03} & 232 & 6.51e-07 & \textbf{3.12e-11} & \textbf{9.32e-12} & 3.47e+02 & 37 & 10599 & 1.31 \\
APHL & \textbf{2.2980e+03} & 239 & 6.08e-07 & 1.36e-10 & 4.24e-09 & 3.66e+02 & \textbf{35} & 11815 & 1.36 \\
TAPR & \textbf{2.2980e+03} & 161 & 3.09e-07 & 3.03e-10 & 4.43e-10 & 2.61e+02 & 107 & 8203 & 2.33 \\
Relaxed NewtonSLRA & \textbf{2.2980e+03} & \textbf{150} & 6.94e-07 & 9.66e-10 & 4.01e-09 & 2.81e+02 & 99 & 7641 & 5.43 \\
\midrule
\multicolumn{10}{l}{\textbf{tai15b} ($n=225, p=15$)}\\
\midrule
GWA & 5.1782e+07 & 134 & 9.47e-07 & 6.99e-07 & 3.39e-07 & \textbf{4.40e+02} & 83 & \textbf{64380} & 1.21 \\
GWA-Newton & 5.1777e+07 & 130 & 8.14e-07 & 4.81e-07 & \textbf{3.14e-07} & 6.38e+02 & 83 & 67200 & \textbf{1.04} \\
APM & 5.1780e+07 & 133 & 9.79e-07 & 6.34e-07 & 3.41e-07 & 6.46e+02 & 87 & 89417 & 1.44 \\
NewtonSLRA & 5.1775e+07 & \textbf{129} & 7.84e-07 & 4.30e-07 & 4.03e-07 & 7.40e+02 & 88 & 89389 & 1.08 \\
APHL & 5.1784e+07 & 133 & 9.79e-07 & 7.23e-07 & 9.83e-07 & 6.43e+02 & \textbf{78} & 79776 & 1.10 \\
TAPR & \textbf{5.1774e+07} & 130 & \textbf{7.44e-07} & \textbf{3.75e-07} & 3.89e-07 & 8.78e+02 & 94 & 93555 & 2.07 \\
Relaxed NewtonSLRA & 5.1787e+07 & 139 & 8.12e-07 & 9.43e-07 & 3.93e-07 & 7.70e+02 & 84 & 94670 & 1.41 \\
\bottomrule
\end{tabular}%
}
\end{table}

    \section{Conclusion}
 We develop a unified framework showing that several alternating-projection-type schemes on manifold intersections induce well-defined retractions. 
Our analysis operates under the clean intersection geometry and a set of mild, verifiable assumptions on the underlying alternating mappings, which cover three representative algorithmic classes in the literature. 
The key technical ingredient is a tangent/normal decomposition derived from local projection expansions: the tangential component admits a summability estimate, while the normal component contracts under an angle-type condition. 
Finally, motivated by the trade-off between admissible neighborhood, local convergence rate, and per-iteration cost, we propose a hybrid multi-phase algorithm that adaptively combines first- and second-order alternating  updates.
\section*{Acknowledgements}

The authors are grateful to Dr. Bin Gao and  Dr. Nachuan Xiao for helpful
discussions on clean intersections and the APHL method.
	\bibliographystyle{unsrt}
	\bibliography{manifold}

@article{lewis2008alternating,
  title={Alternating projections on manifolds},
  author={Lewis, Adrian S and Malick, J{\'e}r{\^o}me},
  journal={Mathematics of Operations Research},
  volume={33},
  number={1},
  pages={216--234},
  year={2008},
  publisher={INFORMS}
}

@article{yang2025variational,
  title={Variational analysis of determinantal varieties},
  author={Yang, Yan and Gao, Bin and Yuan, Ya-xiang},
  journal={arXiv preprint arXiv:2511.22613},
  year={2025}
}

@article{bauschke2013restricted,
  title={Restricted normal cones and the method of alternating projections: applications},
  author={Bauschke, Heinz H and Luke, D Russell and Phan, Hung M and Wang, Xianfu},
  journal={Set-Valued and Variational Analysis},
  volume={21},
  number={3},
  pages={475--501},
  year={2013},
  publisher={Springer}
}

@article{schwarz1869uber,
  title={Über einige Abbildungsaufgaben},
  author={Schwarz, HA},
  journal={Gesammelte Mathematische Abhandlungen 11},
  pages={65--83},
  year={1869}
}

@article{bauschke2013restricted-theory,
  title={Restricted normal cones and the method of alternating projections: theory},
  author={Bauschke, Heinz H and Luke, D Russell and Phan, Hung M and Wang, Xianfu},
  journal={Set-Valued and Variational Analysis},
  volume={21},
  number={3},
  pages={431--473},
  year={2013},
  publisher={Springer}
}

@article{bauschke1996projection,
  title={On projection algorithms for solving convex feasibility problems},
  author={Bauschke, Heinz H and Borwein, Jonathan M},
  journal={SIAM review},
  volume={38},
  number={3},
  pages={367--426},
  year={1996},
  publisher={SIAM}
}

@article{kruger2016regularity,
	title={Regularity of Collections of Sets and Convergence of Inexact Alternating Projections},
	author={Kruger, Alexander Y and Thao, Nguyen H},
	journal={Journal of Convex Analysis},
	volume={23},
	number={3},
	pages={823--847},
	year={2016}
}

@article{smith1994optimization,
  title={Optimization Techniques on Riemannian Manifolds},
  author={Smith, Steven T},
  journal={Fields Institute Communications},
  volume={3},
  year={1994}
}

@article{gubin1967method,
  title={The method of projections for finding the common point of convex sets},
  author={Gubin, LG and Polyak, Boris T and Raik, EV},
  journal={USSR Computational Mathematics and Mathematical Physics},
  volume={7},
  number={6},
  pages={1--24},
  year={1967},
  publisher={Elsevier}
}

@article{bauschke1993convergence,
  title={On the convergence of von Neumann's alternating projection algorithm for two sets},
  author={Bauschke, Heinz H and Borwein, Jonathan M},
  journal={Set-Valued Analysis},
  volume={1},
  number={2},
  pages={185--212},
  year={1993},
  publisher={Springer}
}

@inproceedings{bregman1965method,
  title={The method of successive projections for finding a common point of convex sets},
  author={Bregman, Lev M},
  booktitle={Soviet Math. Dokl.},
  volume={6},
  pages={688--692},
  year={1965}
}

@book{von1950geometry,
  author    = {von Neumann, John},
  title     = {Functional Operators. {II}. The Geometry of Orthogonal Spaces},
  series    = {Annals of Mathematics Studies, 22},
  publisher = {Princeton University Press},
  address   = {Princeton, NJ},
  year      = {1950}
}

@article{davis2025stochastic,
  title={Stochastic Optimization Over Proximally Smooth Sets},
  author={Davis, Damek and Drusvyatskiy, Dmitriy and Shi, Zhan},
  journal={SIAM Journal on Optimization},
  volume={35},
  number={1},
  pages={157--179},
  year={2025},
  publisher={SIAM}
}

@article{xiao2025quadratically,
	title={A Quadratically Convergent Alternating Projection Method for Nonconvex Sets},
	author={Xiao, Nachuan and Wang, Shiwei and Tang, Tianyun and Toh, Kim-Chuan},
	journal={arXiv preprint arXiv:2511.22916},
	year={2025}
}

@article{andersson2013alternating,
	title={Alternating projections on nontangential manifolds},
	author={Andersson, Fredrik and Carlsson, Marcus},
	journal={Constructive approximation},
	volume={38},
	number={3},
	pages={489--525},
	year={2013},
	publisher={Springer}
}

@article{hou2025low,
  title={A low-rank augmented Lagrangian method for doubly nonnegative relaxations of mixed-binary quadratic programs},
  author={Hou, Di and Tang, Tianyun and Toh, Kim-Chuan},
  journal={Operations Research},
  year={2025},
  publisher={INFORMS}
}

@article{hahn2006qaplib,
  title={QAPLIB--a quadratic assignment problem library},
  author={Hahn, Peter and Anjos, Miguel and Burkard, RE and Karisch, SE and Rendl, F},
  journal={Avialable at http://www. seas. upenn. edu/qaplib},
  year={2006}
}

@article{wang2024adaptive,
  title={An adaptive regularized proximal Newton-type methods for composite optimization over the Stiefel manifold},
  author={Wang, Qinsi and Yang, Wei Hong},
  journal={Computational Optimization and Applications},
  volume={89},
  number={2},
  pages={419--457},
  year={2024},
  publisher={Springer}
}

@inproceedings{nagasaka2021relaxed,
  title={Relaxed NewtonSLRA for approximate GCD},
  author={Nagasaka, Kosaku},
  booktitle={International Workshop on Computer Algebra in Scientific Computing},
  pages={272--292},
  year={2021},
  organization={Springer}
}

@article{schost2016quadratically,
  title={A quadratically convergent algorithm for structured low-rank approximation},
  author={Schost, {\'E}ric and Spaenlehauer, Pierre-Jean},
  journal={Foundations of Computational Mathematics},
  volume={16},
  number={2},
  pages={457--492},
  year={2016},
  publisher={Springer}
}

@book{boumal2023introduction,
  title={An introduction to optimization on smooth manifolds},
  author={Boumal, Nicolas},
  year={2023},
  publisher={Cambridge University Press}
}

@article{noll2016local,
  title={On local convergence of the method of alternating projections},
  author={Noll, Dominikus and Rondepierre, Aude},
  journal={Foundations of Computational Mathematics},
  volume={16},
  pages={425--455},
  year={2016},
  publisher={Springer}
}

@book{deutsch2001best,
  title={Best approximation in inner product spaces},
  author={Deutsch, Frank and Deutsch, F},
  volume={7},
  year={2001},
  publisher={Springer}
}

@book{lee2012smooth,
  title={Introduction to smooth manifolds},
  author={Lee, John M},
  journal={Graduate texts in mathematics},
  year={2012},
  publisher={New York: Springer}
}

@article{absil2007trust,
  title={Trust-region methods on Riemannian manifolds},
  author={Absil, P.-A. and Baker, Christopher G and Gallivan, Kyle A},
  journal={Foundations of Computational Mathematics},
  volume={7},
  pages={303--330},
  year={2007},
  publisher={Springer}
}

@Book{Absil2009,
  title     = {Optimization algorithms on matrix manifolds},
  publisher = {Princeton University Press},
  year      = {2009},
  author    = {P.-A. Absil and R. Mahony and R. Sepulchre},
}

@Article{absil2015low,
  author  = {Absil, P.-A. and Oseledets, I. V},
  title   = {Low-rank retractions: a survey and new results},
  journal = {Computational Optimization and Applications},
  year    = {2015},
  volume  = {62},
  number  = {1},
  pages   = {5--29},
}

@article{lewis2009local,
  title={Local linear convergence for alternating and averaged nonconvex projections},
  author={Lewis, Adrian S and Luke, D Russell and Malick, J{\'e}r{\^o}me},
  journal={Foundations of Computational Mathematics},
  volume={9},
  number={4},
  pages={485--513},
  year={2009},
  publisher={Springer}
}

@article{drusvyatskiy2015transversality,
  title={Transversality and alternating projections for nonconvex sets},
  author={Drusvyatskiy, Dmitriy and Ioffe, Alexander D and Lewis, Adrian S},
  journal={Foundations of Computational Mathematics},
  volume={15},
  number={6},
  pages={1637--1651},
  year={2015},
  publisher={Springer}
}

@article{poliquin2000local,
  title={Local differentiability of distance functions},
  author={Poliquin, Ren{\'e} and Rockafellar, R and Thibault, Lionel},
  journal={Transactions of the American mathematical Society},
  volume={352},
  number={11},
  pages={5231--5249},
  year={2000}
}

@article{drusvyatskiy2019local,
  title={Local linear convergence for inexact alternating projections on nonconvex sets},
  author={Drusvyatskiy, Dmitriy and Lewis, Adrian S},
  journal={Vietnam Journal of Mathematics},
  volume={47},
  pages={669--681},
  year={2019},
  publisher={Springer}
}

@Book{Rockafellar2009,
  author    = {Rockafellar, R.T. and Wets, R. J-B},
  publisher = {Springer Science \& Business Media},
  title     = {Variational analysis},
  year      = {2009},
  volume    = {317},
}

@Article{boumal2019global,
  author    = {Boumal, Nicolas and Absil, Pierre-Antoine and Cartis, Coralia},
  title     = {Global rates of convergence for nonconvex optimization on manifolds},
  journal   = {IMA Journal of Numerical Analysis},
  year      = {2019},
  volume    = {39},
  number    = {1},
  pages     = {1--33},
  publisher = {Oxford University Press},
}

@Article{chen2020proximal,
  author    = {Chen, Shixiang and Ma, Shiqian and Man-Cho So, Anthony and Zhang, Tong},
  title     = {Proximal gradient method for nonsmooth optimization over the {Stiefel} manifold},
  journal   = {SIAM Journal on Optimization},
  year      = {2020},
  volume    = {30},
  number    = {1},
  pages     = {210--239},
  publisher = {SIAM},
}

@Article{absil2012projection,
  author    = {Absil, P.-A. and Malick, J{\'e}r{\^o}me},
  title     = {Projection-like retractions on matrix manifolds},
  journal   = {SIAM Journal on Optimization},
  year      = {2012},
  volume    = {22},
  number    = {1},
  pages     = {135--158},
  publisher = {SIAM},
}
 	 \appendix
 \section*{Appendix}
\section{Technical lemma}   

\begin{lemma} \label{lem:summable_coupled_recursion}
Let $\{t_k\}_{k\ge0}$ and $\{a_k\}_{k\ge0}$ be two nonnegative sequences satisfying
\begin{align}
t_{k+1}
&\le (1+\varepsilon_k)\,t_k + \alpha_k\,a_k + p_k,
\qquad k\ge0, 
\label{eq:summable_coupled_t}\\
a_{k+1}
&\le c\,a_k + \beta_k\,t_k + b_k,
\qquad k\ge0,
\label{eq:summable_coupled_a}
\end{align}
where $c\in[0,1)$ and $\varepsilon_k,\alpha_k,\beta_k,p_k,b_k\ge0$.
Assume that
\[
\sum_{k\ge0}\varepsilon_k<\infty,\quad
\sum_{k\ge0}\alpha_k<\infty,\quad
\sum_{k\ge0}p_k<\infty,\quad
\sum_{k\ge0}\beta_k<\infty,
\quad
\sup_{k\ge0} b_k <\infty.
\]
Then $\sup_{k\ge0} t_k<\infty$ and $\sup_{k\ge0} a_k<\infty$.
\end{lemma}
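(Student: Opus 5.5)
The plan is to reduce the coupled system to a single scalar recursion for a weighted sum, using the contraction factor $c<1$ to absorb the normal sequence $a_k$ into the tangential one. A direct attempt is awkward: $a_k$ need not be summable, because $b_k$ is only bounded, so $\alpha_k a_k$ cannot be summed blindly. The mitigating fact is that $\alpha_k$ is summable, so it suffices to bound $a_k$ by a constant times $(1+\sup_{j\le k}t_j)$.

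\textbf{Step 1} controls $a_k$ by running maxima of $t$. Set $T_k:=\max_{0\le j\le k}t_j$ and $\bar b:=\sup_k b_k$. Iterating \eqref{eq:summable_coupled_a} gives
\[
a_{k}\le c^{k}a_0+\sum_{j=0}^{k-1}c^{k-1-j}\bigl(\beta_j t_j+b_j\bigr)
\le a_0+\frac{\bar b}{1-c}+\Bigl(\sup_j\beta_j\Bigr)\frac{T_{k-1}}{1-c}.
\]
Since $\sum\beta_k<\infty$, $\beta:=\sup_j\beta_j<\infty$. Hence $a_k\le A+BT_k$ with constants $A:=a_0+\bar b/(1-c)$ and $B:=\beta/(1-c)$, valid for all $k$ (with $T_{-1}:=t_0$ for $k=0$).

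\textbf{Step 2} obtains a Gronwall-type recursion for $T_k$. Substituting into \eqref{eq:summable_coupled_t},
\[
t_{k+1}\le (1+\varepsilon_k)T_k+\alpha_k(A+BT_k)+p_k\le (1+\varepsilon_k+B\alpha_k)T_k+A\alpha_k+p_k .
\]
The right side is at least $T_k$, so $T_{k+1}\le(1+\gamma_k)T_k+\sigma_k$ with $\gamma_k:=\varepsilon_k+B\alpha_k$ and $\sigma_k:=A\alpha_k+p_k$, both summable. The standard discrete Gronwall argument then applies. Dividing by $\Pi_{k+1}:=\prod_{j\le k}(1+\gamma_j)$, which is bounded by $\exp(\sum\gamma_j)<\infty$, and telescoping gives
\[
T_k\le \exp\Bigl(\sum_j\gamma_j\Bigr)\Bigl(t_0+\sum_j\sigma_j\Bigr)<\infty
\]
uniformly in $k$. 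So $\sup_k t_k<\infty$.

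\textbf{Step 3} concludes with Step 1: $\sup_k a_k\le A+B\sup_kT_k<\infty$. This gives both claims.

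The only delicate point is Step 1. One must avoid requiring summability of $b_k$, and note that mere boundedness of $\beta_k$ (implied by summability) is enough there. Summability of $\beta_k$ is not otherwise needed. I would also note for later use that all constants depend only on $c$, $a_0$, $t_0$, $\bar b$ and the series sums, which is what the applications in Lemma~\ref{lem:bound_Jk_uniform} and Lemma~\ref{lem:Hk_uniform_bound} need for uniformity over $\mathcal D_K$.
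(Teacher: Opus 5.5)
Your proof is correct and follows the same route as the paper: unroll the $a$-recursion, substitute it into the $t$-recursion, pass to the running maximum $T_k=\max_{j\le k}t_j$, and close with a discrete Gronwall bound. The only difference is in bounding $\sum_j c^{k-1-j}\beta_j t_j$: you keep the geometric weights $c^{k-1-j}$ and use $\sup_j\beta_j$, whereas the paper drops the weights and uses $\sum_j\beta_j$, so your version shows that boundedness of $\{\beta_k\}$ already suffices.
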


\begin{proof}
 By \eqref{eq:summable_coupled_a}, we have for all $k\ge1$,
\[
a_k
\le c^{\,k}a_0+\sum_{j=0}^{k-1}c^{\,k-1-j}\bigl(\beta_j t_j+b_j\bigr).
\]
Using $\sum_{i=0}^{k-1}c^{\,i}\le \frac{1}{1-c}$ and $\sup_{j\ge0}b_j<\infty$, we obtain
\begin{equation}\label{eq:a_bound_by_t_simple_final}
a_k
\le \frac{a_0}{1-c}
+\frac{1}{1-c}\sum_{j=0}^{k-1}\beta_j t_j
+\frac{\sup_{\ell\ge0}b_\ell}{1-c},
\qquad \forall k\ge0.
\end{equation}

 Substituting \eqref{eq:a_bound_by_t_simple_final} into \eqref{eq:summable_coupled_t} gives
\begin{equation}\label{eq:t_recursion_with_history_final}
t_{k+1}
\le (1+\varepsilon_k)t_k
+\frac{\alpha_k}{1-c}\sum_{j=0}^{k-1}\beta_j t_j
+\underbrace{\Big(\frac{\alpha_k}{1-c}\bigl(a_0+\sup_{\ell}b_\ell\bigr)+p_k\Big)}_{=:r_k},
\qquad k\ge0.
\end{equation}
Define  
\[
T_k:=\max_{0\le j\le k} t_j,\qquad k\ge0.
\]
Since $t_j\le T_k$ for all $0\le j\le k$, we have
\[
\sum_{j=0}^{k-1}\beta_j t_j
\le \Big(\sum_{j=0}^{k-1}\beta_j\Big)T_k.
\]
Hence, \eqref{eq:t_recursion_with_history_final} implies
\[
t_{k+1}
\le (1+\varepsilon_k)T_k
+\frac{\alpha_k}{1-c}\Big(\sum_{j=0}^{k-1}\beta_j\Big)T_k
+r_k
=(1+\delta_k)T_k+r_k,
\]
where
\[
\delta_k:=\varepsilon_k+\frac{\alpha_k}{1-c}\sum_{j=0}^{k-1}\beta_j\ge0.
\]
By definition, $T_{k+1}=\max\{T_k,t_{k+1}\}$, and since $\delta_k,r_k\ge0$, we obtain
\begin{equation}\label{eq:Tk_summable_recursion_final}
T_{k+1}\le (1+\delta_k)T_k+r_k,\qquad k\ge0.
\end{equation}

Moreover,
\[
\sum_{k\ge0}\delta_k
=\sum_{k\ge0}\varepsilon_k
+\frac{1}{1-c}\sum_{k\ge0}\alpha_k\sum_{j=0}^{k-1}\beta_j
\le \sum_{k\ge0}\varepsilon_k
+\frac{1}{1-c}\Big(\sum_{k\ge0}\alpha_k\Big)\Big(\sum_{j\ge0}\beta_j\Big)
<\infty,
\]
and $\sum_{k\ge0}r_k<\infty$ because $\sum_{k\ge0}\alpha_k<\infty$ and $\sum_{k\ge0}p_k<\infty$.
Then, from \eqref{eq:Tk_summable_recursion_final} we get
\[
T_k
\le \Big(\prod_{j=0}^{k-1}(1+\delta_j)\Big)T_0
+\sum_{i=0}^{k-1}\Big(\prod_{j=i+1}^{k-1}(1+\delta_j)\Big)r_i,
\qquad k\ge1.
\]
Using $\prod_{j=0}^{k-1}(1+\delta_j)\le \exp(\sum_{j=0}^{k-1}\delta_j)<\infty$, we conclude that
\[
T_k\le \exp\!\Big(\sum_{j=0}^{k-1}\delta_j\Big)\Big(T_0+\sum_{j=0}^{k-1}r_j\Big),
\qquad k\ge1,
\]
which implies $\sup_{k\ge0}T_k<\infty$, and hence $\sup_{k\ge0}t_k<\infty$.

Finally, the bound $\sup_{k\ge0}a_k<\infty$ follows from \eqref{eq:a_bound_by_t_simple_final},
$\sum_{k\ge0}\beta_k<\infty$, and $\sup_{k\ge0}t_k<\infty$. 
\end{proof}


\section{\texorpdfstring{Convergence of iAP in \cite{kruger2016regularity}}{Convergence of iAP in Kruger (2016)}}\label{sec:append-iap} 

We recall the inexact alternating projection framework of \cite{kruger2016regularity}.
Given a point $x$ and a closed set $A$ in $\E$, and parameters
$\tau\in(0,1]$ and $\sigma\in[0,1)$, the $(\tau,\sigma)$-projection of $x$ on $A$ is
\[
\Proj_A^{\tau,\sigma}(x)
:=\Big\{a\in A:\ \tau\|x-a\|\le d_A(x),\ d_{\N_x A}(x-a)\le \sigma\|x-a\|\Big\}.
\]
The $(\tau,\sigma)$-alternating projections $\{z_n\}$ for a pair of closed sets
$\{A,B\}$ iterates as follows
\[
z_{2n+1}\in \Proj^{\tau,\sigma}_B(x_{2n}),\qquad
z_{2n+2}\in \Proj^{\tau,\sigma}_A(x_{2n+1}),\qquad n=0,1,\dots,
\]
where $\Proj^{\tau,\sigma}_A(\cdot)$ is the $(\tau,\sigma)$-projection mapping
(see \cite[(27)]{kruger2016regularity}). In particular, $\Proj^{1,\sigma}_A(x)=\Proj_A(x)$
for any $\sigma\in[0,1)$.

\medskip
\paragraph{Theorem~31 in \cite{kruger2016regularity}.}
Assume that $\{A,B\}$ is intrinsic transverse at $\bar x$, and let
$0\le \sigma < \hat\theta_4[A,B](\bar x)$ and $0<\tau\le 1$, where $\hat\theta_4[A,B](\bar x)$ is some constant given by \cite[Def.18]{kruger2016regularity}.
Then, for any $\gamma<\hat\theta_4[A,B](\bar x)$ satisfying
$0<\gamma-\sigma\le \tau$ and
\[
c:=\tau^{-1}\bigl(1-\gamma^2+\gamma\sigma\bigr)<1,
\]
any sequence of $(\tau,\sigma)$-alternating projections for $\{A,B\}$ with initial
point sufficiently close to $\bar x$ converges to a point in $A\cap B$ with
$R$-linear rate $c$.

\medskip
\paragraph{A key step used in the proof.}
In the proof of Theorem~31, for $a\in A$ and $b\in \Proj^{\tau,\sigma}_B(a)$, one shows
\[
d_A(b)\le (1-\gamma^2+\gamma\sigma)\,\|b-a\|,
\]
and consequently for any $a'\in \Proj^{\tau,\sigma}_A(b)$,
\begin{equation}\label{eq:KT32}
\|a'-b\|\le \tau^{-1}d(b,A)\le c\,\|a-b\|.
\end{equation}

\medskip
\paragraph{Modification of parameters to ensure Fact~\ref{fact:alt_linear}.}
Let $A=\M_1$ and $B=\M_2$. Define   $x_k:=z_{2k}\in \M_1$
and $y_k:=z_{2k+1}\in \M_2$. Then \eqref{eq:KT32} yields
\[
d_{\M_2}(x_{k+1}) \le \|x_{k+1}-y_k\|
\le c\,\|x_k-y_k\|.
\]
Moreover, since $y_k\in \Proj^{\tau,\sigma}_{\M_2}(x_k)$, it follow from $\|x_k-y_k\|\leq \frac{1}{\tau}d_{\M_2}(x_k)$ that
\begin{equation}\label{eq:dist-contract}
d_{\M_2}(x_{k+1})
\le \frac{c}{\tau}\, d_{\M_2}(x_k)
\quad\text{with}\quad
\tau_K:=\frac{c}{\tau}=\tau^{-2}\bigl(1-\gamma^2+\gamma\sigma\bigr).
\end{equation}

Consequently, in addition to the original assumptions of Theorem~31,
if we impose the stronger requirement, we have
\begin{equation}\label{eq:tauK<1}
\tau_K=\tau^{-2}(1-\gamma^2+\gamma\sigma)<1
\quad\Longleftrightarrow\quad
\tau^2>1-\gamma^2+\gamma\sigma.
\end{equation}

\section{First-order property \eqref{eq:first-order-retraction} of APHL}
\label{app:APHL_first_order}

Let $\M_1,\M_2\subset\E$ be $C^2$ embedded submanifolds and let
\[
    \M=\M_1\cap\M_2,
    \qquad
    \M_2=\{z\in\E:H(z)=0\},
\]
where $H:\E\to\mathbb R^m$ is a $C^2$ local defining map with
$\rmD H$ surjective near $\bar x\in\M$. Suppose that the assumptions of
\cite[Assumptions~1.1--1.2]{xiao2025quadratically} hold near $\bar x$, with
$X=\M_1$ and $c=H$, and that the nondegeneracy condition
\[
    \rmD H(x)\operatorname{lin}(\T_{\M_1}(x))=\mathbb R^m
\]
holds for all $x\in\M$ near $\bar x$.

For $x\in\M$ near $\bar x$ and $\eta\in\T_x\M$ sufficiently small, set
\[
    z_0:=P_{\M_1}(x+\eta).
\]
Starting from $z_0$, let the APHL iterates be generated by
\[
    z_{k+1}\in \Pi_{\M_1}(A(z_k)+r_k),
    \qquad
    \|r_k\|\le \xi(z_k)\|H(z_k)\|^2,
\]
as in \cite[(1.6)--(1.7)]{xiao2025quadratically}. Let $z^*$ be the limit point
generated by this APHL sequence and define
\[
    \Phi_x^{\rm APHL}(\eta):=z^* .
\]

\begin{lemma}[First-order expansion of the APHL limiting map]
\label{lem:APHL_first_order_expansion}
Under the above assumptions, one has
\[
    \Phi_x^{\rm APHL}(\eta)
    =
    x+\eta+O(\|\eta\|^2).
\]
In particular, the APHL limiting point satisfies the first-order expansion
condition \eqref{eq:first-order-retraction}.
\end{lemma}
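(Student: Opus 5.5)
The plan is to compare the APHL limit with the metric projection onto $\M$ through a chain of estimates of the form ``$z^*$ is close to $z_0$, and $z_0$ is close to $x+\eta$''. The estimates rest on two facts: the local quadratic convergence of APHL from \cite{xiao2025quadratically}, and the fact that the starting feasibility residual $\|H(z_0)\|$ is itself $O(\|\eta\|^2)$.

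\textbf{Step 1 (the starting point).} Since $\eta\in\T_x\M\subset\T_x\M_1$ and $\M_1$ is $C^2$, Lemma~\ref{lem:expansion of orth_proj}\eqref{proj_M:decomposition-1} applies with the remark that $e_x(w)=O(\|w\|^2)$ here. In fact, for a $C^2$ manifold one already has $\Proj_{\M_1}(x+\eta)=x+\eta+O(\|\eta\|^2)$ for tangent $\eta$, because $d_{\M_1}(x+\eta)=O(\|\eta\|^2)$ and $\rmD\Proj_{\M_1}(x)=\Proj_{\T_x\M_1}$ with Lipschitz derivative. Hence $z_0=x+\eta+O(\|\eta\|^2)$. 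Similarly $\eta\in\T_x\M_2=\ker \rmD H(x)$, so a Taylor expansion gives $H(x+\eta)=O(\|\eta\|^2)$. Since $H$ is locally Lipschitz, this yields $\|H(z_0)\|\le\|H(x+\eta)\|+L_H\|z_0-x-\eta\|=O(\|\eta\|^2)$, uniformly for $x$ near $\bar x$.

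\textbf{Step 2 (APHL displacement).} Next I invoke the local convergence theorem of \cite{xiao2025quadratically}. Under their Assumptions~1.1--1.2 and nondegeneracy, for $z_0\in\M_1$ sufficiently close to $\bar x$ the iterates stay in a neighborhood and converge quadratically. The key quantitative piece is a per-step bound $\|z_{k+1}-z_k\|\le C\|H(z_k)\|$. This holds because $A(z)-z=-Q(z)d(z)$ with $\|d(z)\|\le C\|H(z)\|$, because $\|r_k\|\le\xi\|H(z_k)\|^2$, and because $\Pi_{\M_1}$ is Lipschitz near $\M_1$ and fixes $z_k$. Combined with the residual decrease $\|H(z_{k+1})\|\le C'\|H(z_k)\|^2$, which after shrinking gives $\|H(z_{k+1})\|\le\frac12\|H(z_k)\|$, this yields
\[
\|z^*-z_0\|\le\sum_{k\ge0}\|z_{k+1}-z_k\|\le 2C\|H(z_0)\|=O(\|\eta\|^2).
\]
Combining with Step 1 gives $z^*=x+\eta+O(\|\eta\|^2)$, which implies \eqref{eq:first-order-retraction}.

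The main obstacle is Step 2. The difficulty is extracting from \cite{xiao2025quadratically} the residual-controlled step bound and the uniform neighborhood in which their quadratic estimates hold. The plan is to verify the bound $\|d(z)\|\lesssim\|H(z)\|$ directly from the definition of $d$ as the solution of the regularized $m\times m$ linear system, whose matrix is uniformly invertible by nondegeneracy on a compact neighborhood. I also need to check that $z_0$ lies in their admissible ball once $\|\eta\|$ is small, which follows from $\|z_0-x\|\le2\|\eta\|$. If their theorem only provides $\|z_k-z^*\|$ rates relative to $\operatorname{dist}(z_0,\M)$, the fallback is Lemma~\ref{lem:EB_on_M} together with \eqref{eq:bound_Hx}. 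These give $d_{\M}(z_0)\le\bar\kappa\|H(z_0)\|=O(\|\eta\|^2)$, which is enough for the same conclusion.
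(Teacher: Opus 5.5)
Your proposal is correct and follows essentially the same route as the paper: $z_0=x+\eta+O(\|\eta\|^2)$, the initial feasibility residual is $O(\|\eta\|^2)$, and the total APHL displacement is bounded by a constant times that residual. The paper quotes this displacement bound directly from \cite[Proposition~3.8]{xiao2025quadratically} in terms of $d_{\M_2}(z_0)$, which is equivalent to your $\|H(z_0)\|$ by \eqref{eq:bound_Hx} and \eqref{eq:bound_Hx_lower}; one small simplification is that Step~1 needs no Lipschitz derivative of $\Proj_{\M_1}$ (which $C^2$ regularity would not supply anyway), since $\|z_0-(x+\eta)\|=d_{\M_1}(x+\eta)=O(\|\eta\|^2)$ directly.
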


\begin{proof}
Since $\eta\in\T_x\M\subset\T_x\M_1$, the projection expansion onto the
$C^2$ manifold $\M_1$ gives
\[
    z_0=P_{\M_1}(x+\eta)=x+\eta+O(\|\eta\|^2).
\]
Moreover, since $\eta\in\T_x\M\subset\T_x\M_2$, the projection expansion onto
$\M_2$ gives
\[
    d_{\M_2}(x+\eta)=O(\|\eta\|^2).
\]
Combining this estimate with $z_0=x+\eta+O(\|\eta\|^2)$ yields
\[
    d_{\M_2}(z_0)=O(\|\eta\|^2).
\]

For $\|\eta\|$ sufficiently small, $z_0\in\M_1$ and $z_0$ belongs to the
neighborhood $\Xi_x$ in \cite[Proposition~3.8]{xiao2025quadratically}.
By \cite[Proposition~3.8]{xiao2025quadratically}, the APHL iterates remain in
the local neighborhood and satisfy
\[
    \sum_{k=0}^{N}\|z_{k+1}-z_k\|
    \le
    C_x\, d_{\M_2}(z_0),
    \qquad N\ge0,
\]
where $C_x>0$ depends on the local constants at $x$. By
\cite[Theorem~3.9]{xiao2025quadratically}, the sequence converges to a point
$z^*\in\M$. Letting $N\to\infty$ gives
\[
    \|z^*-z_0\|
    \le
    C_x\, d_{\M_2}(z_0)
    =
    O(\|\eta\|^2).
\]
Therefore,
\[
    \Phi_x^{\rm APHL}(\eta)
    =
    z^*
    =
    z_0+O(\|\eta\|^2)
    =
    x+\eta+O(\|\eta\|^2),
\]
which proves the claim.
\end{proof}

\section{Numerical experiments: supplementary materials}
\subsection{DNN relaxation and smooth reformulation}\label{app:DNN}

We consider the mixed-binary quadratic program (MBQP)
\begin{equation}
\min \left\{ x^\top Qx + 2c^\top x :
Ax=b,\ x_i\in\{0,1\}\ \forall i\in B,\ x_ix_j=0\ \forall (i,j)\in E,\ x\in\mathbb{R}_+^n \right\},
\label{eq:mbqp}
\end{equation}
where $Q\in\mathbb{S}^n$ is a symmetric matrix, $c\in\mathbb{R}^n$, $A\in\mathbb{R}^{m\times n}$, $b\in\mathbb{R}^m$,
$B\subseteq [n] := \{1,2,\ldots, n\}$ is the index set of binary variables, and
$E\subseteq \{(i,j)\mid 1\le i<j\le n\}$ is the index set of incompatible pairs.

Following \cite{hou2025low}, we consider the DNN relaxation and its smooth
reformulation with
\[
Q' :=
\begin{pmatrix}
Q & 0_{n\times 2m}\\
0_{2m\times n} & 0_{2m\times 2m}
\end{pmatrix},
\qquad
c' :=
\begin{pmatrix}
c\\
0_{2m}
\end{pmatrix},
\qquad
A' :=
\begin{pmatrix}
A & I_m & 0_{m\times m}\\
A & 0_{m\times m} & -I_m
\end{pmatrix},
\qquad
b' :=
\begin{pmatrix}
b\\
b
\end{pmatrix},
\]
and set
\[
N:=n+2m, \qquad s:=|B|.
\]
Then, one obtains the following DNN problem:
\begin{equation}\label{eq:dnn_relax_Qprime}
\begin{aligned}
\min_{x'\in\mathbb R^{N},\,X'\in\mathbb S^{N}}\quad
& \langle Q',X'\rangle + 2(c')^\top x' \\
\text{s.t.}\quad
& A'x' = b', \quad  A'X' = b'(x')^\top,\\
& x'_i = X'_{ii},\qquad \forall\, i\in B,\\
&
\begin{pmatrix}
1 & (x')^\top\\
x' & X'
\end{pmatrix}\in \mathbb S^{N+1}_+\cap \mathcal P,
\end{aligned}
\end{equation}
where $\mathcal P$ denotes certain polyhedral constraints (e.g., entrywise
nonnegativity and possible zero-pattern constraints, which will be not used in later discussion).
Let $e_1\in\mathbb{R}^r$ be the first canonical basis vector. Here and below, $e$ denotes the all-ones vector of compatible dimension. To further reduce the computational cost, we consider the low-rank factorization 
\[
Y'=
\begin{pmatrix}
1 & (x')^\top\\
x' & X'
\end{pmatrix}
=
\begin{pmatrix}
e_1^\top\\
R
\end{pmatrix}
\begin{pmatrix}
e_1 & R^\top
\end{pmatrix},
\qquad
R\in\mathbb{R}^{N\times r},
\]
where $r\ll n$ is a rank upper bound parameter. The RNNAL framework uses the rank-adaption technique to reduce it during update.
Then, the feasible set of the inner Riemannian subproblem of RNNAL is
\begin{equation}
\mathcal{M}_r
:=
\left\{
R\in\mathbb{R}^{N\times r}:
A'R=b'e_1^\top,\ \diag_B(RR^\top)=R_Be_1
\right\},
\label{eq:Mr_general}
\end{equation}
where $\diag_B(RR^\top)\in\mathbb{R}^{s}$ denotes the vector formed by the diagonal entries
of $RR^\top$ indexed by $B$, and $R_B\in\mathbb{R}^{s\times r}$ denotes the submatrix of $R$
consisting of the rows indexed by $B$.
We further split it as
\[
\mathcal{M}_{r}=\mathcal{M}_{1}\cap\mathcal{M}_{2},
\quad \mathcal{M}_{1}:=\{R\in\R^{N\times r}:A'R=b'e_1^\top\},
\quad
\mathcal{M}_{2}:=\{R\in\R^{N\times r}:\diag_{B}(RR^\top)=R_{B}e_1\},
\]

Accordingly, the low-rank subproblem in RNNAL takes the form
\begin{equation} 
\min_{R\in\mathcal{M}_r} f(R),
\label{eq:rie_sub_ALM}
\end{equation}
where $f:\mathbb{R}^{N\times r}\to\mathbb{R}$ denotes the smooth objective induced by the
current augmented Lagrangian subproblem. Solving \eqref{eq:rie_sub_ALM} is the key step. Let $\grad f(R)$ denote the Riemannian gradient on $\mathcal{M}_r$. Under the Euclidean metric, one has
\[
\grad f(R) = \Proj_{\T_R \M_r} \nabla f(R).
\]
Using the Sherman–Morrison–Woodbury formula, the  computational cost of  projection onto the tangent space   is 
\[
O\Big( \min\{ s^3+ m^2r+mrs, (mr)^2s+(mr)^3 \} \Big).
\]
In \cite{hou2025low}, at each Riemannian gradient step with stepsize $t>0$, given the trial point
\[
V = R - t\,\grad f(R),
\]
the retraction is computed through the metric projection
\begin{equation}
\Retr_R(-t\,\grad f(R))
=
\Proj_{\mathcal{M}_r}(V)
=
\arg\min_{X\in\mathcal{M}_r}\|X-V\|_F^2.
\label{eq:retr_proj}
\end{equation}

In the original RNNAL implementation, the nonconvex projection
\eqref{eq:retr_proj} is transformed into the convex problem
\begin{equation}
\min_{\Theta\in\mathbb{R}^{2m\times r}}
G(\Theta)
:=
\sum_{i\in B}\|(Y(\Theta))_{i,:}\|_2
+
\sum_{i\in [N]\setminus B}\|(Y(\Theta))_{i,:}\|_2^2
+
\langle \gamma e_1^\top,\Theta\rangle,
\label{eq:ret_sub}
\end{equation}
where
\[
Y(\Theta):=V'+A'^\top\Theta \in \R^{N\times r},
\qquad
V' := V-\frac12 ee_1^\top,
\qquad
\gamma := A'e-2b'\in \R^{2m}.
\]
 By
\cite[Theorem~5]{hou2025low}, if $\Theta^\star$ is an optimal solution of
\eqref{eq:ret_sub}, then
\[
\Proj_{\mathcal{M}_r}(V)
=
\Proj_{\mathcal{M}_2}(V+A'^\top\Theta^\star).
\]
The problem \eqref{eq:ret_sub} can  be solved by the generalized Weiszfeld algorithm (GWA) or the GWA-Newton method to accelerate the local convergence.
Next, we compare the orthogonal-projection approach in \cite{hou2025low} with the
alternating-projection-type retractions studied in this paper.  Throughout the experiments,
we keep the outer ALM and the rank-adaptive Riemannian gradient framework unchanged, and
vary only the retraction for the  subproblem \eqref{eq:rie_sub_ALM}.  

\subsection{Implementation details for DNN problem}\label{app:complexity}
\paragraph{ GWA/GWA-Newton in \cite{hou2025low}.}
Let
\[
Y^k:=Y(\Theta^k)=V'+A'^\top\Theta^k.
\]
Define the weight vector $v^k\in\mathbb{R}^N$ by
\begin{equation} 
v_i^k=
\begin{cases}
\|Y^k_{i,:}\|_2^{-1}, & i\in B,\\[3pt]
2, & i\in [N]\setminus B.
\end{cases}
\label{eq:gwa_weights}
\end{equation}
Then the generalized Weiszfeld algorithm updates as
\begin{equation}
\Theta^{k+1}
=
-\Bigl(A'\Diag(v^k)A'^\top\Bigr)^{-1}
\Bigl(\gamma e_1^\top + A'\Diag(v^k)V'\Bigr).
\label{eq:gwa}
\end{equation}

Let $A'_B\in\mathbb{R}^{m\times s}$ denote the submatrix of $A'$ formed by the columns
indexed by $B$. Since only the $s$ entries in $v_B^k$ vary, one has
\begin{equation}
A'\Diag(v^k)A'^\top
=
2A'A'^\top + A'_B\Diag(v_B^k-2e)A_B'{}^\top .
\label{eq:gwa_matrix}
\end{equation}
Hence, after precomputing $A'V'$ and $2A'A'^\top$, one GWA iteration requires:
\begin{itemize}
\item forming $Y^k=V'+A'^\top\Theta^k$, costing $O(mNr)$;
\item evaluating $v_B^k$, costing $O(sr)$;
\item forming the right-hand side in \eqref{eq:gwa}, costing $O(msr)$;
\item solving the linear system with coefficient matrix \eqref{eq:gwa_matrix}.
\end{itemize}
For the last step, there are two practical implementations:

(i) \emph{Direct solve.} Form \eqref{eq:gwa_matrix} explicitly and factorize the resulting
$(2m)\times (2m)$ matrix. This costs
\[
O(m^2s+m^3+m^2r).
\]

(ii) \emph{Sherman-Morrison-Woodbury.} View \eqref{eq:gwa_matrix} as a diagonal-rank-$s$ update of
the precomputed matrix $2A'A'^\top$, and apply the Sherman--Morrison--Woodbury formula.
Then, the cost is
\[
O(s^3+s^2r+msr+m^2r).
\]

Therefore, the total cost per GWA iteration is estimated by
\begin{equation}
O\!\left(
mnr+m^2r+msr+
\min\Bigl\{
m^2s+m^3,\;
s^3+s^2r
\Bigr\}
\right).
\label{eq:gwa_cost_new}
\end{equation}

For local acceleration, we may switch to a Newton step as suggested in
\cite[Remark~8]{hou2025low}. Define the Hessian operator
$P_k:\mathbb{R}^{N\times r}\to\mathbb{R}^{N\times r}$ by
\begin{equation}
\bigl(P_k[H]\bigr)_{i,:}
=
\begin{cases}
\displaystyle
H_{i,:}
\left(
\frac{I_r}{\|Y^k_{i,:}\|_2}
-
\frac{(Y^k_{i,:})^\top Y^k_{i,:}}{\|Y^k_{i,:}\|_2^3}
\right), & i\in B,\\[10pt]
2H_{i,:}, & i\in [N]\setminus B,
\end{cases}
\qquad
H\in\mathbb{R}^{N\times r}.
\label{eq:newton_operator}
\end{equation}
Then the Newton direction $\Delta\Theta^k\in\mathbb{R}^{m\times r}$ is defined by
\begin{equation}
\bigl(A'P_kA'^\top\bigr)[\Delta\Theta^k]
=
\gamma e_1^\top + A'\bigl(\Diag(v^k)Y^k\bigr),
\label{eq:newton_system}
\end{equation}
and the update is
\begin{equation}
\Theta^{k+1}=\Theta^k-\Delta\Theta^k.
\label{eq:newton_baseline}
\end{equation}

As in the projection step of Section~5.3 in \cite{hou2025low}, the active rows indexed by
$B$ contribute the only nontrivial row blocks, while the remaining rows contribute constant
blocks. Therefore, one Newton step admits two practical implementations:\\
(i) \emph{Direct vectorized solve.} Vectorize \eqref{eq:newton_system} into an
$(mr)\times(mr)$ linear system. Since only the $s$ active rows contribute nonconstant
Hessian blocks, the matrix assembly and factorization cost
\[
O((mr)^2s+(mr)^3).
\]
(ii) \emph{Sherman-Morrison-Woodbury.} Eliminate the affine part first and solve the
resulting dense system of size $s\times s$, then recover the $m\times r$ direction. This
costs
\[
O(s^3+m^2r + s^2r+msr).
\]
Including the cost of forming $Y^k$ and the right-hand side, the total cost per
GWA-Newton step is
\begin{equation}
O\!\left(
mnr+
\min\Bigl\{
s^3+ s^2r+m^2r+mrs,\;
(mr)^2s+(mr)^3
\Bigr\}
\right).
\label{eq:newton_cost_new}
\end{equation}
In practice, we use the cheaper of the two implementations in
\eqref{eq:gwa_cost_new} and \eqref{eq:newton_cost_new}. This is almost the same as the projection onto the tangent space.

Following \cite{hou2025low}, we consider the reformulated lifted problem with
\[
N:=n+4p,\qquad m:=4p,
\]
and
\[
Q' :=
\begin{pmatrix}
Q & 0_{n\times 4p}\\
0_{4p\times n} & 0_{4p\times 4p}
\end{pmatrix},
\qquad
A' :=
\begin{pmatrix}
A & I_{2p} & 0_{2p\times 2p}\\
A & 0_{2p\times 2p} & -I_{2p}
\end{pmatrix}\in\R^{m\times N}.
\]
Let   $[n]:=\{1,\dots,n\}$ and   $e\in\mathbb{R}^{m}$ denote the all-ones vector. Then the manifold in the inner Riemannian subproblem of RNNAL is given by 
\begin{equation}
\mathcal{M}_{r}
:=
\left\{
R\in\mathbb{R}^{N\times r}:
A'R = e e_1^\top,\ 
\diag_{[n]}(RR^\top)=R_{[n]} e_1
\right\},
\label{eq:Mr}
\end{equation}
where $\diag_{[n]}(RR^\top)$ denotes the vector formed by the first $n$ diagonal entries of
$RR^\top$, and $R_{[n]}$ denotes the first $n$ rows of $R$.

\paragraph{APM.}
One has
\begin{equation}
R^{k+\frac12} = \Proj_{\mathcal{M}_2}(R^k),
\qquad
R^{k+1} = \Proj_{\mathcal{M}_1}(R^{k+\frac12}).
\label{eq:apm}
\end{equation}
 For any $R\in\mathbb R^{N\times r}$, it follows that
\begin{equation}
\Proj_{\mathcal M_1}(R)
=
R-A'^\top(A'A'^\top)^{-1}(A'R-b'e_1^\top),
\label{eq:proj_M2}
\end{equation}
and
\begin{equation}
\bigl(\Proj_{\mathcal M_2}(R)\bigr)_{i,:}
=
\begin{cases}
\displaystyle
\frac12\left(
v_i(R)\,(2R_{i,:}-e_1^\top)+e_1^\top
\right), & i\in B,\\[8pt]
R_{i,:}, & i\in [N]\setminus B,
\end{cases}
\label{eq:proj_M1}
\end{equation}
where $v$ is given by
\[
v_i(R):=\|2R_{i,:}-e_1^\top\|_2^{-1},\qquad i\in B.
\]
Note that $(A'A'^\top)^{-1}$ can be precomputed.
The per-iteration cost is given by
\[
O(mnr+m^2r+msr).
\]

\paragraph{NewtonSLRA.}
NewtonSLRA first projects onto $\mathcal{M}_2$, and then performs a Newton-type
correction toward $\mathcal{M}_1$ along the affine approximation of $\mathcal{M}_2$.
Starting from the trial point $R^0=V$, for each inner iteration we define
\[
\widetilde R_k:=\Proj_{\mathcal M_2}(R^k)\in\mathbb R^{N\times r}.
\]
Then $R^{k+1}$ is obtained by solving
\begin{equation}
R^{k+1}
=
\arg\min_{R}\frac12\|R-R^k\|_F^2
\quad\text{s.t.}\quad
R\in \mathcal M_1\cap\bigl(\widetilde R_k+T_{\widetilde R_k}\mathcal M_2\bigr).
\label{eq:newtonslra_main}
\end{equation}
Writing $R=R^k+\Delta$, the above problem becomes
\begin{equation}
\min_{\Delta}\ \frac12\|\Delta\|_F^2
\quad\text{s.t.}\quad
A'\Delta=0,
\qquad
c_i^k(\Delta_{i,:})^\top=-g_i^k,\quad i\in B,
\label{eq:newtonslra_sub}
\end{equation}
where
\[
c_i^k:=2\widetilde R_{k,i,:}-e_1^\top,
\qquad
g_i^k:=\bigl\langle R^k_{i,:}-\widetilde R_{k,i,:},\,c_i^k\bigr\rangle,
\qquad i\in B.
\]
Let $C_k\in\mathbb R^{s\times r}$ be the matrix whose rows are $c_i^k$ for $i\in B$, and
let $g^k\in\mathbb R^s$ be the vector whose entries are $g_i^k$. Moreover, recall that  
$A'_B\in\mathbb R^{m\times s}$ is the submatrix of $A'$ formed by the columns indexed by $B$,
and define
\[
S:=A_B'{}^\top(A'A'^\top)^{-1}A'_B\in\mathbb R^{s\times s}.
\]
Then the KKT system of \eqref{eq:newtonslra_sub} reduces to the Schur complement system
\begin{equation}
\Bigl(I_s-(C_kC_k^\top)\odot S\Bigr)\mu^k=g^k,
\label{eq:newtonslra_schur}
\end{equation}
where $\mu^k\in\mathbb R^s$ and $\odot$ denotes the Hadamard product. After solving
\eqref{eq:newtonslra_schur}, we recover
\begin{equation}
\Lambda^k
=
-(A'A'^\top)^{-1}A'_B\bigl(\Diag(\mu^k)C_k\bigr),
\label{eq:newtonslra_lambda}
\end{equation}
and
\begin{equation}
\Delta^k=-A'^\top\Lambda^k-T_B^\ast(\mu^k),
\label{eq:newtonslra_delta}
\end{equation}
where $T_B^\ast(\mu^k)\in\mathbb R^{N\times r}$ is the matrix whose rows indexed by $B$
are $\mu_i^k c_i^k$ and whose remaining rows are zero. The next iterate is then
\[
R^{k+1}=R^k+\Delta^k.
\]

For the linear system \eqref{eq:newtonslra_schur}, there are two practical
implementations:\\
(i) \emph{Direct  solve.}
If the   matrix in \eqref{eq:newtonslra_schur} is formed explicitly, then one
iteration costs
\[
O\!\left(mnr+m^2r+msr+s^2r+s^3\right).
\]
(ii) \emph{Sherman--Morrison--Woodbury.}
Since
$S=A_B'{}^\top(A'A'^\top)^{-1}A'_B$
has rank at most $m$, write
\[
S=UU^\top,\qquad U\in\mathbb R^{s\times m}.
\]
Then
\[
(C_kC_k^\top)\odot S=W_kW_k^\top,
\qquad
W_k:=
\bigl[\Diag(C_k(:,1))U,\dots,\Diag(C_k(:,r))U\bigr]
\in\mathbb R^{s\times mr}.
\]
Hence \eqref{eq:newtonslra_schur} becomes
\[
(I_s-W_kW_k^\top)\mu^k=g^k,
\]
and by the Sherman--Morrison--Woodbury formula,
\begin{equation}
\mu^k
=
g^k+W_k\bigl(I_{mr}-W_k^\top W_k\bigr)^{-1}W_k^\top g^k.
\label{eq:newtonslra_smw}
\end{equation}
Therefore, the cost per iteration is
\[
O\!\left(mnr+m^2r^2 s+m^3r^3\right).
\]

In practice, we use the cheaper of the above two implementations, and thus the
per-iteration cost of NewtonSLRA is
\[
O\!\left(
mnr+
\min\bigl\{\,msr+m^2r+s^2r+s^3,\ (mr)^2s+(mr)^3\,\bigr\}
\right),
\]
which is the same as GWA-Newton. 

Regarding the relaxed NewtonSLRA algorithm, the linear system in \eqref{eq:newtonslra_schur} has size $1\times 1$ since the constraint $c_i^k(\Delta_{i,:})^\top=-g_i^k,\quad i\in B$ in \eqref{eq:newtonslra_sub} is replaced by  the single relaxed-tangent constraint
\[
\langle R^k-\widetilde R_k,\ R-\widetilde R_k\rangle_F = 0,
\qquad \widetilde R_k:=\Proj_{\M_2}(R^k).
\]
 Therefore, the per-iteration cost of relaxed NewtonSLRA is
$$O\!\left(
mnr+msr+m^2r
\right).$$
\paragraph{APHL.}
 Starting from
\[
R^0=\Proj_{\mathcal M_2}(V)\in\mathbb R^{N\times r},
\]
for each inner iteration define
\[
E_k:=A'R^k-b'e_1^\top\in\mathbb R^{2m\times r}.
\]

Since $\mathcal M_2$ is a smooth embedded submanifold, we take $Q(R)=\Proj_{\T_{R}\mathcal M_2}$.
Then $\ker(Q(R))=\mathrm N_R\mathcal M_2$, and the second-order requirement follows from the
standard second-order approximation property of smooth manifolds.
For $i\in B$, let
\[
c_i^k:=2R^k_{i,:}-e_1^\top,
\]
and let $C_k\in\mathbb R^{s\times r}$ be the matrix whose rows are $c_i^k$, $i\in B$.
Then $Q_k$ acts row-wise as
\begin{equation}
\bigl(Q_k[H]\bigr)_{i,:}
=
\begin{cases}
H_{i,:}-\langle H_{i,:},c_i^k\rangle\,c_i^k, & i\in B,\\[3pt]
H_{i,:}, & i\in [N]\setminus B,
\end{cases}
\qquad H\in\mathbb R^{N\times r}.
\label{eq:aphl_Qk}
\end{equation}
We compute $\Lambda^k\in\mathbb R^{2m\times r}$ from
\begin{equation}
A'\!\left(Q_k[A'^\top\Lambda^k]\right)=E_k,
\label{eq:aphl_linear_system}
\end{equation}
and then update
\begin{equation}
\widetilde R_{k+1}
=
R^k-Q_k[A'^\top\Lambda^k],
\qquad
R^{k+1}=\Proj_{\mathcal M_1}(\widetilde R_{k+1}).
\label{eq:aphl_update}
\end{equation}

Let $A'_B\in\mathbb R^{2m\times s}$ be the submatrix of $A'$ formed by the columns
indexed by $B$, and define
\[
S:=A_B'{}^\top(A'A'^\top)^{-1}A'_B\in\mathbb R^{s\times s}.
\]
Moreover, define $h^k\in\mathbb R^s$ by
\[
h_i^k
:=
\left\langle
\bigl(A_B'{}^\top(A'A'^\top)^{-1}E_k\bigr)_{i,:},\,c_i^k
\right\rangle,
\qquad i\in B.
\]
Then \eqref{eq:aphl_linear_system} reduces to the Schur complement system
\begin{equation}
\Bigl(I_s-(C_kC_k^\top)\odot S\Bigr)\mu^k=h^k,
\label{eq:aphl_schur}
\end{equation}
where $\mu^k\in\mathbb R^s$ and $\odot$ denotes the Hadamard product. After solving
\eqref{eq:aphl_schur}, we recover
\begin{equation}
\Lambda^k
=
(A'A'^\top)^{-1}\Bigl(E_k+A'_B\bigl(\Diag(\mu^k)C_k\bigr)\Bigr),
\label{eq:aphl_lambda}
\end{equation}
and
\begin{equation}
Q_k[A'^\top\Lambda^k]
=
A'^\top\Lambda^k-T_B^\ast(\mu^k),
\label{eq:aphl_Qrecover}
\end{equation}
where $T_B^\ast(\mu^k)\in\mathbb R^{N\times r}$ is the matrix whose rows indexed
by $B$ are $\mu_i^k c_i^k$ and whose remaining rows are zero. Hence
\[
\widetilde R_{k+1}
=
R^k-A'^\top\Lambda^k+T_B^\ast(\mu^k).
\]

Similar to NewtonSLRA, for the linear system \eqref{eq:aphl_schur}, there are two practical implementations:\\
(i)\emph{Direct Schur solve.}
If the Schur matrix is formed explicitly, then one iteration costs
\[
O\!\left(m(n+m)r+msr+s^2r+s^3\right).
\]
(ii)\emph{Sherman--Morrison--Woodbury.}
 The corresponding cost is
\[
O\!\left(m(n+m)r+msr+sm^2r^2+m^3r^3\right).
\]

Therefore, the per-iteration cost of APHL is
\[
O\!\left(
mnr+
\min\bigl\{\,msr+s^2r+m^2r+s^3,\ s(mr)^2+(mr)^3\,\bigr\}
\right),
\]
which is the same as NewtonSLRA and GWA-Newton.

 \subsection{Numerical results for QAP problem}
  {\scriptsize
\begin{longtable}{lrrrrrrrrr}
\caption{QAPLib instances: results of different retraction solvers embedded into the same RNNAL outer framework.
Here $pobj$ is the final primal objective value, $r$ is the final rank, $(R_p,R_d,R_c)$ are the relative KKT residuals defined in \cite{hou2025low}, and $\overline{\mathrm{Rite}}$ is the average number of inner iterations used by the retraction subsolver.  Best values in each column \emph{except} $pobj$ are highlighted in bold within each instance.}
\label{tab:qap_all}\\
\toprule
Method & $pobj$ & $r$ & $R_p$ & $R_d$ & $R_c$ & Time(s) & ALMite & BBite & $\overline{\mathrm{Rite}}$\\
\midrule
\endfirsthead

\toprule
Method & $pobj$ & $r$ & $R_p$ & $R_d$ & $R_c$ & Time(s) & ALMite & BBite & $\overline{\mathrm{Rite}}$\\
\midrule
\endhead

\midrule
\multicolumn{10}{r}{\small Continued on next page.}\\
\endfoot

\bottomrule
\endlastfoot

\midrule
\multicolumn{10}{l}{\textbf{chr12a} ($n=144, p=12$)}\\
\midrule
GWA & 9.5524e+03 & 13 & 7.04e-07 & 6.24e-07 & 1.29e-07 & 1.90e+01 & 28 & 5321 & 3.31 \\
GWA-Newton & 9.5521e+03 & 16 & 2.99e-07 & 2.14e-07 & 2.11e-08 & \textbf{1.72e+01} & 29 & \textbf{3760} & \textbf{1.20} \\
APM & 9.5521e+03 & 49 & \textbf{2.43e-07} & 1.06e-07 & 8.41e-08 & 2.26e+01 & 28 & 5085 & 4.04 \\
NewtonSLRA & 9.5520e+03 & \textbf{2} & 5.02e-07 & 4.87e-08 & 3.19e-09 & 2.02e+01 & 28 & 5043 & 1.29 \\
APHL & 9.5522e+03 & 54 & 7.73e-07 & 3.12e-07 & 8.01e-08 & 2.08e+01 & \textbf{26} & 4980 & 1.32 \\
TAPR & 9.5520e+03 & 14 & 4.18e-07 & \textbf{1.35e-09} & \textbf{8.68e-12} & 2.17e+01 & 29 & 5168 & 2.33 \\
Relaxed NewtonSLRA & 9.5520e+03 & 61 & 5.65e-07 & 6.19e-08 & 7.01e-08 & 2.43e+01 & 28 & 4706 & 4.79 \\

\midrule
\multicolumn{10}{l}{\textbf{chr12b} ($n=144, p=12$)}\\
\midrule
GWA & 9.7422e+03 & 19 & 9.32e-07 & 2.45e-07 & 3.81e-07 & \textbf{8.53e+00} & 28 & \textbf{2315} & 2.67 \\
GWA-Newton & 9.7425e+03 & 11 & 8.98e-07 & 6.71e-07 & 1.90e-07 & 1.08e+01 & 28 & 2368 & \textbf{1.18} \\
APM & 9.7420e+03 & 15 & 4.21e-07 & 4.35e-10 & 2.17e-09 & 1.08e+01 & 26 & 2596 & 3.45 \\
NewtonSLRA & 9.7420e+03 & 25 & 7.11e-07 & 4.71e-10 & 4.22e-11 & 1.10e+01 & 25 & 2750 & 1.31 \\
APHL & 9.7420e+03 & \textbf{8} & \textbf{1.40e-07} & 2.58e-10 & \textbf{3.65e-11} & 1.03e+01 & 28 & 2692 & 1.25 \\
TAPR & 9.7420e+03 & 23 & 9.70e-07 & 9.35e-10 & 1.07e-10 & 1.08e+01 & \textbf{24} & 2486 & 2.29 \\
Relaxed NewtonSLRA & 9.7420e+03 & 19 & 6.96e-07 & \textbf{9.58e-17} & 5.54e-08 & 1.30e+01 & 47 & 2525 & 4.00 \\

\midrule
\multicolumn{10}{l}{\textbf{chr12c} ($n=144, p=12$)}\\
\midrule
GWA & 1.1156e+04 & \textbf{3} & \textbf{6.80e-08} & 5.50e-10 & 1.51e-10 & \textbf{2.91e+01} & 36 & 11119 & 2.51 \\
GWA-Newton & 1.1156e+04 & \textbf{3} & 1.92e-07 & 2.40e-09 & \textbf{2.51e-11} & 3.95e+01 & 35 & 9877 & \textbf{1.16} \\
APM & 1.1156e+04 & \textbf{3} & 7.63e-07 & 1.93e-09 & 1.00e-08 & 3.26e+01 & \textbf{32} & 11446 & 2.92 \\
NewtonSLRA & 1.1156e+04 & \textbf{3} & 8.41e-07 & 1.60e-09 & 7.15e-10 & 3.13e+01 & 34 & \textbf{9721} & 1.23 \\
APHL & 1.1156e+04 & 8 & 7.71e-07 & 1.46e-08 & 4.23e-09 & 3.78e+01 & 49 & 11362 & 1.24 \\
TAPR & 1.1156e+04 & 4 & 2.92e-07 & 5.51e-10 & 6.90e-11 & 3.53e+01 & 35 & 10279 & 2.22 \\
Relaxed NewtonSLRA & 1.1156e+04 & \textbf{3} & 7.63e-07 & \textbf{2.76e-10} & 1.21e-09 & 4.12e+01 & 35 & 12030 & 2.79 \\

\midrule
\multicolumn{10}{l}{\textbf{chr20a} ($n=400, p=20$)}\\
\midrule
GWA & 2.1921e+03 & 7 & 8.77e-07 & 5.95e-07 & 1.03e-07 & 6.64e+02 & \textbf{43} & 29765 & 2.43 \\
GWA-Newton & 2.1921e+03 & 9 & 5.69e-07 & 3.03e-07 & 2.64e-07 & 8.57e+02 & 46 & 26658 & \textbf{1.11} \\
APM & 2.1920e+03 & 45 & 2.44e-07 & 1.94e-08 & \textbf{5.45e-08} & \textbf{3.45e+02} & 66 & \textbf{11329} & 2.64 \\
NewtonSLRA & 2.1920e+03 & \textbf{4} & \textbf{2.02e-07} & 1.16e-08 & 9.25e-08 & 9.30e+02 & 50 & 34444 & 1.16 \\
APHL & 2.1920e+03 & 8 & 7.16e-07 & 1.28e-07 & 6.37e-07 & 7.07e+02 & 45 & 25408 & 1.21 \\
TAPR & 2.1920e+03 & 116 & 2.42e-07 & \textbf{8.34e-09} & 4.49e-07 & 4.11e+02 & 115 & 13485 & 2.27 \\
Relaxed NewtonSLRA & 2.1920e+03 & 189 & 7.40e-07 & 2.95e-08 & 6.59e-08 & 5.83e+02 & 135 & 16716 & 5.43 \\

\midrule
\multicolumn{10}{l}{\textbf{chr20b} ($n=400, p=20$)}\\
\midrule
GWA & 2.2980e+03 & 231 & \textbf{1.28e-07} & 4.15e-11 & 1.68e-10 & 3.17e+02 & 36 & 10582 & 4.24 \\
GWA-Newton & 2.2980e+03 & 233 & 2.17e-07 & 4.24e-11 & 3.46e-11 & 4.03e+02 & 36 & 11404 & \textbf{1.23} \\
APM & 2.2980e+03 & 180 & 8.63e-07 & 3.34e-10 & 5.36e-09 & \textbf{1.55e+02} & 40 & \textbf{4422} & 4.64 \\
NewtonSLRA & 2.2980e+03 & 232 & 6.51e-07 & \textbf{3.12e-11} & \textbf{9.32e-12} & 3.47e+02 & 37 & 10599 & 1.31 \\
APHL & 2.2980e+03 & 239 & 6.08e-07 & 1.36e-10 & 4.24e-09 & 3.66e+02 & \textbf{35} & 11815 & 1.36 \\
TAPR & 2.2980e+03 & 161 & 3.09e-07 & 3.03e-10 & 4.43e-10 & 2.61e+02 & 107 & 8203 & 2.33 \\
Relaxed NewtonSLRA & 2.2980e+03 & \textbf{150} & 6.94e-07 & 9.66e-10 & 4.01e-09 & 2.81e+02 & 99 & 7641 & 5.43 \\

\midrule
\multicolumn{10}{l}{\textbf{had12} ($n=144, p=12$)}\\
\midrule
GWA & 1.6520e+03 & \textbf{21} & 8.55e-07 & 3.24e-09 & 7.46e-09 & 1.08e+03 & 2858 & 73647 & 3.02 \\
GWA-Newton & 1.6520e+03 & 46 & 8.75e-07 & 5.03e-08 & 4.79e-07 & 1.48e+01 & 56 & \textbf{3212} & \textbf{1.06} \\
APM & 1.6520e+03 & 22 & \textbf{5.65e-07} & \textbf{4.18e-10} & 6.02e-07 & 6.38e+02 & 2794 & 69133 & 3.53 \\
NewtonSLRA & 1.6520e+03 & 46 & 7.10e-07 & 3.75e-08 & 2.85e-07 & 1.44e+01 & 35 & 3966 & 1.09 \\
APHL & 1.6520e+03 & 44 & 7.91e-07 & 8.76e-09 & \textbf{2.76e-09} & \textbf{1.27e+01} & 37 & 3335 & 1.11 \\
TAPR & 1.6520e+03 & 43 & 8.38e-07 & 8.34e-10 & 6.42e-08 & 1.51e+01 & \textbf{32} & 3660 & 2.07 \\
Relaxed NewtonSLRA & 1.6520e+03 & 43 & 6.60e-07 & 3.70e-09 & 1.22e-08 & 2.02e+01 & 35 & 5350 & 1.56 \\

\midrule
\multicolumn{10}{l}{\textbf{nug12} ($n=144, p=12$)}\\
\midrule
GWA & 5.6799e+02 & 88 & 8.86e-07 & \textbf{7.89e-07} & 3.79e-08 & 8.83e+01 & 73 & 23313 & 1.02 \\
GWA-Newton & 5.6799e+02 & \textbf{87} & 9.61e-07 & 9.97e-07 & 4.27e-08 & 8.22e+01 & \textbf{72} & \textbf{16281} & \textbf{1.01} \\
APM & 5.6799e+02 & 88 & \textbf{8.21e-07} & 9.71e-07 & 3.91e-09 & 9.88e+01 & 75 & 26117 & 1.05 \\
NewtonSLRA & 5.6799e+02 & \textbf{87} & 8.49e-07 & 9.17e-07 & 7.40e-09 & 7.30e+01 & 76 & 16923 & 1.02 \\
APHL & 5.6799e+02 & \textbf{87} & 8.60e-07 & 9.38e-07 & \textbf{2.83e-09} & 7.41e+01 & 76 & 17517 & 1.02 \\
TAPR & 5.6799e+02 & \textbf{87} & 9.06e-07 & 9.63e-07 & 1.16e-08 & 8.67e+01 & 74 & 17629 & 2.01 \\
Relaxed NewtonSLRA & 5.6799e+02 & \textbf{87} & 8.30e-07 & 9.83e-07 & 1.90e-08 & \textbf{7.19e+01} & 77 & 17216 & 1.08 \\

\midrule
\multicolumn{10}{l}{\textbf{nug15} ($n=225, p=15$)}\\
\midrule
GWA & 1.1406e+03 & 143 & \textbf{7.19e-07} & 8.72e-07 & \textbf{2.83e-09} & 2.33e+02 & 79 & 32528 & 1.02 \\
GWA-Newton & 1.1406e+03 & \textbf{142} & 7.81e-07 & 9.33e-07 & 2.17e-08 & 2.33e+02 & 77 & 23575 & \textbf{1.01} \\
APM & 1.1406e+03 & \textbf{142} & 7.50e-07 & 9.38e-07 & 4.50e-09 & 2.44e+02 & 78 & 32595 & 1.06 \\
NewtonSLRA & 1.1406e+03 & \textbf{142} & 8.68e-07 & \textbf{8.49e-07} & 2.67e-08 & 1.88e+02 & \textbf{74} & 22297 & \textbf{1.01} \\
APHL & 1.1406e+03 & 143 & 9.40e-07 & 8.75e-07 & 1.60e-08 & \textbf{1.81e+02} & \textbf{74} & \textbf{22290} & 1.02 \\
TAPR & 1.1406e+03 & \textbf{142} & 8.34e-07 & 8.58e-07 & 5.82e-09 & 2.34e+02 & \textbf{74} & 23942 & 2.01 \\
Relaxed NewtonSLRA & 1.1406e+03 & 143 & 8.00e-07 & 9.56e-07 & 2.27e-08 & 2.00e+02 & 78 & 23872 & 1.07 \\

\midrule
\multicolumn{10}{l}{\textbf{tai15a} ($n=225, p=15$)}\\
\midrule
GWA & 3.7710e+05 & 126 & 9.70e-07 & 5.69e-07 & 3.12e-07 & 6.68e+01 & 50 & 10286 & 1.04 \\
GWA-Newton & 3.7710e+05 & \textbf{125} & \textbf{8.60e-07} & \textbf{2.95e-07} & \textbf{7.49e-10} & 6.48e+01 & \textbf{49} & \textbf{6669} & \textbf{1.02} \\
APM & 3.7710e+05 & \textbf{125} & 9.74e-07 & 5.32e-07 & 1.83e-09 & 6.34e+01 & 57 & 8905 & 1.18 \\
NewtonSLRA & 3.7710e+05 & \textbf{125} & 9.79e-07 & 5.64e-07 & 5.51e-08 & \textbf{5.72e+01} & \textbf{49} & 7095 & 1.03 \\
APHL & 3.7710e+05 & \textbf{125} & 9.10e-07 & 4.40e-07 & 2.06e-09 & 7.32e+01 & 52 & 9518 & 1.03 \\
TAPR & 3.7710e+05 & \textbf{125} & 9.46e-07 & 3.32e-07 & 6.53e-09 & 7.16e+01 & 51 & 7670 & 2.04 \\
Relaxed NewtonSLRA & 3.7710e+05 & \textbf{125} & 9.80e-07 & 4.38e-07 & 6.88e-08 & 6.91e+01 & 51 & 9183 & 1.12 \\

\midrule
\multicolumn{10}{l}{\textbf{tai15b} ($n=225, p=15$)}\\
\midrule
GWA & 5.1782e+07 & 134 & 9.47e-07 & 6.99e-07 & 3.39e-07 & \textbf{4.40e+02} & 83 & \textbf{64380} & 1.21 \\
GWA-Newton & 5.1777e+07 & 130 & 8.14e-07 & 4.81e-07 & \textbf{3.14e-07} & 6.38e+02 & 83 & 67200 & \textbf{1.04} \\
APM & 5.1780e+07 & 133 & 9.79e-07 & 6.34e-07 & 3.41e-07 & 6.46e+02 & 87 & 89417 & 1.44 \\
NewtonSLRA & 5.1775e+07 & \textbf{129} & 7.84e-07 & 4.30e-07 & 4.03e-07 & 7.40e+02 & 88 & 89389 & 1.08 \\
APHL & 5.1784e+07 & 133 & 9.79e-07 & 7.23e-07 & 9.83e-07 & 6.43e+02 & \textbf{78} & 79776 & 1.10 \\
TAPR & 5.1774e+07 & 130 & \textbf{7.44e-07} & \textbf{3.75e-07} & 3.89e-07 & 8.78e+02 & 94 & 93555 & 2.07 \\
Relaxed NewtonSLRA & 5.1787e+07 & 139 & 8.12e-07 & 9.43e-07 & 3.93e-07 & 7.70e+02 & 84 & 94670 & 1.41 \\
\end{longtable}
}
\subsection{Quadratic knapsack problem}

We also test the binary quadratic knapsack problem (QKP) considered in
\cite{hou2025low}. Given a profit matrix $Q\in\mathbb S^n$, a weight vector
$a\in\mathbb R_{++}^n$ and a capacity $\tau>0$, QKP is
\begin{equation}\label{eq:qkp}
\max\left\{x^\top Qx:\ a^\top x\le \tau,\ x\in\{0,1\}^n\right\}.
\end{equation}
Following \cite{hou2025low}, we convert the inequality to an equality constraint and
apply the DNN relaxation within the same RNNAL framework. In the MBQP notation \eqref{eq:mbqp}, QKP has a single linear constraint, i.e.,
\[
A=a^\top\in\mathbb R^{1\times n},\qquad b=\tau,\qquad B=[n],\qquad s=|B|=n,\qquad m=1.
\]
We generate QKP instances by sampling $Q_{ij}=Q_{ji}$ uniformly from $\{1,\dots,100\}$ with probability $p$ (and $0$ otherwise), sampling $a_i$ uniformly from $\{1,\dots,50\}$, and setting the capacity $\tau=0.9\,e^\top a$.
Under the constraint-relaxation reformulation in \eqref{eq:dnn_relax_Qprime}, the extended
dimension is
\[
N=n+2m=n+2,
\]
and the corresponding reformulated constraint matrix and right-hand side are
\[
A'=
\begin{pmatrix}
A & I_m & 0\\
A & 0 & -I_m
\end{pmatrix}
=
\begin{pmatrix}
a^\top & 1 & 0\\
a^\top & 0 & -1
\end{pmatrix}
\in\mathbb R^{2\times (n+2)},
\qquad
b'=
\begin{pmatrix}
\tau\\
\tau
\end{pmatrix}\in\mathbb R^{2}.
\]
 The full results can be found in Table~\ref{tab:qkp_all_results}.
In QKP, we have $m=1$ and $B=[n]$ (hence $s=n$). From
Table~\ref{tab:retr_complexity}, we know that APM and GWA have a per-iteration cost of order
$O(nr)$, whereas Newton-type retractions (GWA-Newton, NewtonSLRA, and APHL) have a
per-iteration cost of order $O(nr^2+r^3)$. Therefore, although Newton-type steps are
more expensive per inner iteration, they typically require substantially fewer inner
retraction iterations and fewer outer ALM/BB iterations, and thus can be faster overall
in our QKP experiments. A plausible explanation is that the adaptive retraction tolerance in \eqref{eq:ret_tol}
is relatively loose, and second-order retractions typically reduce the constraint
violation much more aggressively, yielding a sufficiently accurate retracted point in
only a few inner steps. In particular, for relaxed NewtonSLRA, we observe a  linear decrease of the
retraction residual, rather than a clear
quadratic rate. This can be attributed to the fact that the asymptotic quadratic
neighborhood may be very small and the inner iterations are terminated early by the
inexact tolerance \eqref{eq:ret_tol}.
 
{\scriptsize
\begin{longtable}{lrrrrrrrrr}
\caption{QKP instances (ordered by $n$ from small to large): results of different retraction solvers embedded into the same RNNAL outer framework. Here $pobj$ is the final primal objective value, $r$ is the final rank, $(R_p,R_d,R_c)$ are the relative KKT residuals defined in \cite{hou2025low}, and $\overline{\mathrm{Rite}}$ is the average number of inner iterations used by the retraction subsolver. Instances are generated as in \cite{hou2025low}: the density parameter is $p$ and the capacity is fixed to $\tau=0.9\,e^\top a$. Best values in each column except $pobj$ are highlighted in bold within each instance. }
\label{tab:qkp_all_results}\\
\toprule
Method & $pobj$ & $r$ & $R_p$ & $R_d$ & $R_c$ & Time(s) & ALMite & BBite & $\overline{\mathrm{Rite}}$\\
\midrule
\endfirsthead

\toprule
Method & $pobj$ & $r$ & $R_p$ & $R_d$ & $R_c$ & Time(s) & ALMite & BBite & $\overline{\mathrm{Rite}}$\\
\midrule
\endhead

\midrule
\multicolumn{10}{r}{\small Continued on next page.}\\
\endfoot

\bottomrule
\endlastfoot

\midrule
\multicolumn{10}{l}{($n=500$, $p=0.1$)}\\
\midrule
GWA & -1.1422e+06 & 14 & 7.87e-07 & 7.03e-07 & 1.07e-08 & 5.36e+01 & 13 & 917 & \textbf{1.00} \\
GWA-Newton & -1.1422e+06 & \textbf{11} & 6.78e-07 & \textbf{3.83e-07} & 1.88e-09 & 4.72e+00 & 8 & 305 & 1.02 \\
APM & -1.1422e+06 & 12 & \textbf{3.92e-07} & 7.05e-07 & 1.05e-09 & 4.64e+01 & 14 & 857 & 1.14 \\
NewtonSLRA & -1.1422e+06 & 12 & 9.16e-07 & 8.40e-07 & 2.96e-09 & 4.36e+00 & \textbf{7} & \textbf{243} & 1.39 \\
APHL & -1.1422e+06 & \textbf{11} & 8.96e-07 & 8.87e-07 & 4.86e-09 & \textbf{4.30e+00} & \textbf{7} & 263 & 2.05 \\
TAPR & -1.1422e+06 & 12 & 7.63e-07 & 5.90e-07 & 3.85e-09 & 4.66e+00 & 8 & 274 & 2.25 \\
Relaxed NewtonSLRA & -1.1422e+06 & \textbf{11} & 7.33e-07 & 5.64e-07 & \textbf{3.67e-10} & 4.91e+00 & 8 & 329 & 1.78 \\

\midrule
\multicolumn{10}{l}{($n=500$, $p=0.5$)}\\
\midrule
GWA & -5.6677e+06 & 18 & 7.03e-07 & 9.93e-07 & 5.23e-08 & 2.46e+01 & 22 & 892 & 1.06 \\
GWA-Newton & -5.6677e+06 & \textbf{11} & 8.23e-07 & 4.34e-07 & 4.26e-09 & 3.89e+00 & \textbf{7} & 199 & \textbf{1.03} \\
APM & -5.6677e+06 & 15 & \textbf{5.24e-07} & 4.91e-07 & \textbf{4.86e-10} & 1.36e+01 & 10 & 479 & 3.24 \\
NewtonSLRA & -5.6677e+06 & 13 & 8.80e-07 & 2.91e-07 & 3.00e-09 & 4.68e+00 & \textbf{7} & 227 & 1.38 \\
APHL & -5.6677e+06 & 15 & 7.99e-07 & \textbf{1.31e-07} & 8.81e-09 & 4.21e+00 & \textbf{7} & 235 & 1.96 \\
TAPR & -5.6677e+06 & 26 & 7.31e-07 & 3.54e-07 & 6.17e-09 & 3.73e+00 & 8 & \textbf{182} & 2.28 \\
Relaxed NewtonSLRA & -5.6677e+06 & 13 & 7.42e-07 & 3.43e-07 & 2.51e-09 & \textbf{3.63e+00} & \textbf{7} & 222 & 2.01 \\

\midrule
\multicolumn{10}{l}{ ($n=500$, $p=0.9$)}\\
\midrule
GWA & -1.0261e+07 & 18 & 8.13e-07 & 8.59e-07 & 5.40e-08 & 1.44e+01 & 24 & 693 & \textbf{1.01} \\
GWA-Newton & -1.0261e+07 & 21 & \textbf{3.39e-07} & 2.70e-07 & 6.30e-10 & 3.50e+00 & \textbf{4} & 169 & 1.02 \\
APM & -1.0261e+07 & \textbf{14} & 6.13e-07 & 6.45e-07 & 4.16e-08 & 1.14e+01 & 9 & 381 & 13.57 \\
NewtonSLRA & -1.0261e+07 & 27 & 7.81e-07 & 6.14e-07 & \textbf{1.09e-10} & 4.37e+00 & \textbf{4} & 156 & 1.54 \\
APHL & -1.0261e+07 & \textbf{14} & 3.68e-07 & \textbf{2.40e-07} & 7.26e-10 & 4.49e+00 & \textbf{4} & 194 & 2.04 \\
TAPR & -1.0261e+07 & 21 & 8.22e-07 & 2.66e-07 & 2.90e-10 & 3.48e+00 & \textbf{4} & 158 & 2.45 \\
Relaxed NewtonSLRA & -1.0261e+07 & 29 & 5.32e-07 & 5.75e-07 & 3.69e-10 & \textbf{2.98e+00} & \textbf{4} & \textbf{148} & 2.77 \\

\midrule
\multicolumn{10}{l}{($n=1000$, $p=0.1$)}\\
\midrule
GWA & -4.5995e+06 & 29 & 6.50e-07 & \textbf{3.25e-07} & 3.63e-09 & 5.84e+01 & 11 & 893 & \textbf{1.00} \\
GWA-Newton & -4.5995e+06 & 25 & 8.68e-07 & 5.32e-07 & 1.17e-08 & 2.94e+01 & \textbf{7} & 386 & 1.01 \\
APM & -4.5995e+06 & 27 & 9.06e-07 & 3.88e-07 & 4.17e-09 & 5.32e+01 & 8 & 659 & 1.40 \\
NewtonSLRA & -4.5995e+06 & 25 & 9.06e-07 & 5.21e-07 & 2.21e-09 & \textbf{2.26e+01} & \textbf{7} & \textbf{367} & 1.30 \\
APHL & -4.5995e+06 & 25 & 9.59e-07 & 5.76e-07 & 6.02e-09 & 2.81e+01 & \textbf{7} & 434 & 1.83 \\
TAPR & -4.5995e+06 & 25 & \textbf{6.14e-07} & 6.81e-07 & \textbf{1.56e-09} & 3.96e+01 & 9 & 714 & 2.11 \\
Relaxed NewtonSLRA & -4.5995e+06 & \textbf{22} & 7.85e-07 & 3.30e-07 & 5.46e-09 & 2.63e+01 & \textbf{7} & 492 & 1.51 \\

\midrule
\multicolumn{10}{l}{ ($n=1000$, $p=0.5$)}\\
\midrule
GWA & -2.2747e+07 & 33 & \textbf{4.69e-07} & 7.26e-07 & 2.95e-08 & 1.87e+02 & 22 & 1968 & 1.07 \\
GWA-Newton & -2.2747e+07 & \textbf{18} & 5.10e-07 & \textbf{2.52e-07} & \textbf{2.13e-10} & 2.13e+01 & 6 & 301 & \textbf{1.02} \\
APM & -2.2747e+07 & 25 & 9.67e-07 & 7.12e-07 & 5.45e-08 & 7.38e+01 & 8 & 500 & 4.50 \\
NewtonSLRA & -2.2747e+07 & 25 & 8.34e-07 & 2.81e-07 & 2.57e-09 & 1.54e+01 & 6 & 254 & 1.40 \\
APHL & -2.2747e+07 & 19 & 8.01e-07 & 4.33e-07 & 2.75e-09 & 1.52e+01 & \textbf{5} & 235 & 2.09 \\
TAPR & -2.2747e+07 & 23 & 9.09e-07 & 3.94e-07 & 2.58e-09 & \textbf{1.36e+01} & \textbf{5} & 217 & 2.33 \\
Relaxed NewtonSLRA & -2.2747e+07 & 37 & 8.69e-07 & 6.73e-07 & 4.02e-09 & 1.40e+01 & \textbf{5} & \textbf{191} & 2.31 \\

\midrule
\multicolumn{10}{l}{($n=1000$, $p=0.9$)}\\
\midrule
GWA & -4.0935e+07 & 27 & 5.60e-07 & 4.00e-07 & 3.07e-08 & 3.94e+02 & 28 & 1720 & \textbf{1.01} \\
GWA-Newton & -4.0935e+07 & 24 & 8.07e-07 & 3.41e-07 & 8.63e-10 & 1.66e+01 & \textbf{3} & 197 & 1.03 \\
APM & -4.0935e+07 & 25 & 5.04e-07 & 3.10e-07 & 2.54e-08 & 5.66e+01 & 17 & 213 & 4.21 \\
NewtonSLRA & -4.0935e+07 & \textbf{14} & \textbf{2.94e-07} & 1.58e-07 & 6.60e-10 & \textbf{1.14e+01} & 4 & 220 & 1.51 \\
APHL & -4.0935e+07 & 40 & 8.00e-07 & 2.77e-07 & 1.76e-09 & 1.18e+01 & \textbf{3} & \textbf{164} & 2.00 \\
TAPR & -4.0935e+07 & 18 & 8.16e-07 & \textbf{4.39e-08} & \textbf{1.71e-10} & 1.88e+01 & 4 & 253 & 2.36 \\
Relaxed NewtonSLRA & -4.0935e+07 & 17 & 8.17e-07 & 1.32e-07 & 2.84e-10 & 1.20e+01 & 4 & 213 & 3.82 \\

\midrule
\multicolumn{10}{l}{($n=5000$, $p=0.1$)}\\
\midrule
GWA & -1.1387e+08 & 119 & 8.12e-07 & 9.41e-07 & 1.91e-08 & 2.34e+03 & 18 & 868 & \textbf{1.00} \\
GWA-Newton & -1.1387e+08 & 123 & 7.35e-07 & \textbf{3.51e-07} & 8.57e-10 & 5.07e+02 & \textbf{3} & 368 & 1.01 \\
APM & -1.1387e+08 & 102 & 7.62e-07 & 4.23e-07 & 1.86e-08 & 7.11e+02 & 4 & 471 & 2.26 \\
NewtonSLRA & -1.1387e+08 & 115 & \textbf{6.60e-07} & 4.04e-07 & 5.89e-10 & \textbf{2.67e+02} & \textbf{3} & \textbf{291} & 1.54 \\
APHL & -1.1387e+08 & 107 & 6.97e-07 & 4.04e-07 & 1.29e-09 & 2.87e+02 & \textbf{3} & 294 & 2.08 \\
TAPR & -1.1387e+08 & \textbf{101} & 6.95e-07 & 4.01e-07 & 2.09e-10 & 2.84e+02 & \textbf{3} & 313 & 2.43 \\
Relaxed NewtonSLRA & -1.1387e+08 & 118 & 6.80e-07 & 4.27e-07 & \textbf{3.33e-11} & 2.86e+02 & \textbf{3} & 318 & 2.53 \\

\midrule
\multicolumn{10}{l}{($n=5000$, $p=0.5$)}\\
\midrule
GWA & -5.6825e+08 & \textbf{51} & 1.17e-05 & 2.08e-05 & 2.57e-07 & 3.78e+03 & 21 & 1050 & 1.04 \\
GWA-Newton & -5.6825e+08 & 76 & 4.68e-07 & 1.96e-07 & 2.86e-09 & 2.66e+02 & \textbf{2} & 206 & \textbf{1.03} \\
APM & -5.6825e+08 & 83 & \textbf{2.92e-07} & 5.80e-07 & 5.80e-09 & 9.61e+02 & 6 & 298 & 8.70 \\
NewtonSLRA & -5.6825e+08 & 112 & 4.71e-07 & 2.68e-07 & 3.04e-09 & \textbf{1.87e+02} & \textbf{2} & \textbf{205} & 1.74 \\
APHL & -5.6825e+08 & 66 & 4.49e-07 & \textbf{1.39e-07} & 3.06e-09 & 1.89e+02 & \textbf{2} & 206 & 2.06 \\
TAPR & -5.6825e+08 & 73 & 4.55e-07 & 1.58e-07 & 2.90e-09 & 1.92e+02 & \textbf{2} & 227 & 2.52 \\
Relaxed NewtonSLRA & -5.6825e+08 & 143 & 4.54e-07 & 3.77e-07 & \textbf{2.69e-09} & 2.03e+02 & \textbf{2} & 212 & 3.49 \\

\midrule
\multicolumn{10}{l}{ ($n=5000$, $p=0.9$)}\\
\midrule
GWA & -1.0200e+09 & {45} & 1.09e-05 & 1.91e-05 & 2.53e-07 & 4.76e+03 & 24 & 1147 & \textbf{1.09} \\
GWA-Newton & -1.0200e+09 & 143 & 6.11e-07 & 4.19e-08 & 9.88e-10 & 2.56e+02 & \textbf{1} & 165 & 1.16 \\
APM & -1.0200e+09 & \textbf{37} & \textbf{1.23e-07} & \textbf{3.33e-08} & 1.06e-09 & 5.95e+02 & 3 & 278 & 11.19 \\
NewtonSLRA & -1.0200e+09 & 143 & 6.15e-07 & 4.44e-08 & 9.75e-10 & \textbf{1.39e+02} & \textbf{1} & \textbf{158} & 1.95 \\
APHL & -1.0200e+09 & 143 & 6.34e-07 & 4.53e-08 & 1.10e-09 & 1.54e+02 & \textbf{1} & 166 & 2.18 \\
TAPR & -1.0200e+09 & 143 & 5.78e-07 & 5.53e-07 & \textbf{9.53e-10} & 1.54e+02 & \textbf{1} & 171 & 2.89 \\
Relaxed NewtonSLRA & -1.0200e+09 & 143 & 6.06e-07 & 3.94e-08 & 1.05e-09 & 1.66e+02 & \textbf{1} & 164 & 8.38 \\

\end{longtable}
}

\end{document}